\documentclass[12pt]{amsart}
\usepackage[margin=1in]{geometry}
\usepackage[foot]{amsaddr}

\usepackage{amsthm, amsmath, amssymb}
\usepackage[mathscr]{euscript}
\usepackage{tikz}
\usepackage{dsfont}
\usepackage{mathtools}
\usepackage{hyperref}
\hypersetup{
    colorlinks=true,
    linkcolor=blue,
    filecolor=magenta,      
    urlcolor=cyan,
}
\usepackage{stmaryrd}
\usepackage{enumitem}

\newcommand\bord{\partial\Omega}
\newcommand\B{\mathcal{B}}

\newcommand\Hpo{\dot{\B}(\bord)}
\newcommand\Hmo{\dot{\B}'(\bord)}
\newcommand\Scal{\mathcal{S}}
\newcommand\D{\mathcal{D}}
\newcommand\DO{\mathscr{D}}
\newcommand\K{\mathcal{K}}
\newcommand\Sd{\dot{\Scal}}
\newcommand\Dd{\dot{\D}}
\newcommand\DOd{\dot{\DO}}
\newcommand\Kd{\dot{\K}}
\newcommand\Vd{\dot{\mathcal{V}}}
\newcommand\Wd{\dot{\mathcal{W}}}
\newcommand\R{\mathbb{R}}
\newcommand\Z{\mathbb{Z}}
\newcommand\N{\mathbb{N}}
\newcommand\C{\mathbb{C}}
\newcommand\Lrm{\mathrm{L}}
\newcommand\dx{{\mathrm d}x}
\newcommand\dy{{\mathrm d}y}
\newcommand\nbord{\R^n\backslash\bord}
\newcommand\cbord{\C\backslash\bord}
\newcommand\Trd{\dot{\mathrm T}\mathrm r}
\newcommand\trd{\dot{\mathrm t}\mathrm r}

\newcommand{\ddno}[2]{\frac{\dot{\partial}_{#2}#1}{\partial\nu}}

\theoremstyle{plain}
\newtheorem{theorem}{Theorem}[section]
\newtheorem{lemma}[theorem]{Lemma}
\newtheorem{proposition}[theorem]{Proposition}
\newtheorem{corollary}[theorem]{Corollary}

\theoremstyle{definition}
\newtheorem{definition}[theorem]{Definition}
\newtheorem{remark}[theorem]{Remark}
\numberwithin{equation}{section}

\begin{document}
\title[Convergence of layer potentials on extension domains]{Convergence of layer potentials and Riemann-Hilbert problem on extension domains}

\author[G. Claret]{Gabriel Claret}
\author[A. Rozanova-Pierrat]{Anna Rozanova-Pierrat}
\address{CentraleSupélec, Université Paris-Saclay, 9 rue Joliot Curie, 91190 Gif-sur-Yvette, France}
\email{gabriel.claret@centralesupelec.fr, anna.rozanova-pierrat@centralesupelec.fr}
\author[A. Teplyaev]{Alexander Teplyaev}
\address{University of Connecticut, 352 Mansfield Rd, CT 06269 Storrs, Connecticut, USA}
\email{teplyaev@uconn.edu}\subjclass[2020]{Primary 35Q15; Secondary 31E05}

\keywords{Sobolev extension domains, layer potentials, Neumann-Poincaré operators, Neumann series, Cauchy integral, Hilbert transform, dyadic approximation, domain convergence}

\begin{abstract}
We prove the convergence of layer potential operators for the harmonic transmission problem over a sequence of converging two-sided extension domains. Consequently, the Neumann-Poincaré operators, Calderón projectors, and associated Neumann series converge in this setting. As a result, we generalize the notion of Cauchy integrals and, in a sense, of Hilbert transforms for a class of extension domains. Our approach relies on dyadic approximations of arbitrary open sets, considering convergence in terms of characteristic functions, Hausdorff distance, and compact sets.
\end{abstract}

\maketitle

\section{Introduction}
We prove the convergence of single and double layer potential operators for the harmonic transmission problem over a converging sequence of two-sided extension domains.
In particular, we establish the convergence of Neumann-Poincaré operators, Calderón projectors, and associated Neumann series.
With respect to a certain trace norm, the speed of convergence of the Neumann series can be controlled by the norms of the inner and outer Dirichlet harmonic extension operators, while otherwise being independent of the domain.
As a consequence of our main results, we generalize the notion of the Cauchy integral for a class of extension domains.
This can provide a way to study Hilbert transforms on two-sided extension domains that are not rectifiable. 
Another important consequence is stability for both Lipschitz and non-Lipschitz domains, 
meaning that, in some sense, small domain perturbations lead to small perturbations in the single and double layer potential operators.
Those results have  applications in shape optimization problems involving single and double layer potential operators for both Lipschitz and non-Lipschitz domains, and can be applied to a wider array of problems. 
For convenience, we often concentrate on the case where the sequence of domains is monotone increasing and the norms of the inner and outer extension operators are uniformly bounded, though many results hold under more relaxed conditions.
Using dyadic approximations of arbitrary open sets, we analyze convergence in  terms of characteristic functions, Hausdorff distance, and compact sets. 
Our work aims at extending certain results related to the Radon-Carleman problem \cite[Chapter 6]{mitrea_geometric_2023-2} and the invertibility of layer potential operators~\cite{AuscherEgertNystrom} to non-rectifiable domains. 

It is natural to use irregular shapes such as fractals to model the real world.
As one considers smaller and smaller scales, natural objects display new irregularities~\cite{mandelbrot_how_1967, mandelbrot_fractal_1983}.
For that matter, to fully grasp the richness of their geometries, it appears necessary to adopt a model which allows non-smoothness at all scales.
However, considering such shapes can cause difficulties when studying boundary value problems.
On the one hand, many theoretical results rely on the smoothness of domains to -- among many other things -- obtain regularity estimates for weak solutions~\cite{gilbarg_elliptic_2001}, determine conductivities from boundary measurements~\cite{alessandrini_singular_1990} or establish the boundedness of singular boundary operators~\cite{coifman_two_1989}.
Should one wish to go beyond the smooth case, it would become necessary to consider weaker notions of trace, or of normal derivative, for there is no guarantee that the normal vector can be defined anywhere on the boundary (think of a Minkowski curve for example, which is nowhere smooth).
From a numerical perspective as well -- albeit not the main focus of this paper --, since the computational power of any machine can only allow for a given, finite number of irregularities.
To overcome this regularity gap, one natural approach is to try and approximate the irregular shape using a sequence of more regular geometries.
In the case of self-similar fractals such as the Von Koch snowflake, their definition gives an intrinsic approximation procedure, simply by considering the associated prefractal sequences. However, this procedure depends heavily on the geometry of the fractal shape, and cannot be followed for any non-Lipschitz shape.

In this paper, we generalize the notion of Cauchy integral (which is related to the  Hilbert transform via the classical analysis) to a class of extension domains by means of the convergence of the harmonic layer potential operators along a converging sequence of domains. We also define and study an approximation procedure of arbitrary open sets using dyadic shapes.
The most fundamental definition of the Hilbert transform is a convolution on $\R$ with the kernel $y\mapsto\frac1{\pi y}$ -- or rather the principal value of that convolution, the kernel being singular~\cite[Prologue]{stein_harmonic_1993}.
In the complex plane however, an alternative definition of the Hilbert transform can be given~\cite{khavinson_poincares_2007, muskhelishvili_singular_1977, olver_computing_2011} where the convolution is on a Lipschitz curve instead.
That notion can notably be used in the study of gravitational waves, see~\cite{chapman_exponential_2006, olver_computing_2011}, and allows to express the solution to the Riemann-Hilbert problem consisting in finding a holomorphic function away from the curve satisfying a given jump in trace across the latter~\cite{muskhelishvili_singular_1977}.
If the curve is the boundary of a bounded Lipschitz domain, it coincides with the Cauchy integral~\cite{olver_computing_2011} up to a multiplicative constant.
Mapping properties and boundedness of the latter integral have been widely studied in the case of Lipschitz curves~\cite{calderon_cauchy_1977, coifman_two_1989, coifman_integrale_1982}, as well as on rectifiable Ahlfors regular curves~\cite{eiderman_estimate_2005}.
In addition, and given the connection between the Hilbert transform and the Riemann-Hilbert problem, it appears that the Cauchy integral (and the Hilbert transform in the case of a compact Lipschitz boundary) is connected to the harmonic layer potential operators.
Indeed, it is well known that holomorphic functions are harmonic (see for instance~\cite[Theorem 11.4]{rudin_real_1987}), and the trace jump condition of the problem can be understood in terms of a transmission problem.
The connection between the Cauchy integral and the Neumann-Poincaré operator, which determines the boundary values of the layer potentials, was notably pointed out in~\cite{khavinson_poincares_2007} for $C^2$ curves.
Being defined as singular integrals, the Cauchy integral and Hilbert transform rely heavily on the boundary measure -- in this case, Lebesgue's measure, which is not adapted for non-Lipschitz boundaries. We generalize those notions on non-Lipschitz boundaries and without having to specify a boundary measure, using their connection to the layer potential operators. Then, the convergence of the latter operators allows to prove holomorphism of the generalized Cauchy integral for a class of boundaries beyond Lipschitz.

In the Lipschitz case, the classical definitions of the single and double layer potential operators involve convolutions with either Green's functions or their normal derivatives respectively~\cite{fabes_potential_1978, verchota_layer_1984}.
Solutions to inverse boundary value problems can then be described explicitly using those operators, which constitute an essential tool in the study of inverse problems, in numerical analysis and in areas of spectral theory.
We refer to~\cite{mitrea_geometric_2022-2023, verchota_layer_1984} for the study of the layer potential operators defined in terms of Green's function and to~\cite{barton2017, claret_layer_2025} for a variational approach relying on the well-posedness of transmission problems.
See also the references therein for a more thorough overview of the works on the topic.
The boundary values of the layer potential operators, expressed notably in terms of the Neumann-Poincaré operator, allow to construct the so-called Calder\'on projector~\cite[Theorem 3.1.3]{nedelec_acoustic_2011}, which enables to recover the boundary values of a transmission solution, reconstructed by the layer potential operators from its boundary jumps.
The Neumann-Poincaré operator can also be used to express the solutions to boundary integral equations of the second kind in terms of Neumann series~\cite{perfekt_essential_2017}, allowing notably to recover the jump in trace of a transmission solution with no jump in normal derivative from the knowledge of its interior trace alone. 

In~\cite[Chapter 5]{mazya_boundary_1991}, the boundary layer potential operators on piecewise smooth domains -- however not necessarily Lipschitz -- were studied. Results for layer potentials in the context of Riemannian manifolds were obtained in~\cite{mitrea_boundary_1999}, while layer potentials on half-spaces with boundary data in Besov spaces were studied in \cite{barton_layer_2016}.
The exhaustive investigation carried out in~\cite{mitrea_geometric_2022-2023} of singular integral operators on uniformly rectifiable sets~\cite{david_singular_1991} appeared as the peak of the study of the boundedness of such operators~\cite{calderon_cauchy_1977, coifman_integrale_1982, verchota_layer_1984}.
In $\R^n$, those sets are Ahlfors $(n-1)$-regular closed subsets having `big pieces of Lipschitz images'~\cite[Definitions 5.10.1 and 5.10.2]{mitrea_geometric_2022}, and are characterised by the boundedness of singular integral operators in $L^2$~\cite[Theorem 5.10.2]{mitrea_geometric_2022}.
Another key notion is that of Non-Tangentially-Accessible (NTA) domains~\cite[Section 3]{jerison_boundary_1982}, which arises as essential in the study of the behavior of harmonic functions and measures on the boundary, as in~\cite{david_lipschitz_1990, dahlberg_estimates_1977, jerison_boundary_1982}.
The class of NTA domains includes that of Lipschitz domains, as well as domains with fractal boundaries such as quasidisks~\cite{jerison_boundary_1982, nystrom_integrability_1996}, and domains with boundaries made up of parts of different Hausdorff dimensions. 
As a matter of fact, it is proved in~\cite{azzam_new_2017} that the notions of NTA domains and uniform domains are deeply connected, allowing to unify both approaches.
A short introduction to harmonic measures and NTA domains can be found in~\cite{toro_analysis_2018}.
A class of domains which will be of particular interest in this paper is that of Sobolev extension domains, that is domains on which Sobolev functions can be linearly and continuously extended to the whole space without loss of regularity.
It was shown in~\cite{jones_quasiconformal_1981,rogers_degree-independent_2006} that uniform domains and, more generally, $(\varepsilon,\delta)$-domains are Sobolev extension domains.

Since the layer potential and Neumann-Poincaré operators are defined on the boundary of a domain, their convergence along a sequence of domains must be understood using a notion which goes beyond a single Hilbert space.
A first notion is that of Mosco convergence of closed subspaces, following that of convex sets from~\cite{mosco_convergence_1969} and used notably in~\cite{menegatti_stability_2013} to study the stability of the Neumann problem for the Helmholtz equation, and in~\cite{chandler-wilde_boundary_2021} in the case of a scattering problem by a fractal screen.
This convergence relies on the existence of a larger Hilbert space which contains all others.
Without making that assumption, another method is to use a (measured) Gromov-Hausdorff topology on (measured) pointed metric spaces~\cite{fukaya_collapsing_1987, gromov_metric_2007}.
The general idea is to approximate sets containing the distinguished points in the limit space with sets containing those of the sequence.
This notion has been widely used in a variety of stability problems for optimal transport~\cite{villani_stability_2008, villani_optimal_2009}, and more generally its property of preserving lower curvature bounds for closed Riemannian manifolds~\cite{sturm_geometry_2006} has been used thoroughly, see for instance~\cite{rajala_local_2012, ambrosio_riemannian_2015}.
The spectral distance introduced in~\cite{kasue_spectral_1994, kasue_spectral_1996} allows to quantify how close closed Riemannian manifolds are by comparing the associated heat kernels. It is most useful to study the convergence of the analytic structures~\cite{shioya_convergence_2001}; for instance, the eigenvalues of the Laplacian are continuous with respect to the topology that distance induces.
Here, we use a more general notion of convergence (not relying on the existence of a heat kernel), introduced in~\cite{kuwae_convergence_2003} (see also \cite{Post2012}).
The general idea is to create representatives of elements of the limit space in the approximating sequence.
This framework allows to generalize the notion of Mosco convergence of quadratic forms~\cite{mosco_convergence_1969}, which has many applications in the calculus of variations:
in~\cite{creo_m-convergence_2021} is proved the energy forms associated to the $p$-Laplacian converge along prefractal sequences, see also~\cite{creo_convergence_2018}.
In~\cite{belhadjali_construction_2019}, it is used to prove the convergence of trace energy forms; we also refer to~\cite{creo_non-local_2023}. The notion of convergence we use heavily relies on the way the representatives of the limit space are chosen, for that choice has a significant impact on the subsequent notions of convergence of vectors, operators, and so on (see Figure~\ref{Fig:CV-Hilb}). For that matter, the notion in itself can be deemed rather weak. In this work, we prove the convergence of the layer potential operators in the sense of~\cite{kuwae_convergence_2003} along a sequence of extension domains with uniformly bounded extensions, before strengthening that notion into an $\dot H^1(\R^n)$-convergence (in the sense of~\cite[Chapter 5]{triebel_theory_1983}) of the single layer potentials and of the (harmonic extensions of the) double layer potentials.

The idea of approximating a domain using cubes has been explored in the past.
Although it is not an approximation \textit{per se}, the notion of Whitney covering~\cite{grafakos_classical_2014, whitney_analytic_1934}, using cubes of side comparable to their distance to the boundary, is probably the first to come to mind.
One may also think of~\cite[p. 452]{courant_methods_1989} where a domain is placed on a fine grid to approximate the boundary.
In~\cite{christ_tb_1990} is defined a notion of dyadic cubes in so-called spaces of homogeneous type while in~\cite{david_singular_1991}, those cubes are defined for sets of integer dimension inferior to that of the ambient space.
In~\cite{hofmann_uniform_2016}, dyadic cubes are used to approximate Alhfors-Davies regular sets of dimension $n$ in $\R^{n+1}$.
We also refer to~\cite{david_morceaux_1988, hytonen_non-homogeneous_2012, nazarov_tb-theorem_2003} for similar dyadic constructions.
In the examples mentioned above, the main focus seemed to be the size of the cubes composing the approximation compared to the distance to the boundary, which allows to recover regularity properties of solutions on the approximated domain, see~\cite{bortz_carleson_2023}.
Here, we are more concerned with the modes of convergence of the approximation to the set, and no assumption other than having an open set is made.
In that sense, our approach is closer to that of~\cite{rosler_computing_2024, rosler_computing_2024-1} and their notion of pixelated domains in $\R^2$.
The modes of convergence of sets considered (and holding) with our approximation procedure are those frequently used in shape optimization (see for instance~\cite{claret_existence_2023, hinz_existence_2021, hinz_non-lipschitz_2021}) and described in~\cite{henrot_shape_2018}, namely the convergence in the sense of characteristic functions, of Hausdorff (for the set and its boundary) and of compact sets.

This paper is organised as follows: in Section~\ref{Sec:DyadApprox}, we introduce dyadic approximations of arbitrary open sets. We prove those approximations converge in the sense of characteristic functions, in the sense of Hausdorff (both the domains and the boundaries, on every open ball) and compact sets, Theorem~\ref{Th:DyadApproxCV}.
In Section~\ref{Sec:Framework}, we specify the functional framework in which our study is carried out.
In Subsection~\ref{Subsec:AdDom-Ext-LP}, we define the class of (two-sided) admissible domains, which contains notably Lipschitz, fractal and multifractal domains, on which our study will be carried out, and specify the notions of trace and weak normal derivative which will be used.
We recall the definitions of the layer potential operators for the harmonic transmission problem for two-sided admissible domains from~\cite{claret_layer_2025}.
In Subsection~\ref{Subsec:CV-KS}, we recall the framework of convergence along a sequence of Hilbert spaces from~\cite{kuwae_convergence_2003} and prove several results to gain a better understanding of the notion, and which will be used throughout the rest of the study.
In Section~\ref{Sec:CV-LP}, we prove the convergence of the layer potential operators along a converging sequence of domains: Subsection~\ref{Subsec:CV-Int} focuses on the convergence inside the domain, Subsection~\ref{Subsec:CV-Ext} on the convergence outside, and Subsection~\ref{Subsec:CV-Rn} links those two parts and considers the transmission problem as a whole.
The main result of that section is Theorem~\ref{Th:CV-LP-H1-Rn} which states the convergence of the single and (the harmonic extensions of the) double layer potentials in $\dot H^1(\R^n)$, strengthening Theorem~\ref{Th:CV-LP} which proved the convergence in the framework of moving Hilbert spaces only.
From there, we prove the convergence of the Neumann-Poincaré operators for the harmonic transmission problem and of the associated Calder\'on projectors in Subsection~\ref{Subsec:CV-K-Calderon}, which allows to state the convergence of the Neumann series -- which are uniformly converging sums in this case -- associated to the Neumann-Poincaré operator along a converging sequence of domains in Subsection~\ref{Subsec:CV-Neumann-Series}, Theorem~\ref{Th:CV-Neumann-Series}.
In Section~\ref{Sec:HT-CI}, we express the Cauchy integral on Lipschitz domains in terms of the layer potential operators (Proposition~\ref{Prop:Link-CI-LP}) in Subsection~\ref{Subsec:HT-LP-Lip}, which we use in Subsection~\ref{Subsec:CI-Ext} to define a Cauchy integral on two-sided admissible domains.
We prove that, in a class of domains containing non-Lipschitz and fractal shapes, that Cauchy integral is holomorphic away from the boundary, Theorem~\ref{Th:CI-Holom}.
Then, in Section~\ref{Sec:EquivNorm}, we define a trace norm adapted to the space $\dot H^1(\R^n)$ and prove its equivalence to the norms from Section~\ref{Sec:Framework}. Finally, in Section~\ref{Sec:Monotonicity}, we discuss how to weaken an assumption made throughout Sections~\ref{Sec:CV-LP} which is that of a monotone domain convergence, and give generalizations of Theorem~\ref{Th:CV-LP-H1-Rn} under the assumption of a convergence in the sense of compact sets alone.

Throughout this paper, $n\ge2$ denotes the dimension of the ambient space.
If $x\in \R^n$ and $r>0$, then $B_r(x)$ denotes the open ball of center $x$ and radius $r$.
A set is referred to as a domain if it is nonempty, open and connected.
All domains considered are equal to the interiors of their closures.
If a Hilbert space is decomposed into $H=H_1\oplus H_2$ and $\Lrm_{1,2}:H_{1,2}\to H$ are linear operators, we will denote by $\Lrm_1\oplus \Lrm_2:H\to H$ the linear operator characterised by $(\Lrm_1\oplus \Lrm_2)|_{H_{1,2}}=\Lrm_{1,2}$.
If $u=u_1+u_2\in H$ with $u_{1,2}\in H_{1,2}$, we denote $u=u_1\oplus u_2$.
If $A\subset B$ are Borel sets and $H(B)$ is a set of (equivalence classes of) functions defined on $B$, we will denote by $H(B)|_A$ the set of their restrictions to $A$.
If $\mathrm L$ is an operator defined on a space $H$, we will denote its range by $\mathrm L(H)$.
If $U$ is a subset of $\R^n$, we will denote by $U^c:=\R^n\backslash U$ its complement, $\mathring U$ its interior and $\overline U$ its closure.
If $(\Omega_k)_{k\in\N}$ is a non-decreasing sequence of sets of union $\Omega$, we will denote $\Omega_k\nearrow\Omega$.

\section{Dyadic approximation of open sets}\label{Sec:DyadApprox}

In this section, we give a general method for the approximation of open sets with arbitrary regularity.

Consider the dyadic grid $\Pi_k:=\big\{\pi^k_j\;\mid\;j\in\Z^n\big\}$ on $\R^n$, where
\begin{equation*}
\pi^k_j:=\prod_{m=1}^n\big[2^{-k}j_m,2^{-k}(j_m+1)\big],
\end{equation*}
denoting $j=(j_m)_{m\in\llbracket1,n\rrbracket}\in\Z^n$.
We say that a set $Q\subset\R^n$ is \textit{drawn on} $\Pi_k$ if $Q=\bigcup_{\pi\in\mathcal{Q}}\pi$ for some $\mathcal{Q}\subset\Pi_k$.
Let $\Omega$ be an arbitrary domain of $\R^n$. There exist $k_0\in\N$ and $\pi_0\in \Pi_{k_0}$ such that $\pi_0\subset\Omega$. Define, for all $k\in\N$,
\begin{equation*}
\overline{\Omega}^{\scriptscriptstyle\square}_k:=\max\big\{A\mbox{ drawn on }\Pi_k\;\mid\;\pi_0\subset A\subset\Omega,\; \mathring{A}\mbox{ is connected}\big\},
\end{equation*}
where the maximum is in terms of inclusion. Note that $\overline{\Omega}^{\scriptscriptstyle\square}_k=\varnothing$ for $k<k_0$. Denote its interior by $\Omega^{\scriptscriptstyle\square}_k:=\big(\overline{\Omega}^{\scriptscriptstyle\square}_k\big)^\circ$. This connected open set is referred to as the \textit{dyadic approximation of $\Omega$ of order $k$ rooted in $\pi_0$}. It is a domain for all $k\ge k_0$, see Figure~\ref{Fig:DyadVK} for an example in the case of the Von Koch snowflake.

\begin{figure}[t]
\centering
\hspace{-2.3cm}
\begin{tikzpicture}
\begin{tabular}{c}
\\[-4.148cm]
\includegraphics[height = 6cm]{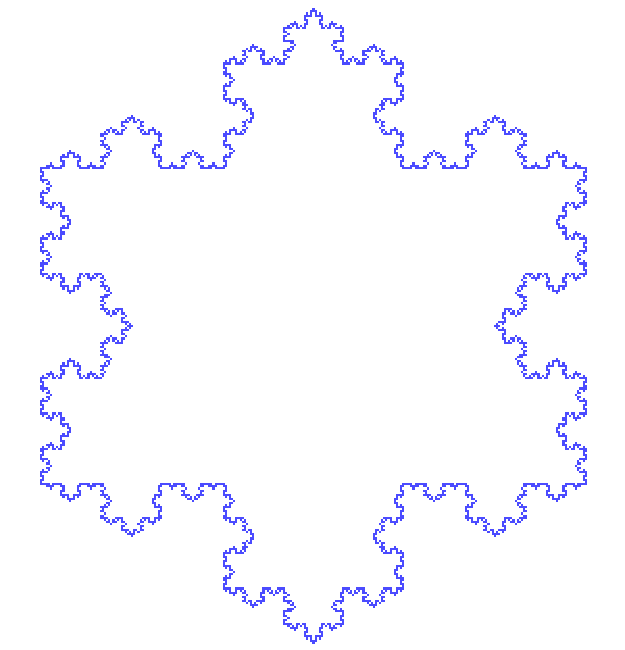}
\end{tabular}
\hspace{-5.08cm}
\draw[step=0.25cm,gray!30!white,very thin] (-1.25,-1.25) grid (5.25,5.25);
\draw (4.5,4.5) node{\color{red} $(\overline{\Omega}^c)^{\scriptscriptstyle\square}_k$};
\draw[thick, densely dotted, red] (1,-0.5) -- (1.25,-0.5) -- (1.25,-0.75) -- (1.5,-0.75) -- (1.5,-1) -- (2.5,-1) -- (2.5,-0.75) -- (2.75,-0.75) -- (2.75,-0.5) -- (3,-0.5) -- (3,0.25) -- (3.25,0.25) -- (3.25,0) -- (4,0) -- (4,0.25) --(4.5,0.25) -- (4.5,0.5) -- (4.75,0.5) -- (4.75,1.75) -- (4,1.75) -- (4,2.25) -- (4.75,2.25) -- (4.75, 2.75) -- (4.75,3.5) -- (4.5,3.5) -- (4.5,3.75) -- (4,3.75) -- (4,4) -- (3.25,4) -- (3.25,3.75) -- (3,3.75) -- (3,4.5) -- (2.75,4.5) -- (2.75,4.75) -- (2.5,4.75) -- (2.5,5) -- (1.5,5) -- (1.5,4.75) -- (1.25,4.75) -- (1.25,4.5) -- (1,4.5) -- (1,4.5) -- (1,3.75) -- (0.75,3.75) -- (0.75,4) -- (0,4) -- (0,3.75) -- (-0.5,3.75) -- (-0.5,3.5) -- (-0.75,3.5) -- (-0.75,2.25) -- (0,2.25) -- (0,1.75) -- (-0.75,1.75) -- (-0.75,0.5) -- (-0.5,0.5) -- (-0.5,0.25) -- (0,0.25) -- (0,0) -- (0.75,0) -- (0.75,0.25) -- (1,0.25) -- cycle;
\draw (2.125,2) node{\color{green!80!black} $\Omega^{\scriptscriptstyle\square}_k$};
\draw[thick, densely dashed, green!80!black] (-0.25,3.25) -- (-0.25,3) -- (0,3) -- (0,2.75) -- (-0.25,2.75) -- (-0.25,2.5) -- (0.25,2.5) -- (0.25,2.25) -- (0.5,2.25) -- (0.5,1.75) -- (0.25,1.75) -- (0.25,1.5) -- (-0.25,1.5) -- (-0.25,1.25) -- (0,1.25) -- (0,1) -- (-0.25,1) -- (-0.25,0.75) -- (0.25,0.75) -- (0.25,0.25) -- (0.5,0.25) -- (0.5,0.75) -- (1.25,0.75) -- (1.25,0.5) -- (1.5,0.5) -- (1.5,-0.25) -- (2.5,-0.25) -- (2.5,0.5) -- (2.75, 0.5) -- (2.75,0.75) -- (3.5,0.75) -- (3.5,0.25) -- (3.75,0.25) -- (3.75,0.75) -- (4.25,0.75) -- (4.25,1) -- (4,1) -- (4,1.25) -- (4.25,1.25) -- (4.25,1.5) -- (3.75,1.5) -- (3.75,1.75) -- (3.5, 1.75) -- (3.5,2.25) -- (3.75,2.25) -- (3.75,2.5) -- (4.25,2.5) -- (4.25,2.75) -- (4,2.75) -- (4,3) -- (4.25,3) -- (4.25,3.25) -- (3.75,3.25) -- (3.75,3.75) -- (3.5,3.75) -- (3.5,3.25) -- (2.75,3.25) -- (2.75,3.5) -- (2.5,3.5) -- (2.5,4.25) -- (1.5,4.25) -- (1.5,3.5) -- (1.25,3.5) -- (1.25,3.25) -- (0.5,3.25) -- (0.5,3.75) -- (0.25,3.75) -- (0.25,3.25) -- cycle;
\draw[blue, ->] (-0.25,-0.5) parabola (0.25,0.125);
\draw (-0.625,-0.5) node{\color{blue} $\bord$};
\draw[gray!70!black] (4,-1) rectangle ++ (0.5,0.5) node[midway]{$\pi_2$};
\draw[gray!70!black] (0.5, 2.5) rectangle ++ (0.5,0.5) node[midway]{$\pi_1$};
\end{tikzpicture}
\caption{Dyadic approximations of a Von Koch snowflake $\Omega$ (lying inside $\bord$, in blue) in $\R^2$ and of its complementary open set $\overline{\Omega}^c$. The dyadic approximation of $\Omega$ rooted at $\pi_1$, $\Omega^{\scriptscriptstyle\square}_k$, lies inside the green dashed line. The dyadic approximation of $\overline{\Omega}^c$ rooted at $\pi_2$, $(\overline{\Omega}^c)^{\scriptscriptstyle\square}_k$, lies outside the red dotted line.}
\label{Fig:DyadVK}
\end{figure}

If $\Omega$ is no longer assumed to be connected, denote by $(C_m)_{m\in M}$ its connected components, with $M=\llbracket 0,N\rrbracket$ for some $N\in\N$ or $M=\N$.
Since all $C_m$ are nonempty open sets, there exists a sequence $(k_m)_{m\in M}\in \N^M$, which we may assume to be non-decreasing, and a sequence $(\pi_m)_{m\in M}$ such that
\begin{equation}\label{Eq:Root}
\forall m\in M,\qquad \pi_m\in \Pi_{k_m}\quad\mbox{and}\quad \pi_m\subset C_m.
\end{equation}
Then, for all $k\in\N$,
denote
\begin{equation*}
\overline{\Omega}^{\scriptscriptstyle\square}_k:=\bigcup_{\substack{m\in M\\k_m\le k}}\overline{(C_m)}^{\scriptscriptstyle\square}_k,
\end{equation*}
and define the \textit{dyadic approximation of order $k$ of $\Omega$ rooted in $(\pi_m)_{m\in M}$} as $\Omega^{\scriptscriptstyle\square}_k:=\big(\overline{\Omega}^{\scriptscriptstyle\square}_k\big)^\circ$ once again.
Note that removing the condition $k_m\le k$ in the union would only lead to adding empty sets, and therefore would not have any impact on the definition of $\overline{\Omega}^{\scriptscriptstyle\square}_k$; we specify that condition for clarity. We prove that the dyadic construction presented above defines an approximation procedure for arbitrary open sets, for the dyadic approximations converge as the grid becomes finer.

\begin{theorem}[Convergence of the dyadic approximations]\label{Th:DyadApproxCV}
Let $\Omega$ be an arbitrary open set in $\R^n$. For any $(\pi_m)_{m\in M}$ as in~\eqref{Eq:Root}, denoting by $(\Omega^{\scriptscriptstyle\square}_k)_{k\in\N}$ the dyadic approximations of $\Omega$ rooted in $(\pi_m)$, it holds:
\begin{enumerate}
\item[(i)] $(\Omega^{\scriptscriptstyle\square}_k)_{k\in\N}$ is a non-decreasing sequence of open subsets of $\Omega$;
\item[(ii)] $\Omega^{\scriptscriptstyle\square}_k\to\Omega$ pointwise and in the sense of characteristic functions;
\item[(iii)] $\Omega^{\scriptscriptstyle\square}_k\to\Omega$ in the sense of compact sets.
\end{enumerate}
In addition, if $B$ is an open ball, then
\begin{enumerate}
\item[(iv)] $\Omega^{\scriptscriptstyle\square}_k\cap B\to\Omega\cap B$ in the sense of Hausdorff;
\item[(v)] $\partial(\Omega^{\scriptscriptstyle\square}_k\cap B)\to \partial(\Omega\cap B)$ for the Hausdorff distance on compact sets.
\end{enumerate}
\end{theorem}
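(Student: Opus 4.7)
The plan is to prove the five claims in order, with the dyadic structure yielding (i)--(iii) by elementary arguments and the Hausdorff assertions (iv)--(v) resolved via compactness/contradiction arguments once pointwise monotone convergence is established.

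For (i), every set drawn on $\Pi_k$ is automatically drawn on $\Pi_{k+1}$ after subdividing each cube into $2^n$ sub-cubes, so for each component $C_m$ with $k_m \le k$ the set $\overline{(C_m)}^{\scriptscriptstyle\square}_k$ is a valid candidate in the definition of $\overline{(C_m)}^{\scriptscriptstyle\square}_{k+1}$, and maximality yields the inclusion. For (ii), the inclusion $\Omega^{\scriptscriptstyle\square}_k \subset \Omega$ handles $x \notin \Omega$. Given $x \in \Omega$ with component $C_m$, open and connected hence path-connected, pick a continuous path $\gamma$ from $x$ to a point of $\pi_m$ inside $C_m$; compactness gives $\delta := \mathrm{dist}(\gamma, \partial C_m) > 0$, and some ball $B_r(x)$ lies in $C_m$. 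For $k \ge k_m$ large enough in terms of $\delta$ and $r$, every cube of $\Pi_k$ meeting a $\sqrt n\, 2^{-k}$-tubular neighborhood of $\gamma \cup B_{r/2}(x)$ lies in $C_m$; these cubes form a face-connected dyadic set containing the sub-cubes of $\pi_m$, hence with connected interior. By maximality this set is contained in $\overline{(C_m)}^{\scriptscriptstyle\square}_k$, and $x$ lies in its interior, i.e., $x \in \Omega^{\scriptscriptstyle\square}_k$. Pointwise convergence of $\mathds{1}_{\Omega^{\scriptscriptstyle\square}_k}$ to $\mathds{1}_\Omega$ follows, and $L^1_{\mathrm{loc}}$-convergence (sense of characteristic functions) is then dominated convergence. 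For (iii), (i) and (ii) make $\{\Omega^{\scriptscriptstyle\square}_k\}$ an increasing open cover of any compact $K \subset \Omega$; a finite subcover plus monotonicity gives $K \subset \Omega^{\scriptscriptstyle\square}_{k_0}$, and the dual condition is immediate from $\overline{\Omega^{\scriptscriptstyle\square}_k} \subset \overline\Omega$.

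For (iv), since $\Omega^{\scriptscriptstyle\square}_k \cap B \subset \Omega \cap B$, one only has to show $\sup_{c \in \Omega \cap B} \mathrm{dist}(c, \Omega^{\scriptscriptstyle\square}_k \cap B) \to 0$. I argue by contradiction: suppose there exist $\varepsilon > 0$ and a subsequence $c_{k_j} \in \Omega \cap B$ with $\mathrm{dist}(c_{k_j}, \Omega^{\scriptscriptstyle\square}_{k_j} \cap B) \ge \varepsilon$. By compactness of $\overline B$, extract $c_{k_j} \to c \in \overline{\Omega \cap B}$. Since $\Omega \cap B$ is open, it is dense in its closure, so pick $z \in \Omega \cap B$ with $|z - c| < \varepsilon/3$. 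By (ii), $z \in \Omega^{\scriptscriptstyle\square}_{k_j}$ for $j$ large, so $z \in \Omega^{\scriptscriptstyle\square}_{k_j} \cap B$, giving $|c_{k_j} - z| \le |c_{k_j} - c| + |c - z| < 2\varepsilon/3$ for $j$ large, a contradiction.

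For (v), I would use analogous contradiction arguments, exploiting $\partial(\Omega^{\scriptscriptstyle\square}_k \cap B) \subset (\partial\Omega^{\scriptscriptstyle\square}_k \cap \overline B) \cup (\partial B \cap \overline{\Omega^{\scriptscriptstyle\square}_k})$ and the analogous decomposition for $\partial(\Omega \cap B)$. For the direction ``$\partial(\Omega^{\scriptscriptstyle\square}_k \cap B)$ close to $\partial(\Omega \cap B)$'', extract a violating subsequence $y_{k_j} \to y \in \overline{\Omega \cap B}$; if $y$ lies in the open set $\Omega \cap B$, by (ii) $y$ belongs to some $\Omega^{\scriptscriptstyle\square}_{k_0}$, whose openness forces $y_{k_j} \in \Omega^{\scriptscriptstyle\square}_{k_j} \cap B$ for $j$ large, contradicting $y_{k_j} \in \partial(\Omega^{\scriptscriptstyle\square}_{k_j} \cap B)$; otherwise $y \in \partial(\Omega \cap B)$ and $\mathrm{dist}(y_{k_j}, \partial(\Omega \cap B)) \to 0$ directly. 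For the reverse direction, extract $y_{k_j} \to y \in \partial(\Omega \cap B)$ and use that $\Omega$ equals the interior of its closure: every neighborhood of $y$ contains both $z \in \Omega \cap B$ (hence in $\Omega^{\scriptscriptstyle\square}_k \cap B$ for $k$ large, by (ii)) and $w \in B \setminus \overline\Omega \subset B \setminus \overline{\Omega^{\scriptscriptstyle\square}_k}$. A standard sup-of-$t$ argument on the segment from $z$ to $w$ produces a point of $\partial(\Omega^{\scriptscriptstyle\square}_k \cap B)$ within that same neighborhood, yielding the uniform bound.

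The main technical obstacle is the uniformity in (iv)--(v): the ``large $k$'' delivered by (ii) depends on the point, its component, and the path $\gamma$, all of which can degenerate across a sequence; the compactness-plus-contradiction scheme above is precisely designed to absorb this non-uniformity. A further subtlety in (v) is to handle cubes inside $\Omega$ that remain unabsorbed at level $k$ due to the face-connectedness requirement (e.g., components $C_m$ with $k_m > k$, or cubes separated from $\pi_m$ by a bottleneck narrower than $2^{-k}$); monotonicity (i) combined with the pointwise absorption (ii) and compactness of $\overline B$ dispatches these as well.
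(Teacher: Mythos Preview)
Your proof of (i)--(ii) is essentially the paper's. For (iii) you take a more direct route: the paper deduces (iii) from (v) via the compact sets $K^\varepsilon_\delta = \{x \in \Omega \cap B_\delta(0) : d(x, \partial(\Omega \cap B_\delta(0))) \ge \varepsilon\}$, whereas your open-cover argument from (i)+(ii) alone is cleaner and self-contained. For (iv) the paper simply cites~\cite[p.~33]{henrot_shape_2018} once monotone pointwise convergence is known; your contradiction argument spells this out.

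For (v) the approaches genuinely differ. The paper's key step for the direction ``$\partial(\Omega^{\scriptscriptstyle\square}_k\cap B)$ close to $\partial(\Omega\cap B)$'' is quantitative: maximality of $\Omega^{\scriptscriptstyle\square}_k$ forces $d(x,\partial\Omega)\le\sqrt{n}\,2^{-k}$ for every $x\in\partial\Omega^{\scriptscriptstyle\square}_k$ (otherwise the adjacent cube could be adjoined without leaving the component), which gives an explicit rate you do not obtain. For the reverse half both proofs use compactness of $\partial(\Omega\cap B)$ and a limit-point argument.

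There is a small gap in your reverse direction of (v). You claim that every neighborhood of $y\in\partial(\Omega\cap B)$ contains some $w\in B\setminus\overline\Omega$. This fails when $y\in\Omega\cap\partial B$: a small enough neighborhood of such $y$ lies entirely in $\Omega$, so no such $w$ exists, and the segment argument cannot start. The fix is immediate once noticed: in that case (ii) gives $y\in\Omega^{\scriptscriptstyle\square}_k$ for $k$ large, and since $y\in\partial B$ one has $y\notin\Omega^{\scriptscriptstyle\square}_k\cap B$ while every neighborhood of $y$ meets $\Omega^{\scriptscriptstyle\square}_k\cap B$, so $y\in\partial(\Omega^{\scriptscriptstyle\square}_k\cap B)$ itself and $d(y_{k_j},\partial(\Omega^{\scriptscriptstyle\square}_{k_j}\cap B))\le|y_{k_j}-y|\to0$, the desired contradiction. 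With this case added your argument is complete.
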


\begin{proof}
Point (i) follows from the definition. Let $x\in\Omega$. Then $x\in C_m$ for a unique $m\in M$. Since $C_m$ is connected, there exists a continuous path $\gamma\subset C_m$ joining $x$ and $\pi_m$. Since $C_m$ is open, it holds:
\begin{equation}\label{Eq:PathPositive}
\forall y\in\gamma,\quad d(y,\bord)>0,\qquad\mbox{i.e.,}\qquad\min_{y\in\gamma}\,d(y,\bord)>0,
\end{equation}
by continuity. Therefore, for $k\ge k_m$ large enough, there exists a set $Q$ drawn on $\Pi_k$ and included in $C_m$ joining $x$ and $\pi_m$ in the sense that $x\in \mathring{Q}$ and $Q\cup \pi_m$ is of connected interior.
Consequently, $x\in (C_m)^{\scriptscriptstyle\square}_k$, which implies $x\in \Omega^{\scriptscriptstyle\square}_k$. Point (ii) follows, as well as Point (iv) by~\cite[p. 33]{henrot_shape_2018}.\\
Let $B$ be an open ball. For all $x\in\bord^{\scriptscriptstyle\square}_k$, it holds $d(x,\bord)\le\sqrt n2^{-k}$ (the diagonal of a cube of side $2^{-k}$ in $\R^n$), otherwise $\Omega^{\scriptscriptstyle\square}_k$ would not be maximal. Consequently,
\begin{equation*}
\rho\big(\partial(\Omega^{\scriptscriptstyle\square}_k\cap B),\partial(\Omega\cap B)\big):=\max_{x\in\partial(\Omega^{\scriptscriptstyle\square}_k\cap B)} d\big(x,\partial(\Omega\cap B)\big) \le \sqrt n2^{-k},
\end{equation*}
where we have used the same notations as in~\cite[Definition 2.2.7]{henrot_shape_2018}. On the other hand, for all $y\in\bord$, the sequence $\big(d\big(y,\partial(\Omega^{\scriptscriptstyle\square}_k\cap B)\big)\big)_{k\in\N}$ is non-increasing and must go to $0$ by Point (i). Therefore, $\big(\rho\big(\partial(\Omega\cap B),\partial(\Omega^{\scriptscriptstyle\square}_k\cap B)\big)\big)_{k\in\N}$ is also non-increasing. Since it is non-negative and finite, it converges to some $\ell\ge 0$. Hence,
\begin{equation*}
\forall k\in\N,\; \exists y_k\in \partial(\Omega\cap B),\quad d\big(y_k,\partial(\Omega^{\scriptscriptstyle\square}_k\cap B)\big)\ge \ell.
\end{equation*}
By compactness of $\partial(\Omega\cap B)$, we may assume there exists $y_\infty\in \partial(\Omega\cap B)$ such that $y_k\to y_\infty$. Then, for all $k\in\N$, it holds
\begin{equation*}
d\big(y_\infty,\partial(\Omega^{\scriptscriptstyle\square}_k\cap B)\big)\ge d\big(y_k,\partial(\Omega^{\scriptscriptstyle\square}_k\cap B)\big) - d(y_\infty,y_k),
\end{equation*}
hence, by taking the limit in $k$, $0\ge\ell$ and Point (v) holds.
For $\delta\ge \varepsilon>0$, consider the compact set
\begin{equation*}
K^\varepsilon_\delta:=\big\{x\in\Omega\cap B_\delta(0)\;\mid\; d\big(x,\partial(\Omega\cap B_\delta(0)\big)\ge \varepsilon\big\}.
\end{equation*}
If $k$ is large enough, then
\begin{equation*}
d^H\big(\partial(\Omega\cap B_\delta(0)),\partial(\Omega^{\scriptscriptstyle\square}_k\cap B_\delta(0))\big)<\varepsilon,
\end{equation*}
with $d^H$ the Hausdorff distance on compact sets, so that
\begin{equation*}
K^\varepsilon_\delta\subset (\Omega^{\scriptscriptstyle\square}_k\cap B_\delta(0))\subset \Omega^{\scriptscriptstyle\square}_k.
\end{equation*}
Since any compact set included in $\Omega$ is also included in $K^\varepsilon_\delta$ for some $\delta\ge\varepsilon>0$, it is also included in $\Omega^{\scriptscriptstyle\square}_k$ for $k$ large enough. Since every $\Omega^{\scriptscriptstyle\square}_k$ is a subset of $\Omega$, Point (iii) follows.
\end{proof}

Theorem~\ref{Th:DyadApproxCV} states that any open set, no matter how irregular, can be approximated by a non-decreasing sequence of dyadic shapes.
This idea is at the core of Section~\ref{Sec:CV-LP}, in which we consider convergence along a non-decreasing sequence of domains.

\section{Functional framework}\label{Sec:Framework}

In this section, we introduce the functional framework in which the rest of the study will be carried out, both in terms of boundary value problems and associated operators, and of convergence along sequences of Hilbert spaces.

\subsection{Admissible domains, extensions and layer potentials}\label{Subsec:AdDom-Ext-LP}

Let us begin with defining the class of domains considered in this paper. This class relies on the notion of homogeneous space of Sobolev functions $\dot H^1$, which is the quotient space of the usual Sobolev space $H^1$ by the equality modulo locally constant functions, see~\cite{deny_espaces_1954},~\cite[Sections 1.1.2 and 1.1.13]{mazya_sobolev_2011} and~\cite[Section II.6]{galdi_introduction_2011}. If $\Omega$ is a domain of $\R^n$, then $\dot H^1(\Omega)$ is a Hilbert space for the norm
\begin{equation*}
\lVert\cdot\rVert^2_{\dot H^1(\Omega)}:=\int_\Omega\lvert\nabla\cdot\rvert^2\,\dx.
\end{equation*}

\begin{definition}[Admissible domain]\label{Def:AdmissibleDomain}
A domain $\Omega$ of $\R^n$ is said to be an admissible domain if
\begin{enumerate}
\item[(i)] there exists a bounded linear extension operator $\mathrm{Ext}_\Omega:\dot H^1(\Omega)\to \dot H^1(\R^n)$;
\item[(ii)] $\bord$ is compact and has positive capacity.
\end{enumerate}
It is said to be two-sided admissible if, in addition,
\begin{enumerate}
\item[(iii)] $\overline{\Omega}^c$ is an admissible domain;
\item[(iv)] $\bord$ is a null set with respect to Lebesgue's measure on $\R^n$.
\end{enumerate}
\end{definition}

A domain satisfying condition (i) is called a $\dot H^1$-extension domain~\cite{claret_layer_2025} for it is, in the context of homogeneous spaces, the counterpart of the notion of $H^1$-extension domain~\cite{hajlasz_sobolev_2008, jones_quasiconformal_1981, rogers_degree-independent_2006}.
The notion of capacity intervening in (ii) refers to the capacity with respect to $H^1(\R^n)$, see~\cite[Section 2.1]{fukushima_dirichlet_2010},~\cite[Section 7.2]{mazya_boundary_1991} and~\cite[Section 2]{biegert_traces_2009}; the notions `quasi-everywhere' (q.e.) and `quasi-continuous' will be understood in the same sense. Any Lipschitz domain is an $\dot H^1$-extension domain; more generally, any $(\varepsilon,\infty)$-domain is an $\dot H^1$-extension domain~\cite[Theorem 2]{jones_quasiconformal_1981}. If $\Omega$ is a $(\varepsilon,\infty)$-domain and either $\Omega$ or $\overline{\Omega}^c$ is bounded, then $\Omega$ is two-sided admissible. In dimension $n\ge 3$, the embedding~\cite[Theorem 1.43]{bahouri_fourier_2011} allows to adapt~\cite[Theorems 2 and 5]{hajlasz_sobolev_2008} and prove that any $\dot H^1$-extension domain $\Omega$ is an $n$-set:
\begin{equation*}
\exists c>0,\;\forall x\in\Omega,\;\forall r\in]0,1],\quad \lambda^{(n)}(\Omega\cap B_r(x))\ge cr^n,
\end{equation*}
where $\lambda^{(n)}$ is Lebesgue's measure on $\R^n$. Consequently, a domain with an outward cusp cannot satisfy the $\dot H^1$-extension property, see Figure~\ref{Fig:AdmissibleDomains}.

\begin{figure}[t]
\centering
\includegraphics[height = 5.7 cm, width = 6 cm]{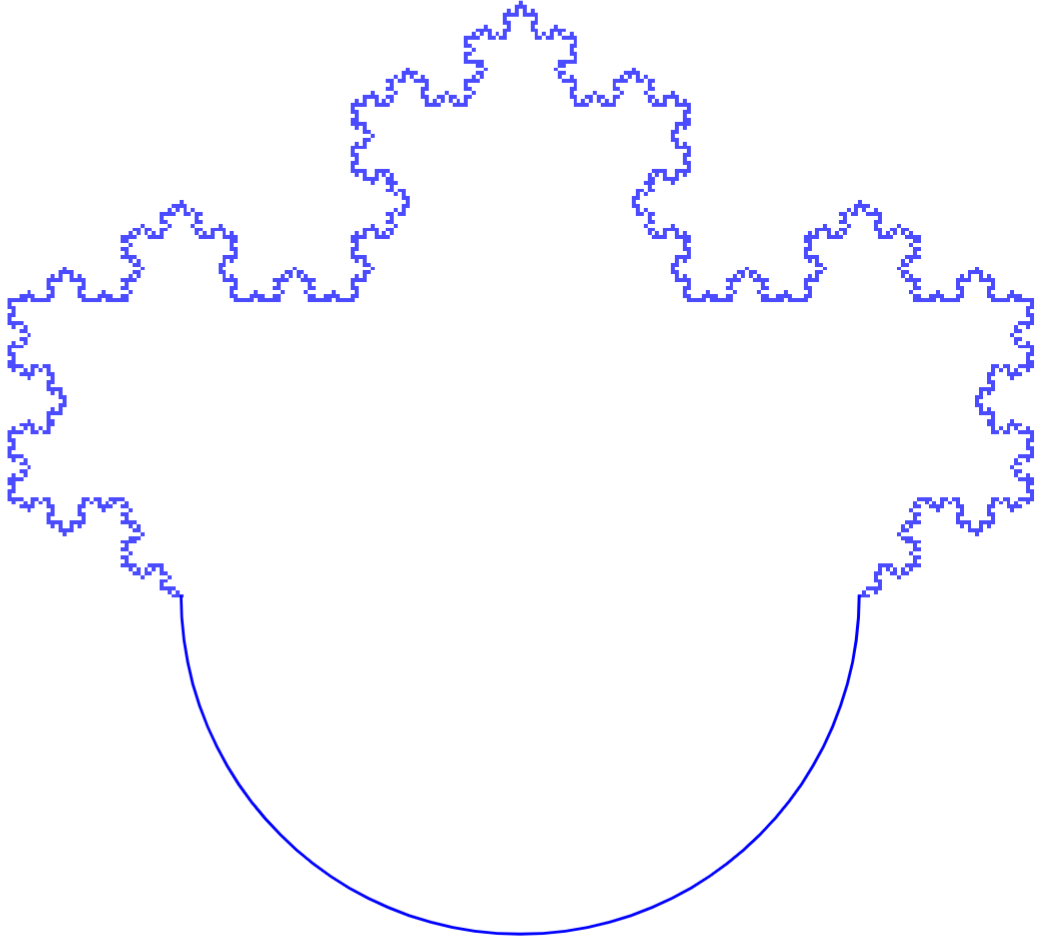}
\hfill
\includegraphics[height = 5.7 cm, width = 5.5 cm]{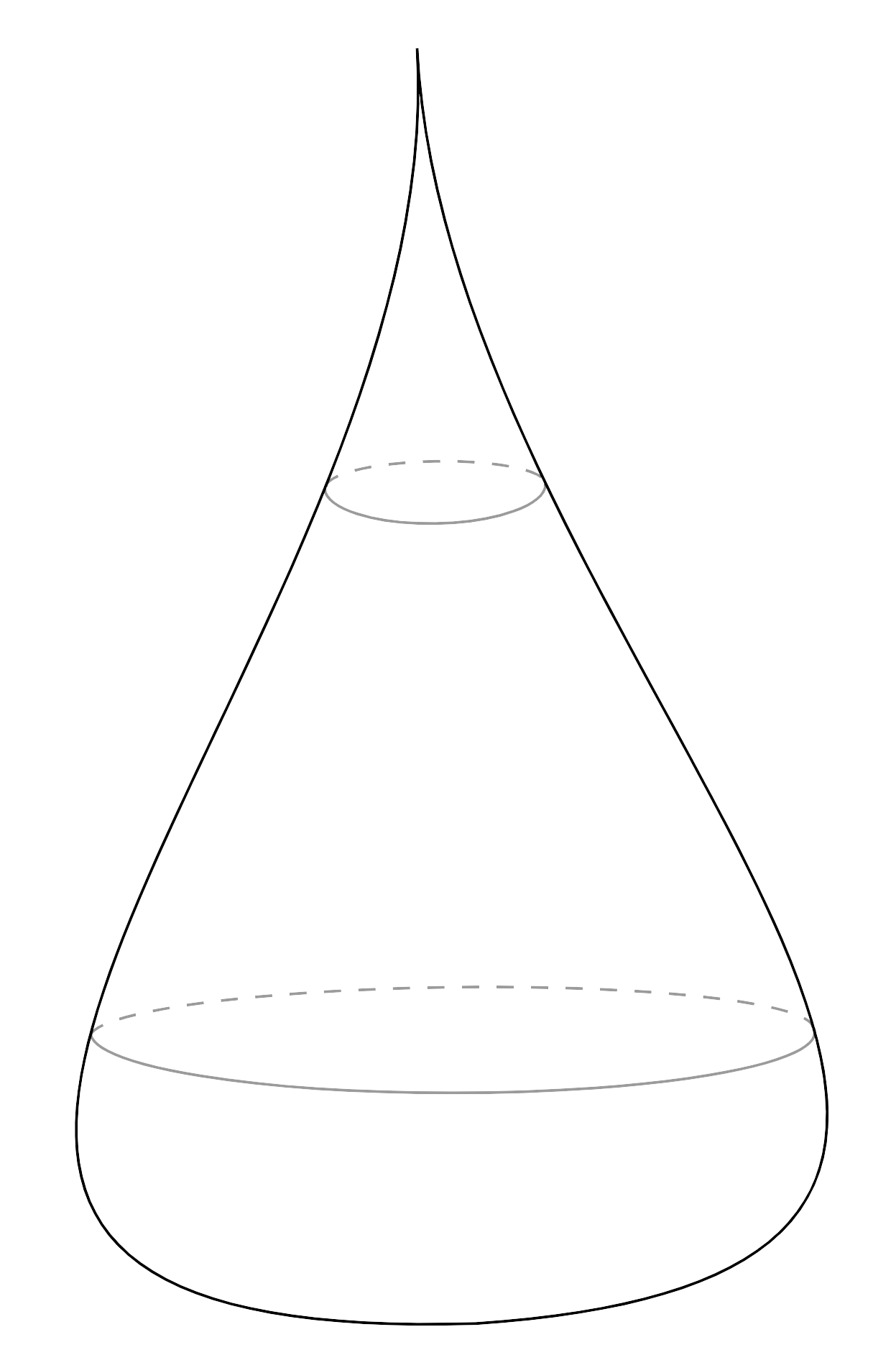}

\caption{On the left, a two-sided admissible domain of $\R^2$. The top part of its boundary consists of a Von Koch curve of Hausdorff dimension $\frac{\ln 4}{\ln 3}$, while the bottom part is of Hausdorff dimension $1$. On the right, a domain of $\R^3$ which is not an extension domain for it has an outward cusp.}
\label{Fig:AdmissibleDomains}
\end{figure}

If $u\in \dot H^1(\R^n)$, then there exists a quasi-continuous representative $\tilde u$ of a representative of $u$ modulo constants, see~\cite[Theorems 2.2.12, 2.2.13 and 2.3.4]{chen_symmetric_2012} and~\cite[Appendix A]{claret_layer_2025}.
If $\Omega$ is an admissible domain, then $[\tilde u|_{\bord}]_{\Hpo}$, defined as the equivalence class modulo constant of the q.e. equivalence class of the restriction $\tilde u|_{\bord}$, does not depend on the choice of $\tilde u$, only on $u$. We denote by $\Hpo$ the vectors space of all $[\tilde u|_{\bord}]_{\Hpo}$ for $u\in\dot H^1(\R^n)$. In this setting, we define the trace operators as in~\cite[Proposition 2.2]{claret_layer_2025}, that is as a variant of~\cite[Theorem 6.1, Remark 6.2, Corollary 6.3]{biegert_traces_2009}.

\begin{definition}[Trace operators]
Let $\Omega$ be a domain of $\R^n$. If $\Omega$ is admissible, then we define the interior trace operator on $\bord$ as
\begin{align*}
\Trd^{\bord}_{\mathrm{i}}:\dot H^1(\Omega)&\longrightarrow\Hpo\\
u&\longmapsto [(\mathrm{Ext}_\Omega u)^\sim|_{\bord}]_{\Hpo},
\end{align*}
where $(\mathrm{Ext}_\Omega u)^\sim$ is a quasi-continuous representative of a representative modulo constant of $\mathrm{Ext}_\Omega u\in \dot H^1(\R^n)$.\\
If $\overline{\Omega}^c$ is admissible, then we define the exterior trace operator on $\bord$ as
\begin{align*}
\Trd^{\bord}_{\mathrm{e}}:\dot H^1(\overline{\Omega}^c)&\longrightarrow\Hpo\\
u&\longmapsto [(\mathrm{Ext}_{\overline{\Omega}^c} u)^\sim|_{\bord}]_{\Hpo}.
\end{align*}
If $\Omega$ is two-sided admissible, then we define the jump in trace across $\bord$ as 
\begin{equation*}
\llbracket \Trd^{\bord} u\rrbracket:=\Trd^{\bord}_{\mathrm{i}}u-\Trd^{\bord}_{\mathrm{e}}u,\quad u\in \dot H^1(\nbord).
\end{equation*}
\end{definition}

If $\Omega$ is an admissible domain, consider the space of harmonic functions on $\Omega$:
\begin{equation}\label{Eq:Int-Harmo-Space}
\dot{V}_0(\Omega):=\big\{v\in \dot H^1(\Omega)\;\mid\;\Delta v=0\mbox{ weakly on }\Omega\big\},
\end{equation}
and denote the restriction of the operator $\Trd^{\bord}_{\mathrm{i}}$ to $\dot V_0(\Omega)$ as $\trd^{\bord}_{\mathrm{i}}:\dot V_0(\Omega)\to\Hpo$. Similarly, if $\overline{\Omega}^c$ is admissible, denote the restriction of the operator $\Trd^{\bord}_{\mathrm{e}}$ to the space of harmonic functions on $\overline{\Omega}^c$, $\dot V_0(\overline{\Omega}^c)$, as $\trd^{\bord}_{\mathrm{e}}:\dot V_0(\overline{\Omega}^c)\to\Hpo$.
The following theorem, stated in this form in~\cite[Theorem 2.10]{claret_layer_2025}, synthesizes results from~\cite[Example 2.3.2 and Theorem 2.3.3]{fukushima_dirichlet_2010} and~\cite[Theorems 2.2.12 and 2.2.13]{chen_symmetric_2012}.

\begin{theorem}[Trace theorem]\label{Th:Trace}
Let $\Omega$ be an admissible domain. Then, the following assertions hold:
\begin{enumerate}
\item[(i)] the kernel $\operatorname{Ker} \Trd^{\bord}_{\mathrm{i}}$ is the orthogonal complement $(\dot{V}_0(\Omega))^\bot$ of $\dot{V}_0(\Omega)$ in $\dot{H}^1(\Omega)$, denoted by $\dot H^1_0(\Omega)$;
\item[(ii)] the space $\Hpo$ is a Hilbert space when endowed with the norm
\begin{equation*}
\lVert f\rVert_{\Trd^{\bord}_{\mathrm{i}}}:=\min\big\{\lVert v \rVert_{\dot{H}^1(\Omega)}\mid\ v\in \dot{H}^1(\Omega) \mbox{ and } \Trd^{\bord}_{\mathrm{i}}v=f\big\},
\end{equation*}
where the minimum is uniquely achieved, for $v\in \dot V_0(\Omega)$ with $\trd^{\bord}_{\mathrm{i}} v=f$;
\item[(iii)] with respect to $\lVert \cdot \rVert_{\Trd^{\bord}_{\mathrm{i}}}$, $\Trd^{\bord}_{\mathrm{i}}$ is bounded with operator norm one. 
Its restriction $\trd^{\bord}_{\mathrm{i}}:\dot{V}_0(\Omega)\to \Hpo$ to $\dot{V}_0(\Omega)$ is an isometry and onto.
\end{enumerate}
If $\overline{\Omega}^c$ is admissible, then counterparts of $(i)$, $(ii)$ and $(iii)$ hold for $\Trd^{\bord}_{\mathrm{e}}$ instead of $\Trd^{\bord}_{\mathrm{i}}$, replacing $\Omega$ and $\trd^{\bord}_{\mathrm{i}}$ with $\overline{\Omega}^c$  and $\trd^{\bord}_{\mathrm{e}}$ respectively. In particular, the norm
\begin{equation*}
\lVert f\rVert_{\Trd^{\bord}_{\mathrm{e}}}:=\min\big\{\lVert v \rVert_{\dot{H}^1(\overline{\Omega}^c)}\mid\ v\in \dot{H}^1(\overline{\Omega}^c) \mbox{ and } \Trd^{\bord}_{\mathrm{e}}\:v=f\big\}
\end{equation*}
is a Hilbert space norm on $\Hpo$.
\end{theorem}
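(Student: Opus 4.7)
The plan is to exploit the standard orthogonal decomposition of $\dot H^1(\Omega)$ induced by the closed subspace $\dot V_0(\Omega)$ of harmonic functions, and then identify the kernel of the trace operator with its orthogonal complement. To begin, observe that the distributional Laplacian $\Delta:\dot H^1(\Omega)\to\mathcal{D}'(\Omega)$ is continuous, so $\dot V_0(\Omega)$ is closed and we have an orthogonal decomposition $\dot H^1(\Omega)=\dot V_0(\Omega)\oplus\dot V_0(\Omega)^\perp$. Integration by parts shows $C_c^\infty(\Omega)\subset\dot V_0(\Omega)^\perp$, and the fundamental lemma identifies $\dot V_0(\Omega)^\perp$ with $\overline{C_c^\infty(\Omega)}^{\dot H^1}$; I would call this subspace $\dot H^1_0(\Omega)$.

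The heart of the proof is point (i): the identification $\operatorname{Ker}\Trd^{\bord}_{\mathrm{i}}=\dot H^1_0(\Omega)$. The inclusion $\dot H^1_0(\Omega)\subset\operatorname{Ker}\Trd^{\bord}_{\mathrm{i}}$ reduces, by continuity of $\Trd^{\bord}_{\mathrm{i}}$ (which inherits continuity from $\mathrm{Ext}_\Omega$ and from the quasi-continuous functional calculus), to showing that a test function has vanishing trace; this is immediate since zero-extension across $\bord$ yields a continuous representative that vanishes q.e.\ there. The reverse inclusion is the technical core: one invokes the capacity-theoretic characterization of q.e.\ vanishing traces from \cite{biegert_traces_2009} together with the quasi-continuous representative machinery of \cite{chen_symmetric_2012, fukushima_dirichlet_2010}, adapted to the homogeneous setting modulo constants. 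The conclusion is that functions in $\dot H^1(\Omega)$ with vanishing trace are exactly those admitting approximation by test functions, i.e.\ those in $\dot H^1_0(\Omega)$.

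Once (i) is in hand, (ii) and (iii) follow cleanly. Given $f\in\Hpo$, any $v\in\dot H^1(\Omega)$ with $\Trd^{\bord}_{\mathrm{i}}v=f$ decomposes uniquely as $v=v_h+v_0$ with $v_h\in\dot V_0(\Omega)$ and $v_0\in\dot H^1_0(\Omega)=\operatorname{Ker}\Trd^{\bord}_{\mathrm{i}}$, so $\trd^{\bord}_{\mathrm{i}}v_h=f$; Pythagoras gives $\lVert v\rVert^2=\lVert v_h\rVert^2+\lVert v_0\rVert^2$, and the minimum in the definition of $\lVert f\rVert_{\Trd^{\bord}_{\mathrm{i}}}$ is achieved uniquely at $v_h$. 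The assignment $f\mapsto v_h$ is then a linear bijection $\Hpo\to\dot V_0(\Omega)$; surjectivity uses that every $f$ is by definition the trace of some $u\in\dot H^1(\R^n)$, whose restriction to $\Omega$ has trace $f$. This map is an isometry by construction, so it transports the Hilbert structure of the closed subspace $\dot V_0(\Omega)$ onto $\Hpo$, proving (ii). For (iii), the operator-norm bound $\lVert\Trd^{\bord}_{\mathrm{i}}v\rVert_{\Trd^{\bord}_{\mathrm{i}}}\le\lVert v\rVert_{\dot H^1(\Omega)}$ is built into the infimum-type norm; equality on $\dot V_0(\Omega)$ yields norm exactly one, and the restriction $\trd^{\bord}_{\mathrm{i}}$ is precisely the inverse of the isometric bijection above. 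The exterior counterpart follows verbatim upon replacing $\Omega$ by $\overline{\Omega}^c$, which is itself admissible.

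The main obstacle is the delicate passage in point (i) from the functional-analytic description of $\dot H^1_0(\Omega)$ as the orthogonal complement of harmonic functions to the pointwise (q.e.) vanishing of traces. The classical inhomogeneous version of this result, for $H^1$-extension domains, requires careful handling of the $1$-capacity, the quasi-continuous representative, and the interaction between the extension operator and quasi-continuity. In the present homogeneous framework, one must further check that equivalence classes modulo constants do not spoil these identifications, which is why the definitions of $\Hpo$ and $\Trd^{\bord}_{\mathrm{i}}$ were set up in \cite{claret_layer_2025} to be compatible with constant shifts; everything else is essentially a transcription of the cited material.
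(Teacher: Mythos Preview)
The paper does not supply a proof of this theorem: it is quoted verbatim from \cite[Theorem 2.10]{claret_layer_2025}, with the remark that it ``synthesizes results from \cite[Example 2.3.2 and Theorem 2.3.3]{fukushima_dirichlet_2010} and \cite[Theorems 2.2.12 and 2.2.13]{chen_symmetric_2012}.'' So there is no in-paper argument to compare against; the relevant comparison is whether your sketch is consistent with those cited sources, and it is. Your identification of $\dot V_0(\Omega)^\perp$ with the $\dot H^1$-closure of $C_c^\infty(\Omega)$ via the weak definition of harmonicity is correct, and you rightly locate the nontrivial content in the reverse inclusion of (i), which is exactly the capacity-theoretic input from \cite{biegert_traces_2009,fukushima_dirichlet_2010,chen_symmetric_2012}. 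Parts (ii) and (iii) are indeed formal consequences of (i) and the orthogonal decomposition, as you outline.

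One small circularity to flag: you invoke ``continuity of $\Trd^{\bord}_{\mathrm{i}}$'' to pass from $C_c^\infty(\Omega)\subset\operatorname{Ker}\Trd^{\bord}_{\mathrm{i}}$ to its closure, but at that stage $\Hpo$ has no topology yet---the norm $\lVert\cdot\rVert_{\Trd^{\bord}_{\mathrm{i}}}$ is only defined in (ii), which logically depends on (i). The clean fix is to argue directly that $\operatorname{Ker}\Trd^{\bord}_{\mathrm{i}}$ is closed in $\dot H^1(\Omega)$: it is the preimage under the bounded map $\mathrm{Ext}_\Omega$ of the set of $w\in\dot H^1(\R^n)$ whose quasi-continuous representative is constant q.e.\ on $\bord$, and that set is closed by the Dirichlet-form theory you cite (this is precisely where \cite[Theorem 2.3.3]{fukushima_dirichlet_2010} enters). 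With that adjustment your outline is sound.
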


In what follows, we will endow $\Hpo$ with the norm $\lVert\cdot\rVert_{\Trd^{\bord}_{\mathrm{i}}}$, which will be denoted by $\lVert\cdot\rVert_{\Hpo}:=\lVert\cdot\rVert_{\Trd^{\bord}_{\mathrm{i}}}$, so that
\begin{equation}\label{Eq:B-norm}
\lVert f\rVert_{\Hpo}=\min\big\{\lVert v \rVert_{\dot{H}^1(\Omega)}\mid\ v\in \dot{H}^1(\Omega) \mbox{ and } \Trd^{\bord}_{\mathrm{i}}v=f\big\}.
\end{equation}
If $\Omega$ is an admissible domain, we define the Dirichlet harmonic extension from $\bord$ to $\Omega$ as $\DOd_\Omega=(\trd^{\bord}_{\mathrm{i}})^{-1}:\Hpo\to \dot V_0(\Omega)$, as well as the Dirichlet harmonic extension operator from $\Omega$ to $\R^n$ (harmonic on $\overline{\Omega}^c$) as
\begin{equation}\label{Eq:Dir-Int-Ext}
\begin{aligned}
\dot{\mathrm{E}}_\Omega: \dot H^1(\Omega)&\longrightarrow \dot H^1(\R^n)\\
u&\longmapsto u\oplus v,
\end{aligned}
\end{equation}
decomposed along $\dot H^1(\Omega)\oplus \dot H^1(\overline{\Omega}^c)$, where $v$ denotes the orthogonal projection of $(\mathrm{Ext}_\Omega u)|_{\overline{\Omega}^c}$ onto the space of harmonic functions on $\overline{\Omega}^c$, $\dot V_0(\overline{\Omega}^c)$, defined as~\eqref{Eq:Int-Harmo-Space}. Unless stated otherwise, the norm $\lVert\dot{\mathrm{E}}_{\Omega}\rVert$ of this operator will always be understood as its operator norm in the space $\mathcal{L}(\dot H^1(\Omega),\dot H^1(\R^n))$.

Similarly, if $\overline{\Omega}^c$ is admissible, we define the Dirichlet harmonic extension from $\bord$ to $\overline{\Omega}^c$ as $\DOd_{\overline{\Omega}^c}=(\trd^{\bord}_{\mathrm{e}})^{-1}:\Hpo\to \dot V_0(\overline{\Omega}^c)$, as well as the Dirichlet harmonic extension operator from $\overline{\Omega}^c$ to $\R^n$ (harmonic on $\Omega$) as
\begin{equation}\label{Eq:Dir-Ext-Ext}
\begin{aligned}
\dot{\mathrm{E}}_{\overline{\Omega}^c}: \dot H^1(\overline{\Omega}^c)&\longrightarrow \dot H^1(\R^n)\\
u&\longmapsto v\oplus u,
\end{aligned}
\end{equation}
where $v$ denotes the orthogonal projection of $(\mathrm{Ext}_{\overline{\Omega}^c} u)|_\Omega$ onto $\dot V_0(\Omega)$. Unless stated otherwise, the norm $\lVert\dot{\mathrm{E}}_{\overline{\Omega}^c}\rVert$ of this operator will always be understood as its the operator norm in the space $\mathcal{L}(\dot H^1(\overline{\Omega}^c),\dot H^1(\R^n))$.

\begin{proposition}\label{Prop:Norm-B-Equiv}
If $\Omega$ is a two-sided admissible domain, then both norms on $\Hpo$ defined in Theorem~\ref{Th:Trace} are equivalent and it holds
\begin{equation}\label{Eq:TrNormEquiv}
\lVert\cdot\rVert_{\Trd^{\bord}_{\mathrm{i}}}\le \sqrt{\lVert\dot{\mathrm{E}}_{\overline{\Omega}^c}\rVert^2-1}\;\lVert\cdot\rVert_{\Trd^{\bord}_{\mathrm{e}}}\qquad \mbox{and}\qquad \lVert\cdot\rVert_{\Trd^{\bord}_{\mathrm{e}}}\le  \sqrt{\lVert\dot{\mathrm{E}}_{\Omega}\rVert^2-1}\;\lVert\cdot\rVert_{\Trd^{\bord}_{\mathrm{i}}},
\end{equation}
with optimal constants. 
\end{proposition}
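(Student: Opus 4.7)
The plan is to realize each side of each inequality in~\eqref{Eq:TrNormEquiv} as a norm in $\dot H^1$ of a specific harmonic function, and then bound the two via the defining property of the harmonic extension operator $\dot{\mathrm{E}}_{\overline{\Omega}^c}$ (resp.\ $\dot{\mathrm{E}}_\Omega$). Fix $f\in\Hpo$ and let $u:=\DOd_{\overline{\Omega}^c}f\in\dot V_0(\overline{\Omega}^c)$. By Theorem~\ref{Th:Trace}(iii) applied to $\overline{\Omega}^c$, we have $\lVert u\rVert_{\dot H^1(\overline{\Omega}^c)}=\lVert f\rVert_{\Trd^{\bord}_{\mathrm{e}}}$. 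Form $w:=\dot{\mathrm{E}}_{\overline{\Omega}^c}u=v\oplus u\in\dot H^1(\R^n)$, where by~\eqref{Eq:Dir-Ext-Ext} $v\in\dot V_0(\Omega)$. Since $w\in\dot H^1(\R^n)$, its interior and exterior traces across $\bord$ agree, so $\trd^{\bord}_{\mathrm{i}}v=\trd^{\bord}_{\mathrm{e}}u=f$, whence $v=\DOd_\Omega f$ and $\lVert v\rVert_{\dot H^1(\Omega)}=\lVert f\rVert_{\Trd^{\bord}_{\mathrm{i}}}$.

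Using the orthogonal decomposition $\dot H^1(\R^n)=\dot H^1(\Omega)\oplus\dot H^1(\overline{\Omega}^c)$ (valid since $\bord$ is Lebesgue-null by Definition~\ref{Def:AdmissibleDomain}(iv)), we write
\begin{equation*}
\lVert f\rVert^2_{\Trd^{\bord}_{\mathrm{i}}}+\lVert f\rVert^2_{\Trd^{\bord}_{\mathrm{e}}}=\lVert v\rVert^2_{\dot H^1(\Omega)}+\lVert u\rVert^2_{\dot H^1(\overline{\Omega}^c)}=\lVert\dot{\mathrm{E}}_{\overline{\Omega}^c}u\rVert^2_{\dot H^1(\R^n)}\le\lVert\dot{\mathrm{E}}_{\overline{\Omega}^c}\rVert^2\,\lVert f\rVert^2_{\Trd^{\bord}_{\mathrm{e}}},
\end{equation*}
and rearranging yields the first inequality. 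The second is obtained by the same argument swapping the roles of $\Omega$ and $\overline{\Omega}^c$, starting from $u':=\DOd_\Omega f$ and applying $\dot{\mathrm{E}}_\Omega$.

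For optimality, I would prove the identity
\begin{equation*}
\lVert\dot{\mathrm{E}}_{\overline{\Omega}^c}\rVert^2=1+\sup_{0\ne f\in\Hpo}\frac{\lVert f\rVert^2_{\Trd^{\bord}_{\mathrm{i}}}}{\lVert f\rVert^2_{\Trd^{\bord}_{\mathrm{e}}}}.
\end{equation*}
The key observation is that for any $u\in\dot H^1(\overline{\Omega}^c)$, the orthogonal decomposition (from Theorem~\ref{Th:Trace}(i)) $u=u_0+u_V$ with $u_0\in\dot H^1_0(\overline{\Omega}^c)$ and $u_V\in\dot V_0(\overline{\Omega}^c)$ is preserved by $\dot{\mathrm{E}}_{\overline{\Omega}^c}$ in the sense that $\dot{\mathrm{E}}_{\overline{\Omega}^c}u_0=0\oplus u_0$ while $\dot{\mathrm{E}}_{\overline{\Omega}^c}u_V=v\oplus u_V$ with $v=\DOd_\Omega(\trd^{\bord}_{\mathrm{e}}u_V)$. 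Writing out the Rayleigh quotient $\lVert\dot{\mathrm{E}}_{\overline{\Omega}^c}u\rVert^2/\lVert u\rVert^2$ with these orthogonal blocks shows it is maximized (in the limit) when $u_0=0$, i.e.\ over $u\in\dot V_0(\overline{\Omega}^c)$, which gives the claimed formula and hence optimality of the constant. The second constant is treated analogously.

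The only genuinely delicate point is the optimality statement: the inequality itself is essentially a one-line calculation once the correct pair $(u,v)$ is chosen, but pinning down that $\sqrt{\lVert\dot{\mathrm{E}}_{\overline{\Omega}^c}\rVert^2-1}$ cannot be improved requires carefully isolating the harmonic part of a candidate maximiser, since the $\dot H^1_0$ part contributes equally to numerator and denominator and therefore pushes the ratio towards $1$ rather than towards the operator norm.
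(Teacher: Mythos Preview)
Your proof is correct and follows essentially the same approach as the paper's: both identify the interior and exterior harmonic extensions of $f$ as the two pieces of $\dot{\mathrm{E}}_{\overline{\Omega}^c}(\DOd_{\overline{\Omega}^c}f)$, use the orthogonal splitting $\dot H^1(\R^n)=\dot H^1(\Omega)\oplus\dot H^1(\overline{\Omega}^c)$ to rewrite the ratio of trace norms in terms of $\lVert\dot{\mathrm{E}}_{\overline{\Omega}^c}\rVert$, and for optimality observe that the operator norm of $\dot{\mathrm{E}}_{\overline{\Omega}^c}$ is already attained on $\dot V_0(\overline{\Omega}^c)$ since the $\dot H^1_0$ component only dilutes the Rayleigh quotient. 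The only difference is cosmetic: the paper manipulates the ratio directly while you add and subtract, and the paper phrases optimality by fixing the trace and varying the preimage rather than via the orthogonal decomposition $u=u_0+u_V$, but these are the same argument.
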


\begin{proof}
Let $f\in \Hpo$ be non-null.
Let $u\in \dot V_0(\Omega)\oplus \dot V_0(\overline{\Omega}^c)$ be such that $\trd^{\bord}_{\mathrm{i}}(u|_{\Omega})=\trd^{\bord}_{\mathrm{e}}(u|_{\overline{\Omega}^c})=f$. Then, $u=\dot{\mathrm{E}}_{\overline{\Omega}^c}(u|_{\overline{\Omega}^c})$, so that
\begin{equation*}
\frac{\lVert f\rVert_{\Trd^{\bord}_{\mathrm{i}}}^2}{\lVert f\rVert_{\Trd^{\bord}_{\mathrm{e}}}^2}=\frac{\lVert u|_\Omega\rVert_{\dot H^1(\Omega)}^2}{\lVert u|_{\overline{\Omega}^c}\rVert_{\dot H^1(\overline{\Omega}^c)}^2}=\frac{\lVert u\rVert_{\dot H^1(\R^n)}^2}{\lVert u|_{\overline{\Omega}^c}\rVert_{\dot H^1(\overline{\Omega}^c)}^2}-1\le \lVert\dot{\mathrm{E}}_{\overline{\Omega}^c}\rVert^2-1.
\end{equation*}
Regarding the optimality of the constant, notice that for all $v\in \dot H^1(\overline{\Omega}^c)$ with $\Trd^{\bord}_{\mathrm{e}}v=f$, it holds $\dot{\mathrm{E}}_{\overline{\Omega}^c}v=u|_\Omega\oplus v$. Since, $\lVert u|_{\overline{\Omega}^c}\rVert_{\dot H^1(\overline{\Omega}^c)}\le\lVert v \rVert_{\dot H^1(\overline{\Omega}^c)}$, we can deduce that restricting $\dot{\mathrm{E}}_{\overline{\Omega}^c}$ to $\dot V_0(\overline{\Omega}^c)$ has no impact on its norm, and the result follows.
The second inequality is similar.
\end{proof}

Finally, if either $\Omega$ or $\overline{\Omega}^c$ is admissible, we define the Dirichlet harmonic extension from $\bord$ to $\R^n$ (harmonic on $\nbord$) $\dot{\mathrm{H}}_{\bord}:\Hpo\to \dot H^1(\R^n)$ as 
\begin{equation}\label{Eq:D-Ext}
\dot{\mathrm H}_{\bord}:=
\begin{cases}
\dot{\mathrm{E}}_\Omega\circ\DOd_\Omega &\mbox{if }\Omega\mbox{ is admissible,}\\
\dot{\mathrm{E}}_{\overline{\Omega}^c}\circ\DOd_{\overline{\Omega}^c} &\mbox{if }\overline{\Omega}^c\mbox{ is admissible}.
\end{cases}
\end{equation}
If $\Omega$ is a two-sided admissible domain, then both definitions coincide.

Since the class of admissible domains contains non-smooth domains, the normal derivatives at the boundary cannot always be defined in the strong sense, using the normal vector. For that matter, we use a weak notion of normal derivative as in~\cite{lancia_transmission_2002} and~\cite[Subsection 2.5]{claret_layer_2025}, as an element of the dual space $\Hmo:=\mathcal{L}(\Hpo,\R)$.

\begin{definition}[Weak normal derivatives]
Let $\Omega$ be a domain of $\R^n$. If $\Omega$ is admissible, then we define the weak interior normal derivative on $\bord$ of $u\in \dot H^1(\Omega)$ with $\Delta u\in L^2(\Omega)$ as the unique $\phi\in\Hmo$ such that
\begin{equation*}
\forall v\in \dot H^1(\Omega),\quad \langle \phi,\Trd^{\bord}_{\mathrm{i}}v\rangle_{\Hmo,\Hpo}=\int_\Omega(\Delta u)v\,\dx+\int_\Omega \nabla u\cdot\nabla v\,\dx.
\end{equation*}
We denote $\ddno u{\mathrm{i}}|_{\bord}:=\phi$.\\
If $\overline{\Omega}^c$ is admissible, then we define the weak exterior normal derivative on $\bord$ of $u\in \dot H^1(\overline{\Omega}^c)$ with $\Delta u\in L^2(\overline{\Omega}^c)$ as the unique $\psi\in\Hmo$ such that
\begin{equation*}
\forall v\in \dot H^1(\overline{\Omega}^c),\quad \langle \psi,\Trd^{\bord}_{\mathrm{e}}v\rangle_{\Hmo,\Hpo}=-\int_{\overline{\Omega}^c}(\Delta u)v\,\dx-\int_{\overline{\Omega}^c} \nabla u\cdot\nabla v\,\dx.
\end{equation*}
We denote $\ddno u{\mathrm{e}}|_{\bord}:=\psi$.\\
If $\Omega$ is two-sided admissible, then we define the jump in normal derivative across $\bord$ of $u\in\dot H^1(\nbord)$ with $\Delta u\in L^2(\nbord)$ as
\begin{equation}\label{Eq:ddn-jump}
\bigg\llbracket\ddno u{}\Big|_{\bord}\bigg\rrbracket:=\ddno u{\mathrm{i}}\Big|_{\bord} -\ddno u{\mathrm{e}}\Big|_{\bord}.
\end{equation}
\end{definition}

We refer to~\cite[Subsection 2.5]{claret_layer_2025} for properties of the normal derivation operators. We simply recall the following result on isometric properties of those operators~\cite[Corollary 2.17]{claret_layer_2025}.

\begin{proposition}[\cite{claret_layer_2025}]\label{Prop:ddn-isom}
Let $\Omega$ be a domain of $\R^n$. If $\Omega$ is admissible, then $\ddno{}{\mathrm{i}}|_{\bord}:\dot V_0(\Omega)\to\Hmo$ is an isometry and onto when $\Hmo$ is endowed with the subordinate norm to $\lVert\cdot\rVert_{\Hpo}$ from~\eqref{Eq:B-norm}, denoted by $\lVert\cdot\rVert_{\Hmo}$. If $\overline{\Omega}^c$ is admissible, then $\ddno{}{\mathrm{e}}|_{\bord}:\dot V_0(\overline{\Omega}^c)\to\Hmo$ is an isometry and onto when $\Hmo$ is endowed with the subordinate norm to $\lVert\cdot\rVert_{\Trd^{\bord}_{\mathrm{e}}}$.
\end{proposition}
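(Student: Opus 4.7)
The plan is to treat the interior case first, deducing both the isometry and surjectivity from Theorem~\ref{Th:Trace} (which tells us $\trd^{\bord}_{\mathrm{i}}:\dot V_0(\Omega)\to\Hpo$ is an isometric isomorphism) combined with the defining variational identity for $\ddno{u}{\mathrm i}|_{\bord}$. The exterior case will follow by an essentially identical argument, with the sign changes in the definition cancelling out.

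For the isometry, fix $u\in\dot V_0(\Omega)$. Since $\Delta u=0$ weakly, the defining identity simplifies to $\langle\ddno u{\mathrm i}|_{\bord},\Trd^{\bord}_{\mathrm i}v\rangle=\int_\Omega\nabla u\cdot\nabla v\,\dx$ for every $v\in\dot H^1(\Omega)$. Any $f\in\Hpo$ with $\lVert f\rVert_{\Hpo}=1$ can be realised (uniquely) as $f=\trd^{\bord}_{\mathrm i}w$ with $w=\DOd_\Omega f\in\dot V_0(\Omega)$ and $\lVert w\rVert_{\dot H^1(\Omega)}=1$ by Theorem~\ref{Th:Trace}(ii)--(iii). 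Applying Cauchy--Schwarz to $\int_\Omega\nabla u\cdot\nabla w\,\dx$ gives $\lVert\ddno u{\mathrm i}|_{\bord}\rVert_{\Hmo}\le\lVert u\rVert_{\dot H^1(\Omega)}$. For the reverse inequality, choose $v=u$: then $\langle\ddno u{\mathrm i}|_{\bord},\trd^{\bord}_{\mathrm i}u\rangle=\lVert u\rVert^2_{\dot H^1(\Omega)}$, and since $\lVert\trd^{\bord}_{\mathrm i}u\rVert_{\Hpo}=\lVert u\rVert_{\dot H^1(\Omega)}$ by Theorem~\ref{Th:Trace}(iii), the supremum defining the dual norm is attained, giving equality.

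For surjectivity, fix $\phi\in\Hmo$. The map $v\mapsto\langle\phi,\trd^{\bord}_{\mathrm i}v\rangle_{\Hmo,\Hpo}$ is a continuous linear functional on the Hilbert space $\dot V_0(\Omega)$ (endowed with the inherited $\dot H^1$-inner product), so by Riesz representation there is a unique $u\in\dot V_0(\Omega)$ satisfying
\begin{equation*}
\forall v\in\dot V_0(\Omega),\qquad \langle\phi,\trd^{\bord}_{\mathrm i}v\rangle_{\Hmo,\Hpo}=\int_\Omega\nabla u\cdot\nabla v\,\dx.
\end{equation*}
It remains to check that this identity propagates from $\dot V_0(\Omega)$ to all of $\dot H^1(\Omega)$, so that $\phi$ coincides with $\ddno u{\mathrm i}|_{\bord}$. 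By Theorem~\ref{Th:Trace}(i), any $v\in\dot H^1(\Omega)$ decomposes orthogonally as $v=v_0+v_1$ with $v_0\in\dot V_0(\Omega)$ and $v_1\in\dot H^1_0(\Omega)=(\dot V_0(\Omega))^\perp$. Since $\Trd^{\bord}_{\mathrm i}v_1=0$ the left-hand side reduces to $\langle\phi,\trd^{\bord}_{\mathrm i}v_0\rangle$, and orthogonality kills $\int_\Omega\nabla u\cdot\nabla v_1\,\dx$, so both sides extend consistently. Recalling $\Delta u=0$, this says precisely that $\ddno u{\mathrm i}|_{\bord}=\phi$.

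The exterior case is obtained by the same argument applied to $\overline{\Omega}^c$, with the $-\int\nabla u\cdot\nabla v\,\dx$ in the definition of $\ddno u{\mathrm e}|_{\bord}$ absorbed by changing $u$ into $-u$ at the Riesz step (it does not affect norms). The main subtlety to watch is conceptual rather than technical: one must consistently work with the $\lVert\cdot\rVert_{\Trd^{\bord}_{\mathrm i}}$ (respectively $\lVert\cdot\rVert_{\Trd^{\bord}_{\mathrm e}}$) norm on $\Hpo$ when computing the dual norm on $\Hmo$ in each case, because by Proposition~\ref{Prop:Norm-B-Equiv} these two trace norms, while equivalent, are generally not equal, so the isometry statement is genuinely norm-dependent.
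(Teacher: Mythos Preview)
The paper does not supply its own proof of this proposition: it is merely quoted from~\cite[Corollary 2.17]{claret_layer_2025}, so there is nothing in the present paper to compare against directly. Your argument is correct and is the natural direct verification. In fact, your computation shows exactly that, for $u\in\dot V_0(\Omega)$ and $f\in\Hpo$,
\[
\Big\langle\ddno u{\mathrm i}\Big|_{\bord},f\Big\rangle_{\Hmo,\Hpo}=\int_\Omega\nabla u\cdot\nabla(\DOd_\Omega f)\,\dx=\langle\trd^{\bord}_{\mathrm i}u,f\rangle_{\Hpo},
\]
i.e.\ $\ddno{}{\mathrm i}|_{\bord}$ restricted to $\dot V_0(\Omega)$ is the composition of the isometric isomorphism $\trd^{\bord}_{\mathrm i}:\dot V_0(\Omega)\to\Hpo$ with the Riesz map $\Hpo\to\Hmo$. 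This is precisely the identification of the Poincar\'e--Steklov operator~\eqref{Eq:P-S-Op} as the Riesz isometry that the paper invokes later (from~\cite[Lemma~3.1]{claret_layer_2025}), and your Riesz-representation step for surjectivity is just the inverse of that same map. So your proof is both complete and aligned with the viewpoint the paper adopts elsewhere.
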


Note that the norms on $\Hmo$ mentioned in Proposition~\ref{Prop:ddn-isom} are equivalent, by Proposition~\ref{Prop:Norm-B-Equiv}. In what follows and unless stated otherwise, $\Hmo$ will be endowed with the norm $\lVert\cdot\rVert_{\Hmo}$.

\begin{definition}[Layer potential operators~\cite{claret_layer_2025}]\label{Def:LP}
Let $\Omega$ be a two-sided admissible domain. The \emph{single} and \emph{double layer potential operators} associated to the harmonic transmission problem
\begin{equation}\label{Eq:Tr-Prob}
\begin{cases}
-\Delta u=0 &\mbox{on }\nbord,\\
\llbracket\Trd^{\bord}u\rrbracket=f,\\
\llbracket\ddno u{}|_{\bord}\rrbracket=g,
\end{cases}
\end{equation}
are defined as the unique pair of operators
\begin{equation*}
(\Sd_{\bord},\Dd_{\bord}):\Hmo\times\Hpo\longrightarrow \dot H^1(\nbord)\times\dot H^1(\nbord)
\end{equation*} 
such that, for all $f\in\Hpo$ and $g\in\Hmo$, the unique weak solution $u\in\dot H^1(\nbord)$ to~\eqref{Eq:Tr-Prob} be given by $u=\Sd_{\bord}g-\Dd_{\bord}f$.
\end{definition}

Once again, we refer to~\cite{claret_layer_2025} regarding properties of the layer potential operators associated to~\eqref{Eq:Tr-Prob} on a two-sided admissible domain. As a complement, we highlight the fact that those operators are bounded independently from the geometry of the two-sided admissible domain.

\begin{proposition}\label{Prop:LP-Bounds}
Let $\Omega$ be a two-sided admissible domain. Then, the layer potential operators associated to~\eqref{Eq:Tr-Prob} have operator norms non-superior to $1$ with respect to the spaces specified in Definition~\ref{Def:LP}.
\end{proposition}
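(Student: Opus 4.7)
The plan is to test the variational definitions of the weak normal derivatives against the layer potentials themselves, reducing the norm estimates to a pairing between $\Hmo$ and $\Hpo$ which can be closed using the isometries of Theorem~\ref{Th:Trace} and Proposition~\ref{Prop:ddn-isom}.

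First, fix $g\in\Hmo$ and set $u:=\Sd_{\bord}g\in\dot H^1(\nbord)$. By Definition~\ref{Def:LP} with $f=0$, $u$ is harmonic on $\nbord$, has no jump in trace (so $\trd^{\bord}_{\mathrm{i}}(u|_\Omega)=\trd^{\bord}_{\mathrm{e}}(u|_{\overline\Omega^c})=:f_0\in\Hpo$), and $\llbracket\ddno{u}{}|_{\bord}\rrbracket=g$. Applying the definitions of the weak interior and exterior normal derivatives to $u$, with $u$ itself as the test function and $\Delta u=0$ on each side, yields
\begin{equation*}
\int_{\Omega}|\nabla u|^2\,\dx=\Big\langle\ddno{u}{\mathrm{i}}\Big|_{\bord},f_0\Big\rangle_{\Hmo,\Hpo},\qquad \int_{\overline\Omega^c}|\nabla u|^2\,\dx=-\Big\langle\ddno{u}{\mathrm{e}}\Big|_{\bord},f_0\Big\rangle_{\Hmo,\Hpo}.
\end{equation*}
Adding the two identities and using~\eqref{Eq:ddn-jump},
\begin{equation*}
\lVert u\rVert_{\dot H^1(\nbord)}^2=\langle g,f_0\rangle_{\Hmo,\Hpo}\le\lVert g\rVert_{\Hmo}\,\lVert f_0\rVert_{\Hpo}.
\end{equation*}
Since $u|_\Omega\in\dot V_0(\Omega)$ with $\trd^{\bord}_{\mathrm{i}}(u|_\Omega)=f_0$, Theorem~\ref{Th:Trace}(iii) gives $\lVert f_0\rVert_{\Hpo}=\lVert u|_\Omega\rVert_{\dot H^1(\Omega)}\le\lVert u\rVert_{\dot H^1(\nbord)}$. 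Dividing through shows $\lVert\Sd_{\bord}g\rVert_{\dot H^1(\nbord)}\le\lVert g\rVert_{\Hmo}$.

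For the double layer, fix $f\in\Hpo$ and set $u:=-\Dd_{\bord}f\in\dot H^1(\nbord)$, which solves~\eqref{Eq:Tr-Prob} with $g=0$. Thus $u$ is harmonic on $\nbord$, $\llbracket\Trd^{\bord}u\rrbracket=-f$, and the interior and exterior normal derivatives coincide at a common $h\in\Hmo$. The same integration by parts now gives
\begin{equation*}
\lVert u\rVert_{\dot H^1(\nbord)}^2=\Big\langle h,\trd^{\bord}_{\mathrm{i}}(u|_\Omega)-\trd^{\bord}_{\mathrm{e}}(u|_{\overline\Omega^c})\Big\rangle_{\Hmo,\Hpo}=-\langle h,f\rangle_{\Hmo,\Hpo}\le\lVert h\rVert_{\Hmo}\,\lVert f\rVert_{\Hpo}.
\end{equation*}
Since $\Hmo$ carries the norm subordinate to $\lVert\cdot\rVert_{\Hpo}$, Proposition~\ref{Prop:ddn-isom} applied to $u|_\Omega\in\dot V_0(\Omega)$ yields $\lVert h\rVert_{\Hmo}=\lVert u|_\Omega\rVert_{\dot H^1(\Omega)}\le\lVert u\rVert_{\dot H^1(\nbord)}$. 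Dividing through, $\lVert\Dd_{\bord}f\rVert_{\dot H^1(\nbord)}\le\lVert f\rVert_{\Hpo}$.

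The main subtlety is matching the dual pairing with the correct norm on $\Hmo$: because Proposition~\ref{Prop:Norm-B-Equiv} produces two (equivalent but distinct) trace norms on $\Hpo$, and hence two subordinate norms on $\Hmo$, one must consistently use the isometry from $\dot V_0(\Omega)$ so that the dominating factor $\lVert u|_\Omega\rVert_{\dot H^1(\Omega)}$ appears, rather than its exterior counterpart which would introduce the constant $\sqrt{\lVert\dot{\mathrm{E}}_{\Omega}\rVert^2-1}$ from~\eqref{Eq:TrNormEquiv}. Either isometry works individually (by symmetry, choosing the exterior side for the double layer gives the same bound via $u|_{\overline\Omega^c}$), which is what ultimately makes the bound $1$ geometry-independent.
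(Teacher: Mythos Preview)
Your proof is correct and follows essentially the same approach as the paper's: Green's formula reduces $\lVert u\rVert_{\dot H^1(\nbord)}^2$ to a boundary pairing, which is then closed via the isometries of Theorem~\ref{Th:Trace}(iii) and Proposition~\ref{Prop:ddn-isom}. The only cosmetic difference is that for $\Sd_{\bord}$ the paper tests against an arbitrary $v\in\dot H^1(\R^n)$ and invokes $\lVert\Trd^{\bord}_{\mathrm{i}}\rVert\le 1$ directly, whereas you test against $u$ itself and use the $\dot V_0(\Omega)$-isometry; both are the same identity specialized differently. One small caveat on your closing remark: ``either isometry works'' is not quite accurate for the stated operator norm, since $\Hpo$ is by convention endowed with $\lVert\cdot\rVert_{\Trd^{\bord}_{\mathrm{i}}}$, so using the exterior isometry would yield a bound in $\lVert f\rVert_{\Trd^{\bord}_{\mathrm{e}}}$ rather than $\lVert f\rVert_{\Hpo}$---but this does not affect your main argument.
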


\begin{proof}
By Green's formula, it holds for $g\in\Hmo$,
\begin{equation*}
\forall v\in\dot H^1(\R^n),\quad\langle\Sd_{\bord}g,v\rangle_{\dot H^1(\R^n)}=\langle g,\Trd^{\bord}_{\mathrm{i}} v\rangle_{\Hmo,\Hpo}\le \lVert g \rVert_{\Hmo}\lVert v \rVert_{\dot H^1(\R^n)},
\end{equation*}
by Theorem~\ref{Th:Trace}, Point (iii). Similarly, for all $f\in\Hpo$,
\begin{equation*}
\lVert\Dd_{\bord}f\rVert_{\dot H^1(\nbord)}^2=\bigg|\bigg\langle\ddno{}{\mathrm{i}}\Big|_{\bord}\Dd_{\bord}f,f\bigg\rangle_{\Hmo,\Hpo}\bigg|\le \lVert\Dd_{\bord}f\rVert_{\dot H^1(\nbord)}\lVert f\rVert_{\Hpo},
\end{equation*}
by~\cite[Corollary 2.17]{claret_layer_2025} which induces a variant of Theorem~\ref{Th:Trace}, Point (iii) for normal derivatives. The result follows.
\end{proof}

\subsection{Convergence along sequences of Hilbert spaces}\label{Subsec:CV-KS}

In this part, we recall the definition of convergence of, and along a sequence of Hilbert spaces introduced in~\cite[Subsections 2.2 and 2.3]{kuwae_convergence_2003}. We also prove some results to gain a better understanding of that notion, and which will be used throughout this paper.

\begin{definition}[Convergence of Hilbert spaces~\cite{kuwae_convergence_2003}]\label{Def:CV-Hilb}
A sequence $(H_k)_{k\in\N}$ of Hilbert spaces is said to converge towards a Hilbert space $H$ through $(\mathcal{C},(\mathcal{T}_k)_{k\in\N})$, where $\mathcal C$ is a dense subspace of $H$ and $(\mathcal{T}_k:\mathcal{C}\to H_k)_{k\in\N}$ is a sequence of linear maps, if it holds:
\begin{equation}\label{CV}
\forall f\in \mathcal{C},\quad\lim_{k\to+\infty}\lVert\mathcal{T}_k f\rVert_{H_k}=\lVert f\rVert_H.
\end{equation}
In that case, we will denote: $H_k\xrightarrow[k\to\infty]{(\mathcal{T}_k)}H$.
\end{definition}

It is essential to point out that the main piece of information given by the convergence of spaces from Definition~\ref{Def:CV-Hilb} is not that the sequence $(H_k)_{k\in\N}$ converges to $H$, but the way that convergence occurs, i.e., the pair $(\mathcal{C},(\mathcal{T}_k)_{k\in\N})$. Indeed, if $(H_k)$ is any sequence of separable Hilbert spaces and $H$ is any separable Hilbert space, then $(H_k)$ converges to $H$ through $(H,(\mathcal{T}_k)_{k\in\N})$ choosing $\mathcal{T}_k:H\to H_k$ isometric, even if the spaces in question are `unrelated'. However, making such a choice for $(\mathcal{T}_k)$ will lead (in general) to distasteful results when it comes to the convergence of vectors, operators and so on.

\begin{definition}[Strong convergence of vectors~\cite{kuwae_convergence_2003}]\label{Def:CV-Vect}
Let $(H_k)_{k\in\N}$ be a sequence of Hilbert spaces and $H$ be a Hilbert space such that $H_k\to H$ through $(\mathcal{C},(\mathcal{T}_k)_{k\in\N})$. A sequence $(u_k)_{k\in\N}$ with $u_k\in H_k$ -- denoted by $(u_k\in H_k)_{k\in\N}$ -- is said to converge (strongly) towards an element $u\in H$ if there exists a sequence $(v_m)_{m\in\N}\in \mathcal{C}^\N$ with $v_m\to u$ in $H$, such that:
\begin{equation*}
\lim_{m\to+\infty}\bigg(\limsup_{k\to+\infty}\lVert\mathcal{T}_kv_m-u_k\rVert_{H_k}\bigg)=0.
\end{equation*}
In that case, we will denote: $(u_k\in H_k)_{k\in\N}\xrightarrow[]{(\mathcal{T}_k)}u\in H$.
\end{definition}

The general definition of convergence of Hilbert spaces from Definition~\ref{Def:CV-Hilb} involves a dense subspace of the limit space. In this paper, the dense subspace will always be the limit space itself. In that case, the notion of convergence of vectors from Definition~\ref{Def:CV-Vect} can be simplified.

\begin{lemma}\label{Lem:CV-KS}
Let $(H_k)_{k\in\N}$ be a sequence of Hilbert spaces and $H$ be a Hilbert space such that $(H_k)_{k\in\N}\xrightarrow[]{}H$ through $(H,(\mathcal{T}_k)_{k\in\N})$.
Then, for all $(u_k\in H_k)_{k\in\N}$ and $u\in H$, it holds:
\begin{equation*}
u_k\xrightarrow[k\to\infty]{}u\quad\mbox{through }(H,(\mathcal{T}_k)_{k\in\N}) \qquad\Longleftrightarrow\qquad \lVert\mathcal{T}_ku-u_k\rVert_{H_k}\xrightarrow[k\to\infty]{}0.
\end{equation*}
\end{lemma}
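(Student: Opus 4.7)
The plan is to prove the two directions separately, leveraging that the dense subspace $\mathcal{C}$ in Definition~\ref{Def:CV-Hilb} is the full space $H$ so that $u$ itself is admissible as an approximating sequence.

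For the implication $(\Leftarrow)$, I would simply take the constant approximating sequence $v_m:=u\in H$ for every $m\in\N$. Then $v_m\to u$ trivially in $H$, and
\[
\limsup_{k\to+\infty}\lVert\mathcal{T}_k v_m-u_k\rVert_{H_k}=\limsup_{k\to+\infty}\lVert\mathcal{T}_k u-u_k\rVert_{H_k}=0
\]
by hypothesis, so the double-limit condition in Definition~\ref{Def:CV-Vect} is satisfied and $u_k\to u$ strongly through $(H,(\mathcal{T}_k))$.

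For the implication $(\Rightarrow)$, suppose $u_k\to u$ strongly and let $(v_m)_{m\in\N}\in H^\N$ be an approximating sequence as in Definition~\ref{Def:CV-Vect}. The key inequality is the triangle inequality in $H_k$ followed by the linearity of $\mathcal{T}_k$:
\[
\lVert\mathcal{T}_k u-u_k\rVert_{H_k}\le \lVert\mathcal{T}_k(u-v_m)\rVert_{H_k}+\lVert\mathcal{T}_k v_m-u_k\rVert_{H_k}.
\]
Now I invoke the defining property~\eqref{CV} of the convergence of Hilbert spaces applied to the fixed element $u-v_m\in H$:
\[
\lim_{k\to+\infty}\lVert\mathcal{T}_k(u-v_m)\rVert_{H_k}=\lVert u-v_m\rVert_H.
\]
Taking $\limsup_{k\to+\infty}$ in the triangle inequality therefore yields
\[
\limsup_{k\to+\infty}\lVert\mathcal{T}_k u-u_k\rVert_{H_k}\le \lVert u-v_m\rVert_H+\limsup_{k\to+\infty}\lVert\mathcal{T}_k v_m-u_k\rVert_{H_k},
\]
valid for every $m$. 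Letting $m\to+\infty$, the first term on the right tends to $0$ by construction of $(v_m)$, and the second does as well by the strong convergence hypothesis, yielding the claim.

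There is no real obstacle here: the only non-trivial ingredient is using~\eqref{CV} with $w:=u-v_m$, which is allowed precisely because the dense subspace has been enlarged to all of $H$; the argument would fail if one only had access to $\mathcal{T}_k$ on a proper dense $\mathcal{C}\subsetneq H$, since then $u-v_m$ might not lie in its domain.
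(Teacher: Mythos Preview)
Your proof is correct and follows essentially the same approach as the paper's: the forward direction uses the identical triangle-inequality decomposition combined with~\eqref{CV} applied to $u-v_m$, and for the converse the paper simply says it ``follows from the definition,'' which is precisely your choice of the constant sequence $v_m=u$. Your explicit remark that this hinges on having $\mathcal{C}=H$ (so that $u-v_m$ is in the domain of~\eqref{CV}) is a useful clarification the paper leaves implicit.
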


\begin{proof}
Assume that $(u_k\in H_k)_{k\in\N}\xrightarrow[]{(\mathcal{T}_k)} u\in H$. There exists $(v_m)_{m\in\N}\in H^\N$ such that:
\begin{equation*}
v_m\xrightarrow[m\to\infty]{H}u\quad\mbox{and}\quad \limsup_{k\to\infty} \lVert\mathcal{T}_kv_m-u_k\rVert_{H_k}\xrightarrow[m\to\infty]{}0.
\end{equation*}
From
\begin{equation*}
\lVert\mathcal{T}_ku-u_k\rVert_{H_k}\le \underbrace{\lVert\mathcal{T}_k(v_m-u)\rVert_{H_k}}_{\xrightarrow[k\to\infty]{}\lVert v_m-u\rVert_H}+\lVert\mathcal{T}_kv_m-u_k\rVert_{H_k},
\end{equation*}
the implication holds. The converse follows from the definition.
\end{proof}

As explained above, the choice of $(\mathcal{T}_k)_{k\in\N}$ accounts for an essential step when considering such convergences and a change in those operators can completely alter the way objects will converge along the sequence of Hilbert spaces, even though the spaces themselves remain untouched, see Figure~\ref{Fig:CV-Hilb}.

\begin{figure}[t]
\centering
\begin{tikzpicture}
% Parallelograms
\draw [densely dotted] (0,0) --  ++ (2,1) -- ++ (0,6) -- ++ (-2,-1) -- cycle ;
\draw [densely dotted] (2.3,0) --  ++ (2,1) -- ++ (0,6) -- ++ (-2,-1) -- cycle ;
\draw [densely dotted] (4.6,0) --  ++ (2,1) -- ++ (0,6) -- ++ (-2,-1) -- cycle ;
\draw [densely dotted] (7.2,0) --  ++ (2,1) -- ++ (0,6) -- ++ (-2,-1) -- cycle ;
\draw [densely dotted] (10,0) --  ++ (2,1) -- ++ (0,6) -- ++ (-2,-1) -- cycle ;
% Names of the spaces
\draw (1.2,-0.4) node{$H_1$} ;
\draw (3.5,-0.4) node{$H_2$} ;
\draw (5.8,-0.4) node{$H_3$} ;
\draw (8.4,-0.4) node{$H_k$} ;
\draw (11.2,-0.4) node{$H$} ;
\draw (7.1,-0.4) node{$\dots$};
\draw (9.8,-0.5) node{$\xrightarrow[k\to\infty]{}$};
% Bullets for origins
\draw (1,3.5) node{\tiny$\bullet$} ;
\draw (3.3,3.5) node{\tiny$\bullet$} ;
\draw (5.6,3.5) node{\tiny$\bullet$} ;
\draw (8.2,3.5) node{\tiny$\bullet$} ;
\draw (11,3.5) node{\tiny$\bullet$} node[below right]{$0$} ;
% Ellipses
\draw [dashed] (1,3.5) ellipse (0.75 and 1.5) ;
\draw [dashed] (3.3,3.5) ellipse (0.75 and 1.5) ;
\draw [dashed] (5.6,3.5) ellipse (0.75 and 1.5) ;
\draw [dashed] (8.2,3.5) ellipse (0.75 and 1.5) ;
\draw [dashed] (11,3.5) ellipse (0.75 and 1.5) ;
% Bullets for red line
\draw (0.5,{3.9+sqrt(5)/2}) node[red]{\tiny$\bullet$} node[below right, red]{$\mathcal{T}_1u$} ;
\draw (2.8,{3.8+sqrt(5)/2}) node[red]{\tiny$\bullet$} node[below right, red]{$\mathcal{T}_2u$} ;
\draw (5.1,{3.7+sqrt(5)/2}) node[red]{\tiny$\bullet$} node[below right, red]{$\mathcal{T}_3u$} ;
\draw (7.7,{3.7+sqrt(5)/2-0.1/2.3*2.6}) node[red]{\tiny$\bullet$} node[below right, red]{$\mathcal{T}_ku$} ;
% Red line
\draw [solid, red] (0.5,{3.9+sqrt(5)/2}) -- ++(1.8,-0.078) edge[dashed] ++(0.5,-0.022) ;
\draw [solid, red] (2.8,{3.8+sqrt(5)/2}) -- ++(1.8,-0.078) edge[dashed] ++(0.5,-0.022) ;
\draw [solid, red] (5.1,{3.7+sqrt(5)/2}) -- ++(2.1,{-0.1/2.3*2.1}) edge[dashed] ++(0.5,{-0.1/2.3*0.5}) ;
\draw [solid, red] (7.7,{3.6+sqrt(5)/2}) -- ++(2.3,-0.1) edge[dashed] ++(0.5,{-0.1/2.3*0.5}) ;
% Sequence u_k
\draw (0.9,{4.3+sqrt(5)/2}) node{\tiny$\bullet$} node[above]{$u_1$} ;
\draw (3.1,{4.1+sqrt(5)/2}) node{\tiny$\bullet$} node[above]{$u_2$} ;
\draw (5.3,{3.9+sqrt(5)/2}) node{\tiny$\bullet$} node[above]{$u_3$} ;
\draw ({5.3+2.2/2.3*2.6},{3.9+sqrt(5)/2-0.2/2.3*2.6}) node{\tiny$\bullet$} node[above]{$u_k$} ;
% Bullets for blue line
\draw (0.5,{3.5-sqrt(5)/2}) node[blue]{\tiny$\bullet$} node[above right, blue]{$\mathcal{R}_1u$} ;
\draw (2.8,{3.5-sqrt(5)/2}) node[blue]{\tiny$\bullet$} node[above right, blue]{$\mathcal{R}_2u$} ;
\draw (5.1,{3.5-sqrt(5)/2}) node[blue]{\tiny$\bullet$} node[above right, blue]{$\mathcal{R}_3u$} ;
\draw (7.7,{3.5-sqrt(5)/2}) node[blue]{\tiny$\bullet$} node[above right, blue]{$\mathcal{R}_ku$} ;
% Blue line
\draw [solid, blue] (0.5,{3.5-sqrt(5)/2}) -- ++(1.8,0) edge[dashed] ++(0.5,0);
\draw [solid, blue] (2.8,{3.5-sqrt(5)/2}) -- ++(1.8,0) edge[dashed] ++(0.5,0);
\draw [solid, blue] (5.1,{3.5-sqrt(5)/2}) -- ++(2.1,0) edge[dashed] ++(0.5,0);
\draw [blue] (7.7,{3.5-sqrt(5)/2}) .. controls +(1,0) and +(-0.5,{-sqrt(5)/2+0.3}) .. (10,3.8);
\draw [dashed, blue] (10,3.8)  .. controls +(0.5,{sqrt(5)/2-0.3}) and +(0,0) .. (10.5,{3.5+sqrt(5)/2});
\draw (10.5,{3.5+sqrt(5)/2}) node[red]{\tiny$\bullet$} node[below right, red]{$u$} ;
\end{tikzpicture}
\caption{Illustration of the impact the sequence of linear maps has on the convergences from Definitions~\ref{Def:CV-Hilb} and~\ref{Def:CV-Vect}. The sequence $(H_k)_{k\in\N}$ converges to $H$ through $(H,(\mathcal{T}_k)_{k\in\N})$ and through $(H,(\mathcal{R}_k)_{k\in\N})$. The ellipses represent the spheres of center $0$ and radius $\lVert u\rVert_H$ with respect to the norm on each space. Since the sequences of operators $(\mathcal{T}_k)_{k\in\N}$ and $(\mathcal{R}_k)_{k\in\N}$ are not asymptotically close (in the sense of Lemma~\ref{Lem:EquivCVFramework}), they induce different notions of convergence: the sequence $(u_k\in H_k)_{k\in\N}$ goes to $u\in H$ through $(\mathcal{T}_k)_{k\in\N}$, but not through $(\mathcal{R}_k)_{k\in\N}$.}
\label{Fig:CV-Hilb}
\end{figure}
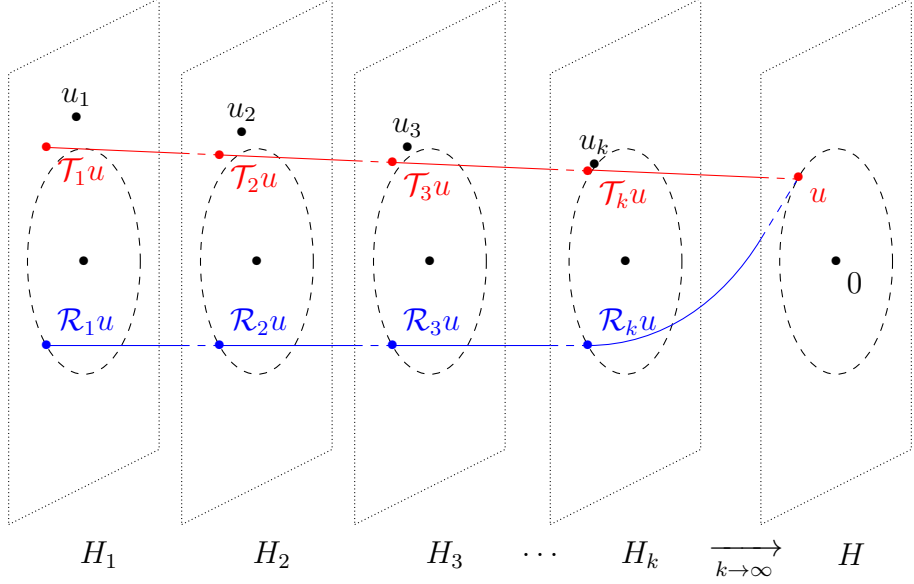

The following lemma indicates what is needed for two sequences of operators in Definition~\ref{Def:CV-Hilb} to induce the same topology for the convergence of vectors in the sense of Definition~\ref{Def:CV-Vect}.

\begin{lemma}\label{Lem:EquivCVFramework}
Let $(H_k)_{k\in\N}$ be a sequence of Hilbert spaces and $H$ be a Hilbert space such that $(H_k)_{k\in\N}\xrightarrow[]{}H$ through $(H,(\mathcal{T}_k)_{k\in\N})$ and through $(H,(\mathcal{R}_k)_{k\in\N})$. Then, if $(u_k\in H_k)\xrightarrow[]{(\mathcal{T}_k)}u\in H$, the following equivalence holds:
\begin{equation*}
u_k\xrightarrow[k\to\infty]{(\mathcal{R}_k)}u\quad\iff\quad \lVert(\mathcal{T}_k-\mathcal{R}_k)u\rVert_{H_k}\xrightarrow[k\to\infty]{} 0.
\end{equation*}
\end{lemma}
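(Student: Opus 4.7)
The plan is to reduce the statement to a direct application of Lemma~\ref{Lem:CV-KS} combined with the triangle inequality in $H_k$. Since the dense subspace $\mathcal{C}$ appearing in Definition~\ref{Def:CV-Hilb} is $H$ itself in both convergence frameworks, Lemma~\ref{Lem:CV-KS} rephrases the convergence of $(u_k)$ to $u$ through either sequence of operators as the vanishing of $\|\mathcal{T}_k u - u_k\|_{H_k}$ or $\|\mathcal{R}_k u - u_k\|_{H_k}$ respectively. Hence the lemma becomes an equivalence between the decay of two sequences of real numbers, and it suffices to compare these via a triangle inequality applied to the identity $\mathcal{T}_k u - u_k = (\mathcal{T}_k - \mathcal{R}_k)u + (\mathcal{R}_k u - u_k)$.

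Concretely, the first step is to invoke Lemma~\ref{Lem:CV-KS} in the framework $(H,(\mathcal{T}_k))$ to rewrite the hypothesis $(u_k) \xrightarrow{(\mathcal{T}_k)} u$ as $\|\mathcal{T}_k u - u_k\|_{H_k} \to 0$, and in the framework $(H,(\mathcal{R}_k))$ to rewrite the target convergence $(u_k) \xrightarrow{(\mathcal{R}_k)} u$ as $\|\mathcal{R}_k u - u_k\|_{H_k} \to 0$. Once this is done, the two estimates
\begin{equation*}
\lVert \mathcal{R}_k u - u_k \rVert_{H_k} \le \lVert (\mathcal{R}_k - \mathcal{T}_k) u \rVert_{H_k} + \lVert \mathcal{T}_k u - u_k \rVert_{H_k}
\end{equation*}
and
\begin{equation*}
\lVert (\mathcal{T}_k - \mathcal{R}_k) u \rVert_{H_k} \le \lVert \mathcal{T}_k u - u_k \rVert_{H_k} + \lVert \mathcal{R}_k u - u_k \rVert_{H_k}
\end{equation*}
immediately yield both implications: the hypothesis makes the rightmost term in each line vanish, so the two remaining quantities vanish simultaneously.

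There is essentially no obstacle here beyond correctly identifying that Lemma~\ref{Lem:CV-KS} applies in both convergence frameworks, which is automatic since both frameworks use $\mathcal{C}=H$. Note that the assumption that $(H_k)$ converges through $(\mathcal{R}_k)$ in the sense of Definition~\ref{Def:CV-Hilb} is actually not used in the argument itself; it only serves to give meaning to the convergence statement $u_k \xrightarrow{(\mathcal{R}_k)} u$ on the left-hand side of the equivalence. The lemma is therefore essentially a consistency statement: two sequences of ``representing operators'' induce the same convergence on a given sequence precisely when they are asymptotically indistinguishable on the limit vector $u$.
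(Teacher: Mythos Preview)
Your proof is correct and is essentially the same as the paper's: both reduce the statement to Lemma~\ref{Lem:CV-KS} and then apply the triangle inequality to $\mathcal{T}_k u - u_k = (\mathcal{T}_k-\mathcal{R}_k)u + (\mathcal{R}_k u - u_k)$, the paper packaging the two bounds as a single two-sided inequality on $\lVert u_k-\mathcal{R}_k u\rVert_{H_k}$. (A tiny wording slip: in your second displayed inequality the term killed by the hypothesis is the \emph{first} one on the right, not the rightmost, but the argument is unaffected.)
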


\begin{proof}
It holds
\begin{multline*}
\lVert(\mathcal{T}_k-\mathcal{R}_k)u\rVert_{H_k}-\lVert u_k-\mathcal{T}_ku\rVert_{H_k}\le\lVert u_k-\mathcal{R}_ku\rVert_{H_k}\\
\le \lVert(\mathcal{T}_k-\mathcal{R}_k)u\rVert_{H_k}+\lVert u_k-\mathcal{T}_ku\rVert_{H_k},
\end{multline*}
and the equivalence follows by Lemma~\ref{Lem:CV-KS}.
\end{proof}

From the notion of convergence of vectors along a sequence of Hilbert spaces, that of linear bounded operators is derived intuitively.

\begin{definition}[Convergence of bounded operators~\cite{kuwae_convergence_2003}]
Let $(H_k)_{k\in\N}$ and $(V_k)_{k\in\N}$ be sequences of Hilbert spaces, and $H$ and $V$ be Hilbert spaces such that $H_k\to H$ through $(\mathcal{C}_H,(\mathcal{T}_k)_{k\in\N})$ and $V_k\to V$ through $(\mathcal{C}_V,(\mathcal{R}_k)_{k\in\N})$. A sequence $(\mathrm{L}_k:H_k\to V_k)_{k\in\N}$ of linear bounded operators is said to converge to a linear bounded operator $\mathrm{L}:H\to V$ if, for all $(u_k\in H_k)_{k\in\N}$ and $u\in H$, it holds
\begin{equation*}
u_k\xrightarrow[k\to\infty]{(\mathcal{T}_k)}u\quad\Longrightarrow \quad \mathrm{L}_ku_k\xrightarrow[k\to\infty]{(\mathcal{R}_k)}\mathrm{L}u.
\end{equation*}
In that case, we will denote: $\mathrm{L}_k\xrightarrow[k\to\infty]{(\mathcal{T}_k),(\mathcal{R}_k)}\mathrm{L}$.
\end{definition}

This notion of convergence of operators preserves self-adjointness.

\begin{lemma}\label{Lem:CV-SA}
Let $(H_k)_{k\in\N}$ be a sequence of Hilbert spaces and $H$ be a Hilbert space such that $(H_k)_{k\in\N}\xrightarrow[]{}H$ through $(\mathcal{C}_H,(\mathcal{T}_k)_{k\in\N})$. Let $(\Lrm_k\in \mathcal{L}(H_k))$ be a sequence of self-adjoint operators such that $\Lrm_k\xrightarrow[k\to\infty]{(\mathcal{T}_k),(\mathcal{T}_k)} \Lrm\in\mathcal{L}(H)$. Then $\Lrm$ is self-adjoint.
\end{lemma}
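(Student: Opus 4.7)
The plan is to show $\langle \Lrm u, v\rangle_H = \langle u, \Lrm v\rangle_H$ for all $u,v\in H$ by a passage-to-the-limit argument along the sequence $(H_k)$, using the self-adjointness of each $\Lrm_k$ inside $H_k$. The natural representatives to work with are $\mathcal{T}_k u$ and $\mathcal{T}_k v$, which by Lemma~\ref{Lem:CV-KS} trivially satisfy $\mathcal{T}_k u \xrightarrow[]{(\mathcal{T}_k)} u$ and $\mathcal{T}_k v \xrightarrow[]{(\mathcal{T}_k)} v$ (take $u_k=\mathcal{T}_k u$). The hypothesis $\Lrm_k \xrightarrow[]{(\mathcal{T}_k),(\mathcal{T}_k)} \Lrm$ then yields $\Lrm_k \mathcal{T}_k u \xrightarrow[]{(\mathcal{T}_k)} \Lrm u$ and $\Lrm_k \mathcal{T}_k v \xrightarrow[]{(\mathcal{T}_k)} \Lrm v$.

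The crucial ingredient I will need is that strong convergence of vectors preserves inner products, namely that whenever $u_k \xrightarrow[]{(\mathcal{T}_k)} u$ and $v_k \xrightarrow[]{(\mathcal{T}_k)} v$, one has $\langle u_k,v_k\rangle_{H_k} \to \langle u,v\rangle_H$. First I would observe that strong convergence is linear: from $\lVert (u_k+v_k) - \mathcal{T}_k(u+v)\rVert_{H_k} \le \lVert u_k - \mathcal{T}_k u\rVert_{H_k} + \lVert v_k - \mathcal{T}_k v\rVert_{H_k}$ and Lemma~\ref{Lem:CV-KS}, I obtain $u_k+v_k \xrightarrow[]{(\mathcal{T}_k)} u+v$. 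Next, strong convergence preserves the norm: from $\big|\lVert u_k\rVert_{H_k}-\lVert \mathcal{T}_k u\rVert_{H_k}\big| \le \lVert u_k - \mathcal{T}_k u\rVert_{H_k}$ combined with Definition~\ref{Def:CV-Hilb}, I get $\lVert u_k\rVert_{H_k} \to \lVert u\rVert_H$. The polarization identity then gives the desired convergence of inner products.

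Applying this with $u_k := \Lrm_k \mathcal{T}_k u$ and $v_k := \mathcal{T}_k v$, and also with $u_k := \mathcal{T}_k u$ and $v_k := \Lrm_k \mathcal{T}_k v$, I obtain
\begin{equation*}
\langle \Lrm_k \mathcal{T}_k u, \mathcal{T}_k v\rangle_{H_k} \xrightarrow[k\to\infty]{} \langle \Lrm u, v\rangle_H, \qquad \langle \mathcal{T}_k u, \Lrm_k \mathcal{T}_k v\rangle_{H_k} \xrightarrow[k\to\infty]{} \langle u, \Lrm v\rangle_H.
\end{equation*}
But the two left-hand sides are equal for every $k$ by self-adjointness of $\Lrm_k$ in $H_k$, so their limits coincide and $\Lrm$ is self-adjoint.

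The only mildly delicate step is the convergence of inner products; everything else is bookkeeping within the Kuwae–Shioya framework already set up. Since the paper does not seem to have stated this as a lemma up to this point, I would either include the short polarization argument inline or extract it as a preliminary observation.
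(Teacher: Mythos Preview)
Your proof is correct and follows essentially the same route as the paper: both pass from $\langle \Lrm_k u_k, v_k\rangle_{H_k} = \langle u_k, \Lrm_k v_k\rangle_{H_k}$ to $\langle \Lrm u, v\rangle_H = \langle u, \Lrm v\rangle_H$ via convergence of inner products under strong convergence. Your version is in fact more self-contained, since you supply the polarization argument for inner-product convergence explicitly, whereas the paper treats it as known from the Kuwae--Shioya framework; just note that your use of Lemma~\ref{Lem:CV-KS} and of $\mathcal{T}_k u$ for arbitrary $u\in H$ tacitly assumes $\mathcal{C}_H=H$, which is the standing convention of the paper but is stated slightly more generally in the lemma's hypotheses.
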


\begin{proof}
Let $(u_k\in H_k)_k\xrightarrow[k\to\infty]{(\mathcal{T}_k)} u\in H$ and $(v_k\in H_k)_k\xrightarrow[k\to\infty]{(\mathcal{T}_k)} v\in H$. Then, it holds
\begin{align*}
\langle \Lrm_k^* u,v\rangle_{H_k} &= \langle u, \Lrm_k v\rangle_{H_k} \longrightarrow \langle u, \Lrm v\rangle_{H} = \langle \Lrm^*u,v\rangle_H,\\
&= \langle \Lrm_k u,v\rangle_{H_k}\longrightarrow \langle \Lrm u ,v\rangle_{H},
\end{align*}
hence $\Lrm^*=\Lrm$.
\end{proof}

To conclude this section, we prove a result which will be used extensively in this paper on the convergence of inverse operators.

\begin{lemma}\label{Lem:InverseCV}
Let $(H_k)_{k\in\N}$ and $(V_k)_{k\in\N}$ be sequences of Hilbert spaces, and $H$ and $V$ be Hilbert spaces such that $H_k\to H$ through $(H,(\mathcal{T}_k)_{k\in\N})$ and $V_k\to V$ through $(V,(\mathcal{R}_k)_{k\in\N})$. Let $(\Lrm_k:H_k\to V_k)_{k\in\N}$ and $\Lrm:H\to V$ be linear bounded invertible operators such that $(\Lrm_k^{-1})_{k\in\N}$ is uniformly bounded in $k$. Assume $\Lrm_k\xrightarrow[]{(\mathcal{T}_k),(\mathcal{R}_k)}\Lrm$ as $k\to\infty$. Then $\Lrm_k^{-1}\xrightarrow[]{(\mathcal{R}_k),(\mathcal{T}_k)}\Lrm^{-1}$ as $k\to\infty$.
\end{lemma}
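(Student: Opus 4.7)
The plan is to reduce the statement, via Lemma~\ref{Lem:CV-KS}, to a quantitative norm estimate on $H_k$, and then to transport that estimate through the operators using the uniform bound on $(\Lrm_k^{-1})$.

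More precisely, fix $(v_k\in V_k)_{k\in\N}$ with $v_k\xrightarrow[]{(\mathcal{R}_k)}v\in V$, and set $u:=\Lrm^{-1}v\in H$. By Lemma~\ref{Lem:CV-KS}, it suffices to show that
\begin{equation*}
\lVert \mathcal{T}_k u-\Lrm_k^{-1}v_k\rVert_{H_k}\xrightarrow[k\to\infty]{}0.
\end{equation*}
The natural idea is to push this difference forward by $\Lrm_k$: writing $\mathcal{T}_k u-\Lrm_k^{-1}v_k=\Lrm_k^{-1}\bigl(\Lrm_k\mathcal{T}_k u-v_k\bigr)$ and using the uniform bound $M:=\sup_k\lVert\Lrm_k^{-1}\rVert<\infty$, we get
\begin{equation*}
\lVert \mathcal{T}_k u-\Lrm_k^{-1}v_k\rVert_{H_k}\le M\,\lVert \Lrm_k\mathcal{T}_k u-v_k\rVert_{V_k},
\end{equation*}
so everything reduces to proving that $\Lrm_k\mathcal{T}_k u-v_k\to 0$ in $V_k$.

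For this, I would first observe that the sequence $(\mathcal{T}_k u\in H_k)_{k\in\N}$ trivially converges to $u$ through $(\mathcal{T}_k)$ (by Lemma~\ref{Lem:CV-KS}, with zero on the left-hand side). By the assumed convergence $\Lrm_k\xrightarrow[]{(\mathcal{T}_k),(\mathcal{R}_k)}\Lrm$, this yields
\begin{equation*}
\Lrm_k\mathcal{T}_k u\xrightarrow[k\to\infty]{(\mathcal{R}_k)}\Lrm u=v,\qquad\text{i.e.,}\qquad \lVert\mathcal{R}_k v-\Lrm_k\mathcal{T}_k u\rVert_{V_k}\xrightarrow[k\to\infty]{}0,
\end{equation*}
again by Lemma~\ref{Lem:CV-KS}. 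Combined with the hypothesis $\lVert\mathcal{R}_k v-v_k\rVert_{V_k}\to 0$ and the triangle inequality, this gives $\lVert\Lrm_k\mathcal{T}_k u-v_k\rVert_{V_k}\to 0$, and the previous displayed inequality closes the argument.

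The only genuinely delicate point is the use of the uniform bound on $\lVert\Lrm_k^{-1}\rVert$: without it, knowing that $\Lrm_k\mathcal{T}_k u-v_k\to 0$ in $V_k$ would not be enough to control $\mathcal{T}_k u-\Lrm_k^{-1}v_k$ in $H_k$, as arbitrarily large amplification could occur along the sequence. This is the precise role of the uniform boundedness assumption, and it is why it cannot be dropped in the statement.
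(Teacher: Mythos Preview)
Your proof is correct and follows essentially the same approach as the paper: both reduce via Lemma~\ref{Lem:CV-KS} to a norm estimate, pull out the uniform bound on $\Lrm_k^{-1}$, and control $\Lrm_k\mathcal{T}_k u-v_k$ in $V_k$ by combining $\mathcal{T}_ku\xrightarrow{(\mathcal{T}_k)}u$ with $\Lrm_k\xrightarrow{(\mathcal{T}_k),(\mathcal{R}_k)}\Lrm$ and the assumed convergence $v_k\xrightarrow{(\mathcal{R}_k)}v$. The only cosmetic difference is that the paper splits the estimate into two terms before applying the uniform bound, whereas you apply the bound first and then use the triangle inequality.
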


\begin{proof}
Let $(v_k\in V_k)_{k\in\N}\xrightarrow[k\to\infty]{(\mathcal{T}_k)} v\in V$. Then, it holds
\begin{align*}
\lVert\Lrm_k^{-1}v_k-\mathcal{T}_k\Lrm^{-1}v\rVert_{H_k} &\le\lVert\Lrm_k^{-1}(v_k-\mathcal{R}_kv)\rVert_{H_k}+\lVert\Lrm_k^{-1}(\mathcal{R}_k\Lrm-\Lrm_k\mathcal{T}_k)\Lrm^{-1}v\rVert_{H_k}\\
&\le \sup_{\ell\in\N}\lVert\Lrm_\ell^{-1}\rVert\big(\lVert v_k-\mathcal{R}_kv\rVert_{V_k}+\lVert(\mathcal{R}_k\Lrm-\Lrm_k\mathcal{T}_k)\Lrm^{-1}v\rVert_{V_k}\big).
\end{align*}
Since $(\mathcal{T}_k\Lrm^{-1}v\in H_k)\xrightarrow[]{(\mathcal{T}_k)} \Lrm^{-1}v\in H$ as $k\to\infty$, it holds $(\Lrm_k\mathcal{T}_k\Lrm^{-1}v\in V_k)\xrightarrow[]{(\mathcal{R}_k)}\Lrm\Lrm^{-1}v\in V$ and the right-hand side goes to $0$.
\end{proof}

\section{Convergence of layer potential operators for transmission problems}\label{Sec:CV-LP}

For any two-sided admissible domain $\Omega$, the space $\dot H^1(\nbord)$ can be described as any of the following three direct orthogonal sums: 
\begin{align}
\dot H^1(\nbord)&=\dot H^1(\Omega)\oplus \dot H^1(\overline{\Omega}^c), \label{Eq:Decomp-Geo}\\
&= \dot V_0(\nbord)\oplus \dot H^1_0(\nbord), \label{Eq:Decomp-Trans}\\
&= \mathrm{ker}\llbracket\Trd^{\bord}\rrbracket\oplus (\mathrm{ker}\llbracket\Trd^{\bord}\rrbracket)^\perp. \label{Eq:Decomp-H1Rn}
\end{align}
Decomposition~\eqref{Eq:Decomp-Geo} is a `geographic' decomposition of the space,~\eqref{Eq:Decomp-Trans} is in terms of the harmonic transmission problem (using notations similar to Theorem~\ref{Th:Trace}, Point (i)), and~\eqref{Eq:Decomp-H1Rn} is in terms of trace continuity.
Naturally, it holds
\begin{equation*}
\dot H^1_0(\nbord)=\dot H^1_0(\Omega)\oplus \dot H^1_0(\overline{\Omega}^c),
\end{equation*}
as the space of elements with null interior and exterior traces, and the space of solutions to the harmonic transmission problem $\dot V_0(\nbord)$ (i.e., the space of harmonic functions on $\nbord$) can be decomposed into a single layer potential part and a double layer potential part:
\begin{equation*}
\dot V_0(\nbord)=\dot{V}_\Scal(\nbord)\oplus \dot{V}_\D(\nbord),
\end{equation*}
where the single layer potential part is characterised by a homogeneous jump in trace, that is
\begin{equation}\label{Eq:SLP-Space}
\dot{V}_\Scal(\nbord):=\mathrm{ker}\llbracket\Trd^{\bord}\rrbracket\cap \dot V_0(\nbord),
\end{equation}
and the double layer potential part by a homogeneous jump in normal derivative:
\begin{equation}\label{Eq:DLP-Space}
\dot{V}_\D(\nbord):=\mathrm{ker}\left\llbracket\ddno{}{}\Big|_{\bord}\right\rrbracket\cap \dot V_0(\nbord).
\end{equation}
The following lemma gives a characterization of the space of double layer potentials as the orthogonal complement of {$\mathrm{ker}\llbracket\Trd^{\bord}\rrbracket$}.

\begin{lemma}\label{Lem:DLP-ortho}
Let $\Omega$ be a two-sided admissible domain. Then, the space of double layer potentials is characterised by $\dot{V}_\D(\nbord)=(\mathrm{ker}\llbracket\Trd^{\bord}\rrbracket)^\perp$.
\end{lemma}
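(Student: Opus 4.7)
The plan is to decompose $\ker\llbracket\Trd^{\bord}\rrbracket$ along the orthogonal splitting~\eqref{Eq:Decomp-Trans}, and then identify its orthogonal complement by testing against elements of $\dot V_\Scal(\nbord)$ via Green's identity.

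First I would establish
\begin{equation*}
\ker\llbracket\Trd^{\bord}\rrbracket = \dot V_\Scal(\nbord) \oplus \dot H^1_0(\nbord).
\end{equation*}
Indeed, any $u \in \dot H^1(\nbord)$ decomposes uniquely along~\eqref{Eq:Decomp-Trans} as $u = v + w$ with $v \in \dot V_0(\nbord)$ and $w \in \dot H^1_0(\nbord) = \dot H^1_0(\Omega) \oplus \dot H^1_0(\overline{\Omega}^c)$. The component $w$ has vanishing interior and exterior traces by Theorem~\ref{Th:Trace}(i) applied on each side, hence $\llbracket\Trd^{\bord} u\rrbracket = \llbracket\Trd^{\bord} v\rrbracket$ and $u \in \ker\llbracket\Trd^{\bord}\rrbracket$ if and only if $v \in \dot V_\Scal(\nbord)$. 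The reverse inclusion is immediate. Taking orthogonal complements in $\dot H^1(\nbord)$ and using~\eqref{Eq:Decomp-Trans} again,
\begin{equation*}
\bigl(\ker\llbracket\Trd^{\bord}\rrbracket\bigr)^\perp = \dot V_\Scal(\nbord)^\perp \cap \dot H^1_0(\nbord)^\perp = \dot V_\Scal(\nbord)^\perp \cap \dot V_0(\nbord).
\end{equation*}

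It then remains to prove $\dot V_\Scal(\nbord)^\perp \cap \dot V_0(\nbord) = \dot V_\D(\nbord)$. Let $u \in \dot V_0(\nbord)$. Since $\Delta u = 0 \in L^2(\nbord)$, summing the defining identities of $\ddno u{\mathrm i}|_{\bord}$ and $\ddno u{\mathrm e}|_{\bord}$ yields, for every $v \in \dot H^1(\nbord)$,
\begin{equation*}
\langle u, v\rangle_{\dot H^1(\nbord)} = \Bigl\langle \ddno u{\mathrm i}\Big|_{\bord}, \Trd^{\bord}_{\mathrm i}v\Bigr\rangle_{\Hmo,\Hpo} - \Bigl\langle \ddno u{\mathrm e}\Big|_{\bord}, \Trd^{\bord}_{\mathrm e}v\Bigr\rangle_{\Hmo,\Hpo}.
\end{equation*}
For $v \in \dot V_\Scal(\nbord)$ we have $\Trd^{\bord}_{\mathrm i}v = \Trd^{\bord}_{\mathrm e}v$ in $\Hpo$, so the right-hand side collapses to $\bigl\langle \llbracket \partial_\nu u|_{\bord}\rrbracket, \Trd^{\bord}_{\mathrm i}v\bigr\rangle_{\Hmo,\Hpo}$. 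The map $\Trd^{\bord}_{\mathrm i} : \dot V_\Scal(\nbord) \to \Hpo$ is onto — given $f \in \Hpo$, the element $\DOd_\Omega f \oplus \DOd_{\overline{\Omega}^c} f$ belongs to $\dot V_\Scal(\nbord)$ and has trace $f$ — so $u \perp \dot V_\Scal(\nbord)$ in $\dot H^1(\nbord)$ is equivalent to $\llbracket \partial_\nu u|_{\bord}\rrbracket = 0$ in $\Hmo$, which is exactly the defining condition~\eqref{Eq:DLP-Space} of $\dot V_\D(\nbord)$.

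The only mildly delicate point is tracking the sign convention in the definitions of $\ddno{u}{\mathrm i}|_{\bord}$ and $\ddno{u}{\mathrm e}|_{\bord}$ so that Green's identity assembles into the trace jump $\llbracket \partial_\nu u|_{\bord}\rrbracket$; once that is done, the surjectivity of $\Trd^{\bord}_{\mathrm i}$ on $\dot V_\Scal(\nbord)$ closes the argument.
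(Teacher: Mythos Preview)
Your proof is correct and uses the same underlying ingredients as the paper's — Green's formula together with the surjectivity of the trace onto $\Hpo$ — but the organization differs. The paper argues the two inclusions directly: for $u\in(\ker\llbracket\Trd^{\bord}\rrbracket)^\perp$ it first tests against $C_c^\infty(\nbord)\subset\ker\llbracket\Trd^{\bord}\rrbracket$ to obtain harmonicity, and then applies Green's formula against general $v\in\ker\llbracket\Trd^{\bord}\rrbracket$ to conclude $\llbracket\partial_\nu u\rrbracket=0$. You instead front-load the structure by writing $\ker\llbracket\Trd^{\bord}\rrbracket=\dot V_\Scal(\nbord)\oplus\dot H^1_0(\nbord)$, so that harmonicity of elements of the orthogonal complement comes for free from $(\dot H^1_0)^\perp=\dot V_0$, and the remaining work reduces to identifying $\dot V_\Scal^\perp\cap\dot V_0$ with $\dot V_\D$. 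This is a slightly cleaner packaging of the same argument; your surjectivity witness $\DOd_\Omega f\oplus\DOd_{\overline{\Omega}^c}f$ makes explicit what the paper's line ``$\Trd^{\bord}(\ker\llbracket\Trd^{\bord}\rrbracket)=\Hpo$ since $\Omega$ is an extension domain'' leaves implicit.
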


\begin{proof}
The inclusion $\dot{V}_\D(\nbord)\subset (\mathrm{ker}\llbracket\Trd^{\bord}\rrbracket)^\perp$ follows from the fact that $u\in \dot{V}_\D(\nbord)$ implies $\llbracket\ddno u{}|_{\bord}\rrbracket=0$ and Green's formula.
For the other inclusion, let $u\in (\mathrm{ker}\llbracket\Trd^{\bord}\rrbracket)^\perp$. Then, it holds
\begin{equation*}
\forall v\in \mathrm{ker}\llbracket\Trd^{\bord}\rrbracket,\quad \int_{\nbord}{\nabla u\cdot\nabla v\,\dx}+\int_{\nbord}{uv\,\dx}=0.
\end{equation*}
Since smooth functions compactly supported (up to locally constant functions) on $\nbord$ are in $\mathrm{ker}\llbracket\Trd^{\bord}\rrbracket$, it holds $-\Delta u=0$ weakly on $\nbord$. Therefore, by Green's formula,
\begin{equation*}
\forall v\in \mathrm{ker}\llbracket\Trd^{\bord}\rrbracket,\quad\bigg\langle\bigg\llbracket\ddno u{}\Big|_{\bord}\bigg\rrbracket,\Trd^{\bord} v\bigg\rangle_{\B'\!,\,\B}=\langle u,v\rangle_{\dot H^1(\R^n)}=0,
\end{equation*}
where we denote $\Trd^{\bord} v:=\Trd_{\mathrm i}^{\bord} v=\Trd_{\mathrm e}^{\bord} v$ since the interior and exterior traces of $v$ coincide.
Since $\Trd^{\bord}(\mathrm{ker}\llbracket\Trd^{\bord}\rrbracket)=\Hpo$ since $\Omega$ is an extension domain, we can deduce $\llbracket\ddno u{}|_{\bord}\rrbracket=0$, hence $u\in \dot{V}_\D(\nbord)$.
\end{proof}

\subsection{Convergence on interior domains}\label{Subsec:CV-Int}

In this part, we investigate the link between the convergence of $\dot H^1$ functions defined on interior domains and the convergence of the associated interior boundary values. If $(\Omega_k)_{k\in\N}$ is a non-decreasing sequence of domains of union $\Omega$ (not necessarily admissible domains at this point), one can construct elements of $\dot H^1(\Omega_k)$ from $\dot H^1(\Omega)$ simply by considering restrictions.

\begin{lemma}\label{Lem:CV-H1-i}
Let $\Omega$ be a domain of $\R^n$ and $(\Omega_k)_{k\in\N}$ be a non-decreasing sequence of domains such that $\Omega_k\nearrow\Omega$. Then, it holds
\begin{equation*}
\dot H^1(\Omega_k)\xrightarrow[k\to\infty]{}\dot H^1(\Omega)\quad\mbox{through}\quad\big(\dot H^1(\Omega),(\cdot|_{\Omega_k})_{k\in\N}\big).
\end{equation*}
\end{lemma}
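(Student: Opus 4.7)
The plan is to verify the two ingredients of Definition~\ref{Def:CV-Hilb}: first, that $\mathcal{T}_k u := u|_{\Omega_k}$ gives a well-defined linear map $\dot H^1(\Omega) \to \dot H^1(\Omega_k)$, and second, that $\lVert \mathcal{T}_k u\rVert_{\dot H^1(\Omega_k)} \to \lVert u\rVert_{\dot H^1(\Omega)}$ for every $u$ in the dense subspace specified in the statement, which here is simply $\dot H^1(\Omega)$ itself so that the density condition is trivial.

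For the first step, the key observation is that each $\Omega_k$ is connected (being a domain) and contained in $\Omega$, hence lies inside a single connected component of $\Omega$. Therefore, if two $H^1_{\mathrm{loc}}$ representatives of the same class in $\dot H^1(\Omega)$ differ by a locally constant function on $\Omega$, their restrictions to $\Omega_k$ differ by a single constant on $\Omega_k$, hence represent the same class in $\dot H^1(\Omega_k)$. This makes $\mathcal{T}_k$ a well-defined linear operator; its boundedness (with norm at most one) is immediate from the fact that $\lvert\nabla u\rvert^2$ is being integrated over a smaller set.

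For the norm convergence, I would invoke the monotone convergence theorem. For any $u\in \dot H^1(\Omega)$, pick a representative; the function $\lvert\nabla u\rvert^2$ is nonnegative and integrable on $\Omega$, and since $(\Omega_k)_{k\in\N}$ is non-decreasing with $\bigcup_k\Omega_k=\Omega$, the sequence $(\mathds{1}_{\Omega_k}\lvert\nabla u\rvert^2)_{k\in\N}$ is non-decreasing and converges pointwise on $\Omega$ to $\lvert\nabla u\rvert^2$. Monotone convergence then yields
\[
\lVert \mathcal{T}_k u\rVert_{\dot H^1(\Omega_k)}^2 = \int_{\Omega_k}\lvert\nabla u\rvert^2\,\dx \xrightarrow[k\to\infty]{} \int_{\Omega}\lvert\nabla u\rvert^2\,\dx = \lVert u\rVert_{\dot H^1(\Omega)}^2,
\]
which is precisely~\eqref{CV}.

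There is no serious obstacle in this argument. The only point that could be mishandled is the compatibility of restriction with the quotient by locally constant functions defining the homogeneous Sobolev space; this is automatic here because each $\Omega_k$ is connected, but would require more care if the $\Omega_k$ were allowed to have several connected components, each sitting in a different component of $\Omega$.
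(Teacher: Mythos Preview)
Your proof is correct and follows essentially the same approach as the paper, which simply invokes pointwise convergence of the characteristic functions and dominated convergence (your use of monotone convergence is an equivalent choice here). Your extra care about well-definedness of restriction on the quotient $\dot H^1$ is a nice touch the paper leaves implicit; note that since $\Omega$ itself is assumed to be a domain and hence connected, the locally constant functions are just constants, so this step is even simpler than you indicate.
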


\begin{proof}
From the pointwise convergence of the characteristic functions and dominated convergence, for all $u\in \dot H^1(\Omega)$, it holds $\lVert u|_{\Omega_k}\rVert_{\dot H^1(\Omega_k)}\to \lVert u\rVert_{\dot H^1(\Omega)}$.
\end{proof}

\begin{remark}\label{Rem:CV-V1-i}
The operators $\cdot|_{\Omega_k}$ from Lemma~\ref{Lem:CV-H1-i} preserve harmonicity in the sense that $\dot V_0(\Omega)|_{\Omega_k}\subset \dot V_0(\Omega_k)$ for all $k\in\N$. For that matter, the following convergence holds:
\begin{equation*}
\dot V_0(\Omega_k)\xrightarrow[k\to\infty]{}\dot V_0(\Omega)\quad\mbox{through}\quad\big(\dot V_0(\Omega),(\cdot|_{\Omega_k})_{k\in\N}\big).
\end{equation*}
\end{remark}

As explained in Subsection~\ref{Subsec:CV-KS}, the notion of convergence of Hilbert spaces, and consequently, of vectors along that convergence, it very weak. In the case of the convergence described in Lemma~\ref{Lem:CV-H1-i}, in addition to the convergence of the norms, the following result proves the convergence in $\dot H^1(\R^n)$ of the Dirichlet extensions, which hints at the fact that the notion of convergence from Lemma~\ref{Lem:CV-H1-i} can be strengthened (see Proposition~\ref{Prop:Link-CV-H1Rn} below).

\begin{proposition}\label{Prop:ExtCV-i}
Let $\Omega$ be an admissible domain and $(\Omega_k)_{k\in\N}$ be a non-decreasing sequence of admissible domains such that $\Omega_k\nearrow\Omega$. Then, for all $u\in \dot H^1(\Omega)$,
\begin{equation*}
\dot{\mathrm{E}}_{\Omega_k}(u|_{\Omega_k})\xrightarrow[k\to\infty]{}\dot{\mathrm{E}}_{\Omega}u\quad\mbox{in }\dot H^1(\R^n).
\end{equation*}
\end{proposition}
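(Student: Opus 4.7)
The plan is to set $w := \dot{\mathrm{E}}_\Omega u$ and $w_k := \dot{\mathrm{E}}_{\Omega_k}(u|_{\Omega_k})$, and to deduce strong convergence in $\dot H^1(\R^n)$ by combining a norm bound with weak convergence in that Hilbert space.

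First, I would show that $\|w_k\|_{\dot H^1(\R^n)} \le \|w\|_{\dot H^1(\R^n)}$ by arguing that $w_k$ is the element of minimal $\dot H^1(\R^n)$-norm over the affine subspace
\[
A_k := \{f \in \dot H^1(\R^n) \mid f|_{\Omega_k} = u|_{\Omega_k}\}.
\]
Indeed, any $f \in A_k$ satisfies $\|f\|_{\dot H^1(\R^n)}^2 = \|u|_{\Omega_k}\|_{\dot H^1(\Omega_k)}^2 + \|f|_{\overline{\Omega_k}^c}\|_{\dot H^1(\overline{\Omega_k}^c)}^2$, and the exterior norm is minimised (under the trace-matching constraint imposed by membership in $A_k$) precisely by the Dirichlet harmonic extension, thanks to the orthogonal decomposition $\dot H^1(\overline{\Omega_k}^c) = \dot V_0(\overline{\Omega_k}^c) \oplus \dot H^1_0(\overline{\Omega_k}^c)$ from Theorem~\ref{Th:Trace}; by~\eqref{Eq:Dir-Int-Ext} this minimiser is exactly $w_k$. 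The monotonicity $\Omega_k \subset \Omega$ forces $w \in A_k$, so Hilbert-space geometry (orthogonality of $w_k$ to the direction subspace of $A_k$) yields the Pythagorean identity
\[
\|w\|_{\dot H^1(\R^n)}^2 = \|w_k\|_{\dot H^1(\R^n)}^2 + \|w - w_k\|_{\dot H^1(\R^n)}^2,
\]
in particular bounding $(w_k)$ in $\dot H^1(\R^n)$.

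Next, from any weakly convergent subsequence $w_{k_j} \rightharpoonup w^{\star}$ in $\dot H^1(\R^n)$, I would identify $w^{\star} = w$. For fixed $m \in \N$, $w_k|_{\Omega_m} = u|_{\Omega_m}$ whenever $k \ge m$; passing to the weak limit and letting $m \to \infty$ gives $w^{\star}|_\Omega = u$. For weak harmonicity of $w^{\star}$ on $\overline{\Omega}^c$, the monotonicity $\overline{\Omega_k}^c \supset \overline{\Omega}^c$ makes every $\phi \in C^\infty_c(\overline{\Omega}^c)$ a valid test function against $w_k|_{\overline{\Omega_k}^c}$; hence $\int_{\R^n} \nabla w_k \cdot \nabla \phi \,\dx = 0$ for all $k$, and weak convergence transfers this identity to $w^{\star}$. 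By uniqueness of the Dirichlet harmonic extension characterising $\dot{\mathrm{E}}_\Omega u$, $w^{\star} = w$, and since the argument applies to every weakly convergent subsequence, the whole sequence satisfies $w_k \rightharpoonup w$.

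Finally, weak lower semicontinuity of the norm together with the preceding bound forces $\|w_k\|_{\dot H^1(\R^n)} \to \|w\|_{\dot H^1(\R^n)}$, which combined with weak convergence produces strong convergence in the Hilbert space $\dot H^1(\R^n)$. The main delicacy lies in the minimality claim of the first step, which rests on identifying the interior and exterior traces on $\partial\Omega_k$ of a global $\dot H^1(\R^n)$ element via a single quasi-continuous representative -- a fact already implicit in the definition~\eqref{Eq:Dir-Int-Ext} of $\dot{\mathrm{E}}_{\Omega_k}$ but worth making explicit before invoking Theorem~\ref{Th:Trace}.
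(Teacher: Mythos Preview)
Your proposal is correct and follows essentially the same route as the paper's proof: both establish the norm bound $\lVert w_k\rVert_{\dot H^1(\R^n)}\le\lVert w\rVert_{\dot H^1(\R^n)}$ via energy minimisation on $\overline{\Omega_k}^c$, identify every weak subsequential limit as $\dot{\mathrm{E}}_\Omega u$ using the monotonicity $\Omega_k\nearrow\Omega$, and upgrade weak convergence to strong via lower semicontinuity of the norm. Your Pythagorean identity is a slight refinement (the paper only records the inequality~\eqref{Eq:Extensions-UpperBound}), and your closing remark on trace matching is a helpful clarification, but neither changes the underlying argument.
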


\begin{proof}
Let $u\in \dot H^1(\Omega)$. For $k\in\N$, denote $u_k:=u|_{\Omega_k}\in \dot H^1(\Omega_k)$. Then $u$ and $u_k$ have the same trace on $\bord_k$ and $\dot{\mathrm{E}}_{\Omega_k}u_k$ is harmonic on $\overline{\Omega}_k^c$. Therefore, by definition of $u_k$ on $\Omega_k$ and by energy minimization on $\overline{\Omega}_k^c$, it holds:
\begin{equation}\label{Eq:Extensions-UpperBound}
\forall k\in\N,\quad \rVert\dot{\mathrm{E}}_{\Omega_k}u_k\rVert_{\dot H^1(\R^n)}\le \lVert\dot{\mathrm{E}}_\Omega u\rVert_{\dot H^1(\R^n)}.
\end{equation}
Hence, there exists $u_\infty\in \dot H^1(\R^n)$ and an increasing sequence $(k_m)_m\in\N^\N$ such that:
\begin{equation*}
\dot{\mathrm{E}}_{\Omega_{k_m}}u_{k_m}\xrightharpoonup[m\to\infty]{}u_\infty\quad\mbox{in }\dot H^1(\R^n).
\end{equation*}
If $k\in\N$, then for all $\ell\ge k$, $(\dot{\mathrm{E}}_{\Omega_\ell}u_\ell)|_{\Omega_k}=u|_{\Omega_k}$. Since $\bigcup_k \Omega_k=\Omega$, we can deduce:
\begin{equation*}
u_\infty|_\Omega=u.
\end{equation*}
Moreover, since all $\dot{\mathrm{E}}_{\Omega_k}u_k$ are harmonic on $\overline{\Omega}^c$, the weak limit $u_\infty$ is harmonic on $\overline{\Omega}^c$ as well (by definition of the weak Laplacian). Therefore, $u_\infty=\dot{\mathrm{E}}_\Omega u$ is the only weak subsequential limit of $(\dot{\mathrm{E}}_{\Omega_k}u_k)_{k\in\N}$, hence 
\begin{equation}\label{Eq:Extensions-WCV}
\dot{\mathrm{E}}_{\Omega_k}u_k\xrightharpoonup[k\to\infty]{}\dot{\mathrm{E}}_\Omega u\quad\mbox{in }\dot H^1(\R^n).
\end{equation}
By~\eqref{Eq:Extensions-UpperBound} and the lower semi-continuity of the weak limit, $ \rVert\dot{\mathrm{E}}_{\Omega_k}u_k\rVert_{\dot H^1(\R^n)}\to\lVert\dot{\mathrm{E}}_\Omega u\rVert_{\dot H^1(\R^n)}$, hence
\begin{equation*}
\dot{\mathrm{E}}_{\Omega_k}u_k\xrightarrow[k\to\infty]{}\dot{\mathrm{E}}_\Omega u\quad\mbox{in }\dot H^1(\R^n).\qedhere
\end{equation*}
\end{proof}

Having defined the framework for the convergence of the interior $\dot H^1$ spaces, we turn to the convergence of the trace spaces.

\begin{lemma}\label{Lem:CV-B}
Let $\Omega$ be an admissible domain. Let $(\Omega_k)_{k\in\N}$ be a non-decreasing sequence of admissible domains such that $\Omega_k\nearrow\Omega$. Then, it holds:
\begin{equation*}
\dot \B(\bord_k)\xrightarrow[k\to\infty]{}\Hpo \quad \mbox{through }\big(\Hpo,(\trd^{\bord_k}\circ\,\dot{\mathrm{H}}_{\bord})_{k\in\N}\big),
\end{equation*}
where $\dot{\mathrm{H}}_{\bord}$ is defined by~\eqref{Eq:D-Ext}.
\end{lemma}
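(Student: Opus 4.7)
\bigskip

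The plan is to realize the trace norm on each $\partial\Omega_k$ as the Dirichlet energy of a \emph{given} harmonic function on $\Omega_k$ (namely the restriction of the global harmonic extension), and then pass to the limit in $k$ via monotone convergence on $\Omega$. Fix $f\in\Hpo$ and set $u:=\dot{\mathrm H}_{\bord}f\in\dot H^1(\R^n)$. By~\eqref{Eq:D-Ext}, $u$ is harmonic on $\Omega$. Since $\Omega_k\subset\Omega$, the restriction $u_k:=u|_{\Omega_k}$ lies in $\dot V_0(\Omega_k)$, so the composition $\trd^{\bord_k}\!\circ\,\dot{\mathrm H}_{\bord}$ is unambiguously defined, with
\begin{equation*}
(\trd^{\bord_k}\circ\,\dot{\mathrm H}_{\bord})(f)=\trd^{\bord_k}(u_k).
\end{equation*}

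Now I would apply Theorem~\ref{Th:Trace}(iii) on $\Omega_k$, which gives that $\trd^{\bord_k}:\dot V_0(\Omega_k)\to\dot\B(\bord_k)$ is an isometry with respect to $\lVert\cdot\rVert_{\dot\B(\bord_k)}$. Hence
\begin{equation*}
\lVert\trd^{\bord_k}(u_k)\rVert_{\dot\B(\bord_k)}
=\lVert u_k\rVert_{\dot H^1(\Omega_k)}
=\biggl(\int_{\Omega_k}\!\lvert\nabla u\rvert^2\,\dx\biggr)^{\!1/2}.
\end{equation*}
Applying the same theorem on $\Omega$ (together with the fact that $u|_\Omega$ is the unique harmonic representative of $f$), one has
\begin{equation*}
\lVert f\rVert_{\Hpo}=\lVert u|_\Omega\rVert_{\dot H^1(\Omega)}=\biggl(\int_\Omega\!\lvert\nabla u\rvert^2\,\dx\biggr)^{\!1/2}.
\end{equation*}

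The final step is just monotone convergence: the sequence $(\mathbf 1_{\Omega_k}\lvert\nabla u\rvert^2)_{k\in\N}$ is non-decreasing and converges pointwise almost everywhere to $\mathbf 1_\Omega\lvert\nabla u\rvert^2$ because $\Omega_k\nearrow\Omega$, while $\lvert\nabla u\rvert^2\in L^1(\Omega)$ provides the required integrable majorant. Consequently,
\begin{equation*}
\lVert\trd^{\bord_k}(u_k)\rVert_{\dot\B(\bord_k)}^2
=\int_{\Omega_k}\!\lvert\nabla u\rvert^2\,\dx
\xrightarrow[k\to\infty]{}\int_\Omega\!\lvert\nabla u\rvert^2\,\dx
=\lVert f\rVert_{\Hpo}^2,
\end{equation*}
which is precisely condition~\eqref{CV} in Definition~\ref{Def:CV-Hilb}, with $\mathcal C=\Hpo$ and $\mathcal T_k=\trd^{\bord_k}\!\circ\,\dot{\mathrm H}_{\bord}$.

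I do not expect a genuine obstacle here: the proof is essentially a bookkeeping exercise, the only subtle point being the justification that the composition $\trd^{\bord_k}\circ\,\dot{\mathrm H}_{\bord}$ makes sense---a point settled as soon as one observes that $\dot{\mathrm H}_{\bord}f$ is harmonic on $\Omega\supset\Omega_k$ so that its restriction to $\Omega_k$ lies in $\dot V_0(\Omega_k)$, the domain of $\trd^{\bord_k}$.
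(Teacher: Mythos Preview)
Your proof is correct and follows essentially the same approach as the paper: set $u=\dot{\mathrm H}_{\bord}f$, use that $u|_{\Omega_k}\in\dot V_0(\Omega_k)$ together with the isometry property of $\trd^{\bord_k}$ from Theorem~\ref{Th:Trace}(iii), and conclude by convergence of $\int_{\Omega_k}|\nabla u|^2\,\dx$ to $\int_\Omega|\nabla u|^2\,\dx$. The paper phrases the last step as dominated convergence (via Lemma~\ref{Lem:CV-H1-i}) rather than monotone convergence, but since you have both monotonicity and an integrable majorant, either theorem applies; note that monotone convergence alone does not require the majorant you mention.
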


\begin{proof}
Let $f\in\Hpo$. As for Lemma~\ref{Lem:CV-H1-i}, it holds:
\begin{equation*}
\lVert(\dot{\mathrm{H}}_{\bord}f)|_{\Omega_k}\rVert_{\dot H^1(\Omega_k)}^2\xrightarrow[k\to\infty]{}\lVert(\dot{\mathrm{H}}_{\bord}f)|_\Omega\rVert_{\dot H^1(\Omega)}^2.
\end{equation*}
Since the trace operators $\trd_{\mathrm{i}}^{\bord_k}:\dot V_0(\Omega_k)\to\dot \B(\bord_k)$ are isometric, we can deduce:
\begin{equation*}
\lVert\trd^{\bord_k}\dot{\mathrm{H}}_{\bord}f\rVert_{\dot \B(\bord_k)}\xrightarrow[k\to\infty]{}\lVert\trd^{\bord}\dot{\mathrm{H}}_{\bord}f\rVert_{\Hpo}=\lVert f\rVert_{\Hpo}.\qedhere
\end{equation*}
\end{proof}

\begin{remark}
In Lemma~\ref{Lem:CV-B}, we have made use of the fact that $\dot{\mathrm{H}}_{\bord}(\Hpo)\subset \dot H^1(\R^n)$, hence the traces involved can be regarded as either interior or exterior.
\end{remark}

The isometry defined by the trace operators on the spaces of harmonic functions (Theorem~\ref{Th:Trace}) allows to prove that, in the framework of Lemmas~\ref{Lem:CV-H1-i} and~\ref{Lem:CV-B}, convergence of harmonic functions and of their traces are equivalent.

\begin{proposition}\label{Prop:TrCV-i}
Let $\Omega$ be an admissible domain and $(\Omega_k)_{k\in\N}$ be a non-decreasing sequence of admissible domains such that $\Omega_k\nearrow\Omega$. Let $u\in \dot V_0(\Omega)$ and $(u_k\in \dot V_0(\Omega_k))_{k\in\N}$. Then, the following equivalence holds:
\begin{equation*}
u_k\xrightarrow[k\to\infty]{(\cdot|_{\Omega_k})}u\quad\iff\quad (\trd_{\mathrm{i}}^{\bord_k}u_k\in\dot \B(\bord_k))_{k\in\N}\xrightarrow[]{(\trd^{\bord_k}\circ\,\dot{\mathrm{H}}_{\bord})}\trd^{\bord}_{\mathrm{i}}u\in\Hpo.
\end{equation*}
\end{proposition}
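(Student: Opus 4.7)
The strategy is to convert both convergences into a statement about the decay of a single norm via Lemma~\ref{Lem:CV-KS}, and then identify these two norms via the isometry of the trace operator on the space of harmonic functions (Theorem~\ref{Th:Trace}, Point (iii)).

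First I would apply Lemma~\ref{Lem:CV-KS} to both sides of the equivalence. Since the framework of Lemma~\ref{Lem:CV-H1-i} uses $\dot H^1(\Omega)$ itself as the dense subspace and the framework of Lemma~\ref{Lem:CV-B} uses $\Hpo$ itself as the dense subspace, the left-hand side reduces to $\lVert u|_{\Omega_k}-u_k\rVert_{\dot H^1(\Omega_k)}\to 0$ and the right-hand side reduces to $\lVert\trd^{\bord_k}(\dot{\mathrm H}_{\bord}\trd^{\bord}_{\mathrm i}u)-\trd^{\bord_k}_{\mathrm i}u_k\rVert_{\dot\B(\bord_k)}\to 0$.

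Second, I would simplify the expression $\trd^{\bord_k}(\dot{\mathrm H}_{\bord}\trd^{\bord}_{\mathrm i}u)$. By definition~\eqref{Eq:D-Ext}, $\dot{\mathrm H}_{\bord}=\dot{\mathrm E}_\Omega\circ\DOd_\Omega$, and since $u\in\dot V_0(\Omega)$ and $\DOd_\Omega=(\trd^{\bord}_{\mathrm i})^{-1}$, we have $\DOd_\Omega\trd^{\bord}_{\mathrm i}u=u$, so that $\dot{\mathrm H}_{\bord}\trd^{\bord}_{\mathrm i}u=\dot{\mathrm E}_\Omega u$. Because $\Omega_k\subset\Omega$ and $\dot{\mathrm E}_\Omega u$ coincides with $u$ on $\Omega$, its restriction to $\Omega_k$ is exactly $u|_{\Omega_k}$, and hence its trace on $\bord_k$ is $\trd^{\bord_k}_{\mathrm i}(u|_{\Omega_k})$.

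Third, I would invoke the isometry of $\trd^{\bord_k}_{\mathrm i}:\dot V_0(\Omega_k)\to\dot\B(\bord_k)$. Since $u$ is harmonic on $\Omega$, its restriction $u|_{\Omega_k}$ belongs to $\dot V_0(\Omega_k)$, and so does $u_k$ by assumption. Thus
\begin{equation*}
\lVert\trd^{\bord_k}_{\mathrm i}(u|_{\Omega_k})-\trd^{\bord_k}_{\mathrm i}u_k\rVert_{\dot\B(\bord_k)}=\lVert u|_{\Omega_k}-u_k\rVert_{\dot H^1(\Omega_k)},
\end{equation*}
which immediately yields the equivalence.

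No substantial obstacle is expected: the argument is essentially a bookkeeping exercise combining the isometric identification from Theorem~\ref{Th:Trace} with the reduction from Lemma~\ref{Lem:CV-KS}. The only point requiring care is checking that $\dot{\mathrm H}_{\bord}\trd^{\bord}_{\mathrm i}u$ restricted to $\Omega_k$ really equals $u|_{\Omega_k}$, which follows from the fact that $u$ is already harmonic on $\Omega$ so the Dirichlet extension $\dot{\mathrm E}_\Omega$ acts as the identity on the $\Omega$-side.
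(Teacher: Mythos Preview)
Your proposal is correct and follows essentially the same approach as the paper: reduce both convergences via Lemma~\ref{Lem:CV-KS}, identify $\dot{\mathrm H}_{\bord}\trd^{\bord}_{\mathrm i}u=\dot{\mathrm E}_\Omega u$, and then use the isometry of $\trd^{\bord_k}_{\mathrm i}$ on $\dot V_0(\Omega_k)$ to equate the two norms. The paper's proof is simply a more condensed version of exactly these steps.
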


\begin{proof}
Using the isometric properties of the interior trace, it holds
\begin{equation}\label{Eq:TrCVDom}
\big\lVert\trd^{\bord_k}_{\mathrm{i}} u_k - \trd^{\bord_k}\underbrace{({\dot{\mathrm{H}}}_{\bord}\trd_{\mathrm{i}}^{\bord}u)}_{\dot{\mathrm{E}}_\Omega u}\big\rVert_{\dot \B(\bord_k)}= \big\lVert u_k- u|_{\Omega_k}\big\rVert_{\dot H^1(\Omega_k)}.
\end{equation}
The equivalence follows (see Lemma~\ref{Lem:CV-KS}).
\end{proof}

Regarding the link between convergence of $\dot H^1$ functions on interior domains and of interior boundary values, Proposition~\ref{Prop:TrCV-i} states the equivalence in the case of harmonic functions.
Given the decomposition $\dot H^1(\Omega)=\dot V_0(\Omega)\oplus \dot H^1_0(\Omega)$, all that is left is to study the case of functions in the kernel of the trace operator.
Unlike what was pointed out in Remark~\ref{Rem:CV-V1-i}, the operators $\cdot|_{\Omega_k}$ from Lemma~\ref{Lem:CV-H1-i} do not preserve trace nullity, in the sense that $\dot H^1_0(\Omega)|_{\Omega_k}\not\subset \dot H^1_0(\Omega_k)$ in general.
However, since the sequence $(\Omega_k)_{k\in\N}$ also converges to $\Omega$ in the sense of compact sets -- by monotone convergence, adapting the proof of Theorem~\ref{Th:DyadApproxCV}, Point (iii) for example --, then $(\dot H^1_0(\Omega_k))_{k\in\N}$ can be seen as a non-decreasing sequence of subsets of $\dot H^1_0(\Omega)$ with dense union.

\begin{lemma}\label{Lem:H10-Density}
Let $\Omega$ be an admissible domain and $(\Omega_k)_{k\in\N}$ be a non-decreasing sequence of admissible domains such that $\Omega_k\nearrow\Omega$. Then $\bigcup_k \dot H^1_0(\Omega_k)$ is a dense subspace of $\dot H^1_0(\Omega)$.
\end{lemma}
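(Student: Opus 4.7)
The plan is to identify $\bigcup_k \dot H^1_0(\Omega_k)$ with a subspace of $\dot H^1_0(\Omega)$ via extension by zero, and then establish density by approximating elements of $\dot H^1_0(\Omega)$ with smooth compactly supported functions whose supports are eventually engulfed by the $\Omega_k$.

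First I would verify that the inclusion $\dot H^1_0(\Omega_k) \hookrightarrow \dot H^1_0(\Omega)$ makes sense. Given $v \in \dot H^1_0(\Omega_k) = \operatorname{Ker}\Trd^{\partial\Omega_k}_{\mathrm{i}}$, its extension $\tilde v$ by zero to $\R^n$ lies in $\dot H^1(\R^n)$ (this is the standard capacity-theoretic fact that functions with vanishing quasi-continuous trace across an admissible boundary extend by zero in $\dot H^1$; see the trace framework of Subsection~\ref{Subsec:AdDom-Ext-LP}). Since $\tilde v$ vanishes on $\overline{\Omega_k}^c$, which contains $\partial\Omega$ because $\Omega_k \subset \Omega$, its restriction to $\Omega$ satisfies $\Trd^{\partial\Omega}_{\mathrm{i}}(\tilde v|_\Omega) = 0$. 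Hence $\tilde v|_\Omega \in \dot H^1_0(\Omega)$, and the map $v \mapsto \tilde v|_\Omega$ is norm-preserving (its $\dot H^1(\Omega)$ norm equals $\lVert v\rVert_{\dot H^1(\Omega_k)}$ since $\tilde v$ is zero on $\Omega\setminus\Omega_k$). So $\bigcup_k \dot H^1_0(\Omega_k)$ is naturally a subspace of $\dot H^1_0(\Omega)$, and the inclusion is monotone in $k$.

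Next I would invoke the density of (equivalence classes of) $C_c^\infty(\Omega)$ functions in $\dot H^1_0(\Omega)$, a standard consequence of the fact that elements of $\dot H^1_0(\Omega)$ extend by zero to $\dot H^1(\R^n)$ and can then be mollified and cut off against any exhaustion of $\Omega$ by compact sets; the admissibility of $\Omega$ (extension property plus positive capacity of $\partial\Omega$) suffices, and this is precisely the density fact already invoked in the proof of Lemma~\ref{Lem:DLP-ortho}. Then for any $u \in \dot H^1_0(\Omega)$, pick $(\varphi_n)_{n\in\N} \subset C_c^\infty(\Omega)$ with $\varphi_n \to u$ in $\dot H^1(\Omega)$. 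Each support $K_n := \operatorname{supp}\varphi_n$ is a compact subset of $\Omega$. Since $\Omega_k \nearrow \Omega$ implies convergence in the sense of compact sets (by monotone convergence, as noted in the excerpt just before the lemma, and analogously to Theorem~\ref{Th:DyadApproxCV}, Point (iii)), for each $n$ there exists $k_n \in \N$ such that $K_n \subset \Omega_{k_n}$. Then $\varphi_n|_{\Omega_{k_n}} \in C_c^\infty(\Omega_{k_n}) \subset \dot H^1_0(\Omega_{k_n})$, and under the identification above it corresponds to $\varphi_n$ itself in $\dot H^1_0(\Omega)$. The sequence $(\varphi_n)_{n}$ therefore lies in $\bigcup_k \dot H^1_0(\Omega_k)$ and converges to $u$, establishing density.

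The only delicate point is the density of $C_c^\infty(\Omega)$ in $\dot H^1_0(\Omega)$, which in the homogeneous setting for general admissible (not necessarily Lipschitz) domains requires the capacity-theoretic identification of the kernel of $\Trd^{\partial\Omega}_{\mathrm{i}}$ with functions vanishing quasi-everywhere on $\partial\Omega$; everything else is a straightforward combination of the zero-extension principle and the compact-set convergence $\Omega_k \nearrow \Omega$.
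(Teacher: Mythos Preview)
Your proposal is correct and follows essentially the same route as the paper: approximate $u\in\dot H^1_0(\Omega)$ by $\varphi_n\in C_c^\infty(\Omega)$, then use the compact-set convergence $\Omega_k\nearrow\Omega$ to place each $\operatorname{supp}\varphi_n$ inside some $\Omega_{k_n}$. The paper's proof is more terse (it takes the extension-by-zero identification and the density of $C_c^\infty(\Omega)$ in $\dot H^1_0(\Omega)$ for granted), while you spell out both of these points explicitly and flag the latter as the only delicate step, which is a fair assessment.
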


\begin{proof}
Let $u\in \dot H^1_0(\Omega)$. There exists $(\varphi_k)_k\in C^\infty_0(\Omega)^\N$ such that $\varphi_k\to u$ in $\dot H^1(\Omega)$. For all $k\in\N$, by the convergence in the sense of compact sets, there exists $m\in\N$ such that $\mathrm{supp}\,\varphi_k\subset\Omega_m$, hence $\varphi_k\in \dot H^1_0(\Omega_m)$. The result follows.
\end{proof}

In addition, the following lemma holds.

\begin{lemma}\label{Lem:CV-Proj-Density}
Assume $H_k$ is a non-decreasing sequence of Hilbert subspaces of $H$ such that $\bigcup_k H_k$ is dense in $H$. For $k\in\N$, denote by $p_k$ the orthogonal projection from $H$ to $H_k$. Then, it holds
\begin{equation*}
\forall u\in H,\quad p_ku\xrightarrow[k\to\infty]{}u\quad\mbox{in }H.
\end{equation*}
\end{lemma}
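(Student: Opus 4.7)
The plan is to argue directly by a standard density-plus-contractivity argument, exploiting that orthogonal projections onto closed subspaces have operator norm at most $1$ and that they fix the vectors of their range.

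Fix $u\in H$ and $\varepsilon>0$. First, by the assumption that $\bigcup_k H_k$ is dense in $H$, I can choose $m\in\N$ and $v\in H_m$ such that $\lVert u-v\rVert_H<\varepsilon$. Next, since the sequence $(H_k)_{k\in\N}$ is non-decreasing, $v$ belongs to $H_k$ for every $k\geq m$, so that $p_k v=v$ for all such $k$. Now I use that the orthogonal projection $p_k$ onto the closed subspace $H_k$ has operator norm at most one, which gives
\begin{equation*}
\lVert p_k u - v\rVert_H = \lVert p_k(u-v)\rVert_H \leq \lVert u-v\rVert_H<\varepsilon
\end{equation*}
for all $k\geq m$. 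Combining with the triangle inequality,
\begin{equation*}
\lVert p_k u - u\rVert_H \leq \lVert p_k u - v\rVert_H + \lVert v-u\rVert_H < 2\varepsilon,
\end{equation*}
which shows that $p_k u\to u$ in $H$ as $k\to\infty$.

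There is no real obstacle here: the only nontrivial ingredients are the density of the union and the non-expansiveness of orthogonal projections, both of which are standard. The monotonicity of the sequence $(H_k)_{k\in\N}$ is exactly what makes it possible to keep $v$ fixed from some rank on, so that the estimate extends uniformly in $k$ once $k\geq m$.
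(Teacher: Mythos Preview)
Your proof is correct and follows essentially the same approach as the paper's: pick an element of some $H_m$ close to $u$, use that projections are contractive and fix their range, and conclude via the triangle inequality. The paper phrases this with a sequence $(\varphi_m)$ and invokes the monotonicity of $\lVert u-p_ku\rVert_H$ to pass from subsequential to full convergence, whereas you argue directly for all $k\ge m$; the underlying idea is identical.
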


\begin{proof}
Let $u\in H$. Let $(\varphi_m)_{m\in\N}$ be such that $\varphi_m\to u$ and $\varphi_m\in H_{k_m}$ for all $m\in\N$, where $(k_m)\in\N^\N$ is increasing. Then, it holds
\begin{equation*}
\lVert\varphi_m-p_{k_m}u\rVert_H=\lVert p_{k_m}(\varphi_m-u)\rVert_H\le\lVert\varphi_m-u\rVert_H\xrightarrow[m\to\infty]{}0.
\end{equation*}
Therefore, $\lVert u-p_{k_m}u\rVert_H\to0$ as $m\to\infty$. Since $(H_k)_k$ forms a non-decreasing sequence, $(\lVert u-p_ku\rVert_H)_k$ is non-increasing so that $p_ku\to u$ in $H$ as $k\to\infty$.
\end{proof}

Under the hypotheses of Lemma~\ref{Lem:H10-Density}, the last two lemmas allow to prove that the orthogonal projectors from $\dot H^1_0(\Omega)$ to $\dot H^1_0(\Omega_k)$ converge to the identity on $\dot H^1_0(\Omega)$. We extend this result to the case in which the projectors are defined on all of $\dot H^1(\Omega)$ instead.

\begin{proposition}\label{Lem:CV-Proj-H10}
Let $\Omega$ be an domain and $(\Omega_k)_{k\in\N}$ be a non-decreasing sequence of admissible domains such that $\Omega_k\nearrow\Omega$. Denote by $p^0$ the orthogonal projection from $\dot H^1(\Omega)$ to $\dot H^1_0(\Omega)$ and for $k\in\N$, denote by $p^0_k$ the orthogonal projection from $\dot H^1(\Omega)$ to $\dot H^1_0(\Omega_k)$ (thought of as a subspace of $\dot H^1_0(\Omega)$). Then, it holds
\begin{equation*}
\forall u\in \dot H^1(\Omega),\quad p^0_ku\xrightarrow[k\to\infty]{\dot H^1(\Omega)}p^0u.
\end{equation*}\end{proposition}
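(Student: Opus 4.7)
The plan is to reduce the statement to Lemma~\ref{Lem:CV-Proj-Density} by exploiting the orthogonal decomposition $\dot H^1(\Omega) = \dot V_0(\Omega) \oplus \dot H^1_0(\Omega)$ given by Theorem~\ref{Th:Trace}, Point~(i). The key observation is that each $\dot H^1_0(\Omega_k)$, extended by zero, sits inside $\dot H^1_0(\Omega)$, so the projection $p^0_k$ annihilates the harmonic component of any $u \in \dot H^1(\Omega)$.

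More precisely, I would first write $u = (u - p^0 u) + p^0 u$, where $u - p^0 u \in \dot V_0(\Omega)$ and $p^0 u \in \dot H^1_0(\Omega)$. Since $\dot H^1_0(\Omega_k) \subset \dot H^1_0(\Omega)$ and $\dot V_0(\Omega) = \dot H^1_0(\Omega)^\perp$ in $\dot H^1(\Omega)$, every element of $\dot V_0(\Omega)$ is orthogonal to $\dot H^1_0(\Omega_k)$. Consequently, $p^0_k(u - p^0 u) = 0$, and therefore
\begin{equation*}
p^0_k u = p^0_k (p^0 u), \qquad k \in \N.
\end{equation*}
The problem thus reduces to proving that $p^0_k v \to v$ in $\dot H^1(\Omega)$ for every $v \in \dot H^1_0(\Omega)$, since $p^0 u \in \dot H^1_0(\Omega)$ satisfies $p^0(p^0 u) = p^0 u$.

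To conclude, I would apply Lemma~\ref{Lem:CV-Proj-Density} with $H = \dot H^1_0(\Omega)$ and $H_k = \dot H^1_0(\Omega_k)$: the sequence $(\dot H^1_0(\Omega_k))_{k\in\N}$ is non-decreasing by monotonicity of the domains and extension by zero, and its union is dense in $\dot H^1_0(\Omega)$ by Lemma~\ref{Lem:H10-Density}. The hypotheses of Lemma~\ref{Lem:CV-Proj-Density} are therefore satisfied, which yields $p^0_k(p^0 u) \to p^0 u$ in $\dot H^1_0(\Omega)$, hence in $\dot H^1(\Omega)$. Combining this with the previous identity gives $p^0_k u \to p^0 u$, as required.

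There is no substantial obstacle in this argument: the proof is essentially a two-line packaging of Lemmas~\ref{Lem:H10-Density} and~\ref{Lem:CV-Proj-Density} once one observes that the harmonic part of $u$ is killed by each $p^0_k$. The only subtlety worth emphasizing is the verification that $\dot H^1_0(\Omega_k)$ can legitimately be viewed as a closed subspace of $\dot H^1_0(\Omega)$ via extension by zero, so that the abstract projection lemma applies verbatim; this is immediate from the inclusion $\Omega_k \subset \Omega$ and the definition of $\dot H^1_0$.
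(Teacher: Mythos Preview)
Your proof is correct and follows essentially the same approach as the paper: both reduce to the case $u\in\dot H^1_0(\Omega)$ by establishing $p^0_k=p^0_k\circ p^0$, then invoke Lemmas~\ref{Lem:H10-Density} and~\ref{Lem:CV-Proj-Density}. Your justification of the identity $p^0_k=p^0_k\circ p^0$ via the direct orthogonality $\dot V_0(\Omega)\perp\dot H^1_0(\Omega_k)$ is in fact slightly more streamlined than the paper's, which reaches the same conclusion through an auxiliary triple orthogonal decomposition of $\dot H^1(\Omega)$.
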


\begin{remark}\label{Rem:ProjH10}
In the definition of $p_k^0$, $\dot H^1_0(\Omega_k)$ is thought of as its extension by $0$ to $\Omega$. In that setting, for all $u\in \dot H^1(\Omega)$, it holds $p^0_ku=p^0_k(u\mathds{1}_{\Omega_k})$. It follows that $p^0_k$ can be seen as the orthogonal projection from $\dot H^1(\Omega_k)$ to $\dot H^1_0(\Omega_k)$.
\end{remark}

\begin{proof}
Lemmas~\ref{Lem:H10-Density} and~\ref{Lem:CV-Proj-Density} yield the result in the case $u\in \dot H^1_0(\Omega)$ (in which case $p^0u=u$). To extend the convergence to all $u\in \dot H^1(\Omega)$, it is enough to prove $p^0_k\circ p^0=p^0_k$. The space $\dot H^1(\Omega)$ can be decomposed into the following direct orthogonal sums:
\begin{equation*}
\dot H^1(\Omega)=\dot V_0(\Omega)\oplus \dot H^1_0(\Omega)=\dot H^1_0(\Omega_k)\oplus \dot H^1_0(\Omega_k)^\perp,
\end{equation*}
where the orthogonal to $\dot H^1_0(\Omega_k)$ in $\dot H^1(\Omega)$ is described by
\begin{equation*}
\dot H^1_0(\Omega_k)^\perp=\big\{v\in \dot H^1(\Omega)\;\mid\;v|_{\Omega_k}\in \dot V_0(\Omega_k)\big\}.
\end{equation*}
Noticing that $\dot V_0(\Omega)\subset \dot H^1_0(\Omega_k)^\perp$, $\dot H^1(\Omega)$ can be further decomposed into
\begin{equation}\label{Eq:DoubleDecompH1}
\dot H^1(\Omega)=\dot V_0(\Omega)\oplus (\dot H^1_0(\Omega)\cap \dot H^1_0(\Omega_k)) \oplus (\dot H^1_0(\Omega)\cap \dot H^1_0(\Omega_k)^\perp).
\end{equation}
where the direct sums are orthogonal. Let $u\in \dot H^1(\Omega)$, decomposed along~\eqref{Eq:DoubleDecompH1} into $u=v+\varphi+w$. By Remark~\ref{Rem:ProjH10} and the fact that $v|_{\Omega_k},w|_{\Omega_k}\in \dot V_0(\Omega_k)$, we can deduce
\begin{equation*}
p^0_k((v+\varphi+w)|_{\Omega_k})=p^0_k((\varphi+w)|_{\Omega_k})=\varphi|_{\Omega_k},
\end{equation*}
hence $p_k^0u=p_k^0(p^0u)$.
\end{proof}

The convergence of the projectors on the kernel of the trace operator from Lemma~\ref{Lem:CV-Proj-H10} allows to complement Proposition~\ref{Prop:TrCV-i} and prove that the interior convergence implies the trace convergence without assuming harmonicity. Of course, the converse cannot hold without that assumption, since the $\dot H^1_0$ part of an $\dot H^1$ function is lost upon applying the trace operator.

\begin{proposition}\label{Prop:Cv-Tr-H1-i}
Let $\Omega$ be an admissible domain and $(\Omega_k)_{k\in\N}$ be a non-decreasing sequence of admissible domains such that $\Omega_k\nearrow\Omega$. Let $(u_k\in \dot H^1(\Omega_k))_{k\in\N}$ and $u\in \dot H^1(\Omega)$. Then, if $u_k\xrightarrow[k\to\infty]{(\cdot|_{\Omega_k})} u$, it holds
\begin{equation*}
\Trd_{\mathrm{i}}^{\bord_k}u_k\xrightarrow[k\to\infty]{(\trd^{\bord_k}\circ\,\dot{\mathrm{H}}_{\bord})}\Trd_{\mathrm{i}}^{\bord}u,
\end{equation*}
using the convergence frameworks from Lemmas~\ref{Lem:CV-H1-i} and~\ref{Lem:CV-B}.
\end{proposition}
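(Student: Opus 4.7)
The plan is to reduce the statement to the harmonic case of Proposition~\ref{Prop:TrCV-i} by exploiting the orthogonal decomposition $\dot H^1 = \dot V_0 \oplus \dot H^1_0$ and using Proposition~\ref{Lem:CV-Proj-H10} to control the kernel part. Since traces annihilate $\dot H^1_0$-components, the trace of $u_k$ equals that of its harmonic projection and likewise for $u$, so only the harmonic parts matter for the conclusion. The main obstacle is that restriction $\cdot|_{\Omega_k}$ does \emph{not} preserve trace nullity, so $w|_{\Omega_k}$ is not generally in $\dot H^1_0(\Omega_k)$ even when $w \in \dot H^1_0(\Omega)$; this is precisely where Proposition~\ref{Lem:CV-Proj-H10} comes in.

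First, I would decompose $u = v \oplus w$ along $\dot V_0(\Omega) \oplus \dot H^1_0(\Omega)$ and, for each $k$, $u_k = v_k \oplus w_k$ along $\dot V_0(\Omega_k) \oplus \dot H^1_0(\Omega_k)$, so that $\Trd^{\bord}_{\mathrm{i}} u = \trd^{\bord}_{\mathrm{i}} v$ and $\Trd^{\bord_k}_{\mathrm{i}} u_k = \trd^{\bord_k}_{\mathrm{i}} v_k$. By Proposition~\ref{Prop:TrCV-i} applied to the harmonic sequence $(v_k)$ and limit $v$, the target trace convergence is equivalent to
\begin{equation*}
\lVert v_k - v|_{\Omega_k}\rVert_{\dot H^1(\Omega_k)} \xrightarrow[k\to\infty]{} 0.
\end{equation*}

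Next, I would insert the $\dot H^1(\Omega_k)$-orthogonal projection onto $\dot V_0(\Omega_k)$, namely $\pi_k := I - p_k^0$, and write
\begin{equation*}
\lVert v_k - v|_{\Omega_k}\rVert_{\dot H^1(\Omega_k)} \le \lVert v_k - \pi_k(u|_{\Omega_k})\rVert_{\dot H^1(\Omega_k)} + \lVert \pi_k(u|_{\Omega_k}) - v|_{\Omega_k}\rVert_{\dot H^1(\Omega_k)}.
\end{equation*}
The first term equals $\lVert \pi_k(u_k - u|_{\Omega_k})\rVert_{\dot H^1(\Omega_k)}$ because $v_k = \pi_k u_k$, and so it is bounded by $\lVert u_k - u|_{\Omega_k}\rVert_{\dot H^1(\Omega_k)}$, which tends to $0$ by the assumed convergence and Lemma~\ref{Lem:CV-KS}.

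For the second term, I would compute $\pi_k(u|_{\Omega_k}) - v|_{\Omega_k}$ using $u|_{\Omega_k} = v|_{\Omega_k} + w|_{\Omega_k}$ and the fact that $v|_{\Omega_k} \in \dot V_0(\Omega_k)$ by Remark~\ref{Rem:CV-V1-i} (so $p_k^0(v|_{\Omega_k}) = 0$); this yields
\begin{equation*}
\pi_k(u|_{\Omega_k}) - v|_{\Omega_k} = w|_{\Omega_k} - p_k^0(w|_{\Omega_k}).
\end{equation*}
By Remark~\ref{Rem:ProjH10}, $p_k^0(w|_{\Omega_k})$ may be identified with $p_k^0 w$ viewed on all of $\Omega$ (extended by zero outside $\Omega_k$), and Proposition~\ref{Lem:CV-Proj-H10} applied to $w \in \dot H^1_0(\Omega)$ gives $p_k^0 w \to p^0 w = w$ in $\dot H^1(\Omega)$. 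Hence
\begin{equation*}
\lVert w|_{\Omega_k} - p_k^0 w\rVert_{\dot H^1(\Omega_k)} \le \lVert w - p_k^0 w\rVert_{\dot H^1(\Omega)} \xrightarrow[k\to\infty]{} 0,
\end{equation*}
which shows the second term vanishes. Combining the two estimates yields $\lVert v_k - v|_{\Omega_k}\rVert_{\dot H^1(\Omega_k)} \to 0$, and Proposition~\ref{Prop:TrCV-i} concludes.
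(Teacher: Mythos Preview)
Your proof is correct and follows essentially the same strategy as the paper's: both use the orthogonal decomposition $\dot H^1=\dot V_0\oplus\dot H^1_0$, invoke Proposition~\ref{Lem:CV-Proj-H10} to handle the failure of $\cdot|_{\Omega_k}$ to preserve trace nullity, and conclude via Proposition~\ref{Prop:TrCV-i}. The only cosmetic difference is that the paper first shows $\varphi_k\to\varphi$ through $(\cdot|_{\Omega_k})$ and then deduces $v_k\to v$, whereas you bound $\lVert v_k-v|_{\Omega_k}\rVert_{\dot H^1(\Omega_k)}$ directly using the complementary projection $\pi_k=I-p_k^0$; the two arguments are equivalent.
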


\begin{proof}
Denote $u_k:=v_k+\varphi_k$ and $u=v+\varphi$, where $v_k\in \dot V_0(\Omega_k)$, $\varphi_k\in \dot H^1_0(\Omega_k)$, $v\in \dot V_0(\Omega)$ and $\varphi\in \dot H^1_0(\Omega)$.
Since $p_k^0$ is an orthogonal projection, it holds
\begin{align*}
\lVert\varphi|_{\Omega_k}-\varphi_k\rVert_{\dot H^1(\Omega_k)}-\lVert p^0u-p^0_ku\rVert_{\dot H^1(\Omega_k)}&\le\lVert p_k^0u-\varphi_k\rVert_{\dot H^1(\Omega_k)}\\
&\le \lVert u|_{\Omega_k}-u_k\rVert_{\dot H^1(\Omega_k)}\xrightarrow[k\to\infty]{}0.
\end{align*}
Since the second term on the left-hand side goes to $0$ by Lemma~\ref{Lem:CV-Proj-Density}, we can deduce $\varphi_k\xrightarrow[k\to\infty]{(\cdot|_{\Omega_k})}\varphi$, hence $v_k\xrightarrow[k\to\infty]{(\cdot|_{\Omega_k})}v$ and the result follows by Proposition~\ref{Prop:TrCV-i}.
\end{proof}

An immediate corollary of Proposition~\ref{Prop:Cv-Tr-H1-i} is that, under the same hypotheses, it holds $\Trd_{\mathrm{i}}^{\bord_k}(u|_{\Omega_k})\to \Trd_{\mathrm{i}}^{\bord}u$ through $(\trd^{\bord_k}\circ\dot{\mathrm{H}}_{\bord_k})_{k\in\N}$ for all $u\in \dot H^1(\Omega)$.
This result seems natural and goes to show that the convergence framework from Lemma~\ref{Lem:CV-B} is `consistent'. Given the definition of convergence of Hilbert spaces (Definition~\ref{Def:CV-Hilb}) and the Riesz representation theorem, the convergence of the trace spaces from Lemma~\ref{Lem:CV-B} induces a framework for the convergence of their dual spaces. This framework involves the Riesz isometry between the trace space and its dual, which is non other than the Poincaré-Steklov operator~\cite[Lemma 3.1]{claret_layer_2025}, defined for any admissible domain $\Omega$ by
\begin{equation}\label{Eq:P-S-Op}
\begin{aligned}
\dot\Lambda_{\bord}:\Hpo &\longrightarrow\Hmo\\
\trd^{\bord}_{\mathrm i}u&\longmapsto\ddno u{\mathrm i},
\end{aligned}
\end{equation}
for all $u\in \dot V_0(\Omega)$.

\begin{lemma}\label{Lem:CV-B'}
Let $\Omega$ be an admissible domain. Let $(\Omega_k)_{k\in\N}$ be a non-decreasing sequence of admissible domains such that $\Omega_k\nearrow\Omega$. Then, it holds:
\begin{equation*}
\dot\B'(\bord_k)\xrightarrow[k\to\infty]{}\Hmo \quad \mbox{through }\big(\Hmo,(\Xi_k)_{k\in\N}\big),
\end{equation*}
where $\Xi_k:=\dot \Lambda_{\bord_k}\circ\trd^{\bord_k}\circ\,\dot{\mathrm{H}}_{\bord}\circ\dot \Lambda_{\bord}^{-1}:\Hmo\to\dot\B'(\bord_k)$.
\end{lemma}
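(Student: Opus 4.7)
The plan is to reduce the claimed convergence of the dual trace spaces to the convergence of the primal trace spaces (Lemma~\ref{Lem:CV-B}) by exploiting the fact that each Poincar\'e-Steklov operator $\dot\Lambda_{\bord}$ in~\eqref{Eq:P-S-Op} is an isometric isomorphism between $\Hpo$ (endowed with $\lVert\cdot\rVert_{\Hpo}$) and $\Hmo$ (endowed with the subordinate norm $\lVert\cdot\rVert_{\Hmo}$). This isometric property is itself an immediate consequence of writing
\begin{equation*}
\dot\Lambda_{\bord}=\ddno{}{\mathrm i}\Big|_{\bord}\circ\,\DOd_{\Omega},
\end{equation*}
i.e.\ as the composition of two isometric surjections, the first supplied by Proposition~\ref{Prop:ddn-isom} and the second being the inverse of the isometry $\trd_{\mathrm i}^{\bord}$ of Theorem~\ref{Th:Trace}, Point (iii). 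The same applies verbatim at each level $k$, yielding an isometry $\dot\Lambda_{\bord_k}:\dot\B(\bord_k)\to\dot\B'(\bord_k)$.

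Granted this, fix $\phi\in\Hmo$ and set $f:=\dot\Lambda_{\bord}^{-1}\phi\in\Hpo$, so that $\lVert\phi\rVert_{\Hmo}=\lVert f\rVert_{\Hpo}$. By the definition of $\Xi_k$,
\begin{equation*}
\Xi_k\phi=\dot\Lambda_{\bord_k}\bigl(\trd^{\bord_k}\dot{\mathrm H}_{\bord}f\bigr),
\end{equation*}
and the isometric property of $\dot\Lambda_{\bord_k}$ gives
\begin{equation*}
\lVert\Xi_k\phi\rVert_{\dot\B'(\bord_k)}=\lVert\trd^{\bord_k}\dot{\mathrm H}_{\bord}f\rVert_{\dot\B(\bord_k)}.
\end{equation*}
By Lemma~\ref{Lem:CV-B}, the right-hand side converges to $\lVert f\rVert_{\Hpo}=\lVert\phi\rVert_{\Hmo}$ as $k\to\infty$, which is exactly condition~\eqref{CV} for the pair $(\Hmo,(\Xi_k)_{k\in\N})$.

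The only genuine subtlety is that the isometry claim for $\dot\Lambda_{\bord}$ must be stated with respect to the specific norms chosen on $\Hpo$ and $\Hmo$ (namely $\lVert\cdot\rVert_{\Trd_{\mathrm i}^{\bord}}$ and its subordinate norm), as emphasised in Proposition~\ref{Prop:ddn-isom} and the sentence fixing the convention just before Definition~\ref{Def:LP}. Once this bookkeeping is in place, the argument above requires no further estimates: the convergence of dual norms is transferred mechanically from the primal side through the isometric Riesz-type identifications.
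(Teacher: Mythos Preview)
Your proof is correct and follows essentially the same approach as the paper: both reduce the dual-space convergence to Lemma~\ref{Lem:CV-B} via the fact that the Poincar\'e--Steklov operators $\dot\Lambda_{\bord}$ and $\dot\Lambda_{\bord_k}$ are isometries. The paper simply cites \cite[Lemma~3.1]{claret_layer_2025} for this Riesz-isometry property, whereas you spell it out from Theorem~\ref{Th:Trace}(iii) and Proposition~\ref{Prop:ddn-isom}; otherwise the arguments are identical.
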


\begin{proof}
This convergence follows by Lemma~\ref{Lem:CV-B} and the fact that the Poincaré-Steklov operators $\dot \Lambda_{\bord_k}:\dot \B(\bord_k)\to\dot\B'(\bord_k)$ and $\dot \Lambda_{\bord}:\Hpo\to\Hmo$ are Riesz isometries~\cite[Lemma 3.1]{claret_layer_2025}, for instance
\begin{equation*}
\forall f_1,f_2\in\Hpo,\quad\langle f_1,f_2\rangle_{\Hpo}=\langle \dot \Lambda_{\bord} f_1,f_2\rangle_{\Hmo,\Hpo}.\qedhere
\end{equation*}
\end{proof}

Given the connection between the convergences from Lemmas~\ref{Lem:CV-B} and~\ref{Lem:CV-B'}, one can formulate a statement similar to that of Proposition~\ref{Prop:TrCV-i}, only regarding normal derivatives instead of traces. 

\begin{proposition}\label{Prop:CV-ddni}
Let $\Omega$ be an admissible domain and $(\Omega_k)_{k\in\N}$ be a non-decreasing sequence of admissible domains such that $\Omega_k\nearrow\Omega$. Let $u\in \dot V_0(\Omega)$ and $(u_k\in \dot V_0(\Omega_k))_{k\in\N}$. Then, the following equivalence holds:
\begin{equation*}
u_k\xrightarrow[k\to\infty]{(\cdot|_{\Omega_k})}u\quad\iff\quad \bigg(\ddno{u_k}{\mathrm{i}}\Big|_{\bord_k}\in\dot\B'(\bord_k)\bigg)_{k\in\N}\xrightarrow[]{(\Xi_k)}\ddno u{\mathrm{i}}\Big|_{\bord}\in\Hmo.
\end{equation*}
\end{proposition}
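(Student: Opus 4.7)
My proof proposal follows the blueprint of Proposition~\ref{Prop:TrCV-i}: reduce both sides of the equivalence to a single norm convergence in $\dot H^1(\Omega_k)$ via Lemma~\ref{Lem:CV-KS}, using the Poincaré-Steklov isometry and the isometric property of the weak normal derivative on the space of harmonic functions.

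\textbf{Step 1: unpack $\Xi_k$ on $\ddno{u}{\mathrm{i}}|_{\bord}$.} Since $u \in \dot V_0(\Omega)$, the definition~\eqref{Eq:P-S-Op} of $\dot\Lambda_{\bord}$ gives $\dot\Lambda_{\bord}^{-1}\bigl(\ddno{u}{\mathrm{i}}|_{\bord}\bigr) = \trd_{\mathrm{i}}^{\bord} u$. Applying $\dot{\mathrm H}_{\bord} = \dot{\mathrm E}_\Omega \circ \DOd_\Omega$ from~\eqref{Eq:D-Ext} yields $\dot{\mathrm H}_{\bord}(\trd_{\mathrm{i}}^{\bord} u) = \dot{\mathrm E}_\Omega u$. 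Because $u$ is harmonic on $\Omega$, its restriction $u|_{\Omega_k}$ is harmonic on $\Omega_k \subset \Omega$, so $u|_{\Omega_k} \in \dot V_0(\Omega_k)$; moreover $(\dot{\mathrm E}_\Omega u)|_{\Omega_k} = u|_{\Omega_k}$, hence $\trd^{\bord_k}(\dot{\mathrm E}_\Omega u) = \trd_{\mathrm{i}}^{\bord_k}(u|_{\Omega_k})$. Finally, applying $\dot\Lambda_{\bord_k}$ and using the harmonicity of $u|_{\Omega_k}$ together with~\eqref{Eq:P-S-Op}, I obtain the compact identity
\begin{equation*}
\Xi_k\!\left(\ddno{u}{\mathrm{i}}\Big|_{\bord}\right) \;=\; \ddno{(u|_{\Omega_k})}{\mathrm{i}}\Big|_{\bord_k}.
\end{equation*}

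\textbf{Step 2: convert the $\dot\B'(\bord_k)$-norm into a $\dot H^1(\Omega_k)$-norm.} Both $u|_{\Omega_k}$ and $u_k$ belong to $\dot V_0(\Omega_k)$, so by the isometric property stated in Proposition~\ref{Prop:ddn-isom},
\begin{equation*}
\left\lVert \Xi_k\!\left(\ddno{u}{\mathrm{i}}\Big|_{\bord}\right) - \ddno{u_k}{\mathrm{i}}\Big|_{\bord_k} \right\rVert_{\dot\B'(\bord_k)} = \left\lVert \ddno{(u|_{\Omega_k}-u_k)}{\mathrm{i}}\Big|_{\bord_k} \right\rVert_{\dot\B'(\bord_k)} = \lVert u|_{\Omega_k} - u_k \rVert_{\dot H^1(\Omega_k)}.
\end{equation*}
This is the analogue of the identity~\eqref{Eq:TrCVDom} used in the proof of Proposition~\ref{Prop:TrCV-i}, only with the trace-isometry of Theorem~\ref{Th:Trace} replaced by the normal-derivative-isometry of Proposition~\ref{Prop:ddn-isom}.

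\textbf{Step 3: conclude via Lemma~\ref{Lem:CV-KS}.} By Lemma~\ref{Lem:CV-KS} applied to the convergence $\dot H^1(\Omega_k) \to \dot H^1(\Omega)$ through $(\cdot|_{\Omega_k})$ (Lemma~\ref{Lem:CV-H1-i}), the left-hand side of the claimed equivalence is precisely $\lVert u|_{\Omega_k} - u_k \rVert_{\dot H^1(\Omega_k)} \to 0$. Applying Lemma~\ref{Lem:CV-KS} a second time to the convergence $\dot\B'(\bord_k) \to \Hmo$ through $(\Xi_k)$ (Lemma~\ref{Lem:CV-B'}), the right-hand side is precisely the vanishing of the norm computed in Step~2. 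The two conditions coincide, giving the equivalence.

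\textbf{Anticipated difficulty.} The work is essentially bookkeeping: I do not expect a genuine obstacle. The only delicate point is the identity in Step~1, where one must verify that the four operators composing $\Xi_k$ cooperate so that $\Xi_k$, evaluated at $\ddno{u}{\mathrm{i}}|_{\bord}$, reproduces the normal derivative of the naïve restriction $u|_{\Omega_k}$. This is a direct consequence of the harmonicity of $u$ on $\Omega$ (which passes to $\Omega_k$ for free) and the definitions of $\dot\Lambda_{\bord}$ and $\dot{\mathrm H}_{\bord}$; no compatibility or regularity hypothesis beyond two-sided admissibility is needed.
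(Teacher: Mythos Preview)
Your proof is correct and follows essentially the same approach as the paper: both reduce the equivalence to the single identity $\lVert \ddno{u_k}{\mathrm{i}}|_{\bord_k} - \Xi_k(\ddno{u}{\mathrm{i}}|_{\bord})\rVert_{\dot\B'(\bord_k)} = \lVert u_k - u|_{\Omega_k}\rVert_{\dot H^1(\Omega_k)}$ via the isometry of Proposition~\ref{Prop:ddn-isom}, then invoke Lemma~\ref{Lem:CV-KS}. The paper's version is terser, writing $\Xi_k(\ddno{u}{\mathrm{i}}|_{\bord}) = \ddno{(\dot{\mathrm E}_\Omega u)}{\mathrm{i}}|_{\bord_k}$ without detailing the four-step unpacking you give in Step~1, but the content is identical. (Minor remark: in your closing comment you mention ``two-sided admissibility'', but the statement and your proof only use admissibility of $\Omega$ and the $\Omega_k$.)
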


\begin{proof}
Using the isometric properties of the normal derivative (Proposition~\ref{Prop:ddn-isom}), it holds
\begin{align*}
\bigg\lVert\ddno{u_k}{\mathrm{i}}\Big|_{\bord_k}-\Xi_k\ddno u{\mathrm{i}}\Big|_{\bord}\bigg\rVert_{\dot\B'(\bord_k)} &=\bigg\lVert\ddno{u_k}{\mathrm{i}}\Big|_{\bord_k}-\ddno{(\dot{\mathrm{E}}_{\Omega}u)}{\mathrm{i}}\Big|_{\bord_k}\bigg\rVert_{\dot\B'(\bord_k)}\\
&= \big\lVert u_k- u|_{\Omega_k}\big\rVert_{\dot H^1(\Omega_k)}.
\end{align*}
The equivalence follows.
\end{proof}

A direct corollary of the equivalence between convergence of harmonic functions and of their interior boundary values from Propositions~\ref{Prop:TrCV-i} and~\ref{Prop:CV-ddni} is the convergence of the Poincaré-Steklov operators and of their inverses.

\begin{corollary}
Let $\Omega$ be an admissible domain and $(\Omega_k)_{k\in\N}$ be a non-decreasing sequence of admissible domains such that $\Omega_k\nearrow\Omega$. Then, the Poincaré-Steklov operators (defined for $\Omega_k$ as in~\eqref{Eq:P-S-Op}) and their inverses converge:
\begin{equation*}
\dot\Lambda_{\bord_k}\xrightarrow[k\to\infty]{(\trd^{\bord_k}\circ\,\dot{\mathrm{H}}_{\bord}),(\Xi_k)}\dot\Lambda_{\bord}\qquad\mbox{and}\qquad \dot\Lambda_{\bord_k}^{-1}\xrightarrow[k\to\infty]{(\Xi_k),(\trd^{\bord_k}\circ\,\dot{\mathrm{H}}_{\bord})}\dot\Lambda_{\bord}^{-1}.
\end{equation*}
\end{corollary}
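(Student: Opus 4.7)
The plan is to read this corollary as an immediate combination of Propositions~\ref{Prop:TrCV-i} and~\ref{Prop:CV-ddni}. Those two propositions together assert that, for a harmonic function, convergence via $(\cdot|_{\Omega_k})$, convergence of its interior trace via $(\trd^{\bord_k}\circ\dot{\mathrm H}_{\bord})$, and convergence of its interior normal derivative via $(\Xi_k)$ are mutually equivalent. Since the Poincaré-Steklov operator $\dot\Lambda_{\bord}$ is by~\eqref{Eq:P-S-Op} precisely the map sending the trace of $u\in\dot V_0(\Omega)$ to its interior normal derivative, the convergences of $\dot\Lambda_{\bord_k}$ and $\dot\Lambda_{\bord_k}^{-1}$ become purely formal consequences.

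The steps I would carry out are as follows. First, take $f\in\Hpo$ and $(f_k\in\dot\B(\bord_k))_k$ with $f_k\to f$ through $(\trd^{\bord_k}\circ\dot{\mathrm H}_{\bord})$. By Theorem~\ref{Th:Trace}~(iii), there exist unique $u_k\in\dot V_0(\Omega_k)$ and $u\in\dot V_0(\Omega)$ with $\trd^{\bord_k}_{\mathrm i}u_k=f_k$ and $\trd^{\bord}_{\mathrm i}u=f$. Next, Proposition~\ref{Prop:TrCV-i} applied to $(u_k)_k$ yields $u_k\to u$ through $(\cdot|_{\Omega_k})$. Then Proposition~\ref{Prop:CV-ddni} applied in the forward direction gives
\begin{equation*}
\ddno{u_k}{\mathrm i}\Big|_{\bord_k}\xrightarrow[k\to\infty]{(\Xi_k)}\ddno{u}{\mathrm i}\Big|_{\bord}.
\end{equation*}
By definition~\eqref{Eq:P-S-Op} the left-hand side is $\dot\Lambda_{\bord_k}f_k$ and the right-hand side is $\dot\Lambda_{\bord}f$, which is the first convergence claimed.

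For the inverses, the same chain runs in reverse. Let $g_k\to g$ through $(\Xi_k)$ and use Proposition~\ref{Prop:ddn-isom} to pick unique $u_k\in\dot V_0(\Omega_k)$ and $u\in\dot V_0(\Omega)$ with $\partial u_k/\partial\nu_{\mathrm i}|_{\bord_k}=g_k$ and $\partial u/\partial\nu_{\mathrm i}|_{\bord}=g$; Proposition~\ref{Prop:CV-ddni} (the converse implication) gives $u_k\to u$ through $(\cdot|_{\Omega_k})$, and Proposition~\ref{Prop:TrCV-i} then provides convergence of the traces through $(\trd^{\bord_k}\circ\dot{\mathrm H}_{\bord})$, i.e.\ $\dot\Lambda_{\bord_k}^{-1}g_k\to\dot\Lambda_{\bord}^{-1}g$. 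Alternatively, once the first convergence is in hand, one may invoke Lemma~\ref{Lem:InverseCV}: by~\cite[Lemma~3.1]{claret_layer_2025} each $\dot\Lambda_{\bord_k}$ is a Riesz isometry onto its dual, so the inverses have operator norm $1$ and are in particular uniformly bounded, which is the only hypothesis of Lemma~\ref{Lem:InverseCV} not already granted.

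There is no substantive obstacle here; the whole content of the corollary is that the two preceding propositions are compatible with the definition of $\dot\Lambda$ as the composition $\ddno{}{\mathrm i}|_{\bord}\circ(\trd^{\bord}_{\mathrm i})^{-1}$. The only mild care needed is to keep track of which sequence of identifications $(\trd^{\bord_k}\circ\dot{\mathrm H}_{\bord})$ or $(\Xi_k)$ is being used on each side of the convergence arrow, so that the framework of Lemma~\ref{Lem:CV-KS} applies correctly; this bookkeeping has already been encoded in Lemmas~\ref{Lem:CV-B} and~\ref{Lem:CV-B'} and the construction of $\Xi_k$.
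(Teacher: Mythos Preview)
Your proposal is correct and matches the paper's approach exactly: the paper states this corollary without a separate proof, noting only that it is ``a direct corollary of the equivalence between convergence of harmonic functions and of their interior boundary values from Propositions~\ref{Prop:TrCV-i} and~\ref{Prop:CV-ddni}.'' Your write-up simply spells out that one-line justification, and the alternative route through Lemma~\ref{Lem:InverseCV} for the inverses is also valid.
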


\begin{remark}\label{Rem:B'-Change}
The convergence framework for the duals of the trace spaces introduced in Lemma~\ref{Lem:CV-B'} is consistent with that of the trace spaces from Lemma~\ref{Lem:CV-B}.
However, it could be deemed more natural to consider a framework based on the Neumann extension from $\bord$ to $\nbord$
\begin{equation}\label{Eq:N-Harmo-Ext}
\dot{\mathrm N}_{\bord}:g\in\Hmo\longmapsto u\in \dot V_0(\nbord)\quad \mbox{with }\ddno u{}\Big|_{\bord}:=\ddno u{\mathrm i}\Big|_{\bord}=\ddno u{\mathrm e}\Big|_{\bord}=g,
\end{equation}
and proceed similarly to what was done for the trace spaces.
Proceeding as in the proof of Lemma~\ref{Lem:CV-B}, it holds $\dot\B'(\bord_k)\to\Hmo$ through $(\Hmo,(\ddno{}{}\big|_{\bord_k}\circ\dot{\mathrm N}_{\bord})_{k\in\N})$, where $\dot{\mathrm{N}}_{\bord}:\Hmo\to \dot H^1(\nbord)$ is the Neumann harmonic extension (harmonic on $\nbord$). Under the monotone convergence hypothesis, both $\Xi_k$ and $\ddno{}{}\big|_{\bord_k}\circ\dot{\mathrm N}_{\bord}$ are equal.
Indeed, for $g\in\Hmo$, let $u\in \dot V_0(\Omega)$ with $\ddno u{\mathrm i}\big|_{\bord}=g$.
If $v,w\in\dot V_0(\overline{\Omega}^c)$ are such that $u\oplus v\in\dot V_{\Scal}(\nbord)$ and $u\oplus w\in\dot V_{\D}(\nbord)$ (defined by~\eqref{Eq:SLP-Space} and~\eqref{Eq:DLP-Space}), then it holds
\begin{align*}
&\Xi_kg=\ddno{(u\oplus v)}{}\Big|_{\bord_k},&&\mbox{while}&&\ddno{(\dot{\mathrm N}_{\bord}g)}{}\Big|_{\bord_k}=\ddno{(u\oplus w)}{}\Big|_{\bord_k}.
\end{align*}
Since $\Omega_k\subset\Omega$, both quantities coincide. When that hypothesis is foregone (see Section~\ref{Sec:Monotonicity}), the operators differ but still induce the same notion of convergence, see Lemma~\ref{Lem:Equiv-B'-Frameworks}.
\end{remark}

\subsection{Convergence on exterior domains}\label{Subsec:CV-Ext}

In this part, we investigate the link between the convergence of $\dot H^1$ functions defined on exterior domains and the convergence of the associated exterior boundary values.
If $(\Omega_k)_{k\in\N}$ is a non-decreasing sequence of two-sided admissible domains of union $\Omega$, one can construct elements of $\dot H^1(\overline{\Omega}_k^c)$ from $\dot H^1(\overline{\Omega})$ by considering extensions.
Since we are interested in the convergence of the operators associated to the transmission problem for $-\Delta$, the most intuitive approach is to consider the convergence of those spaces through $((\dot{\mathrm{E}}_{\overline{\Omega}^c}\cdot)|_{\overline{\Omega}_k^c})_{k\in\N}$.
However, such a transformation does not preserve harmonicity in the sense that $(\dot{\mathrm{E}}_{\overline{\Omega}^c}\cdot)|_{\overline{\Omega}_k^c}(\dot V_0(\overline{\Omega}^c))\not\subset \dot V_0(\overline{\Omega}_k^c)$ in general.
For that matter, we use a different convergence framework, designed to preserve the generic decomposition $\dot H^1=\dot V_0\oplus \dot H^1_0$ and which relies on the following result on convergence of harmonic extensions.

\begin{proposition}\label{Prop:ExtCV-e}
Let $\Omega$ be a two-sided admissible domain and $(\Omega_k)_{k\in\N}$ be a non-decreasing sequence of two-sided admissible domains such that $\Omega_k\nearrow\Omega$. Then, for all $v\in \dot V_0(\overline{\Omega}^c)$,
\begin{equation*}
\dot{\mathrm{E}}_{\Omega_k}(\dot{\mathrm{E}}_{\overline{\Omega}^c}v|_{\Omega_k})\xrightarrow[k\to\infty]{}\dot{\mathrm{E}}_{\overline{\Omega}^c}v\quad\mbox{in }\dot H^1(\R^n).
\end{equation*}
\end{proposition}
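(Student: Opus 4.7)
The plan is to mirror the approach of Proposition~\ref{Prop:ExtCV-i}: obtain a uniform energy bound, extract a weak subsequential limit, identify it by combining the monotonicity $\Omega_k\nearrow\Omega$ with the stability of harmonicity under weak limits, then upgrade to strong convergence via lower semi-continuity of the norm.

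Set $w:=\dot{\mathrm{E}}_{\overline{\Omega}^c}v\in\dot H^1(\R^n)$ and, for each $k\in\N$, $w_k:=\dot{\mathrm{E}}_{\Omega_k}(w|_{\Omega_k})$. Along the decomposition $\dot H^1(\R^n)=\dot H^1(\Omega_k)\oplus \dot H^1(\overline{\Omega}_k^c)$, valid because $\bord_k$ is a Lebesgue null set, one has $w_k=w|_{\Omega_k}\oplus z_k$ with $z_k\in\dot V_0(\overline{\Omega}_k^c)$. Since $\mathrm{Ext}_{\Omega_k}(w|_{\Omega_k})$ and $w$ both lie in $\dot H^1(\R^n)$ and hence have no jump in trace across $\bord_k$, the exterior trace of $z_k$ on $\bord_k$ coincides with that of $w|_{\overline{\Omega}_k^c}$; the minimization property from Theorem~\ref{Th:Trace} then gives $\|z_k\|_{\dot H^1(\overline{\Omega}_k^c)}\le\|w\|_{\dot H^1(\overline{\Omega}_k^c)}$, whence
\begin{equation*}
\|w_k\|_{\dot H^1(\R^n)}^2=\|w\|_{\dot H^1(\Omega_k)}^2+\|z_k\|_{\dot H^1(\overline{\Omega}_k^c)}^2\le\|w\|_{\dot H^1(\R^n)}^2.
\end{equation*}

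From this uniform bound I extract a subsequence $(w_{k_m})_m$ converging weakly in $\dot H^1(\R^n)$ to some $w_\infty$. For any fixed $k$ and any $\ell\ge k$, the construction gives $w_\ell|_{\Omega_k}=w|_{\Omega_k}$; passing to the weak limit in $\dot H^1(\Omega_k)$ yields $w_\infty|_{\Omega_k}=w|_{\Omega_k}$, and since $\bigcup_k\Omega_k=\Omega$ we conclude $w_\infty|_\Omega=w|_\Omega$. Moreover each $w_k$ is harmonic on $\overline{\Omega}_k^c\supset\overline{\Omega}^c$, so the weak limit satisfies $w_\infty|_{\overline{\Omega}^c}\in\dot V_0(\overline{\Omega}^c)$. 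As both $w$ and $w_\infty$ lie in $\dot H^1(\R^n)$, their traces on $\bord$ have no jump; since their interior traces coincide (they agree on $\Omega$), so do their exterior traces, and the isometric characterization of $\trd_{\mathrm{e}}^{\bord}:\dot V_0(\overline{\Omega}^c)\to\Hpo$ from Theorem~\ref{Th:Trace} forces $w_\infty|_{\overline{\Omega}^c}=w|_{\overline{\Omega}^c}=v$. Hence $w_\infty=w$ is the unique weak subsequential limit, so the whole sequence satisfies $w_k\rightharpoonup w$ in $\dot H^1(\R^n)$.

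To upgrade to strong convergence, I combine the uniform bound $\limsup_k\|w_k\|_{\dot H^1(\R^n)}\le\|w\|_{\dot H^1(\R^n)}$ with the lower semi-continuity of the norm under weak convergence, obtaining $\|w_k\|_{\dot H^1(\R^n)}\to\|w\|_{\dot H^1(\R^n)}$, from which strong convergence follows. The delicate step is the identification of $w_\infty|_{\overline{\Omega}^c}$: unlike on the interior, this cannot be read off directly from restrictions to the exhausting subsets, and is where the argument genuinely uses the preservation of harmonicity under weak limits together with the trace-matching forced by $w_\infty\in\dot H^1(\R^n)$.
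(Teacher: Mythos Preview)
Your argument is correct, but you have essentially reproved Proposition~\ref{Prop:ExtCV-i} from scratch for the particular element $w=\dot{\mathrm{E}}_{\overline{\Omega}^c}v$ rather than recognizing that the statement \emph{is} a direct instance of it. The paper's proof is a two-line reduction: setting $u:=(\dot{\mathrm{E}}_{\overline{\Omega}^c}v)|_\Omega\in\dot H^1(\Omega)$, one has $\dot{\mathrm{E}}_{\overline{\Omega}^c}v=\dot{\mathrm{E}}_\Omega u$ because $\dot{\mathrm{E}}_{\overline{\Omega}^c}v\in\dot H^1(\R^n)$ restricts to $u$ on $\Omega$ and to $v\in\dot V_0(\overline{\Omega}^c)$ on $\overline{\Omega}^c$, which are exactly the defining properties of $\dot{\mathrm{E}}_\Omega u$; then Proposition~\ref{Prop:ExtCV-i} applied to $u$ gives the conclusion immediately. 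The ``delicate step'' you flag---identifying $w_\infty|_{\overline{\Omega}^c}$ via harmonicity and trace matching---is precisely the step already carried out once and for all in the proof of Proposition~\ref{Prop:ExtCV-i}, so there is no new content here beyond the observation that $v\in\dot V_0(\overline{\Omega}^c)$ makes $\dot{\mathrm{E}}_{\overline{\Omega}^c}v$ coincide with a Dirichlet extension from the interior side.
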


\begin{proof}
Let $v\in \dot V_0(\overline{\Omega}^c)$. Denote $u:=(\dot{\mathrm{E}}_{\overline{\Omega}^c}v)|_\Omega\in \dot H^1(\Omega)$. Then,
$\dot{\mathrm{E}}_{\overline{\Omega}^c}v=\dot{\mathrm{E}}_\Omega u$ and $\dot{\mathrm{E}}_{\Omega_k}\big((\dot{\mathrm{E}}_{\overline{\Omega}^c}v)|_{\Omega_k}\big)=\dot{\mathrm{E}}_{\Omega_k}(u|_{\Omega_k})$. The convergence follows by Proposition~\ref{Prop:ExtCV-i}.
\end{proof}

Note that the result from Proposition~\ref{Prop:ExtCV-e} only applies for the extension of harmonic functions. 
Therefore, when introducing a convergence framework for the exterior $\dot H^1$ spaces, we rely on the orthogonal decomposition of $\dot H^1$ as $\dot V_0\oplus \dot H^1_0$.

\begin{corollary}\label{Cor:CV-H1-e}
Let $\Omega$ be a two-sided admissible domain. Let $(\Omega_k)_{k\in\N}$ be a sequence of two-sided admissible domains such that $\Omega_k\nearrow\Omega$. Then, it holds
\begin{equation*}
\dot H^1(\overline{\Omega}_k^c)\xrightarrow[k\to\infty]{}\dot H^1(\overline{\Omega}^c)\quad\mbox{through}\quad \big(\dot H^1(\overline{\Omega}^c),(\mathcal{E}_k\oplus i_k)_{k\in\N}\big),
\end{equation*}
with $\mathcal{E}_k\oplus i_k$ decomposed along the direct orthogonal sum $\dot V_0(\overline{\Omega}^c)\oplus \dot H^1_0(\overline{\Omega}^c)$, where
\begin{equation*}
\mathcal{E}_k:=\big(\dot{\mathrm{E}}_{\Omega_k}\big[(\dot{\mathrm{E}}_{\overline{\Omega}^c}\cdot)|_{\Omega_k}\big]\big)\big|_{\overline{\Omega}_k^c}:\dot V_0(\overline{\Omega}^c) \longrightarrow \dot V_0(\overline{\Omega}_k^c),
\end{equation*}
and $i_k:\dot H^1_0(\overline{\Omega}^c)\to \dot H^1_0(\overline{\Omega}_k^c)$ is the extension by $0$.
\end{corollary}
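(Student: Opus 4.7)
The plan is to exploit the fact that $\mathcal{E}_k\oplus i_k$ respects the orthogonal decomposition $\dot V_0(\overline{\Omega}_k^c)\oplus\dot H^1_0(\overline{\Omega}_k^c)$ of its target (Theorem~\ref{Th:Trace}). Indeed, $\mathcal{E}_k v$ is harmonic on $\overline{\Omega}_k^c$ by definition of $\dot{\mathrm E}_{\Omega_k}$, so $\mathcal{E}_k v\in\dot V_0(\overline{\Omega}_k^c)$; while $i_k\phi\in\dot H^1_0(\overline{\Omega}_k^c)$ since the zero extension of $\phi\in\dot H^1_0(\overline{\Omega}^c)$ lies in $\dot H^1(\R^n)$ and vanishes on a neighborhood of $\bord_k$ in $\overline{\Omega}_k^c$ (namely $\overline{\Omega}\setminus\overline{\Omega}_k$). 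By orthogonality, for $u=v\oplus\phi\in\dot V_0(\overline{\Omega}^c)\oplus\dot H^1_0(\overline{\Omega}^c)$,
\begin{equation*}
\lVert(\mathcal E_k\oplus i_k)u\rVert_{\dot H^1(\overline{\Omega}_k^c)}^2 = \lVert\mathcal E_k v\rVert_{\dot H^1(\overline{\Omega}_k^c)}^2+\lVert i_k\phi\rVert_{\dot H^1(\overline{\Omega}_k^c)}^2,
\end{equation*}
and by Definition~\ref{Def:CV-Hilb} it suffices to show that each summand converges to the corresponding squared norm of $v$ or $\phi$ in $\dot H^1(\overline{\Omega}^c)$.

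The $i_k$-summand is immediate: since $\nabla(i_k\phi)=\nabla\phi$ on $\overline{\Omega}^c$ and $\nabla(i_k\phi)=0$ on $\overline{\Omega}\setminus\overline{\Omega}_k$, one has $\lVert i_k\phi\rVert_{\dot H^1(\overline{\Omega}_k^c)}=\lVert\phi\rVert_{\dot H^1(\overline{\Omega}^c)}$ for every $k$.

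For the $\mathcal E_k$-summand, set $U:=\dot{\mathrm E}_{\overline{\Omega}^c}v$ and $U_k:=\dot{\mathrm E}_{\Omega_k}(U|_{\Omega_k})$, so that $\mathcal E_k v=U_k|_{\overline{\Omega}_k^c}$ and $U|_{\overline{\Omega}^c}=v$. Proposition~\ref{Prop:ExtCV-e} yields $U_k\to U$ in $\dot H^1(\R^n)$, hence $\lVert U_k\rVert_{\dot H^1(\R^n)}^2\to\lVert U\rVert_{\dot H^1(\R^n)}^2$. Splitting these norms along $\R^n=\Omega_k\cup\overline{\Omega}_k^c$ and $\R^n=\Omega\cup\overline{\Omega}^c$ (the boundaries being Lebesgue-null by two-sided admissibility) and using $U_k|_{\Omega_k}=U|_{\Omega_k}$ by~\eqref{Eq:Dir-Int-Ext}, Lemma~\ref{Lem:CV-H1-i} applied to $U|_\Omega\in\dot H^1(\Omega)$ gives $\lVert U|_{\Omega_k}\rVert_{\dot H^1(\Omega_k)}^2\to\lVert U|_\Omega\rVert_{\dot H^1(\Omega)}^2$. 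Subtracting this from the $\R^n$ convergence delivers $\lVert\mathcal E_k v\rVert_{\dot H^1(\overline{\Omega}_k^c)}^2\to\lVert v\rVert_{\dot H^1(\overline{\Omega}^c)}^2$, as required.

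The only substantive input is Proposition~\ref{Prop:ExtCV-e}; the rest is orthogonality bookkeeping across the moving domains, so no real obstacle is anticipated.
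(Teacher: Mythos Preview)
Your proof is correct and follows essentially the same approach as the paper: decompose orthogonally, note that $i_k$ is norm-preserving, and invoke Proposition~\ref{Prop:ExtCV-e} for the harmonic component. The paper compresses the $\mathcal{E}_k$-summand computation into the phrase ``the result follows by Proposition~\ref{Prop:ExtCV-e}'', whereas you spell out the subtraction of the $\Omega_k$-part from the $\R^n$-norm via Lemma~\ref{Lem:CV-H1-i}; this is exactly the intended reasoning.
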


\begin{proof}
Let $u\in \dot H^1(\overline{\Omega}^c)$. There exists a unique pair $(v,w)\in \dot V_0(\overline{\Omega}^c)\times \dot H^1_0(\overline{\Omega}^c)$ such that $u=v+w$, and the decomposition is orthogonal. Since, for all $k\in\N$, $i_k$ is norm-preserving, the result follows by Proposition~\ref{Prop:ExtCV-e}.
\end{proof}

\begin{remark}
By design, the operators from Corollary~\ref{Cor:CV-H1-e} preserve the decomposition $\dot H^1=\dot V_0\oplus \dot H^1_0$. For that matter, the following convergences hold:
\begin{align*}
\dot V_0(\overline{\Omega}_k^c)\xrightarrow[k\to\infty]{}\dot V_0(\overline{\Omega}^c)\quad&\mbox{through}\quad \big(\dot V_0(\overline{\Omega}^c),(\mathcal{E}_k)_{k\in\N}\big),\\
\dot H^1_0(\overline{\Omega}_k^c)\xrightarrow[k\to\infty]{}\dot H^1_0(\overline{\Omega}^c)\quad&\mbox{through}\quad \big(\dot H^1_0(\overline{\Omega}^c),(i_k)_{k\in\N}\big).
\end{align*}
\end{remark}

Using this convergence framework, we formulate a result similar to Proposition~\ref{Prop:TrCV-i} for exterior values instead of interior ones, relying on the isometries defined by the exterior trace operators.
However, the convergence framework for the trace spaces from Lemma~\ref{Lem:CV-B} is centered around the trace norms associated to the interior trace operators. Therefore, we must further assume that the trace norm equivalence~\eqref{Eq:TrNormEquiv} is uniform along the sequence of domains.

\begin{proposition}\label{Prop:TrCV-e}
Let $\Omega$ be a two-sided admissible domain and $(\Omega_k)_{k\in\N}$ be a non-decreasing sequence of two-sided admissible domains such that $\Omega_k\nearrow\Omega$.
If the Dirichlet harmonic extensions $(\dot{\mathrm{E}}_{\Omega_k})_{k\in\N}$ are uniformly bounded, then the following implication holds:
\begin{equation*}
(v_k\in \dot V_0(\overline{\Omega}_k^c))_{k\in\N}\xrightarrow[]{(\mathcal E_k)}v\in \dot V_0(\overline{\Omega}^c),
\end{equation*}
implies
\begin{equation*}
(\trd_{\mathrm{e}}^{\bord_k}v_k\in\dot \B(\bord_k))_{k\in\N}\xrightarrow[]{(\trd^{\bord_k}\circ\,\dot{\mathrm{H}}_{\bord})}\trd_{\mathrm{e}}^{\bord}v\in\Hpo,
\end{equation*}
using the convergence frameworks from Lemma~\ref{Lem:CV-B} and Corollary~\ref{Cor:CV-H1-e}.
If the Dirichlet harmonic extensions $(\dot{\mathrm{E}}_{\overline{\Omega}_k^c})_{k\in\N}$ are uniformly bounded, then the converse implication holds instead.
\end{proposition}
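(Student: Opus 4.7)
The plan is to translate both convergences into concrete $\dot H^1$-norm estimates via Lemma~\ref{Lem:CV-KS}, then identify a single representative in $\dot H^1(\R^n)$ whose trace on $\bord_k$ realizes the quantity to be bounded, and finally convert between the exterior and interior trace norms on $\dot\B(\bord_k)$ using Proposition~\ref{Prop:Norm-B-Equiv}. The isometry properties of the exterior trace on $\dot V_0(\overline{\Omega}_k^c)$ (i.e., the exterior variant of Theorem~\ref{Th:Trace} (iii)) will do the bulk of the work.

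First, by Lemma~\ref{Lem:CV-KS}, the hypothesis $(v_k)\xrightarrow{(\mathcal E_k)} v$ says exactly that $\lVert v_k-\mathcal E_k v\rVert_{\dot H^1(\overline{\Omega}_k^c)}\to 0$, while the desired conclusion says that $\lVert \trd^{\bord_k}\dot{\mathrm{H}}_{\bord}(\trd_{\mathrm e}^{\bord}v)-\trd_{\mathrm e}^{\bord_k}v_k\rVert_{\dot\B(\bord_k)}\to 0$, where $\lVert\cdot\rVert_{\dot\B(\bord_k)}=\lVert\cdot\rVert_{\Trd_{\mathrm i}^{\bord_k}}$ by the paper's convention.

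The key algebraic identity is
\[
\trd_{\mathrm e}^{\bord_k}(\mathcal E_k v)=\trd^{\bord_k}\dot{\mathrm{H}}_{\bord}(\trd_{\mathrm e}^{\bord}v).
\]
Indeed, since $v\in\dot V_0(\overline{\Omega}^c)$ one has $\dot{\mathrm{H}}_{\bord}(\trd_{\mathrm e}^{\bord}v)=\dot{\mathrm{E}}_{\overline{\Omega}^c}v$, and $\mathcal E_k v$ is by definition the restriction to $\overline{\Omega}_k^c$ of $\dot{\mathrm{E}}_{\Omega_k}\bigl[(\dot{\mathrm{E}}_{\overline{\Omega}^c}v)|_{\Omega_k}\bigr]\in\dot H^1(\R^n)$, a function that agrees with $\dot{\mathrm{E}}_{\overline{\Omega}^c}v$ on $\Omega_k$ and whose interior and exterior traces on $\bord_k$ coincide. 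Thus the trace difference in the conclusion simplifies to $\trd_{\mathrm e}^{\bord_k}(\mathcal E_k v-v_k)$, where $\mathcal E_k v-v_k\in\dot V_0(\overline{\Omega}_k^c)$.

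Applying the exterior counterpart of Theorem~\ref{Th:Trace} (iii) to $\overline{\Omega}_k^c$ yields the isometric identity
\[
\lVert\trd_{\mathrm e}^{\bord_k}(\mathcal E_k v-v_k)\rVert_{\Trd_{\mathrm e}^{\bord_k}}=\lVert\mathcal E_k v-v_k\rVert_{\dot H^1(\overline{\Omega}_k^c)},
\]
which by assumption tends to $0$. The remaining step is to relate the two trace norms on $\dot\B(\bord_k)$. Proposition~\ref{Prop:Norm-B-Equiv} applied on each $\bord_k$ gives the needed inequalities, with constants expressed in terms of $\lVert\dot{\mathrm{E}}_{\Omega_k}\rVert$ or $\lVert\dot{\mathrm{E}}_{\overline{\Omega}_k^c}\rVert$. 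Under the uniform boundedness hypothesis from the statement, the relevant sequence of constants stays bounded and the convergence transfers from one norm to the other; for the converse implication, one runs the same chain in reverse, starting from interior-norm convergence and concluding exterior-norm convergence, which by the isometry is $\dot H^1(\overline{\Omega}_k^c)$-convergence. The only delicate point is to keep straight which of Proposition~\ref{Prop:Norm-B-Equiv}'s two inequalities is invoked in each direction, and to verify that the chosen extension operator is indeed the one whose uniform boundedness is assumed.
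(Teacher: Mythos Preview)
Your proposal is correct and follows essentially the same approach as the paper: identify $\dot{\mathrm{H}}_{\bord}(\trd_{\mathrm e}^{\bord}v)=\dot{\mathrm{E}}_{\overline{\Omega}^c}v$ so that the trace difference reduces to $\trd_{\mathrm e}^{\bord_k}(\mathcal E_k v-v_k)$, use the exterior-trace isometry on $\dot V_0(\overline{\Omega}_k^c)$ to equate its $\Trd_{\mathrm e}^{\bord_k}$-norm with $\lVert \mathcal E_k v-v_k\rVert_{\dot H^1(\overline{\Omega}_k^c)}$, and then pass between the two trace norms via Proposition~\ref{Prop:Norm-B-Equiv} under the appropriate uniform boundedness assumption. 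The paper's proof is simply a more compressed version of exactly this chain.
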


\begin{proof}
Using the isometric properties of the exterior trace (Theorem~\eqref{Th:Trace}, Point (iv)) and the uniform boundedness of $(\dot{\mathrm{E}}_{\Omega_k})_{k\in\N}$, there exists $c>0$ uniform in $k$ such that:
\begin{equation*}
\big\lVert\trd^{\bord_k}_{\mathrm{e}} u_k - \trd^{\bord_k}\underbrace{({\dot{\mathrm{H}}}_{\bord}\trd_{\mathrm{e}}^{\bord}u)}_{\dot{\mathrm{E}}_{\overline{\Omega}^c} u}\big\rVert_{\dot \B(\bord_k)}\le c \big\lVert u_k-\dot{\mathrm{E}}_{\Omega_k}(\dot{\mathrm{E}}_{\overline{\Omega}^c}u|_{\Omega_k})|_{\overline{\Omega}_k^c}\big\rVert_{\dot H^1(\overline{\Omega}_k^c)},
\end{equation*}
and by uniform boundedness of $(\dot{\mathrm{E}}_{\overline{\Omega}_k^c})_{k\in\N}$, there exists $c'>0$ uniform in $k$ such that
\begin{equation*}
\big\lVert u_k-\dot{\mathrm{E}}_{\Omega_k}(\dot{\mathrm{E}}_{\overline{\Omega}^c}u|_{\Omega_k})|_{\overline{\Omega}_k^c}\big\rVert_{\dot H^1(\overline{\Omega}_k^c)}\le c'\big\lVert\trd^{\bord_k}_{\mathrm{e}} u_k - \trd^{\bord_k}({\dot{\mathrm{H}}}_{\bord}\trd_{\mathrm{e}}^{\bord}u)\big\rVert_{\dot \B(\bord_k)}.
\end{equation*}
The result follows (see Lemma~\ref{Lem:CV-KS}). 
\end{proof}

\begin{corollary}
Let $\Omega$ be a two-sided admissible domain and $(\Omega_k)_{k\in\N}$ be a non-decreasing sequence of two-sided admissible domains such that $\Omega_k\nearrow\Omega$. Assume that the Dirichlet harmonic extensions $(\dot{\mathrm{E}}_{\Omega_k})_{k\in\N}$ are uniformly bounded. Then, if $(u_k\in \dot H^1(\overline{\Omega}_k^c))_{k\in\N}\xrightarrow[k\to\infty]{(\mathcal{E}_k\oplus i_k)} u\in \dot H^1(\overline{\Omega}^c)$, it holds
\begin{equation*}
\Trd_{\mathrm{e}}^{\bord_k}u_k\xrightarrow[k\to\infty]{(\trd^{\bord_k}\circ\,\dot{\mathrm{H}}_{\bord})}\Trd_{\mathrm{e}}^{\bord}u.
\end{equation*}
In particular, it holds $\Trd_{\mathrm{e}}^{\bord_k}u\xrightarrow[k\to\infty]{(\trd^{\bord_k}\circ\,\dot{\mathrm{H}}_{\bord})} \Trd_{\mathrm{e}}^{\bord}u$.
\end{corollary}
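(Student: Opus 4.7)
The plan is to mirror the strategy of Proposition~\ref{Prop:Cv-Tr-H1-i}, but now exploiting the fact that, unlike in the interior case, the convergence framework $(\mathcal{E}_k \oplus i_k)_{k\in\N}$ was deliberately designed in Corollary~\ref{Cor:CV-H1-e} to respect the direct orthogonal decomposition $\dot H^1 = \dot V_0 \oplus \dot H^1_0$. This will make the analogue of Lemma~\ref{Lem:CV-Proj-H10} automatic here, with no need for a projector-convergence argument.

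First, decompose $u = v + w$ orthogonally along $\dot V_0(\overline{\Omega}^c) \oplus \dot H^1_0(\overline{\Omega}^c)$ and, for each $k$, decompose $u_k = v_k + w_k$ orthogonally along $\dot V_0(\overline{\Omega}_k^c) \oplus \dot H^1_0(\overline{\Omega}_k^c)$. Since $\mathcal{E}_k v \in \dot V_0(\overline{\Omega}_k^c)$ and $i_k w \in \dot H^1_0(\overline{\Omega}_k^c)$, the orthogonality of the sum gives
\begin{equation*}
\lVert u_k - (\mathcal{E}_k \oplus i_k) u \rVert_{\dot H^1(\overline{\Omega}_k^c)}^2 = \lVert v_k - \mathcal{E}_k v \rVert_{\dot H^1(\overline{\Omega}_k^c)}^2 + \lVert w_k - i_k w \rVert_{\dot H^1(\overline{\Omega}_k^c)}^2.
\end{equation*}
By Lemma~\ref{Lem:CV-KS}, the hypothesis $u_k \to u$ through $(\mathcal{E}_k \oplus i_k)$ thus forces both $v_k \to v$ through $(\mathcal{E}_k)$ and $w_k \to w$ through $(i_k)$ separately.

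Next, since $w_k \in \dot H^1_0(\overline{\Omega}_k^c)$ and $w \in \dot H^1_0(\overline{\Omega}^c)$, their exterior traces vanish, so $\Trd^{\bord_k}_{\mathrm e} u_k = \trd^{\bord_k}_{\mathrm e} v_k$ and $\Trd^{\bord}_{\mathrm e} u = \trd^{\bord}_{\mathrm e} v$. Applying Proposition~\ref{Prop:TrCV-e} to the harmonic sequence $(v_k)_{k\in\N}$, which uses precisely the assumed uniform boundedness of $(\dot{\mathrm E}_{\Omega_k})_{k\in\N}$, yields
\begin{equation*}
\trd^{\bord_k}_{\mathrm e} v_k \xrightarrow[k\to\infty]{(\trd^{\bord_k}\circ\,\dot{\mathrm H}_{\bord})} \trd^{\bord}_{\mathrm e} v,
\end{equation*}
and the first assertion follows.

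For the ``in particular'' statement, the natural reading is to apply the result to the canonical approximating sequence $u_k := (\mathcal{E}_k \oplus i_k) u \in \dot H^1(\overline{\Omega}_k^c)$, which trivially converges to $u$ through $(\mathcal{E}_k \oplus i_k)$ by Lemma~\ref{Lem:CV-KS}, giving the advertised trace convergence. There is no genuine obstacle here: all the substantive work has already been invested in Corollary~\ref{Cor:CV-H1-e} (the orthogonality-preserving framework) and in Proposition~\ref{Prop:TrCV-e} (the uniform-boundedness argument for harmonic vectors). The only point that deserves care is checking that the orthogonal decomposition of $(\mathcal{E}_k \oplus i_k)u$ really is $\mathcal{E}_k v + i_k w$, which is exactly how the framework was built.
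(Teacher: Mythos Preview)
Your proof is correct and follows essentially the same approach as the paper: decompose orthogonally along $\dot V_0 \oplus \dot H^1_0$, use that $(\mathcal{E}_k \oplus i_k)$ respects this decomposition to extract the convergence of the harmonic parts, and then invoke Proposition~\ref{Prop:TrCV-e}. The paper's version is slightly terser---it only writes the one-sided inequality $\lVert v_k - \mathcal{E}_k v\rVert \le \lVert u_k - (\mathcal{E}_k\oplus i_k)u\rVert$ rather than your full Pythagorean identity---but the argument is the same.
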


\begin{proof}
Denote $u_k=v_k+\varphi_k$ and $u=v+\varphi$, where $v_k\in \dot V_0(\overline{\Omega}_k^c)$, $\varphi_k\in \dot H^1_0(\overline{\Omega}_k^c)$, $v\in \dot V_0(\overline{\Omega}^c)$ and $\varphi\in \dot H^1_0(\overline{\Omega}^c)$. Then, using the orthogonal decomposition $\dot H^1(\overline{\Omega}_k^c)=\dot V_0(\overline{\Omega}_k^c)\oplus \dot H^1_0(\overline{\Omega}_k^c)$, it holds
\begin{equation*}
\lVert v_k-(\dot{\mathrm{E}}_{\Omega_k}(\dot{\mathrm{E}}_{\overline{\Omega}^c}v|_{\Omega_k})|_{\overline{\Omega}_k^c}\rVert_{\dot H^1(\overline{\Omega}_k^c)}\le \lVert u_k-(\mathcal{E}_k\oplus i_k) u\rVert_{\dot H^1(\overline{\Omega}_k^c)}\xrightarrow[k\to\infty]{}0.
\end{equation*}
The convergence of the traces follows by Proposition~\ref{Prop:TrCV-i}.
\end{proof}

As mentioned at the beginning of this subsection, the convergence framework from Corollary~\ref{Cor:CV-H1-e} was chosen for it preserves harmonicity, yet it is not the simplest framework of which one could think. Using Proposition~\ref{Prop:TrCV-e} allows to notice both induce similar notions of convergence when the limit is harmonic.

\begin{corollary}\label{Cor:Equiv-CV-H1-e}
Let $\Omega$ be a two-sided admissible domain. If the extension operators $\dot{\mathrm{E}}_{\Omega_k}$ and $\dot{\mathrm{E}}_{\overline{\Omega}_k^c}$, $k\in\N$, are uniformly bounded, then for all $u\in \dot V_0(\overline{\Omega}^c)$, it holds
\begin{equation*}
\big\lVert\mathcal{E}_ku-(\dot{\mathrm{E}}_{\overline{\Omega}^c}u)|_{\overline{\Omega}_k^c}\big\rVert_{\dot H^1(\overline{\Omega}_k^c)}\xrightarrow[k\to\infty]{}0.
\end{equation*}
\end{corollary}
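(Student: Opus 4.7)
The strategy is to reduce the claim directly to the global $\dot H^1(\R^n)$-convergence of Proposition~\ref{Prop:ExtCV-e}. The key observation is that both functions compared in the corollary are restrictions to $\overline{\Omega}_k^c$ of explicit elements of $\dot H^1(\R^n)$: by the very definition of $\mathcal{E}_k$ in Corollary~\ref{Cor:CV-H1-e},
\begin{equation*}
\mathcal{E}_k u = \dot{\mathrm{E}}_{\Omega_k}\bigl((\dot{\mathrm{E}}_{\overline{\Omega}^c}u)|_{\Omega_k}\bigr)\big|_{\overline{\Omega}_k^c},
\end{equation*}
while the benchmark $(\dot{\mathrm{E}}_{\overline{\Omega}^c}u)|_{\overline{\Omega}_k^c}$ is plainly the restriction of $\dot{\mathrm{E}}_{\overline{\Omega}^c}u \in \dot H^1(\R^n)$ itself to the same exterior domain.

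Since $u \in \dot V_0(\overline{\Omega}^c)$ by hypothesis, Proposition~\ref{Prop:ExtCV-e} applies to $u$ and yields
\begin{equation*}
\dot{\mathrm{E}}_{\Omega_k}\bigl((\dot{\mathrm{E}}_{\overline{\Omega}^c}u)|_{\Omega_k}\bigr) \xrightarrow[k\to\infty]{} \dot{\mathrm{E}}_{\overline{\Omega}^c}u \quad\text{in } \dot H^1(\R^n).
\end{equation*}
Because the $\dot H^1$-norm is the $L^2$-norm of the gradient, the restriction inequality $\lVert f|_{\overline{\Omega}_k^c}\rVert_{\dot H^1(\overline{\Omega}_k^c)} \le \lVert f\rVert_{\dot H^1(\R^n)}$ holds trivially for any $f\in \dot H^1(\R^n)$. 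Applying it to the difference of the two functions displayed above transports the $\dot H^1(\R^n)$-convergence to a $\dot H^1(\overline{\Omega}_k^c)$-convergence, which is precisely the desired limit.

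I expect no serious obstacle: the argument collapses to a single chain of inequalities once Proposition~\ref{Prop:ExtCV-e} is invoked, with the shrinking of $\Omega\setminus\overline{\Omega_k}$ (where the two restrictions can actually disagree in a non-harmonic way) already encoded in that proposition. It is worth noting that the uniform-boundedness hypothesis on $(\dot{\mathrm{E}}_{\Omega_k})_{k\in\N}$ and $(\dot{\mathrm{E}}_{\overline{\Omega}_k^c})_{k\in\N}$ does not appear to enter this argument at all; it is presumably retained in the statement for consistency with the surrounding convergence results based on Proposition~\ref{Prop:TrCV-e}, where that standing assumption is genuinely indispensable.
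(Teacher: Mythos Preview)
Your proof is correct and is in fact more direct than the paper's. You observe that
\[
\mathcal{E}_k u - (\dot{\mathrm{E}}_{\overline{\Omega}^c}u)|_{\overline{\Omega}_k^c}
= \bigl(\dot{\mathrm{E}}_{\Omega_k}((\dot{\mathrm{E}}_{\overline{\Omega}^c}u)|_{\Omega_k}) - \dot{\mathrm{E}}_{\overline{\Omega}^c}u\bigr)\big|_{\overline{\Omega}_k^c},
\]
and then bound the $\dot H^1(\overline{\Omega}_k^c)$-norm by the full $\dot H^1(\R^n)$-norm, which vanishes by Proposition~\ref{Prop:ExtCV-e}. The paper instead passes through the trace spaces: it uses the uniform equivalence of $\lVert\cdot\rVert_{\Trd^{\bord_k}_{\mathrm{i}}}$ and $\lVert\cdot\rVert_{\Trd^{\bord_k}_{\mathrm{e}}}$ from~\eqref{Eq:TrNormEquiv} (this is where the uniform bounds on the extensions enter) to show that the alternative framework $((\dot{\mathrm{E}}_{\overline{\Omega}^c}\cdot)|_{\overline{\Omega}_k^c})_{k\in\N}$ also satisfies the conclusion of Proposition~\ref{Prop:TrCV-e}, and then invokes Lemma~\ref{Lem:EquivCVFramework} to conclude that the two frameworks agree asymptotically. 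Your route bypasses this detour entirely, and your closing remark is exactly right: with your argument the uniform-boundedness hypothesis is unnecessary, whereas the paper's trace-based argument genuinely uses it. The paper's approach has the virtue of staying within the abstract Kuwae--Shioya machinery it is building, but yours is the sharper statement.
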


\begin{proof}
By~\eqref{Eq:TrNormEquiv}, there exist $c_1,c_2>0$ such that for all $(v_k\in \dot V_0(\overline{\Omega}_k^c))_{k\in\N}$ and all $v\in \dot V_0(\overline{\Omega}^c)$, it holds
\begin{multline*}
c_1\lVert v_k-(\dot{\mathrm{E}}_{\overline{\Omega}^c}v)|_{\overline{\Omega}_k^c}\rVert_{\dot H^1(\overline{\Omega}_k^c)}\le \lVert\trd^{\bord_k}_{\mathrm{e}}v_k-\trd^{\bord_k}\dot{\mathrm{H}}_{\bord}\trd^{\bord}_{\mathrm{e}}v\rVert_{\dot \B(\bord_k)}\\
\le c_2\lVert v_k-(\dot{\mathrm{E}}_{\overline{\Omega}^c}v)|_{\overline{\Omega}_k^c}\rVert_{\dot H^1(\overline{\Omega}_k^c)}.
\end{multline*}
Therefore, one can state a result similar to Proposition~\ref{Prop:ExtCV-e}, only with convergence through $((\dot{\mathrm{E}}_{\overline{\Omega}^c}\cdot)|_{\overline{\Omega}_k^c})_{k\in\N}$ instead of $(\mathcal{E}_k)_{k\in\N}$. The result follows by Lemma~\ref{Lem:EquivCVFramework}.
\end{proof}

From there, using the isometric properties of the normal derivation allows to state the equivalence between the convergence of harmonic functions on exterior domains and of their exterior normal derivatives.

\begin{proposition}\label{Prop:CV-ddne}
Let $\Omega$ be a two-sided admissible domain and $(\Omega_k)_{k\in\N}$ be a non-decreasing sequence of two-sided admissible domains such that $\Omega_k\nearrow\Omega$. Then, if the Dirichlet harmonic extensions $(\dot{\mathrm{E}}_{\Omega_k})_{k\in\N}$ and $(\dot{\mathrm{E}}_{\overline{\Omega}_k^c})_{k\in\N}$ are uniformly bounded, then the statements
\begin{equation*}
(v_k\in \dot V_0(\overline{\Omega}_k^c))_{k\in\N}\xrightarrow[]{(\mathcal E_k)}v\in \dot V_0(\overline{\Omega}^c),
\end{equation*}
and
\begin{equation*}
\bigg(\ddno{v_k}{\mathrm{e}}\Big|_{\bord_k}\in\dot\B'(\bord_k)\bigg)_{k\in\N}\xrightarrow[]{(\Xi_k)}\ddno v{\mathrm{e}}\Big|_{\bord}\in\Hmo,
\end{equation*}
are equivalent, using the convergence frameworks from Lemma~\ref{Lem:CV-B'} and Corollary~\ref{Cor:CV-H1-e}.
\end{proposition}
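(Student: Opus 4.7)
My plan is to mirror the proof of Proposition~\ref{Prop:CV-ddni}, replacing the isometric property of the interior normal derivative with that of the exterior one (Proposition~\ref{Prop:ddn-isom}). The latter holds with respect to the norm on $\dot\B'(\bord_k)$ subordinate to $\lVert\cdot\rVert_{\Trd^{\bord_k}_{\mathrm e}}$, not to the default norm $\lVert\cdot\rVert_{\dot\B'(\bord_k)}$. Under the uniform boundedness of both $(\dot{\mathrm E}_{\Omega_k})_{k\in\N}$ and $(\dot{\mathrm E}_{\overline{\Omega}_k^c})_{k\in\N}$, Proposition~\ref{Prop:Norm-B-Equiv} combined with duality makes these two norms on $\dot\B'(\bord_k)$ uniformly (in $k$) equivalent, so they induce the same topology for sequential convergence.

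Applying the exterior isometry to $v_k-\mathcal E_k v\in\dot V_0(\overline{\Omega}_k^c)$ gives the identity
\begin{equation*}
\bigg\lVert\ddno{v_k}{\mathrm e}-\ddno{\mathcal E_k v}{\mathrm e}\bigg\rVert_{*}=\lVert v_k-\mathcal E_k v\rVert_{\dot H^1(\overline{\Omega}_k^c)},
\end{equation*}
where $\lVert\cdot\rVert_{*}$ denotes the exterior-subordinate norm on $\dot\B'(\bord_k)$. By Lemma~\ref{Lem:CV-KS} and the uniform norm equivalence above, this identity characterises $v_k\xrightarrow[k\to\infty]{(\mathcal E_k)}v$ as the convergence of $(\partial v_k/\partial\nu_{\mathrm e})_{k\in\N}$ in $\dot\B'(\bord_k)$ through the alternative family of operators $(w\mapsto\partial(\mathcal E_k w)/\partial\nu_{\mathrm e})_{k\in\N}$.

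To reach the framework $(\Xi_k)$ prescribed by the statement, I would invoke Lemma~\ref{Lem:EquivCVFramework}: the alternative family and $(\Xi_k)$ induce the same notion of convergence provided that, for every $v\in\dot V_0(\overline{\Omega}^c)$,
\begin{equation*}
\bigg\lVert\Xi_k\ddno v{\mathrm e}-\ddno{\mathcal E_k v}{\mathrm e}\bigg\rVert_{\dot\B'(\bord_k)}\xrightarrow[k\to\infty]{}0.
\end{equation*}
This last estimate is the main obstacle. By Remark~\ref{Rem:B'-Change}, the first term equals the normal derivative at $\bord_k$ of $\dot{\mathrm N}_{\bord}(\partial v/\partial\nu_{\mathrm e})$, whose exterior restriction is $v$ itself, whereas the second is the exterior normal derivative at $\bord_k$ of the honest harmonic function $\mathcal E_k v\in \dot V_0(\overline{\Omega}_k^c)$. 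I would close the loop by reapplying the exterior isometry, using Corollary~\ref{Cor:Equiv-CV-H1-e} to replace $\mathcal E_k v$ by $(\dot{\mathrm E}_{\overline{\Omega}^c}v)|_{\overline{\Omega}_k^c}$ in $\dot H^1(\overline{\Omega}_k^c)$, and then applying Proposition~\ref{Prop:TrCV-e} to the constant-in-$k$ sequence equal to $v$ to convert the residual discrepancy between the two harmonic functions into an already-established trace convergence.
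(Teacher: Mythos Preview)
Your overall strategy matches the paper's: apply the exterior normal-derivative isometry from Proposition~\ref{Prop:ddn-isom}, transfer between the interior- and exterior-subordinate dual norms via the uniform equivalence from Proposition~\ref{Prop:Norm-B-Equiv}, and identify $\Xi_k(\partial_{\mathrm e}v)$ with a normal derivative of the Neumann extension through Remark~\ref{Rem:B'-Change}. The paper also passes through Corollary~\ref{Cor:Equiv-CV-H1-e} to trade $\mathcal E_k v$ for a restriction of a global extension, so up to the ordering of steps the two arguments coincide.

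The gap is your closing move. You propose to finish by ``applying Proposition~\ref{Prop:TrCV-e} to the constant-in-$k$ sequence equal to $v$'' so as to convert the residual discrepancy into a trace convergence. But Proposition~\ref{Prop:TrCV-e} controls exterior \emph{traces}, not exterior normal derivatives, and you do not explain how to pass from one to the other here; moreover, once you have replaced $\mathcal E_k v$ by $(\dot{\mathrm E}_{\overline{\Omega}^c}v)|_{\overline{\Omega}_k^c}$ you are no longer dealing with an element of $\dot V_0(\overline{\Omega}_k^c)$ (it fails to be harmonic across $\bord$), so the exterior isometry cannot be reapplied to it. The paper closes the argument differently: it keeps the Neumann extension $\dot{\mathrm E}^{\mathrm N}_{\overline{\Omega}^c}v=\dot{\mathrm N}_{\bord}(\partial_{\mathrm e}v)$ in play, uses the isometry to reduce the $\dot\B'(\bord_k)$ estimate to $\lVert v_k-(\dot{\mathrm E}^{\mathrm N}_{\overline{\Omega}^c}v)|_{\overline{\Omega}_k^c}\rVert_{\dot H^1(\overline{\Omega}_k^c\backslash\bord)}$, and then observes directly that the Neumann and Dirichlet extensions agree on $\overline{\Omega}^c$, so that
\[
\lVert(\dot{\mathrm E}^{\mathrm N}_{\overline{\Omega}^c}-\dot{\mathrm E}_{\overline{\Omega}^c})v\rVert_{\dot H^1(\overline{\Omega}_k^c\backslash\bord)}\xrightarrow[k\to\infty]{}0
\]
by convergence of characteristic functions. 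This Neumann-versus-Dirichlet comparison is the missing ingredient; it replaces your appeal to Proposition~\ref{Prop:TrCV-e}.
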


\begin{proof}
By Corollary~\ref{Cor:Equiv-CV-H1-e}, since $v\in \dot V_0(\overline{\Omega}^c)$, it holds
\begin{equation*}\label{Eq:Implication-H1CV}
(v_k\in \dot V_0(\overline{\Omega}_k^c))_{k\in\N}\xrightarrow[]{(\mathcal{E}_k)}v\in \dot V_0(\overline{\Omega}^c) \iff \lVert u_k-\dot{\mathrm E}_{\overline{\Omega}^c}u|_{\overline{\Omega}_k^c}\rVert_{\dot H^1(\overline{\Omega}_k^c)}\xrightarrow[k\to\infty]{}0.
\end{equation*}
while by Remark~\ref{Rem:B'-Change}, it holds
\begin{equation*}
\ddno{v_k}{\mathrm{e}}\Big|_{\bord_k}\xrightarrow[k\to\infty]{(\Xi_k)}\ddno v{\mathrm{e}}\Big|_{\bord}
\iff\bigg\lVert\ddno{v_k}{\mathrm{e}}\Big|_{\bord_k}-\ddno{}{\mathrm{e}}\Big|_{\bord_k}\dot{\mathrm{N}}_{\bord}\ddno v{\mathrm{e}}\Big|_{\bord}\bigg\rVert_{\dot\B'(\bord_k)}\xrightarrow[k\to\infty]{}0.
\end{equation*}
Using the isometric properties of the exterior normal derivative and the uniform boundedness of the extension operators, there exist $c, c'>0$ uniform in $k$ such that
\begin{multline*}
c\lVert v_k-(\dot{\mathrm{E}}^{\mathrm N}_{\overline{\Omega}^c}v)|_{\overline{\Omega}_k^c}\rVert_{\dot H^1(\overline{\Omega}_k^c\backslash\bord)}\le \bigg\lVert\ddno{v_k}{\mathrm{e}}\Big|_{\bord_k}-\ddno{}{\mathrm{e}}\Big|_{\bord_k}\dot{\mathrm{N}}_{\bord}\ddno v{\mathrm{e}}\Big|_{\bord}\bigg\rVert_{\dot\B'(\bord_k)}\\
\le c'\lVert v_k-(\dot{\mathrm{E}}^{\mathrm N}_{\overline{\Omega}^c}v)|_{\overline{\Omega}_k^c}\rVert_{\dot H^1(\overline{\Omega}_k^c\backslash\bord)},
\end{multline*}
where $\dot{\mathrm{E}}^{\mathrm N}_{\overline{\Omega}^c}:\dot H^1(\overline{\Omega}^c)\to \dot H^1(\nbord)$ is the Neumann harmonic extension:
\begin{equation*}
\forall u\in\dot H^1(\Omega),\quad \big(\dot{\mathrm{E}}^{\mathrm N}_{\overline{\Omega}^c}u\big)\big|_\Omega=u,\quad \big(\dot{\mathrm{E}}^{\mathrm N}_{\overline{\Omega}^c}u\big)\big|_{\overline{\Omega}^c}\in \dot V_0(\overline{\Omega}^c)\quad \mbox{and}\quad \left\llbracket\ddno{}{}\Big|_{\bord}(\dot{\mathrm{E}}^{\mathrm N}_{\overline{\Omega}^c}u)\right\rrbracket=0,
\end{equation*}
where the jump in normal derivative was defined by~\eqref{Eq:ddn-jump}.
Still by uniform boundedness of the extensions, and convergence in the sense of characteristic functions, 
\begin{equation*}
\lVert(\dot{\mathrm{E}}^{\mathrm N}_{\overline{\Omega}^c}-\dot{\mathrm{E}}_{\overline{\Omega}^c})v\rVert_{\dot H^1(\overline{\Omega}_k^c\backslash\bord)}\xrightarrow[k\to\infty]{}0,
\end{equation*}
hence the equivalence.
\end{proof}

\subsection{Convergence on the whole space}\label{Subsec:CV-Rn}

The frameworks for the convergence of $\dot H^1$ functions on interior and exterior domains from Lemma~\ref{Lem:CV-H1-i} and Corollary~\ref{Cor:CV-H1-e} induce a notion of convergence for $\dot H^1$ functions on the whole space.

\begin{lemma}\label{Lem:CV-H1-Rn}
Let $\Omega$ be a two-sided admissible domain. Let $(\Omega_k)_{k\in\N}$ be a sequence of two-sided admissible domains such that $\Omega_k\nearrow\Omega$. Then, it holds
\begin{equation*}
\dot H^1(\nbord_k)\xrightarrow[k\to\infty]{}\dot H^1(\nbord)\quad\mbox{through}\quad\big(\dot H^1(\nbord),(\cdot|_{\Omega_k}\oplus \mathcal{E}_k \oplus i_k)_{k\in\N}\big),
\end{equation*}
with $\cdot|_{\Omega_k}\oplus \mathcal{E}_k \oplus i_k$ decomposed along the direct orthogonal sum $\dot H^1(\Omega)\oplus \dot V_0(\overline{\Omega}^c)\oplus \dot H^1_0(\overline{\Omega}^c)$.
\end{lemma}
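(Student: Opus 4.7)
The plan is to invoke the norm convergence in Definition~\ref{Def:CV-Hilb} directly, since the dense subspace is taken to be the whole space $\dot H^1(\nbord)$. That is, I will fix an arbitrary $u\in \dot H^1(\nbord)$ and show that
\begin{equation*}
\big\lVert (\cdot|_{\Omega_k}\oplus\mathcal{E}_k\oplus i_k)u\big\rVert_{\dot H^1(\nbord_k)}\xrightarrow[k\to\infty]{}\lVert u\rVert_{\dot H^1(\nbord)},
\end{equation*}
by reducing to the already-established interior and exterior convergences (Lemma~\ref{Lem:CV-H1-i} and Corollary~\ref{Cor:CV-H1-e}).

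First I would decompose $u$ orthogonally along $\dot H^1(\Omega)\oplus \dot V_0(\overline{\Omega}^c)\oplus \dot H^1_0(\overline{\Omega}^c)$ as $u=u_{\mathrm i}+v+\varphi$, so that $\lVert u\rVert_{\dot H^1(\nbord)}^2=\lVert u_{\mathrm i}\rVert_{\dot H^1(\Omega)}^2+\lVert v\rVert_{\dot H^1(\overline{\Omega}^c)}^2+\lVert\varphi\rVert_{\dot H^1(\overline{\Omega}^c)}^2$. Then the image vector, living in $\dot H^1(\nbord_k)=\dot H^1(\Omega_k)\oplus \dot H^1(\overline{\Omega}_k^c)$, has interior part $u_{\mathrm i}|_{\Omega_k}\in \dot H^1(\Omega_k)$ and exterior part $\mathcal{E}_kv+i_k\varphi\in \dot H^1(\overline{\Omega}_k^c)$. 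Because $\mathcal{E}_kv\in \dot V_0(\overline{\Omega}_k^c)$ by construction and $i_k\varphi\in \dot H^1_0(\overline{\Omega}_k^c)$ (the extension by $0$ makes sense since $\overline{\Omega}^c\subset \overline{\Omega}_k^c$), the exterior part is itself orthogonally decomposed; hence the squared norm of the image equals
\begin{equation*}
\lVert u_{\mathrm i}|_{\Omega_k}\rVert_{\dot H^1(\Omega_k)}^2+\lVert \mathcal{E}_kv\rVert_{\dot H^1(\overline{\Omega}_k^c)}^2+\lVert i_k\varphi\rVert_{\dot H^1(\overline{\Omega}_k^c)}^2.
\end{equation*}

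Next I would handle each term separately. The first converges to $\lVert u_{\mathrm i}\rVert_{\dot H^1(\Omega)}^2$ by Lemma~\ref{Lem:CV-H1-i}. The third is equal to $\lVert \varphi\rVert_{\dot H^1(\overline{\Omega}^c)}^2$ for every $k$, since extension by $0$ is norm-preserving. For the middle term, I would write $\lVert \mathcal{E}_kv\rVert_{\dot H^1(\overline{\Omega}_k^c)}^2=\lVert \dot{\mathrm E}_{\Omega_k}[(\dot{\mathrm E}_{\overline{\Omega}^c}v)|_{\Omega_k}]\rVert_{\dot H^1(\R^n)}^2-\lVert (\dot{\mathrm E}_{\overline{\Omega}^c}v)|_{\Omega_k}\rVert_{\dot H^1(\Omega_k)}^2$; by Proposition~\ref{Prop:ExtCV-e} the first piece converges to $\lVert \dot{\mathrm E}_{\overline{\Omega}^c}v\rVert_{\dot H^1(\R^n)}^2$, and by Lemma~\ref{Lem:CV-H1-i} applied to $\dot{\mathrm E}_{\overline{\Omega}^c}v$ the second piece converges to $\lVert (\dot{\mathrm E}_{\overline{\Omega}^c}v)|_\Omega\rVert_{\dot H^1(\Omega)}^2$, whose difference is exactly $\lVert v\rVert_{\dot H^1(\overline{\Omega}^c)}^2$. (Alternatively, one may simply quote Corollary~\ref{Cor:CV-H1-e} applied to the exterior vector $v+\varphi$.) Summing the three limits yields $\lVert u\rVert_{\dot H^1(\nbord)}^2$ and the lemma follows.

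I do not foresee a genuine obstacle here: the statement is essentially a bookkeeping consequence of the two previously established convergences, and the main point to check carefully is the orthogonality of $\mathcal{E}_kv$ and $i_k\varphi$ inside $\dot H^1(\overline{\Omega}_k^c)$, which is immediate from the identification $\dot H^1(\overline{\Omega}_k^c)=\dot V_0(\overline{\Omega}_k^c)\oplus\dot H^1_0(\overline{\Omega}_k^c)$.
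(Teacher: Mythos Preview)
Your proof is correct and follows the same approach as the paper: both argue by orthogonality of the decomposition $\dot H^1(\Omega)\oplus \dot V_0(\overline{\Omega}^c)\oplus \dot H^1_0(\overline{\Omega}^c)$ and invoke the interior convergence (Lemma~\ref{Lem:CV-H1-i}) together with the exterior convergence (Corollary~\ref{Cor:CV-H1-e}). The paper's proof is a one-line reference to those two results plus orthogonality, while you spell out the norm computation (and even give an alternative direct argument for the $\mathcal{E}_k$ term via Proposition~\ref{Prop:ExtCV-e}), but the substance is identical.
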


\begin{proof}
The result follows from Lemma~\ref{Lem:CV-H1-i}, Corollary~\ref{Cor:CV-H1-e}, and the fact that the decomposition is orthogonal.
\end{proof}

\begin{remark}\label{Rem:Spaces-CV}
By design, the convergence framework from Lemma~\ref{Lem:CV-H1-Rn} is compatible with those from Lemma~\ref{Lem:CV-H1-i} and Corollary~\ref{Cor:CV-H1-e}, in the sense that if
\begin{equation*}
(u_k\in \dot H^1(\nbord_k))_{k\in\N}\xrightarrow[k\to\infty]{(\cdot|_{\Omega_k}\oplus \mathcal{E}_k \oplus i_k)}u\in \dot H^1(\nbord),
\end{equation*}
then
\begin{equation*}
(u_k|_{\Omega_k}\in \dot H^1(\Omega_k))_{k\in\N}\xrightarrow[k\to\infty]{(\cdot|_{\Omega_k})}u|_\Omega\in \dot H^1(\Omega),
\end{equation*}
and
\begin{equation*}
(u_k|_{\overline{\Omega}_k^c}\in \dot H^1(\overline{\Omega}_k^c))_{k\in\N}\xrightarrow[k\to\infty]{(\mathcal{E}_k \oplus i_k)}u|_{\overline{\Omega}^c}\in \dot H^1({\overline{\Omega}^c}).
\end{equation*}
In addition, it also holds:
\begin{equation*}
\dot V_0(\nbord_k)\xrightarrow[k\to\infty]{}\dot V_0(\nbord)\quad\mbox{through}\quad\big(\dot V_0(\nbord), (\cdot|_{\Omega_k}\oplus\mathcal{E}_k)_{k\in\N}\big),
\end{equation*}
decomposed along $\dot V_0(\Omega)\oplus\dot V_0(\overline{\Omega}^c)$.
\end{remark}

In Subsections~\ref{Subsec:CV-Int} and~\ref{Subsec:CV-Ext}, we have established the link between convergence of harmonic functions inside and outside, and convergence of their interior and exterior boundary values. For the transmission problem, beyond harmonicity, it is necessary to connect the interior and exterior part in the sense that we need to consider the jumps in trace and normal derivative.
The following proposition proves the convergence of the orthogonal projectors giving the single layer potential parts of the transmission solutions (in the sense of~\eqref{Eq:Decomp-Trans} and~\eqref{Eq:SLP-Space}) along a converging sequence of domains.

\begin{proposition}\label{Prop:ProjTrCV}
Let $\Omega$ be a two-sided admissible domain. Assume there exists a non-decreasing sequence $(\Omega_k)_{k\in\N}$ of two-sided admissible domains such that $\Omega_k\nearrow\Omega$ and that the extensions $(\dot{\mathrm{E}}_{\Omega_k})_{k\in\N}$ and $(\dot{\mathrm{E}}_{\overline{\Omega}_k^c})_{k\in\N}$ are uniformly bounded. For $k\in\N$, define the orthogonal projectors $p_k^{\Scal}:\dot V_0(\R^n\backslash\bord_k)\to \dot{V}_\Scal(\R^n\backslash\bord_k)$ and $p^{\Scal}:\dot V_0(\R^n\backslash\bord)\to \dot{V}_\Scal(\R^n\backslash\bord)$. Then, it holds:
\begin{equation}\label{Eq:WCV-Proj}
p_k^{\Scal}\xrightarrow[k\to\infty]{(\cdot|_{\Omega_k}\oplus\mathcal{E}_k),(\cdot|_{\Omega_k}\oplus\mathcal{E}_k)}p^{\Scal}.
\end{equation}
\end{proposition}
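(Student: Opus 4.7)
The plan is to embed the whole sequence $s_k := p_k^{\Scal} u_k \in \dot{V}_\Scal(\nbord_k)$ into a single fixed Hilbert space by exploiting the observation that elements of $\dot{V}_\Scal(\nbord_k)$ have continuous trace across $\bord_k$ and therefore canonically extend to $\dot H^1(\R^n)$ with identical norm. Because $\lVert s_k\rVert_{\dot H^1(\R^n)} = \lVert s_k\rVert_{\dot H^1(\nbord_k)} \le \lVert u_k\rVert_{\dot H^1(\nbord_k)}$ is bounded (the norms of $u_k$ converge to $\lVert u\rVert_{\dot H^1(\nbord)}$), I would first extract a subsequential weak limit $s_{k_m} \rightharpoonup s^*$ in $\dot H^1(\R^n)$. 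Since $s_k$ is harmonic on each fixed $\Omega_j$ for all $k\ge j$ and on the fixed $\overline{\Omega}^c \subset \overline{\Omega}_k^c$, the weak limit $s^*$ is harmonic on $\Omega = \bigcup_j \Omega_j$ and on $\overline{\Omega}^c$, hence $s^* \in \dot{V}_0(\nbord) \cap \dot H^1(\R^n) = \dot{V}_\Scal(\nbord)$.

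Next I would identify $s^* = p^{\Scal} u =: s$ by testing the projection orthogonality $\langle u_k - s_k, \phi_k\rangle_{\dot H^1(\nbord_k)} = 0$ against the natural representatives $\phi_k := \dot{\mathrm{E}}_{\Omega_k}(\phi|_{\Omega_k}) \in \dot{V}_\Scal(\nbord_k)$ of an arbitrary $\phi \in \dot{V}_\Scal(\nbord)$; Proposition~\ref{Prop:ExtCV-i} applied to $\phi|_\Omega$ gives $\phi_k \to \phi$ strongly in $\dot H^1(\R^n)$ (using that $\dot{\mathrm{E}}_\Omega(\phi|_\Omega)=\phi$ as $\phi$ is already harmonic outside $\Omega$). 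The pairing $\langle s_k, \phi_k\rangle$ is weak-strong in $\dot H^1(\R^n)$ and tends to $\langle s^*, \phi\rangle_{\dot H^1(\nbord)}$; the pairing $\langle u_k, \phi_k\rangle$ splits over $\Omega_k$ and $\overline{\Omega}_k^c$, the exterior piece being handled by rewriting it as $\int_{\R^n} \mathbf{1}_{\overline{\Omega}_k^c}\,\nabla \tilde W_k \cdot \nabla \phi_k$ where $\tilde W_k := \dot{\mathrm{E}}_{\Omega_k}\bigl(\dot{\mathrm{E}}_{\overline{\Omega}^c}(u|_{\overline{\Omega}^c})|_{\Omega_k}\bigr)$ converges strongly in $\dot H^1(\R^n)$ (Proposition~\ref{Prop:ExtCV-i} again), combined with dominated convergence for the indicators. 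One concludes $\langle u - s^*, \phi\rangle_{\dot H^1(\nbord)} = 0$ for every $\phi \in \dot{V}_\Scal(\nbord)$, forcing $u - s^* \in \dot{V}_\D(\nbord)$ and $s^* = s$; uniqueness then propagates the weak convergence $s_k \rightharpoonup s$ in $\dot H^1(\R^n)$ to the full sequence.

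The last step is to upgrade this to convergence in the moving framework, i.e.\ $\lVert p_k^{\Scal} u_k - \tilde s_k\rVert_{\dot H^1(\nbord_k)} \to 0$ with $\tilde s_k := \dot{\mathrm{E}}_{\Omega_k}(s|_{\Omega_k}) = (s|_{\Omega_k}) \oplus \mathcal{E}_k(s|_{\overline{\Omega}^c})$ (the two expressions agree because $s$ is harmonic on $\overline{\Omega}^c$), and $\tilde s_k \to s$ in $\dot H^1(\R^n)$ by Proposition~\ref{Prop:ExtCV-i}. Decomposing $u_k = \tilde u_k + r_k$ with $\tilde u_k := (u|_{\Omega_k})\oplus \mathcal{E}_k(u|_{\overline{\Omega}^c})$ and $\lVert r_k\rVert_{\dot H^1(\nbord_k)} \to 0$, the difference reads $p_k^{\Scal} u_k - \tilde s_k = p_k^{\Scal} \tilde d_k + p_k^{\Scal} r_k$, where $\tilde d_k := \tilde u_k - \tilde s_k$ is the transfer of $d := u - s \in \dot{V}_\D(\nbord)$; contractivity of projections handles $r_k$. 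The hard part is then proving $p_k^{\Scal} \tilde d_k \to 0$ strongly in $\dot H^1(\R^n)$: rerunning the weak-compactness argument gives a weak limit in $\dot{V}_\Scal(\nbord)$ that is orthogonal to every $\phi \in \dot{V}_\Scal(\nbord)$ (using $d \perp \phi$ in $\dot H^1(\nbord)$), hence zero, so $p_k^{\Scal} \tilde d_k \rightharpoonup 0$ weakly. The main obstacle—the upgrade to strong convergence—is extracted from the identity $\lVert p_k^{\Scal} \tilde d_k\rVert^2 = \langle \tilde d_k, p_k^{\Scal} \tilde d_k\rangle_{\dot H^1(\nbord_k)}$ (valid since $\tilde d_k - p_k^{\Scal} \tilde d_k \in \dot{V}_\D(\nbord_k) \perp \dot{V}_\Scal(\nbord_k)$), by rewriting each interior and exterior integral as a strong-weak $L^2(\R^n)$ pairing between $\mathbf{1}_{\Omega_k}$ (resp.\ $\mathbf{1}_{\overline{\Omega}_k^c}$) times the gradient of a strongly converging $\dot H^1(\R^n)$ function—namely $\dot{\mathrm{E}}_\Omega(d|_\Omega)$ (resp.\ $\dot{\mathrm{E}}_{\Omega_k}(\dot{\mathrm{E}}_{\overline{\Omega}^c}(d|_{\overline{\Omega}^c})|_{\Omega_k})$)—and the weakly null $\nabla p_k^{\Scal} \tilde d_k$, which therefore tends to zero.
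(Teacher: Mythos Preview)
Your argument is correct and follows a genuinely different route from the paper's. The paper works entirely within the abstract Kuwae--Shioya framework: it observes that the $p_k^{\Scal}$ are self-adjoint contractions, extracts a weak subsequential operator limit $p_\infty^{\Scal}$, invokes Lemma~\ref{Lem:CV-SA} and then \cite[Lemma~2.6]{kuwae_convergence_2003} to upgrade this to a \emph{strong} operator limit automatically (self-adjoint weak limits of self-adjoint operators are strong limits), checks that $p_\infty^{\Scal}$ is a projector, and finally identifies its range as $\dot V_\Scal(\nbord)$ using Propositions~\ref{Prop:TrCV-i} and~\ref{Prop:TrCV-e}. The weak-to-strong passage is thus handled by a single citation.

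You instead exploit the lucky structural fact that $\dot V_\Scal(\nbord_k)\subset\dot H^1(\R^n)$ isometrically, which lets you run the whole compactness argument in one fixed Hilbert space. You identify the weak limit by testing against $\phi_k=\dot{\mathrm E}_{\Omega_k}(\phi|_{\Omega_k})$ and passing to the limit in each pairing by hand, and you recover strong convergence from the projection identity $\lVert p_k^{\Scal}\tilde d_k\rVert^2=\langle\tilde d_k,p_k^{\Scal}\tilde d_k\rangle$ together with a strong--weak $L^2(\R^n)^n$ argument. This is longer but entirely self-contained: it avoids the black-box \cite[Lemma~2.6]{kuwae_convergence_2003} and makes transparent exactly where the monotonicity $\Omega_k\subset\Omega$, Proposition~\ref{Prop:ExtCV-i}, and the indicator convergences enter. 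The paper's approach is cleaner and more portable to other self-adjoint situations; yours gives more insight into why the single-layer space is the right one to embed into and would adapt more readily if one wanted explicit rate estimates.
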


\begin{proof}
For all $k\in\N$, $\lVert p_k^{\Scal}\rVert_{\mathcal{L}(\dot V_0(\nbord_k))}=1$, hence there exists a linear bounded operator $p_\infty^{\Scal}$ on $\dot V_0(\nbord)$ which is a weak subsequential limit of $(p_k^{\Scal})_{k\in\N}$.
Since all $p_k^{\Scal}$ are self-adjoint, the limit is also self-adjoint (Lemma~\ref{Lem:CV-SA}): it is a strong subsequential limit by~\cite[Lemma 2.6]{kuwae_convergence_2003}. Since all $p_k^{\Scal}$ are projectors, we can deduce $(p_\infty^{\Scal})^2=p_\infty^{\Scal}$, i.e., it is also a projector.
By Propositions~\ref{Prop:TrCV-i} and~\ref{Prop:TrCV-e}, the null jump in trace is preserved by the limit so that $p_\infty^{\Scal}(\dot V_0(\nbord))\subset \dot{V}_\Scal(\nbord)$.\\
Conversely, let $u\in \dot{V}_\Scal(\nbord)$ and denote $f:=\trd^{\bord}u$. Consider a sequence $(f_k\in\dot \B(\bord_k))$ converging to $f\in\Hpo$.
Then, by Propositions~\ref{Prop:TrCV-i} and~\ref{Prop:TrCV-e}, it holds:
\begin{equation*}
\DOd_{\Omega_k}f_k\xrightarrow[k\to\infty]{(\cdot|_{\Omega_k})}\DOd_{\Omega}f\quad\mbox{and}\quad \DOd_{\overline{\Omega}_k^c}f_k\xrightarrow[k\to\infty]{(\mathcal{E}_k)}\DOd_{\overline{\Omega}^c}f.
\end{equation*}
Consequently, it holds
\begin{equation*}
\begin{array}{ccc}
\DOd_{\Omega_k}f_k\oplus\DOd_{\overline{\Omega}_k^c}f_k &\xrightarrow[k\to\infty]{(\cdot|_{\Omega_k}\oplus\mathcal{E}_k)} & u\\
\rotatebox{90}{$\,=$} &&\\
p_k^{\Scal}(\DOd_{\Omega_k}f_k\oplus\DOd_{\overline{\Omega}_k^c}f_k) &\xrightarrow[k\to\infty]{(\cdot|_{\Omega_k}\oplus\mathcal{E}_k)} & p_\infty^{\Scal}u,
\end{array}
\end{equation*}
hence $p_\infty^{\Scal}(\dot V_0(\nbord))=\dot{V}_\Scal(\nbord)$. Therefore, $p^{\Scal}$ is the only possible subsequential limit, and the result follows. 
\end{proof}

\begin{remark}\label{Rem:LP-CV}
Proposition~\ref{Prop:ProjTrCV} states that if $(u_k\in \dot V_0(\nbord_k))_{k\in\N}\to u\in \dot V_0(\nbord)$ through $(\cdot|_{\Omega_k}\oplus\mathcal{E}_k)_{k\in\N}$, then the single (resp. double) potential part of $u_k$ converges to the single (resp. double) potential part of $u$: the decomposition $\dot V_0(\nbord)=\dot{V}_\Scal(\nbord)\oplus \dot{V}_\D(\nbord)$ is preserved.
\end{remark}

Using the convergence of the jumps in trace and normal derivative under the hypothesis of uniform boundedness of the harmonic extensions, and the uniform boundedness of the layer potential operators, we can state the convergence of the layer potential operators along a sequence of two-sided admissible domains.

\begin{theorem}[Convergence of the harmonic layer potential operators]\label{Th:CV-LP}
Let $\Omega$ be a two-sided admissible domain and $(\Omega_k)_{k\in\N}$ be a non-decreasing sequence of two-sided admissible domains such that $\Omega_k\nearrow\Omega$. Assume that the Dirichlet harmonic extensions $(\dot{\mathrm{E}}_{\Omega_k})_{k\in\N}$ and $(\dot{\mathrm{E}}_{\overline{\Omega}_k^c})_{k\in\N}$ from~\eqref{Eq:Dir-Int-Ext} and~\eqref{Eq:Dir-Ext-Ext} are uniformly bounded. Then, it holds
\begin{equation*}
\Sd_{\bord_k}\xrightarrow[k\to\infty]{(\Xi_k),(\cdot|_{\Omega_k}\oplus\mathcal{E}_k)}\Sd_{\bord}\qquad\mbox{and}\qquad\Dd_{\bord_k}\xrightarrow[k\to\infty]{(\trd^{\bord_k}\circ\,\dot{\mathrm{H}}_{\bord}),(\cdot|_{\Omega_k}\oplus\mathcal{E}_k)}\Dd_{\bord},
\end{equation*}
using the convergence frameworks from Lemmas~\ref{Lem:CV-B},~\ref{Lem:CV-B'} and~\ref{Lem:CV-H1-Rn}.
\end{theorem}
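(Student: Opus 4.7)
The plan is to exploit the uniform bound $\lVert\Sd_{\bord_k}\rVert,\lVert\Dd_{\bord_k}\rVert\le 1$ from Proposition~\ref{Prop:LP-Bounds} to reduce both convergences to their canonical representatives, and then handle the SLP and DLP by different arguments tailored to the fact that $\Sd_{\bord}g\in\dot H^1(\R^n)$ while $\Dd_{\bord}f$ carries a genuine trace jump. Concretely, for any $(g_k)\xrightarrow[]{(\Xi_k)}g$, Lemma~\ref{Lem:CV-KS} gives $\lVert g_k-\Xi_k g\rVert_{\dot\B'(\bord_k)}\to 0$, so Proposition~\ref{Prop:LP-Bounds} yields $\lVert\Sd_{\bord_k}g_k-\Sd_{\bord_k}(\Xi_k g)\rVert_{\dot H^1(\nbord_k)}\to 0$; it thus suffices to verify the convergence on the sequence $\Xi_k g$. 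An analogous reduction applies to $\Dd_{\bord_k}$.

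For the SLP, set $u:=\Sd_{\bord}g$ and $u_k:=\Sd_{\bord_k}(\Xi_k g)$, both in $\dot H^1(\R^n)$. The Green-formula argument from the proof of Proposition~\ref{Prop:LP-Bounds} gives the variational identities $\langle u,v\rangle_{\dot H^1(\R^n)}=\langle g,\Trd^{\bord}_{\mathrm i}v\rangle$ and $\langle u_k,v\rangle_{\dot H^1(\R^n)}=\langle \Xi_k g,\Trd^{\bord_k}_{\mathrm i}v\rangle$ for every $v\in\dot H^1(\R^n)$. Setting $w:=\dot{\mathrm H}_{\bord}\dot\Lambda_{\bord}^{-1}g\in\dot H^1(\R^n)$, the definition of $\Xi_k$ together with Green's formula on $\Omega_k$ (where $w$ is harmonic) identifies these pairings as $\int_{\Omega}\nabla w\cdot\nabla v\,\dx$ and $\int_{\Omega_k}\nabla w\cdot\nabla v\,\dx$ respectively. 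Testing with $v=u_k-u\in\dot H^1(\R^n)$ and subtracting yields the clean identity
\begin{equation*}
\lVert u_k-u\rVert_{\dot H^1(\R^n)}^2=-\int_{\Omega\setminus\Omega_k}\nabla w\cdot\nabla(u_k-u)\,\dx,
\end{equation*}
so Cauchy-Schwarz gives $\lVert u_k-u\rVert_{\dot H^1(\R^n)}\le\lVert\nabla w\rVert_{L^2(\Omega\setminus\Omega_k)}\to 0$ by dominated convergence. Denoting the transfer by $\tau_k u:=u|_{\Omega_k}\oplus\mathcal E_k(u|_{\overline\Omega^c})$, the absence of a trace jump in $u$ forces $\tau_k u=\dot{\mathrm E}_{\Omega_k}(u|_{\Omega_k})$, which tends to $u$ in $\dot H^1(\R^n)$ by Proposition~\ref{Prop:ExtCV-i}. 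Since $\lVert\cdot\rVert_{\dot H^1(\nbord_k)}=\lVert\cdot\rVert_{\dot H^1(\R^n)}$ on SLPs, the triangle inequality delivers $\lVert u_k-\tau_k u\rVert_{\dot H^1(\nbord_k)}\to 0$.

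For the DLP, set $u:=\Dd_{\bord}f\in\dot V_{\D}(\nbord)$ and $u_k:=\Dd_{\bord_k}(\trd^{\bord_k}\dot{\mathrm H}_{\bord}f)\in\dot V_{\D}(\nbord_k)$. The variational shortcut is unavailable because $u\notin\dot H^1(\R^n)$; instead I use the decomposition $\dot V_0(\nbord_k)=\dot{V}_{\Scal}(\nbord_k)\oplus\dot{V}_{\D}(\nbord_k)$. Writing $(u-\dot{\mathrm E}_{\overline\Omega^c}(u|_{\overline\Omega^c}))|_{\Omega}=\DOd_{\Omega}(\llbracket\Trd^{\bord}u\rrbracket)$ and restricting to $\Omega_k$ shows that the trace jump of $\tau_k u\in\dot V_0(\nbord_k)$ across $\bord_k$ equals $\trd^{\bord_k}(\dot{\mathrm H}_{\bord}f)$, which is exactly the trace jump of $u_k$. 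Since by Lemma~\ref{Lem:DLP-ortho} the trace jump parametrises $\dot V_{\D}(\nbord_k)$ bijectively, $u_k$ coincides with the $\dot V_{\D}(\nbord_k)$-projection of $\tau_k u$, hence
\begin{equation*}
u_k-\tau_k u=-p_k^{\Scal}(\tau_k u).
\end{equation*}
Now $\tau_k u\xrightarrow[]{(\cdot|_{\Omega_k}\oplus\mathcal E_k)}u$ by construction, and $p^{\Scal}u=0$ because $u\in\dot V_{\D}(\nbord)$, so Proposition~\ref{Prop:ProjTrCV} yields $p_k^{\Scal}(\tau_k u)\to 0$ in the moving-Hilbert framework, i.e.\ $\lVert p_k^{\Scal}(\tau_k u)\rVert_{\dot H^1(\nbord_k)}\to 0$, which concludes the proof.

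The main obstacle lies in the DLP case: since $\Dd_{\bord}f\notin\dot H^1(\R^n)$, the elegant test-with-$(u_k-u)$ computation that settles the SLP does not apply. The key idea is to identify $u_k$ as the orthogonal projection of the transfer $\tau_k u$ onto $\dot V_{\D}(\nbord_k)$ and to pipe the whole convergence through the projector convergence already established in Proposition~\ref{Prop:ProjTrCV}; this turns a problem about operator-level convergence of $\Dd_{\bord_k}$ into a corollary of the convergence of the orthogonal projectors $p_k^{\Scal}$.
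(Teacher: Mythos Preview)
Your proof is correct but takes a genuinely different route from the paper. The paper's argument is essentially a one-liner: the layer potential operators are uniformly bounded (Proposition~\ref{Prop:LP-Bounds}), their inverses are precisely the jump-in-normal-derivative and jump-in-trace operators whose convergence was established piecewise in Subsections~\ref{Subsec:CV-Int} and~\ref{Subsec:CV-Ext}, so Lemma~\ref{Lem:InverseCV} delivers the convergence of the layer potentials directly. The same abstract inverse-convergence lemma handles both $\Sd$ and $\Dd$ at once.

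Your approach trades this unification for explicitness, treating the two operators by separate mechanisms. For the SLP, your variational computation actually produces the quantitative estimate $\lVert u_k-u\rVert_{\dot H^1(\R^n)}\le\lVert\nabla w\rVert_{L^2(\Omega\setminus\Omega_k)}$, which is stronger than what the statement requires and essentially anticipates Theorem~\ref{Th:CV-LP-H1-Rn}(ii). For the DLP, you bypass Lemma~\ref{Lem:InverseCV} entirely and reduce to Proposition~\ref{Prop:ProjTrCV} via the clean geometric observation that $u_k$ is the $\dot V_{\D}(\nbord_k)$-projection of the transferred element $\tau_k u$; the paper has Proposition~\ref{Prop:ProjTrCV} available at this point but does not invoke it in its own proof of the theorem. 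The paper's route is more economical and highlights the duality between layer potentials and jump operators; yours is more constructive, gives an explicit rate in the SLP case, and makes concrete use of the projector convergence that the paper proved just beforehand.
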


\begin{proof}
$(\Sd_{\bord_k})_{k\in\N}$ and $(\Dd_{\bord_k})_{k\in\N}$ are uniformly bounded sequences of operators by Proposition~\ref{Prop:LP-Bounds}. By the convergence of their respective inverses (i.e., the jumps in normal derivative and in trace) proved in Subsections~\ref{Subsec:CV-Int} and~\ref{Subsec:CV-Ext}, and Lemma~\ref{Lem:InverseCV}, the convergence of the layer potential operators follows.
\end{proof}

So far, the results have been stated in the framework of convergence along a sequence of Hilbert spaces introduced in~\cite{kuwae_convergence_2003}. Nonetheless, it was proved in Lemma~\ref{Lem:CV-KS} that notion of convergence can be handled with more ease when the Hilbert spaces converge in a `nice enough' manner. In the case of the convergence of the $\dot V_0$ spaces in the sense of Remark~\ref{Rem:Spaces-CV}, the convergence of vectors -- therefore, of bounded operators -- can be understood in terms of (strong) convergence in $\dot H^1(\R^n)$ of Dirichlet harmonic extensions.

\begin{proposition}\label{Prop:Link-CV-H1Rn}
Let $\Omega$ be a two-sided admissible domain. Let $(\Omega_k)_{k\in\N}$ be a sequence of two-sided admissible domains such that $\Omega_k\nearrow\Omega$. Then,
\begin{enumerate}
\item[(i)] if $(\dot{\mathrm{E}}_{\Omega_k})$ is uniformly bounded, then the following equivalence holds:
\begin{multline*}
(u_k\in \dot H^1(\Omega_k))_{k\in\N}\xrightarrow[]{(\cdot|_{\Omega_k})}u\in \dot H^1(\Omega) \\
\iff\quad \dot{\mathrm{E}}_{\Omega_k}u_k\xrightarrow[k\to\infty]{} \dot{\mathrm{E}}_\Omega u\quad\mbox{in }\dot H^1(\R^n);
\end{multline*}

\item[(ii)] if $(\dot{\mathrm{E}}_{\overline{\Omega}_k^c})$ is uniformly bounded, then the following equivalence holds:
\begin{multline*}
(v_k\in \dot V_0(\overline{\Omega}_k^c))_{k\in\N}\xrightarrow[]{(\mathcal{E}_k)}v\in \dot V_0(\overline{\Omega}^c)\\
\iff\quad \dot{\mathrm{E}}_{\overline{\Omega}_k^c}v_k\xrightarrow[k\to\infty]{} \dot{\mathrm{E}}_{\overline{\Omega}^c} v\quad\mbox{in }\dot H^1(\R^n).
\end{multline*}
\end{enumerate}
\end{proposition}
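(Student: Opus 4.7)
The plan is to translate both Kuwae--Shioya convergences into honest norm convergences by invoking Lemma~\ref{Lem:CV-KS}, and then to combine the uniform boundedness of the extension operators with Propositions~\ref{Prop:ExtCV-i} and~\ref{Prop:ExtCV-e} via a straightforward triangle inequality. The two points are structurally identical; the only delicate issue is an algebraic identification needed in Point (ii).

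For Point (i), Lemma~\ref{Lem:CV-KS} reduces the left-hand side to $\lVert u_k - u|_{\Omega_k}\rVert_{\dot H^1(\Omega_k)} \to 0$. For the forward direction I would split
\[
\dot{\mathrm{E}}_{\Omega_k} u_k - \dot{\mathrm{E}}_\Omega u = \dot{\mathrm{E}}_{\Omega_k}(u_k - u|_{\Omega_k}) + \big[\dot{\mathrm{E}}_{\Omega_k}(u|_{\Omega_k}) - \dot{\mathrm{E}}_\Omega u\big].
\]
The first summand is bounded in $\dot H^1(\R^n)$ by $\sup_\ell\lVert\dot{\mathrm{E}}_{\Omega_\ell}\rVert \cdot \lVert u_k - u|_{\Omega_k}\rVert_{\dot H^1(\Omega_k)}$ thanks to the uniform hypothesis, hence tends to zero; the second is exactly what Proposition~\ref{Prop:ExtCV-i} handles. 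The converse direction actually uses no boundedness assumption: since $(\dot{\mathrm{E}}_{\Omega_k}u_k)|_{\Omega_k} = u_k$ and $(\dot{\mathrm{E}}_\Omega u)|_{\Omega_k} = u|_{\Omega_k}$, the obvious restriction estimate
\[
\lVert u_k - u|_{\Omega_k}\rVert_{\dot H^1(\Omega_k)} \le \lVert \dot{\mathrm{E}}_{\Omega_k}u_k - \dot{\mathrm{E}}_\Omega u\rVert_{\dot H^1(\R^n)}
\]
closes the argument.

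For Point (ii), the analogous reduction gives $\lVert v_k - \mathcal{E}_k v\rVert_{\dot H^1(\overline{\Omega}_k^c)} \to 0$. The heart of the proof, and the step I expect to be the main obstacle, is the identification
\[
\dot{\mathrm{E}}_{\overline{\Omega}_k^c}(\mathcal{E}_k v) = \dot{\mathrm{E}}_{\Omega_k}\big((\dot{\mathrm{E}}_{\overline{\Omega}^c} v)|_{\Omega_k}\big).
\]
It holds because, setting $w := \dot{\mathrm{E}}_{\overline{\Omega}^c} v$, the hypothesis $v \in \dot V_0(\overline{\Omega}^c)$ makes $w$ harmonic on $\Omega$, so $w|_{\Omega_k} \in \dot V_0(\Omega_k)$; then $\dot{\mathrm{E}}_{\Omega_k}(w|_{\Omega_k}) = w|_{\Omega_k} \oplus \mathcal{E}_k v$ lies in $\dot V_0(\nbord_k)$ and has restriction $\mathcal{E}_k v$ on $\overline{\Omega}_k^c$, whence it must coincide with $\dot{\mathrm{E}}_{\overline{\Omega}_k^c}(\mathcal{E}_k v)$ by uniqueness of the Dirichlet harmonic extension. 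Once this identification is secured, Proposition~\ref{Prop:ExtCV-e} applied to $v$ yields $\dot{\mathrm{E}}_{\overline{\Omega}_k^c}(\mathcal{E}_k v) \to \dot{\mathrm{E}}_{\overline{\Omega}^c} v$ in $\dot H^1(\R^n)$.

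From there the two implications of (ii) follow by exactly the same scheme as (i). For the forward direction I would split
\[
\dot{\mathrm{E}}_{\overline{\Omega}_k^c}v_k - \dot{\mathrm{E}}_{\overline{\Omega}^c}v = \dot{\mathrm{E}}_{\overline{\Omega}_k^c}(v_k - \mathcal{E}_k v) + \big[\dot{\mathrm{E}}_{\overline{\Omega}_k^c}(\mathcal{E}_k v) - \dot{\mathrm{E}}_{\overline{\Omega}^c}v\big],
\]
controlling the first summand by the uniform bound on $(\dot{\mathrm{E}}_{\overline{\Omega}_k^c})$ and the second by the identification above; for the converse, the restriction estimate
\[
\lVert v_k - \mathcal{E}_k v\rVert_{\dot H^1(\overline{\Omega}_k^c)} \le \lVert \dot{\mathrm{E}}_{\overline{\Omega}_k^c}v_k - \dot{\mathrm{E}}_{\overline{\Omega}^c}v\rVert_{\dot H^1(\R^n)} + \lVert \dot{\mathrm{E}}_{\overline{\Omega}^c}v - \dot{\mathrm{E}}_{\Omega_k}(w|_{\Omega_k})\rVert_{\dot H^1(\R^n)}
\]
suffices, the second term again tending to zero by Proposition~\ref{Prop:ExtCV-e}.
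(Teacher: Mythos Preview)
Your proof is correct and follows essentially the same route as the paper: reduce via Lemma~\ref{Lem:CV-KS}, sandwich using the uniform bounds on the extension operators, and close with Proposition~\ref{Prop:ExtCV-i} (for~(i)) and Proposition~\ref{Prop:ExtCV-e} (for~(ii)). The paper packages each point as a single two-sided inequality rather than treating the two implications separately, and it uses the identification $\dot{\mathrm{E}}_{\overline{\Omega}_k^c}(\mathcal{E}_k v) = \dot{\mathrm{E}}_{\Omega_k}\big((\dot{\mathrm{E}}_{\overline{\Omega}^c} v)|_{\Omega_k}\big)$ without comment; your explicit verification of that identity is a welcome clarification but not a different idea.
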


\begin{proof}
Let us prove Point (i). From the uniform boundedness of the extension operators $\dot{\mathrm{E}}_{\Omega_k}$, there exists $c>0$ such that for all $k\in\N$,
\begin{equation*}
\lVert u_k-u|_{\Omega_k}\rVert_{\dot H^1(\Omega_k)}\le \lVert\dot{\mathrm{E}}_{\Omega_k}u_k-\dot{\mathrm{E}}_{\Omega_k}(u|_{\Omega_k})\rVert_{\dot H^1(\R^n)} \le c\,\lVert u_k-u|_{\Omega_k}\rVert_{\dot H^1(\Omega_k)}.
\end{equation*}
The equivalence follows from Lemma~\ref{Lem:CV-KS} and Proposition~\ref{Prop:ExtCV-i}, Point (i).

Let us prove Point (ii). It holds
\begin{equation*}
\lVert v_k-\mathcal{E}_kv\rVert_{\dot H^1(\overline{\Omega}_k^c)} \le \lVert\dot{\mathrm{E}}_{\overline{\Omega}_k^c}v_k-\dot{\mathrm{E}}_{\Omega_k}(\dot{\mathrm{E}}_{\overline{\Omega}^c}v|_{\Omega_k})\rVert_{\dot H^1(\R^n)},
\end{equation*}
and from the uniform boundedness of the extension operators $\dot{\mathrm{E}}_{\overline{\Omega}_k^c}$, there exists $c'>0$ such that for all $k\in\N$,
\begin{equation*}
\lVert\dot{\mathrm{E}}_{\overline{\Omega}_k^c}v_k-\dot{\mathrm{E}}_{\Omega_k}(\dot{\mathrm{E}}_{\overline{\Omega}^c}v|_{\Omega_k})\rVert_{\dot H^1(\R^n)} \le c'\,\lVert v_k-\mathcal{E}_kv\rVert_{\dot H^1(\overline{\Omega}_k^c)}.
\end{equation*}
The equivalence follows from Lemma~\ref{Lem:CV-KS} and Proposition~\ref{Prop:ExtCV-i}, Point (ii).
\end{proof}

As explained above, the strengthened convergences of Proposition~\ref{Prop:Link-CV-H1Rn} allow to legitimize the choices made when introducing the convergence of the $\dot H^1$ spaces (Lemma~\ref{Lem:CV-H1-i} and Corollary~\ref{Cor:CV-H1-e}).
Together with Propositions~\ref{Prop:TrCV-i} and~\ref{Prop:CV-ddni} on the link between convergence of harmonic functions and their boundary values, it further allows to legitimize the choices made for the convergences of the trace spaces and their dual spaces: under the hypotheses of Proposition~\ref{Prop:Link-CV-H1Rn}, saying a sequence of harmonic functions $u_k$ on $\Omega_k$ converges to $u$, harmonic on $\Omega$ -- in the sense that their harmonic extensions converge in $\dot H^1(\R^n)$ -- is equivalent to saying their boundary values converge in the sense of Lemma~\ref{Lem:CV-B} and Lemma~\ref{Lem:CV-B'}.

As a consequence of Proposition~\ref{Prop:Link-CV-H1Rn}, Remark~\ref{Rem:LP-CV} can be restated to prove the layer potentials across $\bord$ can be approximated by sequences of layer potentials across $(\bord_k)_{k\in\N}$ in terms of convergence in $\dot H^1(\R^n)$.

\begin{theorem}[Convergence of the harmonic layer potential operators in $\dot H^1(\R^n)$]\label{Th:CV-LP-H1-Rn}
Let $\Omega$ be a two-sided admissible domain. Assume there exists a non-decreasing sequence $(\Omega_k)_{k\in\N}$ of two-sided admissible domains such that $\Omega_k\nearrow\Omega$ and that the extension operators $(\dot{\mathrm{E}}_{\Omega_k})_{k\in\N}$ and $(\dot{\mathrm{E}}_{\overline{\Omega}_k^c})_{k\in\N}$ from~\eqref{Eq:Dir-Int-Ext} and~\eqref{Eq:Dir-Ext-Ext} are uniformly bounded. Then,
\begin{enumerate}
\item[(i)] for every $(f_k\in\dot \B(\bord_k))_{k\in\N}\xrightarrow[k\to\infty]{(\trd^{\bord_k}\circ\,\dot{\mathrm{H}}_{\bord})} f\in\Hpo$, it holds
\begin{align*}
&& \dot{\mathrm{E}}_{\Omega_k}(\Dd_{\bord_k}f_k|_{\Omega_k})&\xrightarrow[k\to\infty]{}\dot{\mathrm{E}}_\Omega(\Dd_{\bord}f|_\Omega) &&\mbox{in }\dot H^1(\R^n),\\
&\text{and} &
\dot{\mathrm{E}}_{\overline{\Omega}_k^c}(\Dd_{\bord_k}f_k|_{\overline{\Omega}_k^c})&\xrightarrow[k\to\infty]{}\dot{\mathrm{E}}_{\overline{\Omega}^c}(\Dd_{\bord}f|_{\overline{\Omega}^c}) &&\mbox{in }\dot H^1(\R^n);
\end{align*}

\item[(ii)] for every $(g_k\in\dot\B'(\bord_k))_{k\in\N}\xrightarrow[k\to\infty]{(\Xi_k)} g\in\Hmo$, it holds
\begin{equation*}
\Sd_{\bord_k}g_k\xrightarrow[k\to\infty]{}\Sd_{\bord}g\qquad\mbox{in }\dot H^1(\R^n).
\end{equation*}
\end{enumerate}
\end{theorem}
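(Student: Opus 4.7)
\textbf{Proof proposal for Theorem~\ref{Th:CV-LP-H1-Rn}.} The strategy is to combine the moving-Hilbert-space convergence of the layer potentials (Theorem~\ref{Th:CV-LP}) with the dictionary converting $\dot V_0$-convergence into $\dot H^1(\R^n)$-convergence of harmonic extensions (Proposition~\ref{Prop:Link-CV-H1Rn}). The only care required is to split each layer potential into its interior and exterior pieces, check that each piece is harmonic, and then invoke the appropriate half of Proposition~\ref{Prop:Link-CV-H1Rn}.

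First, I would apply Theorem~\ref{Th:CV-LP} to obtain
\begin{equation*}
(\Dd_{\bord_k}f_k\in \dot H^1(\R^n\!\setminus\!\bord_k))_{k\in\N}\xrightarrow[]{(\cdot|_{\Omega_k}\oplus \mathcal E_k\oplus i_k)}\Dd_{\bord}f\in\dot H^1(\nbord),
\end{equation*}
and likewise for $\Sd_{\bord_k}g_k$. By Remark~\ref{Rem:Spaces-CV}, this splits into interior and exterior convergences: the restriction to $\Omega_k$ converges through $(\cdot|_{\Omega_k})$, and the restriction to $\overline{\Omega}_k^c$ converges through $(\mathcal E_k\oplus i_k)$. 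Since both $\Dd_{\bord_k}f_k|_{\overline{\Omega}_k^c}$ and $\Sd_{\bord_k}g_k|_{\overline{\Omega}_k^c}$ lie in $\dot V_0(\overline{\Omega}_k^c)$ by definition of the layer potentials, the $\dot H^1_0$-component vanishes at every level and the exterior convergence reduces to convergence through $(\mathcal E_k)$, as in the last display of Remark~\ref{Rem:Spaces-CV}.

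For (i), I now apply Proposition~\ref{Prop:Link-CV-H1Rn}(i) to $\Dd_{\bord_k}f_k|_{\Omega_k}\in \dot H^1(\Omega_k)$, which is legitimate because $(\dot{\mathrm E}_{\Omega_k})$ is uniformly bounded; this yields the first $\dot H^1(\R^n)$-limit $\dot{\mathrm E}_{\Omega_k}(\Dd_{\bord_k}f_k|_{\Omega_k})\to \dot{\mathrm E}_\Omega(\Dd_{\bord}f|_\Omega)$. Proposition~\ref{Prop:Link-CV-H1Rn}(ii), applied to the harmonic restriction $\Dd_{\bord_k}f_k|_{\overline{\Omega}_k^c}\in \dot V_0(\overline{\Omega}_k^c)$ and using the uniform boundedness of $(\dot{\mathrm E}_{\overline{\Omega}_k^c})$, delivers the second $\dot H^1(\R^n)$-limit. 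This gives (i).

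For (ii), I observe that since $\Sd_{\bord_k}g_k\in \dot V_{\Scal}(\nbord_k)$ has null jump in trace across $\bord_k$, its exterior trace coincides with its interior trace, and its exterior restriction is the harmonic function on $\overline{\Omega}_k^c$ with that trace; hence $\Sd_{\bord_k}g_k=\dot{\mathrm E}_{\Omega_k}(\Sd_{\bord_k}g_k|_{\Omega_k})$ as elements of $\dot H^1(\R^n)$ (using that $\bord_k$ has Lebesgue measure zero, condition (iv) of Definition~\ref{Def:AdmissibleDomain}), and similarly for the limit. Applying Proposition~\ref{Prop:Link-CV-H1Rn}(i) to the interior restrictions then immediately yields $\Sd_{\bord_k}g_k\to \Sd_{\bord}g$ in $\dot H^1(\R^n)$. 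The main---and essentially only---subtlety is the verification that the single layer potential really coincides with its own Dirichlet harmonic extension from the interior, which is exactly what the null jump in trace together with harmonicity on both sides guarantee.
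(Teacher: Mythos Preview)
Your proposal is correct and follows essentially the same approach as the paper, which presents Theorem~\ref{Th:CV-LP-H1-Rn} as a direct consequence of Theorem~\ref{Th:CV-LP} (via Remark~\ref{Rem:LP-CV}) combined with Proposition~\ref{Prop:Link-CV-H1Rn}, without writing out further details. Your observation for~(ii) that $\Sd_{\bord_k}g_k=\dot{\mathrm E}_{\Omega_k}(\Sd_{\bord_k}g_k|_{\Omega_k})$ thanks to the null trace jump is exactly the right way to collapse the two-sided statement into a single $\dot H^1(\R^n)$-convergence.
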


A case in which the uniform boundedness of the extension operators in Proposition~\ref{Prop:Link-CV-H1Rn} and Theorem~\ref{Th:CV-LP-H1-Rn} is satisfied is that of $(\varepsilon,\infty)$-domains (or $(\varepsilon,\delta)$ in general), see~\cite{jones_quasiconformal_1981,rogers_degree-independent_2006}. Since the inverses of the layer potential operators, given by the jumps in trace and normal derivative, converge (this is actually how the convergence of the layer potentials was established in Theorem~\ref{Th:CV-LP}), the converses of the statements from~\ref{Th:CV-LP-H1-Rn} hold as well.

\subsection{Convergence of Neumann-Poincar\'e operators and Calder\'on projectors}\label{Subsec:CV-K-Calderon}

Having proved the convergence of the layer potential operators in Theorem~\ref{Th:CV-LP}, we turn to that of their boundary values. We recall the definition of the Neumann-Poincaré operator from~\cite{claret_layer_2025} in the case of two-sided admissible domains.

If $\Omega$ is a two-sided admissible domain, define the Neumann-Poincaré operator associated to~\eqref{Eq:Tr-Prob} as
\begin{equation*}
\Kd_{\bord}:=\frac12(\trd_{\mathrm i}^{\bord}+\trd_{\mathrm e}^{\bord})\circ\Dd_{\bord}:\Hpo\to\Hpo.
\end{equation*}
It is linear and bounded, and the traces of the double layer potential operator can be expressed as follows:
\begin{equation*}
\trd_{\mathrm i}^{\bord}\circ\Dd_{\bord}=-\frac12I+\Kd_{\bord}\qquad\mbox{and}\qquad\trd_{\mathrm e}^{\bord}\circ\Dd_{\bord}=\frac12I+\Kd_{\bord}.
\end{equation*}
In the case of a Lipschitz domain, it can be rewritten in terms of an integral over the boundary with respect to the surface measure $\sigma$~\cite{fabes_potential_1978, verchota_layer_1984}, namely, for $f\in\Hpo$,
\begin{equation*}
\Kd_{\bord}f(x)=\frac1{\omega_n}\int_{\bord}\frac{\langle y-x,\nu_y\rangle}{\lvert y-x\rvert^d}f(y)\sigma(\dy),\quad x\in\bord,
\end{equation*}
where $\omega_n$ is the surface area of the unit sphere in $\R^n$ and $\nu_y$ is the outward normal vector at $y\in\bord$.

The adjoint of the Neumann-Poincaré operator can be expressed in terms of the single layer potential operator as
\begin{equation*}
\Kd_{\bord}^*=\frac12\Big(\ddno{}{\mathrm i}\Big|_{\bord}+\ddno{}{\mathrm e}\Big|_{\bord}\Big)\circ\Sd_{\bord}:\Hmo\to\Hmo,
\end{equation*}
so that
\begin{equation*}
\ddno{}{\mathrm i}\Big|_{\bord}\circ\Sd_{\bord}=\frac12I+\Kd_{\bord}\qquad\mbox{and}\qquad\ddno{}{\mathrm i}\Big|_{\bord}\circ\Sd_{\bord}=-\frac12I+\Kd_{\bord}^*.
\end{equation*}

Considering the remaining boundary values of the layer potentials, namely the trace of the single layer potential and the normal derivative of the double layer potential
\begin{align*}
\Vd_{\bord}&:=\trd^{\bord}\circ\Sd_{\bord}:\Hmo\to\Hpo,\\
\Wd_{\bord}&:=-\ddno{}{}\Big|_{\bord}\circ\Dd_{\bord}:\Hpo\to\Hmo,
\end{align*}
allows to construct the Calder\'on projectors associated to~\eqref{Eq:Tr-Prob}:
\begin{equation*}
\dot{\mathcal C}^{\bord}_{\mathrm i}:=\frac12I+\dot{\mathcal{M}}_{\bord}\qquad\mbox{and}\qquad\dot{\mathcal C}^{\bord}_{\mathrm e}:=-\frac12I+\dot{\mathcal{M}}_{\bord},
\end{equation*}
where
\begin{equation*}
\dot{\mathcal{M}}_{\bord}:=
\begin{pmatrix}
-\Kd_{\bord} & \Vd_{\bord} \\
\Wd_{\bord} & \Kd_{\bord}^*
\end{pmatrix}:\Hpo\times\Hmo\to\Hpo\times\Hmo.
\end{equation*}
Those operators allow to recover the boundary values of a harmonic transmission solution from its boundary jumps: if $-\Delta u=0$ weakly on $\nbord$, then
\begin{equation*}
\dot{\mathcal C}^{\bord}_{\mathrm{i,e}}
\begin{pmatrix}
-\llbracket\trd^{\bord}u\rrbracket\\
\Big\llbracket\ddno u{}\Big|_{\bord}\Big\rrbracket
\end{pmatrix}
=
\begin{pmatrix}
\trd^{\bord}_{\mathrm{i,e}}\\\ddno u{\mathrm{i,e}}\Big|_{\bord}
\end{pmatrix}.
\end{equation*}
We refer to~\cite{claret_layer_2025} regarding properties of the operators described above on a two-sided admissible domain. It follows from the convergence of the layer potential operators from Theorem~\ref{Th:CV-LP} that the Calder\'on projectors converge under the same hypotheses.

\begin{corollary}\label{Cor:K-CV}
Let $\Omega$ be a two-sided admissible domain and $(\Omega_k)_{k\in\N}$ be a non-decreasing sequence of two-sided admissible domains such that $\Omega_k\nearrow\Omega$. Assume that the Dirichlet harmonic extensions $(\dot{\mathrm{E}}_{\Omega_k})_{k\in\N}$ and $(\dot{\mathrm{E}}_{\overline{\Omega}_k^c})_{k\in\N}$ are uniformly bounded. Then, it holds
\begin{equation*}
\Kd_{\bord_k}\xrightarrow[k\to\infty]{(\trd^{\bord_k}\circ\,\dot{\mathrm{H}}_{\bord}),(\trd^{\bord_k}\circ\,\dot{\mathrm{H}}_{\bord})}\Kd_{\bord}\qquad\mbox{and}\qquad\Kd^*_{\bord_k}\xrightarrow[k\to\infty]{(\Xi_k),(\Xi_k)}\Kd^*_{\bord}.
\end{equation*}
In addition, it holds
\begin{equation*}
\dot{\mathcal{V}}_{\bord_k}\xrightarrow[k\to\infty]{(\Xi_k),(\trd^{\bord_k}\circ\,\dot{\mathrm{H}}_{\bord})}\dot{\mathcal{V}}_{\bord}\qquad\mbox{and}\qquad\dot{\mathcal{W}}_{\bord_k}\xrightarrow[k\to\infty]{(\trd^{\bord_k}\circ\,\dot{\mathrm{H}}_{\bord}),(\Xi_k)}\dot{\mathcal{W}}_{\bord},
\end{equation*}
so that the Calder\'on projectors converge
\begin{equation*}
\dot{\mathcal C}^{\bord_k}_{\mathrm i}\xrightarrow[k\to\infty]{}\dot{\mathcal C}^{\bord}_{\mathrm i}\qquad\mbox{and}\qquad\dot{\mathcal C}^{\bord_k}_{\mathrm e}\xrightarrow[k\to\infty]{}\dot{\mathcal C}^{\bord}_{\mathrm e},
\end{equation*}
in the sense that their components converge in the frameworks specified above.
\end{corollary}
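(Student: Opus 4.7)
The plan is to build each boundary-value operator out of ingredients whose convergence is already established: namely, the layer potentials $\Sd_{\bord_k}$ and $\Dd_{\bord_k}$ (Theorem~\ref{Th:CV-LP}) composed with interior/exterior trace and normal-derivative operators on harmonic functions (Propositions~\ref{Prop:Cv-Tr-H1-i},~\ref{Prop:TrCV-e},~\ref{Prop:CV-ddni}, and~\ref{Prop:CV-ddne}). Throughout, I use Remark~\ref{Rem:Spaces-CV}, which ensures that restricting a converging sequence in $\dot H^1(\nbord_k)$ to $\Omega_k$ or to $\overline{\Omega}_k^c$ preserves the convergence in the expected framework.

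For the Neumann--Poincar\'e operator, given $(f_k)_{k\in\N}$ converging to $f$ through $(\trd^{\bord_k}\circ\dot{\mathrm{H}}_{\bord})_{k\in\N}$, Theorem~\ref{Th:CV-LP} yields $\Dd_{\bord_k}f_k\to\Dd_{\bord}f$ in $\dot H^1(\nbord_k)$. Restricting to $\Omega_k$ and to $\overline{\Omega}_k^c$ and invoking Propositions~\ref{Prop:Cv-Tr-H1-i} and~\ref{Prop:TrCV-e} (whose uniform boundedness hypothesis is in force), I would deduce that both $\trd^{\bord_k}_{\mathrm i}(\Dd_{\bord_k}f_k)$ and $\trd^{\bord_k}_{\mathrm e}(\Dd_{\bord_k}f_k)$ converge to their limit counterparts through $(\trd^{\bord_k}\circ\dot{\mathrm{H}}_{\bord})_{k\in\N}$. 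Taking half the sum then yields $\Kd_{\bord_k}f_k\to\Kd_{\bord}f$. The adjoint $\Kd^*_{\bord_k}$ is handled symmetrically, using $\Sd$ in place of $\Dd$ and Propositions~\ref{Prop:CV-ddni} and~\ref{Prop:CV-ddne} to obtain the convergence of the normal derivatives in the target framework $(\Xi_k)_{k\in\N}$.

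For $\dot{\mathcal V}_{\bord_k}=\trd^{\bord_k}\circ\Sd_{\bord_k}$, since $\Sd_{\bord_k}g_k$ lies in $\dot V_{\Scal}(\nbord_k)$ its interior and exterior traces coincide, so I only need to apply Proposition~\ref{Prop:Cv-Tr-H1-i} to the interior restriction after using Theorem~\ref{Th:CV-LP}. Symmetrically, $\dot{\mathcal W}_{\bord_k}=-\ddno{}{}|_{\bord_k}\circ\Dd_{\bord_k}$ is well-defined because $\Dd_{\bord_k}f_k\in\dot V_{\D}(\nbord_k)$ has no jump in normal derivative, and its convergence follows by composing Theorem~\ref{Th:CV-LP} with Proposition~\ref{Prop:CV-ddni}.

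The convergence of the Calder\'on projectors $\dot{\mathcal C}^{\bord_k}_{\mathrm{i,e}}=\pm\frac12 I+\dot{\mathcal{M}}_{\bord_k}$ is then immediate from the convergence of the four entries of $\dot{\mathcal{M}}_{\bord_k}$, together with the observation that the identity trivially converges to itself in any framework of the form $(\mathcal{T}_k,\mathcal{T}_k)_{k\in\N}$ by Lemma~\ref{Lem:CV-KS}. The main obstacle here is bookkeeping: one must check that the source and target convergence frameworks of each composition align correctly—so that, for instance, $\dot{\mathcal V}_{\bord_k}$ sends $(\Xi_k)$-converging sequences to $(\trd^{\bord_k}\circ\dot{\mathrm{H}}_{\bord})$-converging ones—but this is forced by the individual convergence statements once the pieces are assembled.
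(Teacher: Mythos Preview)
Your proposal is correct and follows essentially the same route as the paper: the paper's proof simply cites the convergence of the layer potentials (Theorem~\ref{Th:CV-LP}), of the trace operators (Propositions~\ref{Prop:TrCV-i} and~\ref{Prop:TrCV-e}), and of the normal derivation operators (Propositions~\ref{Prop:CV-ddni} and~\ref{Prop:CV-ddne}), and leaves the composition and framework bookkeeping implicit. You have spelled out those compositions and the reason the identity converges, which is fine; citing Proposition~\ref{Prop:Cv-Tr-H1-i} instead of Proposition~\ref{Prop:TrCV-i} is harmless since the layer potentials take values in $\dot V_0$.
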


\begin{proof}
This follows from the convergence of the layer potential operators (Theorem~\ref{Th:CV-LP}), the trace operators (Propositions~\ref{Prop:TrCV-i} and~\ref{Prop:TrCV-e}) and the normal derivation operators (Propositions~\ref{Prop:CV-ddni} and~\ref{Prop:CV-ddne}).
\end{proof}

\subsection{Convergence of Neumann series for the Neumann-Poincaré operators}\label{Subsec:CV-Neumann-Series}

The convergence of the Neumann-Poincaré operators can be used to prove that of the associated Neumann series. Those provide a formula to recover the jumps in trace (resp. normal derivative) from boundary values of a double (resp. single) layer potential.

If $\Omega$ is a two-sided admissible domain, it was proved in~\cite[Theorem 6.11]{claret_layer_2025} that the operators $(\pm\frac12I+\Kd_{\bord})$ are strictly contractive on $\Hpo$ for the norm induced by the operator $\Vd_{\bord}^{-1}=\llbracket\ddno{}{}|_{\bord}\rrbracket\circ\dot{\mathrm{H}}_{\bord}:\Hpo\to\Hmo$, namely
\begin{equation}\label{Eq:Vnorm}
\lVert\cdot\rVert_{\Vd_{\bord}^{-1}}^2:=\langle \Vd_{\bord}^{-1}\cdot,\cdot\rangle_{\Hmo,\Hpo},
\end{equation}
which is equivalent to our usual norm $\lVert\cdot\rVert_{\Hpo}$ by Remark~\ref{rem-equivalent} below.

\begin{theorem}\label{Th:NP-contraction}
Let $\Omega$ be a two-sided admissible domain. There exists a constant $c\in]0,1[$ such that for all $f\in\Hpo$, it holds
\begin{equation*}
(1-c)\lVert f\rVert_{\Vd_{\bord}^{-1}}\le\Big\lVert \pm\Big(\frac12I+\Kd_{\bord}\Big)f\Big\rVert_{\Vd_{\bord}^{-1}}\le c\lVert f\rVert_{\Vd_{\bord}^{-1}}.
\end{equation*}
Moreover, the constant $c$ depends only on the norms of the interior and exterior Dirichlet harmonic extensions $ \dot{\mathrm{E}}_{\Omega_k} $ and $\dot{\mathrm{E}}_{\overline{\Omega}_k^c} $, and otherwise is independent of $\Omega$.
\end{theorem}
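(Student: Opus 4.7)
The plan is to compute $\lVert f\rVert^2_{\Vd_{\bord}^{-1}}$ geometrically and then derive an explicit identity for $\lVert(\tfrac{1}{2}I+\Kd_{\bord})f\rVert^2_{\Vd_{\bord}^{-1}}$. First, observe that $\Sd_{\bord}(\Vd_{\bord}^{-1}f)$ is harmonic on $\nbord$ with continuous trace $f$ across $\bord$, hence coincides with $\dot{\mathrm H}_{\bord}f\in\dot H^1(\R^n)$. Applying Green's formula on $\Omega$ and on $\overline{\Omega}^c$ with $\llbracket\ddno{\Sd_{\bord}g}{}|_{\bord}\rrbracket=g$ gives $\lVert \Sd_\bord g\rVert^2_{\dot H^1(\R^n)}=\langle g,\Vd_\bord g\rangle_{\Hmo,\Hpo}$. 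Specializing to $g=\Vd_\bord^{-1}f$ and invoking the isometries of $\DOd_{\Omega}$ and $\DOd_{\overline{\Omega}^c}$ from Theorem~\ref{Th:Trace}, I obtain
\[
\lVert f\rVert^2_{\Vd_{\bord}^{-1}}=\lVert\dot{\mathrm H}_{\bord}f\rVert^2_{\dot H^1(\R^n)}=\lVert f\rVert^2_{\Hpo}+\lVert f\rVert^2_{\Trd^{\bord}_{\mathrm{e}}}.
\]
Combined with Proposition~\ref{Prop:Norm-B-Equiv}, this already proves the equivalence with $\lVert\cdot\rVert_{\Hpo}$ claimed in the companion remark.

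The central computation is then
\[
\lVert(\tfrac{1}{2}I+\Kd_{\bord})f\rVert^2_{\Vd_{\bord}^{-1}}=\lVert f\rVert^2_{\Hpo}-\lVert\Dd_{\bord}f\rVert^2_{\dot H^1(\nbord)}.
\]
Setting $u:=\Dd_{\bord}f$, so that $\trd^{\bord}_{\mathrm{e}}u=\tfrac{1}{2}f+\Kd_{\bord}f$ and $\llbracket\Trd^{\bord}u\rrbracket=-f$, $\llbracket\ddno{u}{}|_{\bord}\rrbracket=0$, I note that $\dot{\mathrm H}_{\bord}((\tfrac{1}{2}I+\Kd_{\bord})f)$ equals $u$ on $\overline{\Omega}^c$ and $u|_{\Omega}+\DOd_{\Omega}f$ on $\Omega$, each piece being harmonic with the prescribed trace. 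Expanding $\lVert\cdot\rVert^2_{\dot H^1(\R^n)}$ of this extension, using Green's formula on $\Omega$ to rewrite the cross term $\langle u|_{\Omega},\DOd_{\Omega}f\rangle_{\dot H^1(\Omega)}$ as $\langle\ddno{u}{\mathrm{i}}|_{\bord},f\rangle_{\Hmo,\Hpo}$, together with $\lVert u\rVert^2_{\dot H^1(\nbord)}=-\langle\ddno{u}{\mathrm{i}}|_{\bord},f\rangle_{\Hmo,\Hpo}$ (itself obtained from Green's formula on both sides combined with the two jump conditions on $u$), then gives the identity. Swapping the roles of interior and exterior yields the symmetric relation
\[
\lVert(-\tfrac{1}{2}I+\Kd_{\bord})f\rVert^2_{\Vd_{\bord}^{-1}}=\lVert f\rVert^2_{\Trd^{\bord}_{\mathrm{e}}}-\lVert\Dd_{\bord}f\rVert^2_{\dot H^1(\nbord)}.
\]
This Green-identity bookkeeping of jumps and cross terms is the main technical obstacle.

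Dropping the nonnegative term $\lVert\Dd_{\bord}f\rVert^2$ and combining the first-paragraph decomposition with Proposition~\ref{Prop:Norm-B-Equiv} yields $\lVert f\rVert^2_{\Hpo}\le(1-\lVert\dot{\mathrm E}_{\overline{\Omega}^c}\rVert^{-2})\lVert f\rVert^2_{\Vd_{\bord}^{-1}}$ and, symmetrically, $\lVert f\rVert^2_{\Trd^{\bord}_{\mathrm{e}}}\le(1-\lVert\dot{\mathrm E}_{\Omega}\rVert^{-2})\lVert f\rVert^2_{\Vd_{\bord}^{-1}}$. Setting
\[
c:=\max\bigl(\sqrt{1-\lVert\dot{\mathrm E}_{\Omega}\rVert^{-2}},\;\sqrt{1-\lVert\dot{\mathrm E}_{\overline{\Omega}^c}\rVert^{-2}}\,\bigr)\in\,]0,1[
\]
then proves both upper bounds with the claimed dependence on the extension norms. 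Finally, writing $f=(\tfrac{1}{2}I+\Kd_{\bord})f+(\tfrac{1}{2}I-\Kd_{\bord})f$ and applying the triangle inequality together with the upper bound on $\lVert(\tfrac{1}{2}I-\Kd_{\bord})f\rVert_{\Vd_{\bord}^{-1}}$ gives $\lVert(\tfrac{1}{2}I+\Kd_{\bord})f\rVert_{\Vd_{\bord}^{-1}}\ge(1-c)\lVert f\rVert_{\Vd_{\bord}^{-1}}$, and the opposite-sign case is identical.
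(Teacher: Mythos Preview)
Your argument is correct. The identities you derive via Green's formula---in particular $\lVert(\tfrac12 I+\Kd_{\bord})f\rVert_{\Vd_{\bord}^{-1}}^2=\lVert f\rVert_{\Hpo}^2-\lVert\Dd_{\bord}f\rVert_{\dot H^1(\nbord)}^2$ and its exterior counterpart---combined with the norm decomposition $\lVert f\rVert_{\Vd_{\bord}^{-1}}^2=\lVert f\rVert_{\Hpo}^2+\lVert f\rVert_{\Trd^{\bord}_{\mathrm e}}^2$ and Proposition~\ref{Prop:Norm-B-Equiv} (equivalently Proposition~\ref{prop-equivalent}) yield the upper bound with the explicit constant you give, and the triangle-inequality step closes the lower bound.

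The paper's own proof is a two-line citation: it invokes Proposition~\ref{prop-equivalent} together with \cite[Theorems~6.10(iii) and~6.11(iii)]{claret_layer_2025}, where the contractivity of $\pm\tfrac12 I+\Kd_{\bord}$ in the $\Vd_{\bord}^{-1}$-norm is established. Your route is therefore not different in spirit---you are effectively reconstructing the content of those external theorems from scratch---but it has the merit of being fully self-contained and of exhibiting the constant $c$ explicitly in terms of $\lVert\dot{\mathrm E}_\Omega\rVert$ and $\lVert\dot{\mathrm E}_{\overline{\Omega}^c}\rVert$, which the paper only asserts. One minor remark: as written in the statement, the $\pm$ is placed outside the parenthesis, so literally $\pm(\tfrac12 I+\Kd_{\bord})$ differ only by sign; your proof in fact handles both $\tfrac12 I+\Kd_{\bord}$ and $-\tfrac12 I+\Kd_{\bord}$, which is the intended reading (cf.\ Corollary~\ref{Cor:NS-UCV}).
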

\begin{proof}
The proof follows from Proposition~\ref{prop-equivalent} and~\cite[Theorems 6.10 (iii) and 6.11 (iii)]{claret_layer_2025}.  
\end{proof}

The convergence of the associated Neumann series follows.

\begin{corollary}\label{Cor:NS-UCV}
If $ \Omega $ is a two-sided admissible domains, then the   associated Neumann series  
\begin{equation*}
\sum_{\ell=0}^{+\infty}\Big(\pm\frac12I+\Kd_{\bord} \Big)^\ell 
\end{equation*}
converge uniformly on bounded sets with respect to the norm $\lVert \cdot\rVert_{\Vd_{\bord}^{-1}}$.

 If $(\Omega_k)_{k\in\N}$ are two-sided admissible domains such that the Dirichlet harmonic extensions $(\dot{\mathrm{E}}_{\Omega_k})_{k\in\N}$ and $(\dot{\mathrm{E}}_{\overline{\Omega}_k^c})_{k\in\N}$ are uniformly bounded, then the   associated Neumann series  
\begin{equation*}
\sum_{\ell=0}^{+\infty}\Big(\pm\frac12I+\Kd_{\bord_k}\Big)^\ell\ 
\end{equation*}
  have uniform remainder estimates, 
that is, 
they satisfy the Weierstrass M-test uniformly in 
$k$ with respect to the norms $\lVert \cdot\rVert_{\Vd_{\bord_k}^{-1}}$.
 
\end{corollary}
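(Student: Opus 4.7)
The plan is to reduce both parts of the corollary to a direct iteration of the strict contraction estimate from Theorem~\ref{Th:NP-contraction}, followed by the Weierstrass M-test.

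For the first statement (single domain $\Omega$), I would invoke Theorem~\ref{Th:NP-contraction} to obtain a constant $c\in\,]0,1[$ such that $\bigl\lVert\bigl(\pm\tfrac12 I+\Kd_{\bord}\bigr)f\bigr\rVert_{\Vd_{\bord}^{-1}}\le c\,\lVert f\rVert_{\Vd_{\bord}^{-1}}$ for every $f\in\Hpo$. Iterating this inequality yields
\begin{equation*}
\Bigl\lVert\Bigl(\pm\tfrac12 I+\Kd_{\bord}\Bigr)^{\!\ell}f\Bigr\rVert_{\Vd_{\bord}^{-1}}\le c^\ell\,\lVert f\rVert_{\Vd_{\bord}^{-1}},\qquad \ell\in\N.
\end{equation*}
For $f$ in any bounded subset of $(\Hpo,\lVert\cdot\rVert_{\Vd_{\bord}^{-1}})$ of radius $R>0$, the $\ell$-th term of the Neumann series has operator-norm-like bound $c^\ell R$, and $\sum_\ell c^\ell R<\infty$ since $c<1$. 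The Weierstrass M-test then gives uniform convergence of the partial sums on that bounded set.

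For the second statement, the key observation is that Theorem~\ref{Th:NP-contraction} asserts that the contraction constant $c$ depends only on $\lVert\dot{\mathrm{E}}_{\Omega}\rVert$ and $\lVert\dot{\mathrm{E}}_{\overline{\Omega}^c}\rVert$. Under the hypothesis that $(\lVert\dot{\mathrm{E}}_{\Omega_k}\rVert)_{k\in\N}$ and $(\lVert\dot{\mathrm{E}}_{\overline{\Omega}_k^c}\rVert)_{k\in\N}$ are uniformly bounded, one should therefore extract a uniform contraction constant $c_\star\in\,]0,1[$ such that $c_k\le c_\star$ for every $k\in\N$, where $c_k$ is the constant associated to $\Omega_k$. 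Iterating as above yields, uniformly in $k$,
\begin{equation*}
\Bigl\lVert\Bigl(\pm\tfrac12 I+\Kd_{\bord_k}\Bigr)^{\!\ell}f_k\Bigr\rVert_{\Vd_{\bord_k}^{-1}}\le c_\star^\ell\,\lVert f_k\rVert_{\Vd_{\bord_k}^{-1}}\le c_\star^\ell R,
\end{equation*}
for $f_k$ in the ball of radius $R$ of $(\dot\B(\bord_k),\lVert\cdot\rVert_{\Vd_{\bord_k}^{-1}})$. The dominating series $\sum_\ell c_\star^\ell R<\infty$ is then independent of $k$, which is exactly the uniform Weierstrass M-test statement.

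The main obstacle is the second step: extracting the uniform contraction constant $c_\star$. While Theorem~\ref{Th:NP-contraction} tells us $c_k=c_k(\lVert\dot{\mathrm{E}}_{\Omega_k}\rVert,\lVert\dot{\mathrm{E}}_{\overline{\Omega}_k^c}\rVert)$, one needs the explicit monotone (or at least locally bounded away from $1$) dependence of this constant on the extension norms to conclude. The cleanest route is to trace through Proposition~\ref{prop-equivalent} and the referenced~\cite[Theorems 6.10 (iii) and 6.11 (iii)]{claret_layer_2025} used in the proof of Theorem~\ref{Th:NP-contraction} and read off the explicit expression of $c$ as a continuous non-decreasing function of $(\lVert\dot{\mathrm{E}}_{\Omega}\rVert,\lVert\dot{\mathrm{E}}_{\overline{\Omega}^c}\rVert)$; evaluating this function at the upper bounds of the two sequences then supplies $c_\star<1$ and completes the argument.
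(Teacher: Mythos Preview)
Your proposal is correct and matches the paper's treatment: the corollary is stated without proof, as an immediate consequence of Theorem~\ref{Th:NP-contraction}, and your argument---iterating the strict contraction estimate and invoking the Weierstrass M-test---is precisely the intended reasoning. Your caution about extracting a uniform $c_\star$ is well placed but ultimately resolvable: the phrase ``depends only on'' in Theorem~\ref{Th:NP-contraction} is meant in the sense that $c$ is given by an explicit expression in $\lVert\dot{\mathrm E}_\Omega\rVert$ and $\lVert\dot{\mathrm E}_{\overline{\Omega}^c}\rVert$ (via Proposition~\ref{prop-equivalent} and \cite[Theorems~6.10--6.11]{claret_layer_2025}) which is bounded away from $1$ on bounded sets, so the uniform bound on the extension norms directly yields $\sup_k c_k<1$.
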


By Corollary~\ref{Cor:NS-UCV}, the Neumann series associated to $(\pm\frac12I+\Kd_{\bord})$ converge in $\Hpo$. For instance, it holds
\begin{equation*}
\Big(\frac{1}{2}I-\Kd_{\bord}\Big)f=f_{\mathrm i}\quad\iff\quad  f=\sum_{\ell=0}^{+\infty} \Big(\frac{1}{2}I+\Kd_{\bord}\Big)^\ell f_{\mathrm i},
\end{equation*}
which allows to recover the jump in trace $-f\in\Hpo$ of a harmonic transmission solution on $\nbord$ with no jump in normal derivative from its interior trace $f_{\mathrm i}\in\Hpo$.

We wish to use the convergence of the Neumann-Poincaré operators from Corollary~\ref{Cor:K-CV} to deduce the convergence of Neumann series along a converging sequence of domains.
However, that convergence takes place in the framework of Lemma~\ref{Lem:CV-B}, where the trace spaces were endowed with the interior trace norm from Theorem~\ref{Th:Trace}.
For that matter, it is necessary to ensure convergences similar to Corollary~\ref{Cor:K-CV} hold when the trace spaces are endowed with the norms defined by~\eqref{Eq:Vnorm}.

Consider the Hilbert space $\dot{\B}_{\Vd^{-1}}(\bord):=(\Hpo,\lVert\cdot\rVert_{\Vd_{\bord}^{-1}})$.
For $f\in\dot{\B}_{\Vd^{-1}}(\bord)$, it holds
\begin{align}\label{Eq:Vnorm-Isom}
\lVert f\rVert_{\Vd_{\bord}^{-1}}^2 &=\langle \Vd_{\bord}^{-1}f,f\rangle_{\Hmo,\Hpo} \nonumber\\
&= \Big\langle \Big\llbracket \ddno{}{}\Big|_{\bord}\dot{\mathrm{H}}_{\bord}f\Big\rrbracket,f\Big\rangle_{\Hmo,\Hpo} \nonumber\\
&= \lVert\dot{\mathrm{H}}_{\bord}f\rVert_{\dot H^1(\R^n)}^2
\end{align}
by Green's formula, so that $\dot{\mathrm{H}}_{\bord}:\dot{\B}_{\Vd^{-1}}(\bord)\to \dot H^1(\R^n)$ defines an isometry.

\begin{proposition}\label{Prop:CV-B-Vnorm}
Let $\Omega$ be a two-sided admissible domain. Let $(\Omega_k)_{k\in\N}$ be a non-decreasing sequence of two-sided admissible domains such that $\Omega_k\nearrow\Omega$. Then, it holds:
\begin{equation*}
\dot \B_{\Vd^{-1}}(\bord_k)\xrightarrow[k\to\infty]{}\dot\B_{\Vd^{-1}}(\bord)\quad \mbox{through }\big(\dot\B_{\Vd^{-1}}(\bord),(\trd^{\bord_k}\circ\,\dot{\mathrm{H}}_{\bord})_{k\in\N}\big).
\end{equation*}
\end{proposition}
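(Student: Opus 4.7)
The plan is to push everything through the $\dot H^1(\R^n)$-isometry recorded in~\eqref{Eq:Vnorm-Isom}. Applied on both $\bord$ and each $\bord_k$, that identity combined with Definition~\ref{Def:CV-Hilb} reduces the asserted convergence to showing that, for every $f\in\Hpo$,
\begin{equation*}
\lVert\dot{\mathrm{H}}_{\bord_k}\trd^{\bord_k}\dot{\mathrm{H}}_{\bord}f\rVert_{\dot H^1(\R^n)}\xrightarrow[k\to\infty]{}\lVert\dot{\mathrm{H}}_{\bord}f\rVert_{\dot H^1(\R^n)}.
\end{equation*}
I set $u:=\dot{\mathrm{H}}_{\bord}f\in \dot H^1(\R^n)$ and $v_k:=\dot{\mathrm{H}}_{\bord_k}\trd^{\bord_k}u\in \dot H^1(\R^n)$.

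\textbf{Upper bound.} Since $u$ is harmonic on $\Omega\supset \Omega_k$, both $u|_{\Omega_k}$ and $v_k|_{\Omega_k}$ belong to $\dot V_0(\Omega_k)$ and share the interior trace $\trd^{\bord_k}_{\mathrm{i}}u$ on $\bord_k$; the uniqueness part of Theorem~\ref{Th:Trace}(ii) applied to $\Omega_k$ therefore forces $v_k=u$ on $\Omega_k$. The same theorem applied exteriorly to $\overline{\Omega}_k^c$ characterises $v_k|_{\overline{\Omega}_k^c}$ as the minimiser of $\lVert\cdot\rVert_{\dot H^1(\overline{\Omega}_k^c)}$ among functions in $\dot H^1(\overline{\Omega}_k^c)$ with exterior trace $\trd^{\bord_k}_{\mathrm{e}}u$ on $\bord_k$; since $u|_{\overline{\Omega}_k^c}$ is a competitor, $\lVert v_k\rVert_{\dot H^1(\overline{\Omega}_k^c)}\le \lVert u\rVert_{\dot H^1(\overline{\Omega}_k^c)}$. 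Adding the two contributions,
\begin{equation*}
\lVert v_k\rVert_{\dot H^1(\R^n)}^2=\lVert u\rVert_{\dot H^1(\Omega_k)}^2+\lVert v_k\rVert_{\dot H^1(\overline{\Omega}_k^c)}^2\le\lVert u\rVert_{\dot H^1(\R^n)}^2.
\end{equation*}

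\textbf{Lower bound by weak compactness.} The bounded sequence $(v_k)$ in $\dot H^1(\R^n)$ admits a weakly convergent subsequence $v_{k_j}\rightharpoonup v_\infty$. For fixed $m$, $v_{k_j}=u$ on $\Omega_m$ as soon as $k_j\ge m$, so $v_\infty=u$ on $\Omega=\bigcup_m\Omega_m$. Since $v_{k_j}$ is harmonic on $\overline{\Omega}_{k_j}^c\supset \overline{\Omega}^c$, the distributional weak limit $v_\infty$ is harmonic on $\overline{\Omega}^c$. The membership $v_\infty\in \dot H^1(\R^n)$ ensures continuity of the trace across $\bord$, so $\trd^{\bord}_{\mathrm{e}}v_\infty=\trd^{\bord}_{\mathrm{i}}v_\infty=\trd^{\bord}_{\mathrm{i}}u=f$; uniqueness of the exterior Dirichlet harmonic extension (Theorem~\ref{Th:Trace} applied exteriorly) then yields $v_\infty=u$ on $\overline{\Omega}^c$, hence $v_\infty=u$ globally. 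Every weak subsequential limit being $u$, the whole sequence satisfies $v_k\rightharpoonup u$ weakly in $\dot H^1(\R^n)$; weak lower semicontinuity of the norm together with the previous upper bound then yields $\lVert v_k\rVert_{\dot H^1(\R^n)}\to\lVert u\rVert_{\dot H^1(\R^n)}$.

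\textbf{Main obstacle.} The delicate step is the identification of the weak limit on $\overline{\Omega}^c$: it relies on the preservation of harmonicity under weak $\dot H^1$-convergence and on the trace identification across $\bord$. The latter hinges crucially on the fact that $v_k\in \dot H^1(\R^n)$, so its trace across $\bord$ is continuous, which allows the interior agreement $v_\infty=u$ on $\Omega$ to propagate into the correct exterior boundary value and pin down $v_\infty|_{\overline{\Omega}^c}$ via Dirichlet uniqueness on $\overline{\Omega}^c$.
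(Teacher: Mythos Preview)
Your proof is correct and follows essentially the same approach as the paper: both reduce via the isometry~\eqref{Eq:Vnorm-Isom} to showing $\lVert\dot{\mathrm{H}}_{\bord_k}\trd^{\bord_k}\dot{\mathrm{H}}_{\bord}f\rVert_{\dot H^1(\R^n)}\to\lVert\dot{\mathrm{H}}_{\bord}f\rVert_{\dot H^1(\R^n)}$. The only difference is that the paper obtains this in one line by citing Proposition~\ref{Prop:ExtCV-i} (applied to $u=(\dot{\mathrm{H}}_{\bord}f)|_\Omega$, so that $\dot{\mathrm{E}}_{\Omega_k}(u|_{\Omega_k})=\dot{\mathrm{H}}_{\bord_k}\trd^{\bord_k}\dot{\mathrm{H}}_{\bord}f$ and $\dot{\mathrm{E}}_\Omega u=\dot{\mathrm{H}}_{\bord}f$), whereas you reprove that proposition inline via the same energy upper bound plus weak-compactness identification argument.
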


\begin{proof}
Let $f\in\dot\B_{\Vd^{-1}}(\bord)$. By Proposition~\ref{Prop:ExtCV-i} for $u=(\dot{\mathrm H}_{\bord}f)|_\Omega$, it holds
\begin{equation*}
\lVert \dot{\mathrm{H}}_{\bord_k}\trd^{\bord_k}\dot{\mathrm{H}}_{\bord}f\rVert_{\dot H^1(\R^n)}\xrightarrow[k\to\infty]{}\lVert\dot{\mathrm{H}}_{\bord}f\rVert_{\dot H^1(\R^n)}.
\end{equation*}
The result follows by~\eqref{Eq:Vnorm-Isom}.
\end{proof}

Given the definition of the convergence of Hilbert spaces (Definition~\ref{Def:CV-Hilb}), it appears that replacing the norms on the spaces with equivalent norms can cause the convergence to fail if the associated sequence of operators is not adjusted accordingly.
For example, if $(H_k,\lVert\cdot\rVert_{H_k})_{k\in\N}\to (H,\lVert\cdot\rVert_H)$ through $(H,(\mathcal{T}_k)_{k\in\N})$, then $(H_k,\lVert\cdot\rVert_{H_k})_{k\in\N}\to (H,2\lVert\cdot\rVert_H)$ through $(H,(2\mathcal{T}_k)_{k\in\N})$ instead.
In the case of the convergence of the trace spaces from Lemma~\ref{Lem:CV-B} and Proposition~\ref{Prop:CV-B-Vnorm}, both convergences hold through the same sequence of operators: $(\trd^{\bord_k}\circ\dot{\mathrm{H}}_{\bord})_{k\in\N}$.
In addition, we will prove in Section~\ref{Sec:EquivNorm} that, in the case of a two-sided admissible domain $\Omega$, the norms $\lVert\cdot\rVert_{\Hpo}$ and $\lVert\cdot\rVert_{\Vd(\bord)^{-1}}$ are equivalent with constants only depending on the norms of the interior and exterior harmonic extension operators (see~\eqref{Eq:TraceNorm-Rn}).
It follows that if $\Omega_k\nearrow\Omega$ and the harmonic extension operators $(\dot {\mathrm{E}}_{\Omega_k})_{k\in\N}$ and $(\dot{\mathrm{E}}_{\overline{\Omega}_k^c})_{k\in\N}$ are uniformly bounded, then the convergences from Lemma~\ref{Lem:CV-B} and Proposition~\ref{Prop:CV-B-Vnorm} are equivalent.

\begin{proposition}\label{Prop:CV-B-Frameworks-Equiv}
Let $\Omega$ be a two-sided admissible domain and $(\Omega_k)_{k\in\N}$ be a non-decreasing sequence of two-sided admissible domains such that $\Omega_k\nearrow\Omega$. Assume that the Dirichlet harmonic extensions $(\dot{\mathrm{E}}_{\Omega_k})_{k\in\N}$ and $(\dot{\mathrm{E}}_{\overline{\Omega}_k^c})_{k\in\N}$ are uniformly bounded. Then, for all $(f_k\in\dot\B(\bord_k))_{k\in\N}$ and $f\in\Hpo$, it holds
\begin{multline*}
(f_k\in\dot\B(\bord_k))_{k\in\N}\xrightarrow[]{(\trd^{\bord_k}\circ\dot{\mathrm{H}}_{\bord})}f\in\Hpo\\
\iff\quad (f_k\in\dot\B_{\Vd^{-1}}(\bord_k))_{k\in\N}\xrightarrow[]{(\trd^{\bord_k}\circ\dot{\mathrm{H}}_{\bord})}f\in\dot\B_{\Vd^{-1}}(\bord).
\end{multline*}
\end{proposition}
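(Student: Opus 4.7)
The plan is to reduce both strong-vector convergences to the scalar statement that $\lVert \trd^{\bord_k}\dot{\mathrm{H}}_{\bord}f - f_k\rVert$ vanishes as $k\to\infty$ — once measured in $\lVert\cdot\rVert_{\dot\B(\bord_k)}$, once in $\lVert\cdot\rVert_{\Vd_{\bord_k}^{-1}}$ — and then compare the two via a $k$-uniform equivalence of norms.

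Concretely, I would first note that Lemma~\ref{Lem:CV-B} and Proposition~\ref{Prop:CV-B-Vnorm} both use the same representative maps $(\trd^{\bord_k}\circ\dot{\mathrm{H}}_{\bord})_{k\in\N}$ and take the dense subspace $\mathcal{C}$ equal to the full limit Hilbert space. Lemma~\ref{Lem:CV-KS} therefore turns each of the convergences appearing in the proposition into the elementary scalar condition
\begin{equation*}
\lVert \trd^{\bord_k}\dot{\mathrm{H}}_{\bord}f - f_k\rVert_{\dot\B(\bord_k)} \xrightarrow[k\to\infty]{} 0,\qquad\text{resp.}\qquad \lVert \trd^{\bord_k}\dot{\mathrm{H}}_{\bord}f - f_k\rVert_{\Vd_{\bord_k}^{-1}} \xrightarrow[k\to\infty]{} 0.
\end{equation*}
Next, I would invoke the norm equivalence announced in the paragraph preceding the proposition and to be proved in Section~\ref{Sec:EquivNorm}: on any two-sided admissible domain $\Omega$ the norms $\lVert\cdot\rVert_{\Hpo}$ and $\lVert\cdot\rVert_{\Vd_{\bord}^{-1}}$ are equivalent, with constants depending only on $\lVert\dot{\mathrm{E}}_\Omega\rVert$ and $\lVert\dot{\mathrm{E}}_{\overline{\Omega}^c}\rVert$. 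Applied to each $\Omega_k$, the uniform bounds on $(\dot{\mathrm{E}}_{\Omega_k})_{k\in\N}$ and $(\dot{\mathrm{E}}_{\overline{\Omega}_k^c})_{k\in\N}$ yield constants $c_1,c_2>0$ independent of $k$ such that
\begin{equation*}
c_1\lVert\cdot\rVert_{\dot\B(\bord_k)}\le \lVert\cdot\rVert_{\Vd_{\bord_k}^{-1}}\le c_2\lVert\cdot\rVert_{\dot\B(\bord_k)}
\end{equation*}
on every $\dot\B(\bord_k)$. Sandwiching the two scalar quantities above between these inequalities immediately gives both implications of the claimed equivalence.

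The only non-routine ingredient is the uniform two-sided norm equivalence, that is, the fact that the implicit constants depend on the extension-operator norms alone and not otherwise on the geometry of $\Omega$. That dependence is the real work and will be the main obstacle; however, it is handled independently in Section~\ref{Sec:EquivNorm}, so within the present section it may be used as a black box. Once that input is in hand, the proposition is a direct consequence of Lemma~\ref{Lem:CV-KS} and a two-sided estimate applied term by term to the difference $\trd^{\bord_k}\dot{\mathrm{H}}_{\bord}f - f_k$.
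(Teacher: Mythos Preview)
Your proposal is correct and follows essentially the same approach as the paper: invoke Lemma~\ref{Lem:CV-KS} to reduce both convergences to the vanishing of $\lVert \trd^{\bord_k}\dot{\mathrm{H}}_{\bord}f - f_k\rVert$ in the two respective norms, then use the uniform-in-$k$ equivalence of $\lVert\cdot\rVert_{\dot\B(\bord_k)}$ and $\lVert\cdot\rVert_{\Vd_{\bord_k}^{-1}}$ coming from Section~\ref{Sec:EquivNorm} and the uniform bounds on the extension operators.
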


\begin{proof}
By~\eqref{Eq:TraceNorm-Rn} below and uniform boundedness of the harmonic extension operators, there exist constants $c_1,c_2>0$ such that for all $k\in\N$, it holds
\begin{equation*}
c_1\lVert\cdot\rVert_{\Vd_{\bord_k}^{-1}}\le\lVert\cdot\rVert_{\dot\B(\bord_k)}\le c_2\lVert\cdot\rVert_{\Vd_{\bord_k}^{-1}}.
\end{equation*}
The result follows by Lemma~\ref{Lem:CV-KS}.
\end{proof}

It follows from Proposition~\ref{Prop:CV-B-Frameworks-Equiv} that all the convergence results for elements in the trace spaces still hold when the trace spaces are endowed with the norm~\eqref{Eq:Vnorm} instead. In particular, the convergence
\begin{equation}\label{Eq:CV-1/2I+K}
\frac12I+\Kd_{\bord_k}\xrightarrow[k\to\infty]{(\trd^{\bord_k}\circ\,\dot{\mathrm{H}}_{\bord}),(\trd^{\bord_k}\circ\,\dot{\mathrm{H}}_{\bord})}\frac12I+\Kd_{\bord},
\end{equation}
can be understood as a convergence of operators acting on $\dot{\B}_{\Vd^{-1}}$ spaces, which are then contractive. As mentioned above, this means the associated Neumann series converge, and~\eqref{Eq:CV-1/2I+K} implies a term by term convergence of the series:
\begin{equation*}
\forall\ell\in\N,\quad \Big(\frac12I+\Kd_{\bord_k}\Big)^\ell\xrightarrow[k\to\infty]{(\trd^{\bord_k}\circ\,\dot{\mathrm{H}}_{\bord}),(\trd^{\bord_k}\circ\,\dot{\mathrm{H}}_{\bord})}\Big(\frac12I+\Kd_{\bord}\Big)^\ell.
\end{equation*}
We prove the convergence of the series as a whole.

\begin{theorem}[Convergence of the Neumann series]\label{Th:CV-Neumann-Series}
Let $\Omega$ be a two-sided admissible domain and $(\Omega_k)_{k\in\N}$ be a non-decreasing sequence of two-sided admissible domains such that $\Omega_k\nearrow\Omega$. Assume that the Dirichlet harmonic extensions $(\dot{\mathrm{E}}_{\Omega_k})_{k\in\N}$ and $(\dot{\mathrm{E}}_{\overline{\Omega}_k^c})_{k\in\N}$ from~\eqref{Eq:Dir-Int-Ext} and~\eqref{Eq:Dir-Ext-Ext} are uniformly bounded. Then,
\begin{equation*}
\sum_{\ell=0}^{+\infty}\Big(\frac12I+\Kd_{\bord_k}\Big)^\ell\xrightarrow[k\to\infty]{(\trd^{\bord_k}\circ\,\dot{\mathrm{H}}_{\bord}),(\trd^{\bord_k}\circ\,\dot{\mathrm{H}}_{\bord})}\sum_{\ell=0}^{+\infty}\Big(\frac12I+\Kd_{\bord}\Big)^\ell,
\end{equation*}
using the convergence framework from Lemma~\ref{Lem:CV-B}.
Moreover, those series have uniform remainder estimates, that is, they satisfy the Weierstrass M-test uniformly in $k$.
\end{theorem}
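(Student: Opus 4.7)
The plan is to leverage the uniform contraction from Theorem~\ref{Th:NP-contraction} -- which produces a constant $c\in\,]0,1[$ depending only on the norms of the harmonic extension operators -- together with the term-by-term convergence of Corollary~\ref{Cor:K-CV}, to run a standard $\varepsilon/3$ truncation argument. By Proposition~\ref{Prop:CV-B-Frameworks-Equiv}, under the uniform-extension hypothesis, the $\lVert\cdot\rVert_{\Vd_{\bord_k}^{-1}}$ and $\lVert\cdot\rVert_{\dot\B(\bord_k)}$ norms are equivalent with constants uniform in $k$, so the claimed convergence in the framework of Lemma~\ref{Lem:CV-B} may be equivalently checked in the $\Vd^{-1}$-framework of Proposition~\ref{Prop:CV-B-Vnorm}. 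Writing $T_k:=\tfrac12I+\Kd_{\bord_k}$ and $T:=\tfrac12I+\Kd_{\bord}$, Theorem~\ref{Th:NP-contraction} then yields the operator bounds $\lVert T_k^\ell\rVert\le c^\ell$ with $c$ uniform in $k$, which at once delivers the Weierstrass M-test uniformly in $k$ on bounded sets with majorants $(c^\ell)_{\ell\in\N}$.

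For the series convergence itself, I would fix $f\in\Hpo$ and, by Lemma~\ref{Lem:CV-KS}, reduce to showing $\lVert\sum_\ell T_k^\ell \mathcal T_k f - \mathcal T_k \sum_\ell T^\ell f\rVert_{\Vd_{\bord_k}^{-1}}\to 0$, where $\mathcal T_k:=\trd^{\bord_k}\circ\dot{\mathrm{H}}_{\bord}$. Splitting at a truncation level $N\in\N$ via the triangle inequality produces three terms: (i) a finite-sum term $\lVert\sum_{\ell=0}^N(T_k^\ell \mathcal T_k f - \mathcal T_k T^\ell f)\rVert_{\Vd_{\bord_k}^{-1}}$, which vanishes as $k\to\infty$ for every fixed $N$ since the composition of convergent operator sequences is convergent, so that $T_k^\ell\to T^\ell$ by induction starting from~\eqref{Eq:CV-1/2I+K}; (ii) a tail $\lVert\sum_{\ell>N}T_k^\ell \mathcal T_k f\rVert_{\Vd_{\bord_k}^{-1}}\le\tfrac{c^{N+1}}{1-c}\lVert\mathcal T_k f\rVert_{\Vd_{\bord_k}^{-1}}$; and (iii) a tail $\lVert\mathcal T_k \sum_{\ell>N}T^\ell f\rVert_{\Vd_{\bord_k}^{-1}}$, whose $\limsup$ in $k$ is at most $\lVert\sum_{\ell>N}T^\ell f\rVert_{\Vd^{-1}_{\bord}}\le\tfrac{c^{N+1}}{1-c}\lVert f\rVert_{\Vd^{-1}_{\bord}}$ by Proposition~\ref{Prop:CV-B-Vnorm}. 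Choosing $N$ first so as to push the two tail bounds below $\varepsilon/3$, and then $k$ large enough to handle term (i), closes the estimate.

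The main subtlety is ensuring that the tail bounds in (ii) and (iii) are genuinely uniform in $k$ for fixed $N$; this relies crucially on the uniformity of the contraction constant $c$ granted by Theorem~\ref{Th:NP-contraction} under the uniform-extension hypothesis, since without it the quantifiers on $N$ and $k$ could not be interchanged as the $\varepsilon/3$ argument requires. The remaining ingredients -- the equivalence of the $\Vd^{-1}$- and $\dot\B$-frameworks, the stability of convergence under composition of operators, and the asymptotic comparison $\lVert\mathcal T_k g\rVert_{\Vd_{\bord_k}^{-1}}\to\lVert g\rVert_{\Vd^{-1}_{\bord}}$ -- are all supplied by the results of Subsections~\ref{Subsec:CV-KS} and~\ref{Subsec:CV-Neumann-Series}.
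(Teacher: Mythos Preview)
Your proposal is correct and takes a genuinely different route from the paper. The paper does not run a truncation argument at all: instead, it identifies the Neumann series as the inverse $(\tfrac12 I-\Kd_{\bord_k})^{-1}$, rewrites this inverse explicitly as $\llbracket\trd^{\bord_k}\cdot\rrbracket\circ\dot{\mathrm N}_{\bord_k}$, and then bounds each factor uniformly in $k$ using the uniform-extension hypothesis. With that uniform bound on the inverses in hand, it invokes Lemma~\ref{Lem:InverseCV} on convergence of inverse operators, together with the convergence $\tfrac12 I-\Kd_{\bord_k}\to\tfrac12 I-\Kd_{\bord}$ from Corollary~\ref{Cor:K-CV}, to conclude.

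Your $\varepsilon/3$ argument is more elementary and self-contained: it never needs the explicit formula for the inverse nor Lemma~\ref{Lem:InverseCV}, and it makes the role of the uniform contraction constant fully transparent (and along the way delivers the Weierstrass statement directly). The paper's approach, by contrast, is more structural---it recycles the abstract inverse-convergence machinery already in place---and its explicit identification of the inverse as a trace-jump composed with a Neumann extension may be of independent interest. One minor point worth making explicit in your write-up: the reduction ``fix $f\in\Hpo$'' implicitly uses that the limit operator acts on an arbitrary convergent sequence $(f_k)$, not just on $\mathcal T_k f$; this gap is closed by the uniform bound $\lVert\sum_\ell T_k^\ell\rVert\le(1-c)^{-1}$, which handles the difference $f_k-\mathcal T_k f$.
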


\begin{proof}
For $k\in\N$, since $\frac12I+\Kd_{\bord_k}$ is contractive on $\dot{\B}_{\Vd^{-1}}(\bord_k)$, it holds
\begin{align*}
\sum_{\ell=0}^{+\infty}\Big(\frac12I+\Kd_{\bord_k}\Big)^\ell &=\Big(\frac12I-\Kd_{\bord_k}\Big)^{-1}\\
&=-(\trd^{\bord_k}_{\mathrm i}\circ\Dd_{\bord_k})^{-1}\\
&=\llbracket\trd^{\bord_k}\cdot\rrbracket\circ\dot{\mathrm N}_{\bord_k},
\end{align*}
where $\dot{\mathrm N}_{\bord_k}:\dot{\B}'(\bord_k)\to\dot V_{\D}(\nbord_k)$ is the Neumann harmonic extension (as in Remark~\ref{Rem:B'-Change}). By Theorem~\ref{Th:Trace} and Proposition~\ref{Prop:Norm-B-Equiv}, it holds
\begin{equation*}
\big\lVert\llbracket\trd^{\bord_k}\cdot\rrbracket\big\rVert_{\mathcal{L}(\dot H^1(\nbord_k),\dot{\B}(\bord_k)}\le 1+\sqrt{\lVert\dot{\mathrm{E}}_{\overline{\Omega}_k^c}\rVert^2-1},
\end{equation*}
while noticing that the Neumann harmonic extension $\dot{\mathrm N}_{\bord_k}$ can be expressed in terms of Dirichlet harmonic extensions using the Poincaré-Steklov operators for $-\Delta$ on $\Omega_k$ and $\overline{\Omega}_k^c$ yields
\begin{equation*}
\lVert\dot{\mathrm N}_{\bord_k}\rVert_{\mathcal{L}(\dot{\B}'(\bord_k),\dot V_{\D}(\nbord_k))}\le c
\end{equation*}
where $c>0$ is a constant depending only on the norms of the extensions $\dot{\mathrm{E}}_{\Omega_k}$ and $\dot{\mathrm{E}}_{\overline{\Omega}_k^c}$. In particular, we may choose $c$ to be uniform in $k\in\N$, so that
\begin{equation*}
\frac12I-\Kd_{\bord_k}\xrightarrow[k\to\infty]{(\trd^{\bord_k}\circ\,\dot{\mathrm{H}}_{\bord}),(\trd^{\bord_k}\circ\,\dot{\mathrm{H}}_{\bord})}\frac12I-\Kd_{\bord},
\end{equation*}
together with Lemma~\ref{Lem:InverseCV} yields the result.
\end{proof}

\section{Application to the Riemann-Hilbert problem on admissible domains}\label{Sec:HT-CI}

As an application of the convergence of the layer potential operators from Theorem~\ref{Th:CV-LP-H1-Rn}, we define the notion of Cauchy integral in $\C$ beyond the Lipschitz case.

\subsection{Cauchy integral, Hilbert transform and layer potentials for Lipschitz domains}\label{Subsec:HT-LP-Lip}

In this expository section we recall the classical framework related to the Cauchy integral in the case when $\Omega$ is a Lipschitz domain in $\C$, which can be identified with $\R^2$ as usual.

We recall the definitions of the following complex Hilbert spaces:
\begin{equation*}
\dot H^1_\C(\Omega) :=\dot H^1(\Omega)+i\dot H^1(\Omega),
\end{equation*}
and the spaces $\dot H^1_\C(\cbord)$, $\dot H_\C^{\frac{1}{2}}(\bord)$ and $\dot H_\C^{-\frac{1}{2}}(\bord)$ (the fractional Sobolev space modulo constants, see~\cite[Chapter IV, Appendix]{DAUTRAY-LIONS1988}, and its dual) in the same way.
In a complex framework, we can extend the definitions of the double layer potential operator for the transmission problem~\eqref{Eq:Tr-Prob} across a Lipschitz boundary, given in~\cite{ammari_reconstruction_2004} for instance:
\begin{align*}
\Dd_{\bord} : \dot H_\C^{\frac{1}{2}}(\bord) &\longrightarrow \dot H^1_\C(\cbord)\\
f_{\mathrm{R}}+if_{\mathrm{I}} &\longmapsto \Dd_{\bord}f_{\mathrm{R}} + i\Dd_{\bord}f_{\mathrm{I}}
\end{align*}
for the double layer potential, and similarly for the single layer potential operator $\Sd_{\bord}:\dot H_\C^{\frac{1}{2}}(\bord)\to \dot H^1_\C(\R^n)$ and Neumann-Poincaré operator $\Kd_{\bord}:\dot H_\C^{\frac{1}{2}}(\bord)\to \dot H_\C^{\frac{1}{2}}(\bord)$. Note that the definitions of those complex-valued operators displays their algebraic decompositions. 

Let $f\in \dot H_\C^{\frac{1}{2}}(\bord)$. The Cauchy integral of $f$ over $\bord$ counter-clockwise is defined by:
\begin{equation*}
\Phi_{\bord}f(z):=\frac{1}{2i\pi}\int_{\bord}{\frac{f(\zeta)}{\zeta-z}\,\mathrm{d}\zeta},\quad z\in\C.
\end{equation*}
It is holomorphic on $\C\backslash\bord$~\cite[Section 15]{muskhelishvili_singular_1977}. Moreover, it is connected to the Neumann-Poincaré operator as in~\cite[Section 6]{khavinson_poincares_2007}:
\begin{equation}\label{LinkNP-CI}
\Kd_{\bord}f(z)=\int_{\bord}f(\zeta)\mathrm{Re}\bigg(\frac{\mathrm{d}\zeta}{2i\pi(\zeta-z)}\bigg),\quad z\in\bord.
\end{equation}

If $f_{\mathrm{R}}\in \dot H^{\frac{1}{2}}(\bord)$ is $\R$-valued, then~\eqref{LinkNP-CI} yields:
\begin{equation}\label{ReNP-CI}
\Kd_{\bord}f_{\mathrm{R}}(z)=\mathrm{Re}(\Phi_{\bord}f_{\mathrm{R}}(z)),\quad z\in\bord.
\end{equation}
Since $\Phi_{\bord}f_{\mathrm{R}}$ is holomorphic on $\C\backslash\bord$, its real part is harmonic and by~\eqref{ReNP-CI}, $\Phi_{\bord}f_{\mathrm{R}}$ can be written as:
\begin{equation}\label{CI-HarmoConj}
\Phi_{\bord}f_{\mathrm{R}}(z)=-\Dd_{\bord}f_{\mathrm{R}}(z)+ih_{\mathrm{R}}(z),\quad z\in\C,
\end{equation}
where $h_{\mathrm{R}}$ is the harmonic conjugate~\cite[p. 294]{rudin_real_1987} of $-\Dd_{\bord}f_{\mathrm{R}}$. By the Plemelj formulae~\cite[Section 17]{muskhelishvili_singular_1977}, it holds $\llbracket\Trd^{\bord}\Phi_{\bord}f_{\mathrm{R}}\rrbracket=f_{\mathrm{R}}$, which implies $h_{\mathrm{R}}$, which is harmonic, can be expressed as a single layer potential.

If $if_{\mathrm{I}}\in i\dot H^{\frac{1}{2}}(\bord)$ is $i\R$-valued, then~\eqref{LinkNP-CI} yields:
\begin{equation}\label{ImNP-CI}
\Kd_{\bord}(if_{\mathrm{I}})(z)=\mathrm{Im}(\Phi_{\bord}(if_{\mathrm{I}})(z)),\quad z\in\bord.
\end{equation}
As before, $\Phi_{\bord}(if_{\mathrm{I}})$ can be written as:
\begin{equation*}
\Phi_{\bord}(if_{\mathrm{I}})(z)=\Sd_{\bord}g_{\mathrm{I}}(z)-i\Dd_{\bord}(if_{\mathrm{I}})(z),\quad z\in\C,
\end{equation*}
where $\Sd_{\bord}g_{\mathrm{I}}$ is the anti-harmonic conjugate of $-\Dd_{\bord}(if_{\mathrm{I}})$.

Those considerations yield the following decomposition.

\begin{proposition}\label{Prop:Link-CI-LP}
Let $\Omega$ be a Lipschitz domain in $\C$. Let $f\in \dot H_\C^{\frac{1}{2}}(\bord)$.
\begin{enumerate}
\item[(i)] If $f$ is real-valued, then $\mathrm{Re}(\Phi_{\bord}f)$ is a double layer potential and $\mathrm{Im}(\Phi_{\bord}f)$ is a single layer potential;

\item[(ii)] If $f$ is purely imaginary-valued, then $\mathrm{Re}(\Phi_{\bord}f)$ is a single layer potential and $\mathrm{Im}(\Phi_{\bord}f)$ is a double layer potential.
\end{enumerate}
\end{proposition}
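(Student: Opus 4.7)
The plan is to read the statement directly off the Plemelj jump formulas combined with the Cauchy--Riemann equations, which on a Lipschitz boundary link tangential traces and normal traces of conjugate harmonic functions. I would set $F := \Phi_{\bord} f$, which is holomorphic on $\cbord$ by \cite[Section 15]{muskhelishvili_singular_1977}, and write $F = u + iv$ with $u = \mathrm{Re}(F)$, $v = \mathrm{Im}(F)$; both are harmonic on $\cbord$. By Plemelj \cite[Section 17]{muskhelishvili_singular_1977}, as already used when justifying~\eqref{CI-HarmoConj},
\begin{equation*}
\llbracket \Trd^{\bord} F \rrbracket = f, \qquad \text{so} \qquad \llbracket \Trd^{\bord} u \rrbracket = \mathrm{Re}(f), \quad \llbracket \Trd^{\bord} v \rrbracket = \mathrm{Im}(f).
\end{equation*}

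Next I would invoke the Cauchy--Riemann equations applied separately on $\Omega$ and on $\overline{\Omega}^c$: with $(\tau,\nu)$ a direct orthonormal frame on $\bord$, holomorphicity on each side yields $\partial_\nu u = -\partial_\tau v$ and $\partial_\nu v = \partial_\tau u$ up to $\bord$ as identities in $\dot H^{-1/2}(\bord)$ (here the Lipschitz hypothesis on $\Omega$ is used so that tangential differentiation is well defined as a bounded map $\dot H^{1/2}(\bord) \to \dot H^{-1/2}(\bord)$ and integration by parts against test functions gives the identities rigorously in the weak normal-derivative sense of Subsection~\ref{Subsec:AdDom-Ext-LP}). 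Since the tangential derivative commutes with the jump operator across $\bord$, I obtain
\begin{equation*}
\llbracket \partial_\nu u \rrbracket = -\partial_\tau \llbracket \Trd^{\bord} v \rrbracket, \qquad \llbracket \partial_\nu v \rrbracket = \partial_\tau \llbracket \Trd^{\bord} u \rrbracket.
\end{equation*}

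From here both points follow immediately. For (i), with $f = f_{\mathrm R}$ real, $\llbracket \Trd^{\bord} v \rrbracket = 0$ places $v$ in $\dot{V}_\Scal(\cbord)$ by~\eqref{Eq:SLP-Space}, while the preceding identities give $\llbracket \partial_\nu u \rrbracket = -\partial_\tau(0) = 0$, placing $u$ in $\dot{V}_\D(\cbord)$ by~\eqref{Eq:DLP-Space}. Point (ii) is the mirror image: for $f = if_{\mathrm I}$ one has $\llbracket \Trd^{\bord} u \rrbracket = 0$ and $\llbracket \partial_\nu v \rrbracket = \partial_\tau(0) = 0$, giving $u \in \dot{V}_\Scal(\cbord)$ and $v \in \dot{V}_\D(\cbord)$.

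The main obstacle I anticipate is the rigorous justification of the Cauchy--Riemann boundary identities $\partial_\nu u = \mp \partial_\tau v$ in the weak normal-derivative framework of the paper, rather than the pointwise classical one. A cleaner, essentially equivalent route that sidesteps this issue is to quote~\eqref{CI-HarmoConj} and its purely imaginary analogue stated right before the proposition: for real $f_{\mathrm R}$ one has $\Phi_{\bord} f_{\mathrm R} = -\Dd_{\bord} f_{\mathrm R} + i h_{\mathrm R}$ where $h_{\mathrm R}$ is harmonic on $\cbord$ with continuous trace across $\bord$ (by Plemelj), hence is a single layer potential by~\eqref{Eq:SLP-Space}; for purely imaginary data the analogous decomposition gives the claim. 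I would present this shorter variant as the main argument and use the Cauchy--Riemann computation above only as a conceptual confirmation.
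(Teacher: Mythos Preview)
Your proposal is correct, and the ``cleaner route'' you give at the end is precisely the paper's own argument: the proposition is not proved separately but stated as a summary of the discussion immediately preceding it --- the identification $\mathrm{Re}(\Phi_{\bord}f_{\mathrm R})=-\Dd_{\bord}f_{\mathrm R}$ via~\eqref{ReNP-CI} and~\eqref{CI-HarmoConj}, together with Plemelj to see that the harmonic conjugate $h_{\mathrm R}$ has no trace jump and hence lies in $\dot V_\Scal(\cbord)$, and the analogous reasoning from~\eqref{ImNP-CI} for purely imaginary data.

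Your primary route through the Cauchy--Riemann boundary identities $\llbracket\partial_\nu u\rrbracket=\pm\partial_\tau\llbracket\Trd^{\bord}v\rrbracket$ is a genuinely different argument. It bypasses the Neumann--Poincar\'e relation~\eqref{LinkNP-CI} entirely and deduces the double-layer property of the real part directly from the vanishing of the trace jump of the imaginary part. The paper's approach has the advantage of identifying the double-layer part \emph{explicitly} as $-\Dd_{\bord}f_{\mathrm R}$ before any normal-derivative computation, so the weak Cauchy--Riemann boundary identity on a Lipschitz curve --- the obstacle you correctly flag --- is never invoked. Your approach is more symmetric and conceptually transparent (each conclusion is one Plemelj plus one Cauchy--Riemann step), at the cost of that technical justification.
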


\subsection{Cauchy integral and Hilbert transform for two-sided admissible domain}\label{Subsec:CI-Ext}

We wish to use the connection between the Cauchy integral and the Neumann-Poincaré operator given by equation~\eqref{LinkNP-CI}, and the consequent Proposition~\ref{Prop:Link-CI-LP} to give a definition on irregular domains.
However, the Plemelj formulae~\cite[Section 17]{muskhelishvili_singular_1977}, which allow to identify the harmonic conjugate in equation~\eqref{CI-HarmoConj} as a single layer potential in the Lipschitz case, are proved on Lipschitz boundaries, relying on geometrical considerations.
At this point, we give the following definition of the Cauchy integral on a two-sided admissible domain, inspired by Proposition~\ref{Prop:Link-CI-LP}.

\begin{definition}[Cauchy integral on two-sided admissible domains of $\C$]\label{Def:CI}
Let $\Omega$ be a two-sided admissible domain of $\C$. For $f=f_{\mathrm{R}}+if_{\mathrm{I}}\in\dot{\B}_\C(\bord)$, we define the Cauchy integral on $\bord$ as:
\begin{equation*}
\begin{cases}
\Phi_{\bord}f_{\mathrm{R}}:=-\Dd_{\bord}f_{\mathrm{R}}+i\Sd_{\bord}g_{\mathrm{R}},\\
\Phi_{\bord}(if_{\mathrm{I}})=\Sd_{\bord}g_{\mathrm{I}}-i\Dd_{\bord}(if_{\mathrm{I}}),
\end{cases}
\end{equation*}
where $\Sd_{\bord}g_{\mathrm{R}}$ is its single layer potential part of the harmonic conjugate of $-\Dd_{\bord}f_{\mathrm{R}}$ and $\Sd_{\bord}g_{\mathrm{I}}$ is the single layer potential part of the anti-harmonic conjugate of $-\Dd_{\bord}(if_{\mathrm{I}})$.
\end{definition}

Note that, although the Cauchy integral defined in this way can be decomposed using the layer potential operators as in the Lipschitz case, it is not defined as a holomorphic function.
In this part, we prove that when the two-sided admissible domain we consider can be `suitably' approximated by Lipschitz domain, the generalized Cauchy integral from Definition~\ref{Def:CI} is holomorphic away from the boundary.
More specifically, we prove that, in that case, the generalized Cauchy integral can be expressed as the limit of a sequence of Cauchy integrals for Lipschitz domains, in a sense which preserves holomorphism.
To do so, we make use of the convergence framework from Section~\ref{Sec:CV-LP}, which we adapt to the complex-valued case, in the sense of the following lemma.

\begin{lemma}
Let $(H_k)_{k\in\N}$ be a sequence of Hilbert spaces composed of $\R$-valued functions or distributions, and $H$ be a such Hilbert space such that $H_k\to H$ through $(\mathcal{T}_k)_{k\in\N}$. Then it holds
\begin{equation*}
H_k+i H_k\xrightarrow[k\to\infty]{} H+iH\quad\mbox{through }(H+iH,(\mathcal{T}_k)_{k\in\N}),
\end{equation*}
where $\mathcal{T}_k$ is understood as $\mathcal{T}_k(u+iv)=\mathcal{T}_ku+i\mathcal{T}_kv$ for all $u,v\in H$.
\end{lemma}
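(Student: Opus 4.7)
The plan is to verify the convergence criterion of Definition~\ref{Def:CV-Hilb} directly. First I would fix the Hilbert structure on the complexifications: for $u,v\in H$, the complexified space $H+iH$ carries the natural norm
\begin{equation*}
\lVert u+iv\rVert_{H+iH}^2=\lVert u\rVert_H^2+\lVert v\rVert_H^2,
\end{equation*}
and similarly for each $H_k+iH_k$. The extension $\mathcal{T}_k(u+iv)=\mathcal{T}_ku+i\mathcal{T}_kv$ is then an $\R$-linear (in fact $\C$-linear if the original $\mathcal{T}_k$ are $\R$-linear) map from $\mathcal{C}+i\mathcal{C}$ into $H_k+iH_k$, where $\mathcal{C}$ is the dense subspace of $H$ through which the original convergence is assumed.

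Next I would check the density requirement. Since $\mathcal{C}$ is dense in $H$, every $u+iv\in H+iH$ can be approximated by $u_n+iv_n$ with $u_n,v_n\in\mathcal{C}$ converging to $u$ and $v$ respectively in $H$, so $\mathcal{C}+i\mathcal{C}$ is dense in $H+iH$.

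It then only remains to verify the defining norm convergence~\eqref{CV}. For $f=u+iv\in\mathcal{C}+i\mathcal{C}$ one has
\begin{equation*}
\lVert\mathcal{T}_kf\rVert_{H_k+iH_k}^2=\lVert\mathcal{T}_ku\rVert_{H_k}^2+\lVert\mathcal{T}_kv\rVert_{H_k}^2,
\end{equation*}
and applying the assumed convergence $H_k\to H$ through $(\mathcal{C},(\mathcal{T}_k))$ to each real component yields
\begin{equation*}
\lVert\mathcal{T}_kf\rVert_{H_k+iH_k}^2\xrightarrow[k\to\infty]{}\lVert u\rVert_H^2+\lVert v\rVert_H^2=\lVert f\rVert_{H+iH}^2,
\end{equation*}
which is exactly the required condition. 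There is no genuine obstacle here: the proof amounts to observing that the complexification decouples real and imaginary parts orthogonally, so Definition~\ref{Def:CV-Hilb} for the complex spaces reduces coordinate-wise to the real hypothesis.
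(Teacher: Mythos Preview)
Your proof is correct and is precisely the direct verification one would expect; the paper itself states this lemma without proof, treating it as immediate from the orthogonal decomposition of the complexified norm. The only minor remark is that in the paper's convention the dense subspace is always the full limit space, so $\mathcal{C}=H$ here and your density step is trivially satisfied, but your slightly more general formulation does no harm.
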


Given the definition of generalized Cauchy integral from Definition~\ref{Def:CI}, it appears that the harmonic conjugation operators play a key role; the following proposition proves the convergence of the interior harmonic conjugations along a non-decreasing sequence of two-sided admissible domains.

\begin{proposition}\label{Prop:HarmoConjCV-i}
Let $\Omega$ be a two-sided admissible domain of $\C$ and let $\Omega_k\nearrow\Omega$. For $k\in\N$, let $\mathrm{HC}_{\Omega_k}:\dot V_0(\Omega_k)\to \dot V_0(\Omega_k)$ be the harmonic conjugation operator on $\Omega_k$, and let $\mathrm{HC}_\Omega:\dot V_0(\Omega)\to \dot V_0(\Omega)$ be that on $\Omega$. Then, it holds
\begin{equation*}
\mathrm{HC}_{\Omega_k}\xrightarrow[k\to\infty]{(\cdot|_{\Omega_k}),(\cdot|_{\Omega_k})} \mathrm{HC}_{\Omega}.
\end{equation*}
\end{proposition}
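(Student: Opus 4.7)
The plan rests on two elementary facts about harmonic conjugation in dimension two: first, harmonic conjugation is a $\dot H^1$-isometry, because the Cauchy--Riemann equations identify $\nabla(\mathrm{HC}(u))$ with a $90^\circ$ rotation of $\nabla u$; second, any two harmonic conjugates of a given harmonic function on a connected open set differ by a (locally) constant function, hence coincide as elements of $\dot V_0$. Both facts are completely local, and they underlie the proof.

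Since the source and target spaces carry the same convergence framework $(\cdot|_{\Omega_k})$ (see Remark~\ref{Rem:CV-V1-i}), Lemma~\ref{Lem:CV-KS} reduces the claim to showing that for every $(u_k\in \dot V_0(\Omega_k))_{k\in\N}$ satisfying $u_k\xrightarrow[k\to\infty]{(\cdot|_{\Omega_k})}u\in \dot V_0(\Omega)$, it holds
\begin{equation*}
\big\lVert \mathrm{HC}_{\Omega_k}u_k-(\mathrm{HC}_\Omega u)|_{\Omega_k}\big\rVert_{\dot H^1(\Omega_k)}\xrightarrow[k\to\infty]{}0.
\end{equation*}
The triangle inequality bounds the left-hand side by
\begin{equation*}
\big\lVert \mathrm{HC}_{\Omega_k}(u_k-u|_{\Omega_k})\big\rVert_{\dot H^1(\Omega_k)}+\big\lVert \mathrm{HC}_{\Omega_k}(u|_{\Omega_k})-(\mathrm{HC}_\Omega u)|_{\Omega_k}\big\rVert_{\dot H^1(\Omega_k)}.
\end{equation*}
The first summand equals $\lVert u_k-u|_{\Omega_k}\rVert_{\dot H^1(\Omega_k)}$ by the isometry property, which vanishes as $k\to\infty$ by the input hypothesis combined with Lemma~\ref{Lem:CV-KS}.

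The second summand vanishes identically: by locality of the Cauchy--Riemann equations, $(\mathrm{HC}_\Omega u)|_{\Omega_k}$ is itself a harmonic conjugate of $u|_{\Omega_k}$ on the connected domain $\Omega_k$, as is $\mathrm{HC}_{\Omega_k}(u|_{\Omega_k})$ by definition, so the two differ by a constant and agree in $\dot V_0(\Omega_k)$. No serious obstacle is anticipated; the only conceptual subtlety is checking that harmonic conjugation is both isometric in $\dot H^1$ and single-valued modulo constants under the setup adopted, and both reduce to a direct gradient computation combined with the standing convention that elements of $\dot V_0$ are equivalence classes modulo constants.
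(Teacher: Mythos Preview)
Your proof is correct and follows essentially the same approach as the paper's: both hinge on the two facts you isolate, namely that harmonic conjugation is a $\dot H^1$-isometry (via Cauchy--Riemann) and that $(\mathrm{HC}_\Omega u)|_{\Omega_k}$ coincides with $\mathrm{HC}_{\Omega_k}(u|_{\Omega_k})$ in $\dot V_0(\Omega_k)$. The only cosmetic difference is that the paper first reduces to $u_k=u|_{\Omega_k}$ by invoking the uniform boundedness (isometry) of the $\mathrm{HC}_{\Omega_k}$ and then shows the second term vanishes, whereas you handle a general sequence directly via the triangle inequality; the content is the same.
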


\begin{proof}
By the Cauchy-Riemann equations, the operators $\mathrm{HC}_{\Omega_k}$ are uniformly bounded (actually, they are isometric). Therefore, it is enough to prove
\begin{equation*}
\forall u\in \dot V_0(\Omega),\quad \lVert\mathrm{HC}_{\Omega_k}(u|_{\Omega_k})-(\mathrm{HC}_{\Omega}u)|_{\Omega_k}\rVert_{\dot V_0(\Omega_k)}\xrightarrow[k\to\infty]{}0.
\end{equation*}
Since both $u|_{\Omega_k}+i\mathrm{HC}_{\Omega_k}(u|_{\Omega_k})$ and $(u+i\mathrm{HC}_{\Omega}u)|_{\Omega_k}$ are holomorphic on $\Omega_k$ it follows that $i(\mathrm{HC}_{\Omega_k}(u|_{\Omega_k})-(\mathrm{HC}_{\Omega}u)|_{\Omega_k})$ is a purely imaginary-valued holomorphic function. Therefore, it is constant on $\Omega_k$, that is null as an element of $\dot V_0(\Omega_k)$. The convergence follows.
\end{proof}

Since the interior harmonic conjugations converge, and knowing that in the Lipschitz case, the harmonic conjugate of a double layer potential is a single layer potential, we can prove that property is preserved upon passing to the limit with respect to the domain. As a consequence, we can prove the Cauchy integral from Definition~\ref{Def:CI} is not only a sum of layer potentials, but also a holomorphic function away from the boundary.

\begin{theorem}[Holomorphism of the Cauchy integral]\label{Th:CI-Holom}
Let $\Omega$ be a two-sided admissible domain of $\C$. Assume there exists a non-decreasing sequence $(\Omega_k)_{k\in\N}$ of Lipschitz domains such that $\Omega_k\nearrow\Omega$, and that the extension operators $(\dot{\mathrm{E}}_{\Omega_k})_{k\in\N}$ and $(\dot{\mathrm{E}}_{\overline{\Omega}_k^c})_{k\in\N}$ from~\eqref{Eq:Dir-Int-Ext} and~\eqref{Eq:Dir-Ext-Ext} are uniformly bounded. Then, for all $f\in\dot\B_\C(\bord)$, the Cauchy integral $\Phi_{\bord}f$ is holomorphic on $\cbord$.
\end{theorem}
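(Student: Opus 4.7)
The plan is to approximate $\Phi_{\bord}f$ by a sequence of classical Cauchy integrals $\Phi_{\bord_k}f_k$ on the Lipschitz domains $\Omega_k$ and pass to the limit, using the convergence results of Section~\ref{Sec:CV-LP} together with Proposition~\ref{Prop:HarmoConjCV-i}. By complex linearity and Definition~\ref{Def:CI}, it suffices to treat a real-valued $f\in\dot{\mathcal{B}}(\bord)$ (the purely imaginary case being symmetric). Writing $\Phi_{\bord}f=-\Dd_{\bord}f+i\Sd_{\bord}g$, I would pick the natural approximating sequence $f_k:=\trd^{\bord_k}\circ\dot{\mathrm H}_{\bord}f\in\dot{\mathcal{B}}(\bord_k)$, which converges to $f$ through $(\trd^{\bord_k}\circ\dot{\mathrm H}_{\bord})_{k\in\N}$ by Lemma~\ref{Lem:CV-B}. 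Since each $\Omega_k$ is Lipschitz, Proposition~\ref{Prop:Link-CI-LP} applies: $\Phi_{\bord_k}f_k=-\Dd_{\bord_k}f_k+i\Sd_{\bord_k}g_k$, where $\Sd_{\bord_k}g_k$ is the single layer potential part of the harmonic conjugate of $-\Dd_{\bord_k}f_k$, and this function is genuinely holomorphic on $\C\setminus\bord_k$.

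Next, I would pass to the limit on each side of $\bord$ separately. By Theorem~\ref{Th:CV-LP-H1-Rn}, the harmonic extensions $\dot{\mathrm E}_{\Omega_k}(\Dd_{\bord_k}f_k|_{\Omega_k})$ converge in $\dot H^1(\R^n)$ to $\dot{\mathrm E}_\Omega(\Dd_{\bord}f|_\Omega)$, and similarly for the exterior. Then Proposition~\ref{Prop:HarmoConjCV-i} gives the convergence of the interior harmonic conjugates $\mathrm{HC}_{\Omega_k}$ through $(\cdot|_{\Omega_k})_{k\in\N}$; an entirely analogous statement holds on the exterior by redoing the argument of Proposition~\ref{Prop:HarmoConjCV-i} with $\dot V_0(\overline{\Omega}_k^c)$ and the operators $\mathcal E_k$ from Corollary~\ref{Cor:CV-H1-e}, noting that the single layer potential part of the harmonic conjugate is well defined in $\dot V_0(\overline{\Omega}^c)$ even though global harmonic conjugation may fail on the non-simply-connected exterior. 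Combining these, $\Sd_{\bord_k}g_k\to\Sd_{\bord}g$ through the same convergence framework, and consequently $\Phi_{\bord_k}f_k\to\Phi_{\bord}f$ in this sense.

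Finally, I would upgrade this to a mode of convergence that preserves holomorphism. Using Proposition~\ref{Prop:Link-CV-H1Rn}, the convergence of the harmonic extensions of the interior (resp. exterior) parts of $\Phi_{\bord_k}f_k$ holds in $\dot H^1(\R^n)$, which restricts to $H^1_{\text{loc}}$ convergence on any open $U\subset\Omega$ (resp. $U\subset\overline{\Omega}^c$) because on such $U$ the harmonic extensions eventually coincide with $\Phi_{\bord_k}f_k$ itself. Fix any compact $K\subset\cbord$: by Theorem~\ref{Th:DyadApproxCV}-style convergence in the sense of compact sets, $K\subset\C\setminus\bord_k$ for $k$ large enough, so every $\Phi_{\bord_k}f_k$ is holomorphic on a neighborhood of $K$. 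The Cauchy--Riemann operator $\partial_{\bar z}$ is continuous from $H^1_{\text{loc}}$ to $L^2_{\text{loc}}$, so the $H^1_{\text{loc}}$ limit of holomorphic functions is holomorphic; hence $\Phi_{\bord}f$ is holomorphic on $K^\circ$, and since $K$ was arbitrary, on all of $\cbord$.

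The main obstacle is the third step: extracting $H^1_{\text{loc}}$ convergence on compacts strictly inside $\Omega$ or $\overline{\Omega}^c$ from the $\dot H^1(\R^n)$ convergence of harmonic extensions, and dealing cleanly with the exterior harmonic conjugation which is only defined modulo topological monodromy. The reduction to the single layer potential part in Definition~\ref{Def:CI} is precisely what tames this ambiguity, since that part is an unambiguous element of $\dot V_0(\overline{\Omega}^c)$, allowing the same $\cdot|_{\Omega_k}\oplus\mathcal E_k$ framework used for the double layer potential to be applied uniformly.
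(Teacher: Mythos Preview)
Your overall architecture matches the paper's: reduce to real $f$, approximate by $f_k=\trd^{\bord_k}\circ\dot{\mathrm H}_{\bord}f$, use Proposition~\ref{Prop:Link-CI-LP} to write $\Phi_{\bord_k}f_k=-\Dd_{\bord_k}f_k+i\Sd_{\bord_k}g_k$, invoke Theorem~\ref{Th:CV-LP-H1-Rn} and Proposition~\ref{Prop:HarmoConjCV-i}, and pass to the limit. Your final paragraph on $H^1_{\text{loc}}$ convergence preserving holomorphism is correct and spells out what the paper leaves implicit.

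The one substantive difference is your treatment of the exterior. You attempt an exterior analogue of Proposition~\ref{Prop:HarmoConjCV-i} and then hedge that only the ``single layer potential part'' of the exterior harmonic conjugate is well defined. This is the weak point: harmonic conjugation on $\overline{\Omega}_k^c$ is genuinely ill-defined because that domain is not simply connected, and it is not clear what the ``single layer potential part'' of a non-existent global object should mean, nor that Proposition~\ref{Prop:HarmoConjCV-i}'s proof (which uses that a purely imaginary holomorphic function on a \emph{connected} domain is constant) survives. The paper sidesteps this entirely. It only performs harmonic conjugation on the interior, observes that $\mathrm{HC}_\Omega((\Dd_{\bord}f)|_\Omega)$ is some harmonic function on $\Omega$, hence equals $(\Sd_{\bord}g)|_\Omega$ for a suitable $g$, and then uses the key identity $\Sd_{\bord}g=\dot{\mathrm E}_\Omega((\Sd_{\bord}g)|_\Omega)$ together with Proposition~\ref{Prop:Link-CV-H1Rn}(i) to get $\Sd_{\bord_k}g_k\to\Sd_{\bord}g$ in $\dot H^1(\R^n)$ from the interior convergence alone. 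The exterior convergence of $\Dd_{\bord_k}f_k$ then comes directly from Theorem~\ref{Th:CV-LP-H1-Rn}(i), restricted to $\overline{\Omega}^c\subset\overline{\Omega}_k^c$. No exterior conjugation is needed.

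In short: replace your exterior-conjugation detour with the observation that a single layer potential is globally determined by its restriction to $\Omega$ via $\dot{\mathrm E}_\Omega$, and your proof becomes the paper's.
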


\begin{proof}
Assume $f$ is real-valued. Let $(f_k\in\dot \B(\bord_k))_{k\in\N}$ be such that $f_k\to f$ through $(\trd^{\bord_k}\circ\,\dot{\mathrm{H}}_{\bord})_{k\in\N}$. Then, by Theorem~\ref{Th:CV-LP-H1-Rn} and Proposition~\ref{Prop:HarmoConjCV-i}, it holds
\begin{equation*}
\Dd_{\bord_k}f_k\xrightarrow[k\to\infty]{(\cdot|_{\Omega_k}\oplus\mathcal{E}_k)}\Dd_{\bord}f\quad\mbox{hence}\quad \mathrm{HC}_{\Omega_k}((\dot{\D}_{\bord_k}f_k)|_{\Omega_k})\xrightarrow[k\to\infty]{(\cdot|_{\Omega_k})}\mathrm{HC}_\Omega((\dot{\D}_{\bord}f)|_\Omega).
\end{equation*}
By Proposition~\ref{Prop:Link-CI-LP}, since for all $k\in\N$, $\Omega_k$ is a Lipschitz domain, the harmonic conjugate of $\dot{\D}_{\bord_k}f_k$ is a single layer potential: there exists $g_k\in\dot{\B}'(\bord_k)$ such that the harmonic conjugate of $-\Dd_{\bord_k}f_k$ is $\Sd_{\bord_k}g_k$. Moreover, since $\mathrm{HC}_\Omega((\dot{\D}_{\bord}f)|_\Omega)$ is harmonic, it can be written as $-(\Sd_{\bord}g)|_\Omega$ for some $g\in\Hmo$. Then, $\Sd_{\bord}g=\dot{\mathrm{E}}_{\Omega}((\Sd_{\bord}g)|_\Omega)$ so that by Proposition~\ref{Prop:Link-CV-H1Rn}, it holds
\begin{equation*}
\Sd_{\bord_k}g_k\xrightarrow[k\to\infty]{}\Sd_{\bord}g\quad\mbox{in }\dot H^1_\C(\R^n).
\end{equation*}
For all $k\in\N$, $\Phi_{\bord_k}f_k=-\Dd_{\bord_k}f_k+i\Sd_{\bord_k}g_k$ is holomorphic on $\cbord_k$. Therefore, by Theorem~\ref{Th:CV-LP-H1-Rn}, $-\Dd_{\bord}f+i\Sd_{\bord}g$ is holomorphic on $\cbord$, hence $\Phi_{\bord}f=-\Dd_{\bord}f+i\Sd_{\bord}g$ and the result holds.
The proof in the purely imaginary case is similar.
\end{proof}

\begin{remark}
If $\Omega$ is a Koch snowflake, then the hypotheses of Theorem~\ref{Th:CI-Holom} are fulfilled and the Cauchy integral for $\Omega$ from Definition~\ref{Def:CI} is holomorphic on $\cbord$. It is also the case when the boundary is described by a Koch mixture (see~\cite{capitanelli_robin_2010}) as it is discussed in~\cite[Appendix B]{dekkers_mixed_2022}, and for a variety of self-similar fractals.
\end{remark}

\section{Equivalence of trace norms}\label{Sec:EquivNorm}

If $\Omega$ is a two-sided admissible domain, then it was proved in Proposition~\ref{Prop:Norm-B-Equiv} that, on the space $\Hpo$, the norms $\lVert\cdot\rVert_{\Hpo}=\lVert\cdot\rVert_{\Trd^{\bord}_{\mathrm{i}}}$ and $\lVert\cdot\rVert_{\Trd^{\bord}_{\mathrm{e}}}$ are equivalent, with optimal constants expressed in terms of the harmonic extension operators.
Given their definition (see also Theorem~\ref{Th:Trace}), the norm $\lVert\cdot\rVert_{\Trd^{\bord}_{\mathrm{i}}}$ is connected to the space $\dot H^1(\Omega)$, while the norm $\lVert\cdot\rVert_{\Trd^{\bord}_{\mathrm{e}}}$ is connected to $\dot H^1(\overline{\Omega}^c)$.
With that in mind, it can be deemed natural to consider another norm on the space $\Hpo$ in a way which is more `symmetric' in the sense that it is not centered on the interior domain, nor on the exterior domain:
\begin{equation}\label{Eq:TraceNorm-Rn}
\lVert f\rVert_{\Trd^{\bord}}:=\min\big\{\lVert v \rVert_{\dot{H}^1(\R^n)}\mid\ v\in \dot{H}^1(\R^n) \mbox{ and } \Trd^{\bord}v=f\big\}.
\end{equation}
Note that the trace in the previous formula can be regarded as either interior or exterior, since they coincide for elements of $\dot H^1(\R^n)$. It follows by energy minimization that, for all $f\in\Hpo$, $v\in \dot H^1(\R^n)$ with $\Trd^{\bord}v=f$ is such that $\lVert f\rVert_{\Trd^{\bord}}=\lVert v \rVert_{\dot H^1(\R^n)}$ if and only if $v\in \dot V_\Scal(\nbord)$.

\begin{remark}\label{rem-equivalent}
Given the isometry $\lVert\cdot\rVert_{\Trd^{\bord}}$ induces between the trace space and $\dot H^1(\R^n)$,   this norm is non other than the norm 
%$\lVert\cdot\rVert_{\Vd_{\bord}^{-1}}$ 
defined by~\eqref{Eq:Vnorm}
\begin{equation}\label{Eq:TraceNorm-Rn-}  
\lVert f\rVert_{\Vd_{\bord}^{-1}}=\lVert f\rVert_{\Trd^{\bord}}.
\end{equation}
 In this section, to remain consistent with the notations $\lVert\cdot\rVert_{\Trd^{\bord}_{\mathrm i}}$ and $\lVert\cdot\rVert_{\Trd^{\bord}_{\mathrm e}}$, we will keep denoting it by $\lVert\cdot\rVert_{\Trd^{\bord}}$.
\end{remark}

\begin{proposition}\label{prop-equivalent}
If $\Omega$ is a two-sided admissible domain, then the norms on $\Hpo$ defined in Theorem~\ref{Th:Trace} are equivalent to the norm from~\eqref{Eq:TraceNorm-Rn}:
\begin{equation}\label{NormTrEq}
\lVert\cdot\rVert_{\Trd^{\bord}}\le \lVert\dot {\mathrm{E}}_\Omega\rVert\,\lVert\cdot\rVert_{\Trd^{\bord}_{\mathrm{i}}}\qquad \mbox{and}\qquad \lVert\cdot\rVert_{\Trd^{\bord}_{\mathrm{i}}}\le  \sqrt{1-\frac1{\lVert\dot{\mathrm{E}}_{\overline{\Omega}^c}\rVert^2}}\,\lVert\cdot\rVert_{\Trd^{\bord}},
\end{equation}
and
\begin{equation*}
\lVert\cdot\rVert_{\Trd^{\bord}}\le \lVert\dot {\mathrm{E}}_{\overline{\Omega}^c}\rVert\,\lVert\cdot\rVert_{\Trd^{\bord}_{\mathrm{e}}}\qquad \mbox{and}\qquad \lVert\cdot\rVert_{\Trd^{\bord}_{\mathrm{e}}}\le  \sqrt{1-\frac1{\lVert\dot{\mathrm{E}}_{\Omega}\rVert^2}}\,\lVert\cdot\rVert_{\Trd^{\bord}},
\end{equation*}
with optimal constants.
\end{proposition}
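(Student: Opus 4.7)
The plan is to mirror the strategy used in the proof of Proposition~\ref{Prop:Norm-B-Equiv}, replacing the half-space extensions with the global extension operators $\dot{\mathrm{E}}_\Omega$ and $\dot{\mathrm{E}}_{\overline{\Omega}^c}$, and exploiting the fact that the minimizer of~\eqref{Eq:TraceNorm-Rn} is precisely the element of $\dot{V}_\Scal(\nbord)$ with trace $f$.

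For the first inequality $\lVert\cdot\rVert_{\Trd^{\bord}}\le \lVert\dot{\mathrm{E}}_\Omega\rVert\,\lVert\cdot\rVert_{\Trd^{\bord}_{\mathrm{i}}}$, given a nonzero $f\in\Hpo$, I would set $v:=\DOd_\Omega f\in\dot V_0(\Omega)$ so that $\lVert v\rVert_{\dot H^1(\Omega)}=\lVert f\rVert_{\Trd^{\bord}_{\mathrm{i}}}$ by Theorem~\ref{Th:Trace}. Then $\dot{\mathrm{E}}_\Omega v\in\dot H^1(\R^n)$ has trace $f$ on $\bord$, which is admissible in~\eqref{Eq:TraceNorm-Rn}, so $\lVert f\rVert_{\Trd^{\bord}}\le\lVert\dot{\mathrm{E}}_\Omega v\rVert_{\dot H^1(\R^n)}\le\lVert\dot{\mathrm{E}}_\Omega\rVert\,\lVert v\rVert_{\dot H^1(\Omega)}$. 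For the second inequality, I would take the minimizer $u\in\dot V_\Scal(\nbord)$ in~\eqref{Eq:TraceNorm-Rn}, so $\lVert u\rVert_{\dot H^1(\R^n)}=\lVert f\rVert_{\Trd^{\bord}}$ and $u|_{\overline{\Omega}^c}\in\dot V_0(\overline{\Omega}^c)$. The crucial observation is that $u$ coincides with $\dot{\mathrm{E}}_{\overline{\Omega}^c}(u|_{\overline{\Omega}^c})$ since both are harmonic on $\Omega$ with trace $f$; this yields $\lVert u|_{\overline{\Omega}^c}\rVert_{\dot H^1(\overline{\Omega}^c)}\ge \lVert u\rVert_{\dot H^1(\R^n)}/\lVert\dot{\mathrm{E}}_{\overline{\Omega}^c}\rVert$, so from the orthogonal decomposition~\eqref{Eq:Decomp-Geo},
\begin{equation*}
\lVert f\rVert^2_{\Trd^{\bord}_{\mathrm{i}}}\le \lVert u|_\Omega\rVert^2_{\dot H^1(\Omega)}=\lVert u\rVert^2_{\dot H^1(\R^n)}-\lVert u|_{\overline{\Omega}^c}\rVert^2_{\dot H^1(\overline{\Omega}^c)}\le \Big(1-\tfrac{1}{\lVert\dot{\mathrm{E}}_{\overline{\Omega}^c}\rVert^2}\Big)\lVert f\rVert^2_{\Trd^{\bord}}.
\end{equation*}

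For optimality, I would rewrite the ratios in terms of $\dot{V}_0$-elements. The first ratio satisfies $\lVert f\rVert_{\Trd^{\bord}}/\lVert f\rVert_{\Trd^{\bord}_{\mathrm{i}}}=\lVert\dot{\mathrm{E}}_\Omega v\rVert_{\dot H^1(\R^n)}/\lVert v\rVert_{\dot H^1(\Omega)}$ for $v=\DOd_\Omega f\in\dot V_0(\Omega)$ (since $\dot{\mathrm{E}}_\Omega v\in\dot V_\Scal(\nbord)$ has trace $f$ and is therefore the minimizer); taking supremum over $f$ is the same as over $v\in\dot V_0(\Omega)$, and the argument at the end of the proof of Proposition~\ref{Prop:Norm-B-Equiv} shows that the restriction of $\dot{\mathrm{E}}_\Omega$ to $\dot V_0(\Omega)$ has the same operator norm as $\dot{\mathrm{E}}_\Omega$. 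For the second optimality, writing $\lVert f\rVert^2_{\Trd^{\bord}_{\mathrm{i}}}/\lVert f\rVert^2_{\Trd^{\bord}}=1-\lVert u|_{\overline{\Omega}^c}\rVert^2_{\dot H^1(\overline{\Omega}^c)}/\lVert\dot{\mathrm{E}}_{\overline{\Omega}^c}(u|_{\overline{\Omega}^c})\rVert^2_{\dot H^1(\R^n)}$ and again noting the restriction of $\dot{\mathrm{E}}_{\overline{\Omega}^c}$ to $\dot V_0(\overline{\Omega}^c)$ has full norm gives the sharp value. The symmetric pair of inequalities follows verbatim by swapping the roles of $\Omega$ and $\overline{\Omega}^c$.

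I do not expect a genuine obstacle here: the only subtlety is verifying that the minimizer of~\eqref{Eq:TraceNorm-Rn} lies in $\dot V_\Scal(\nbord)$ (which is already noted right after the definition, as a consequence of energy minimization and~\eqref{Eq:SLP-Space}), and that $\dot{\mathrm{E}}_{\overline{\Omega}^c}$ restricted to $\dot V_0(\overline{\Omega}^c)$ retains its full operator norm, which was already established inside the proof of Proposition~\ref{Prop:Norm-B-Equiv}.
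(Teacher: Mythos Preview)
Your proposal is correct and follows essentially the same approach as the paper: both reduce to working with the minimizer $u\in\dot V_\Scal(\nbord)$, use the identities $u=\dot{\mathrm{E}}_\Omega(u|_\Omega)=\dot{\mathrm{E}}_{\overline{\Omega}^c}(u|_{\overline{\Omega}^c})$ together with the orthogonal splitting~\eqref{Eq:Decomp-Geo}, and invoke the argument from Proposition~\ref{Prop:Norm-B-Equiv} for optimality. The only cosmetic difference is that you first bound $\lVert f\rVert_{\Trd^{\bord}}$ via an admissible competitor before observing that $\dot{\mathrm{E}}_\Omega v$ is in fact the minimizer, whereas the paper starts directly from $u\in\dot V_\Scal(\nbord)$.
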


\begin{proof}
By Theorem~\ref{Th:Trace}, Equation~\eqref{NormTrEq} is equivalent to:
\begin{equation*}
\lVert u\rVert_{\dot H^1(\R^n)}\le \lVert\dot {\mathrm{E}}_\Omega\rVert\,\lVert u\rVert_{\dot H^1(\Omega)}\qquad \mbox{and}\qquad \lVert\cdot\rVert_{\dot H^1(\Omega)}\le  \sqrt{1-\frac1{\lVert\dot{\mathrm{E}}_{\overline{\Omega}^c}\rVert^2}}\,\lVert u\rVert_{\dot H^1(\R^n)},
\end{equation*}
for $u\in \dot V_\Scal(\nbord)$.
The first inequality and optimality of the constant follow from $u=\dot{\mathrm{E}}_\Omega(u|_\Omega)$. Moreover, $u=\dot{\mathrm{E}}_{\overline{\Omega}^c}(u|_{\overline{\Omega}^c})$, hence
\begin{equation*}
\frac{\lVert u|_{\Omega}\rVert_{\dot H^1(\Omega)}^2}{\lVert u\rVert_{\dot H^1(\R^n)}^2}=1-\frac{\lVert u|_{\overline{\Omega}^c}\rVert_{\dot H^1(\overline{\Omega}^c)}^2}{\lVert u\rVert_{\dot H^1(\R^n)}^2}\le 1-\frac1{\lVert\dot {\mathrm{E}}_{\overline{\Omega}^c}\rVert^2}.
\end{equation*}
The second inequality follows, and by the same reasoning as for Proposition~\ref{Prop:Norm-B-Equiv}, so does the optimality of the constant. The proof for the second equivalence is similar.
\end{proof}

\section{Forgoing the monotonicity assumption}\label{Sec:Monotonicity}

In the spirit of the dyadic approximation introduced in Section~\ref{Sec:DyadApprox}, the standard assumption made about the convergence of the sequence $(\Omega_k)_{k\in\N}$ to $\Omega$ in Section~\ref{Sec:CV-LP}, all (two-sided) admissible domains, is that of a monotone convergence: $\Omega_k\nearrow\Omega$.
It seems natural to wonder if that monotonicity condition can be foregone, and at what cost.

To answer that question, the first step -- and perhaps the most important, see Figure~\ref{Fig:CV-Hilb} -- is to determine a convergence framework for the $\dot H^1$ spaces. The main purpose of the monotonicity assumption is to use the restrictions operators $\cdot|_{\Omega_k}$ to make the interior $\dot H^1$ spaces converge, see Lemma~\ref{Lem:CV-H1-i}. Using those operators, the convergence holds without any kind of regularity assumption on $\Omega$ or on $(\Omega_k)_{k\in\N}$. That assumption also arises for the convergence of the exterior $\dot H^1$ spaces in Corollary~\ref{Cor:CV-H1-e} for it allows to think of any element of $\dot H^1(\overline{\Omega}^c)$ as an element $\dot H^1(\overline{\Omega}_k^c)$ by means of $i_k$, the extension by $0$.

Let $\Omega$ be an admissible domain and $(\Omega_k)_{k\in\N}$ be a sequence of admissible domains such that $\Omega_k\to\Omega$ in the sense of characteristic functions.
Denoting by $\mathrm{Ext}_\Omega:\dot H^1(\Omega)\to \dot H^1(\R^n)$ any linear bounded $\dot H^1$ extension from $\Omega$ to $\R^n$, it holds $\dot H^1(\Omega_k)\to \dot H^1(\Omega)$ through $(\dot H^1(\Omega),((\mathrm{Ext}_\Omega\cdot)|_{\Omega_k})_{k\in\N})$.
Note that the extension property for $(\Omega_k)_{k\in\N}$ is not required for this result to hold.
However, as it was pointed out in Subsection~\ref{Subsec:CV-Ext}, such a construction does not preserve harmonicity, which is rather distasteful for the study of the harmonic layer potential operators.
For that matter, we use a convergence framework using double extension operators similar to the sequence $(\mathcal{E}_k)_{k\in\N}$ in Corollary~\ref{Cor:CV-H1-e}. The latter result relies on Proposition~\ref{Prop:ExtCV-i} on the convergence in $\dot H^1(\R^n)$ of harmonic extensions, which the following result generalizes under the assumption of domain convergence in the sense of compact sets, which implies that in the sense of characteristic functions for two-sided admissible domains~\cite[Exercise 2.7]{henrot_shape_2018}.

\begin{proposition}\label{A-Prop:Ext-CV}
Let $\Omega$ be a two-sided admissible domain and $(\Omega_k)_{k\in\N}$ be a sequence of two-sided admissible domains such that $\Omega_k\to\Omega$ in the sense of compact sets. Then, for all $u\in \dot H^1(\R^n)$,
\begin{equation*}
\dot{\mathrm{E}}_{\Omega_k}(u|_{\Omega_k})\xrightarrow[k\to\infty]{}\dot{\mathrm{E}}_\Omega (u|_\Omega)\quad\mbox{in }\dot H^1(\R^n).
\end{equation*}
If $\overline{\Omega}^c$ and $\overline{\Omega}_k^c$, $k\in\N$, are admissible domains, then 
\begin{equation*}
\dot{\mathrm{E}}_{\overline{\Omega}_k^c}(u|_{\overline{\Omega}_k^c})\xrightarrow[k\to\infty]{}\dot{\mathrm{E}}_{\overline{\Omega}^c} (u|_{\overline{\Omega}^c})\quad\mbox{in }\dot H^1(\R^n).
\end{equation*}
\end{proposition}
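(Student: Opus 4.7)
The plan is to follow the template of Proposition~\ref{Prop:ExtCV-i}: extract a weak subsequential limit, identify it with $\dot{\mathrm{E}}_\Omega(u|_\Omega)$, then upgrade to strong convergence. Set $u_k := \dot{\mathrm{E}}_{\Omega_k}(u|_{\Omega_k})$ and $u_* := \dot{\mathrm{E}}_\Omega(u|_\Omega)$. Since $u$ is itself an extension of $u|_{\Omega_k}$ to $\R^n$ and $u_k$ is the energy-minimizing such extension, harmonic on $\overline{\Omega}_k^c$, one obtains the uniform bound $\lVert u_k\rVert_{\dot H^1(\R^n)} \le \lVert u\rVert_{\dot H^1(\R^n)}$ (where the null Lebesgue measure of $\bord_k$ is used to split the $\dot H^1(\R^n)$-norm), and hence a weakly convergent subsequence $u_{k_m} \rightharpoonup u_\infty$ in $\dot H^1(\R^n)$.

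I would next identify $u_\infty = u_*$ in two steps. The compact set convergence $\Omega_k \to \Omega$ combined with the local Rellich--Kondrachov embedding $\dot H^1 \hookrightarrow L^2_{\mathrm{loc}}$ ensures $u_\infty = u$ a.e.\ on $\Omega$: for any compact $K\subset\Omega$, $u_{k_m}=u$ on $K$ for $m$ large, and strong $L^2(K)$-convergence of the subsequence yields the claim. For harmonicity of $u_\infty$ on $\overline{\Omega}^c$, I would argue that the hypotheses propagate to compact set convergence $\overline{\Omega}_k^c\to\overline{\Omega}^c$ of the complements (exploiting two-sided admissibility and the null Lebesgue measure of boundaries, in the spirit of~\cite[Exercise 2.7]{henrot_shape_2018}), so that any test function $\varphi\in C_0^\infty(\overline{\Omega}^c)$ satisfies $\mathrm{supp}\,\varphi\subset\overline{\Omega}_{k_m}^c$ eventually; the harmonicity of $u_{k_m}$ on $\overline{\Omega}_{k_m}^c$ then passes to the limit via the weak convergence $\nabla u_{k_m}\rightharpoonup\nabla u_\infty$ in $L^2(\R^n)^n$. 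Uniqueness of the harmonic extension gives $u_\infty=u_*$, whence weak convergence of the full sequence.

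A cleaner and more structural reformulation, which I expect to be the preferred argument and which also delivers the strong convergence, is to recognize $u-u_k$ as the orthogonal projection of $u$ in $\dot H^1(\R^n)$ onto the closed subspace $\dot H^1_0(\overline{\Omega}_k^c)$ (embedded by extension by zero across $\bord_k$, valid since the boundary has null Lebesgue measure); the harmonicity of $u_k$ on $\overline{\Omega}_k^c$ is precisely the orthogonality relation characterizing this projection, and similarly for $u-u_*$. The desired $\dot H^1(\R^n)$-convergence then reduces to Mosco convergence of subspaces $\dot H^1_0(\overline{\Omega}_k^c)\to \dot H^1_0(\overline{\Omega}^c)$ in $\dot H^1(\R^n)$, which is equivalent to strong operator convergence of the associated orthogonal projections on any fixed vector. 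The second statement of the proposition is obtained by the same argument after exchanging the roles of $\Omega$ and $\overline{\Omega}^c$ (and of $\Omega_k$ and $\overline{\Omega}_k^c$), using the extra admissibility hypothesis on the complements. The main obstacle is lifting the one-sided assumption $\Omega_k\to\Omega$ in compact sets to the two-sided version: this is exactly what both the harmonicity argument and the strong-approximation half of the Mosco condition require, and it is where the loss of monotonicity must be compensated by the two-sided admissibility of $\Omega$ and the $\Omega_k$.
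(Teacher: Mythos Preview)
Your weak-convergence step is essentially the paper's: extract a weak limit from the energy bound $\lVert u_k\rVert_{\dot H^1(\R^n)}\le\lVert u\rVert_{\dot H^1(\R^n)}$, then identify it as $u$ on $\Omega$ and harmonic on $\overline{\Omega}^c$ using the convergence in the sense of compact sets. (The Rellich--Kondrachov detour is unnecessary: since $u_k=u$ on any compact $K\subset\Omega$ for $k$ large, the weak limit agrees with $u$ on every such $K$, hence on $\Omega$.)

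For the upgrade to strong convergence the paper does \emph{not} go through Mosco convergence of the subspaces $\dot H^1_0(\overline{\Omega}_k^c)$. Instead it proves convergence of norms directly, splitting $\lVert u_k\rVert^2_{\dot H^1(\R^n)}$ into the $\Omega_k$ part (which equals $\int_{\Omega_k}|\nabla u|^2$ and converges by convergence of characteristic functions) and the $\overline{\Omega}_k^c$ part, for which it uses exactly the orthogonality relation you identified: since $u-u_k\in\dot H^1_0(\overline{\Omega}_k^c)$ and $u_k$ is harmonic there,
\[
\lVert u_k\rVert^2_{\dot H^1(\overline{\Omega}_k^c)}=\int_{\overline{\Omega}_k^c}\nabla u_k\cdot\nabla u\,\dx
=\big\langle \nabla u_k,\,(\nabla u)\mathds{1}_{\overline{\Omega}_k^c}\big\rangle_{L^2(\R^n)},
\]
and then passes to the limit by pairing the weak convergence $\nabla u_k\rightharpoonup\nabla u_*$ with the strong convergence $(\nabla u)\mathds{1}_{\overline{\Omega}_k^c}\to(\nabla u)\mathds{1}_{\overline{\Omega}^c}$ in $L^2(\R^n)$. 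This is a one-line computation once weak convergence is in hand. Your Mosco route is a legitimate alternative and makes the structure transparent (both $u_k$ and $u_*$ are $u$ minus a projection), but it requires verifying both Mosco conditions, whereas the paper's argument avoids that machinery entirely.

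One clarification on what you call the ``main obstacle'': the convergence in the sense of compact sets, as used here (and in~\cite{henrot_shape_2018}), is already two-sided by definition --- it asks both that compact subsets of $\Omega$ are eventually in $\Omega_k$ and that compact subsets of $\overline{\Omega}^c$ are eventually in $\overline{\Omega}_k^c$. So there is nothing to lift; the harmonicity-on-the-complement step and the strong-approximation half of Mosco are immediate from the hypothesis.
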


\begin{proof}
To prove the first convergence, we proceed as for Proposition~\ref{Prop:ExtCV-i}. As for~\eqref{Eq:Extensions-UpperBound}, energy minimization yields
\begin{equation*}
\forall k\in\N,\quad \lVert\dot{\mathrm{E}}_{\Omega_k}(u|_{\Omega_k})\rVert_{\dot H^1(\R^n)}\le \lVert u\rVert_{\dot H^1(\R^n)}.
\end{equation*}
Following the proof leads to
\begin{equation}\label{A-Eq:Ext-WCV}
\dot{\mathrm{E}}_{\Omega_k}(u|_{\Omega_k})\xrightharpoonup[k\to\infty]{\dot H^1(\R^n)}\dot{\mathrm{E}}_\Omega (u|_\Omega).
\end{equation}
Indeed, the convergence in the sense of compact sets implies that any weak subsequential limit of $(\dot{\mathrm{E}}_{\Omega_k}(u|_{\Omega_k}))_{k\in\N}$ is $\dot H^1(\R^n)$, equal to $u$ on $\Omega$ and harmonic on $\overline{\Omega}^c$.
All that is left to show is the convergence of the $\dot H^1(\R^n)$ norms.
The convergence in the sense of characteristic functions directly yields
\begin{equation*}
\lVert(\dot{\mathrm{E}}_{\Omega_k}(u|_{\Omega_k}))|_{\Omega_k}\rVert^2_{\dot H^1(\Omega_k)}=\int_\Omega|\nabla u|^2\,\dx\xrightarrow[k\to\infty]{}\int_{\Omega_k}|\nabla u|^2\,\dx=\lVert(\dot{\mathrm{E}}_\Omega (u|_\Omega))|_\Omega\rVert^2_{\dot H^1(\Omega)}.
\end{equation*}
Since $\dot{\mathrm{E}}_\Omega (u|_\Omega)$ is harmonic on $\overline{\Omega}^c$, it holds
\begin{equation*}
\int_{\overline{\Omega}^c}\nabla(\dot{\mathrm{E}}_\Omega (u|_\Omega))\cdot\nabla u\,\dx=\lVert(\dot{\mathrm{E}}_\Omega (u|_\Omega))|_{\overline{\Omega}^c}\rVert^2_{\dot H^1(\overline{\Omega}^c)}.
\end{equation*}
Similarly, it holds for all $k\in\N$,
\begin{equation*}
\int_{\overline{\Omega}_k^c}\nabla(\dot{\mathrm{E}}_{\Omega_k} (u|_{\Omega_k}))\cdot\nabla u\,\dx=\lVert(\dot{\mathrm{E}}_\Omega (u|_{\Omega_k}))|_{\overline{\Omega}_k^c}\rVert^2_{\dot H^1(\overline{\Omega}_k^c)}.
\end{equation*}
Since $(\nabla u)\mathds{1}_{\Omega_k}\to (\nabla u)\mathds{1}_\Omega$ in $L^2(\R^n)$ and by~\eqref{A-Eq:Ext-WCV}, the convergence of the norms follows. The proof of the second convergence is similar.
\end{proof}

The convergence results from Proposition~\ref{A-Prop:Ext-CV} allow to prove a variant of Proposition~\ref{Prop:ExtCV-e} (considering $u=\dot{\mathrm{E}}_\Omega v$ for $v\in\dot H^1(\Omega)$ and $u=\dot{\mathrm{E}}_{\overline{\Omega}^c} v$ for $v\in \dot H^1(\overline{\Omega}^c)$) and prove the following convergences of the $\dot V_0$ spaces. We only focus on the convergence of harmonic functions for only the harmonic part is relevant when considering harmonic layer potentials.

\begin{proposition}\label{A-Prop:CV-H1}
Let $\Omega$ be a two-sided admissible domain. Let $(\Omega_k)_{k\in\N}$ be a sequence of two-sided admissible domains such that $\Omega_k\to\Omega$ in the sense of compact sets. Then, it holds
\begin{align*}
\dot V_0(\Omega_k)\xrightarrow[k\to\infty]{}\dot V_0(\Omega)\quad&\mbox{through}\quad \big(\dot V_0(\Omega),\big(\big(\dot{\mathrm{E}}_{\overline{\Omega}_k^c}\big[(\dot{\mathrm{E}}_{\Omega}\cdot)|_{\overline{\Omega}_k^c}\big]\big)\big|_{\Omega_k})_{k\in\N}\big),\\
\dot V_0(\overline{\Omega}_k^c)\xrightarrow[k\to\infty]{}\dot V_0(\overline{\Omega}^c)\quad&\mbox{through}\quad \big(\dot V_0(\overline{\Omega}^c),\big(\big(\dot{\mathrm{E}}_{\Omega_k}\big[(\dot{\mathrm{E}}_{\overline{\Omega}^c}\cdot)|_{\Omega_k}\big]\big)\big|_{\overline{\Omega}_k^c})_{k\in\N}\big).
\end{align*}
\end{proposition}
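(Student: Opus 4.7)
The plan is to reduce both statements to Proposition~\ref{A-Prop:Ext-CV}, after observing that the operators $\dot{\mathrm E}_\Omega$ and $\dot{\mathrm E}_{\overline{\Omega}^c}$ are in a suitable sense idempotent on each other's image. Concretely, for $v\in\dot V_0(\Omega)$ I would set $u:=\dot{\mathrm E}_\Omega v\in\dot H^1(\R^n)$, so that $u|_\Omega=v$ and $u|_{\overline{\Omega}^c}\in\dot V_0(\overline{\Omega}^c)$ by definition of $\dot{\mathrm E}_\Omega$. Since $v$ is itself the (unique) harmonic function on $\Omega$ whose trace matches that of $u|_{\overline{\Omega}^c}$ on $\bord$, one then has $\dot{\mathrm E}_{\overline{\Omega}^c}(u|_{\overline{\Omega}^c})=u$. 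Applying the second convergence in Proposition~\ref{A-Prop:Ext-CV} to this $u$ yields
\[
U_k:=\dot{\mathrm E}_{\overline{\Omega}_k^c}(u|_{\overline{\Omega}_k^c})\xrightarrow[k\to\infty]{\dot H^1(\R^n)} u,
\]
and by construction the representative of $v$ in $\dot V_0(\Omega_k)$ is $\mathcal T_kv:=U_k|_{\Omega_k}$.

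What remains, in light of Definition~\ref{Def:CV-Hilb}, is the convergence of norms $\lVert U_k|_{\Omega_k}\rVert_{\dot H^1(\Omega_k)}\to\lVert v\rVert_{\dot H^1(\Omega)}$. I would split
\[
\int_{\R^n}|\nabla U_k|^2\mathds{1}_{\Omega_k}\,\dx
=\int_{\R^n}\bigl(|\nabla U_k|^2-|\nabla u|^2\bigr)\mathds{1}_{\Omega_k}\,\dx
+\int_{\R^n}|\nabla u|^2\mathds{1}_{\Omega_k}\,\dx.
\]
The first integral is bounded by $\bigl\lVert|\nabla U_k|^2-|\nabla u|^2\bigr\rVert_{L^1(\R^n)}$, which tends to $0$ from the $L^2$-convergence $\nabla U_k\to\nabla u$. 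For the second, the compact-set convergence $\Omega_k\to\Omega$ together with $\lambda^{(n)}(\bord)=0$ (Point~(iv) of Definition~\ref{Def:AdmissibleDomain}) gives $\mathds{1}_{\Omega_k}\to\mathds{1}_\Omega$ almost everywhere, and dominated convergence with integrable majorant $|\nabla u|^2$ produces the limit $\int_\Omega|\nabla u|^2\,\dx=\lVert v\rVert_{\dot H^1(\Omega)}^{\,2}$.

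The second convergence is obtained by the symmetric argument: for $v\in\dot V_0(\overline{\Omega}^c)$ I would take $u:=\dot{\mathrm E}_{\overline{\Omega}^c}v$, observe that $\dot{\mathrm E}_\Omega(u|_\Omega)=u$ by the same uniqueness-of-harmonic-extension reasoning, and then invoke the first convergence of Proposition~\ref{A-Prop:Ext-CV} applied to~$u$, followed by the identical cut-off computation on $\overline{\Omega}_k^c$. The only genuinely delicate step, and the one where two-sided admissibility is crucially used, is the passage from compact-set convergence of the sequence of domains to the almost-everywhere convergence of their characteristic functions; the rest is bookkeeping with the definition of convergence through a sequence of operators.
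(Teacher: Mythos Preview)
Your proposal is correct and follows the same route the paper sketches: set $u=\dot{\mathrm E}_\Omega v$ (resp.\ $u=\dot{\mathrm E}_{\overline{\Omega}^c}v$), use the idempotence $\dot{\mathrm E}_{\overline{\Omega}^c}(u|_{\overline{\Omega}^c})=u$ to identify the limit in Proposition~\ref{A-Prop:Ext-CV}, and then pass from $\dot H^1(\R^n)$-convergence of $U_k$ to convergence of $\lVert U_k|_{\Omega_k}\rVert_{\dot H^1(\Omega_k)}$ via the pointwise convergence $\mathds{1}_{\Omega_k}\to\mathds{1}_\Omega$ on $\nbord$ and dominated convergence. This last step is exactly what the paper carries out explicitly in the proof of Proposition~\ref{Prop:CV-B-K}, so your level of detail is appropriate.
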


\begin{remark}
In the framework of Lemma~\ref{Lem:CV-H1-i}, which only used restrictions, no regularity assumption was made on the domain $\Omega$, nor on the converging sequence $(\Omega_k)_{k\in\N}$. In the convergence framework of Proposition~\ref{A-Prop:CV-H1} however, the use of double extension operators leads to assuming the domains are two-sided admissible.
\end{remark}

Using the convergence framework of Proposition~\ref{A-Prop:CV-H1}, we can follow the steps taken in Subsection~\ref{Subsec:CV-Ext} replacing the monotone convergence $\Omega_k\nearrow\Omega$ with a convergence in the sense of compact sets.

\begin{proposition}\label{Prop:CV-B-K}
Let $\Omega$ be a two-sided admissible domain. Let $(\Omega_k)_{k\in\N}$ be a sequence of two-sided admissible domains such that $\Omega_k\to\Omega$ in the sense of compact sets. Then, it holds
\begin{equation*}
\dot \B(\bord_k)\xrightarrow[k\to\infty]{}\Hpo \quad \mbox{through }\big(\Hpo,(\trd^{\bord_k}\circ\,\dot{\mathrm{H}}_{\bord})_{k\in\N}\big).
\end{equation*}
\end{proposition}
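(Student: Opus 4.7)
The plan is to show convergence of the trace norms $\|f_k\|_{\dot\B(\bord_k)}\to\|f\|_{\Hpo}$ (where $f_k:=\trd^{\bord_k}\dot{\mathrm H}_{\bord}f$) via the strong $\dot H^1(\R^n)$ convergence of harmonic extensions provided by Proposition~\ref{A-Prop:Ext-CV}. Set $u:=\dot{\mathrm H}_{\bord}f\in\dot H^1(\R^n)$; this function is harmonic on $\nbord$, and by Theorem~\ref{Th:Trace}, $\|u|_\Omega\|_{\dot H^1(\Omega)}=\|\DOd_\Omega f\|_{\dot H^1(\Omega)}=\|f\|_{\Hpo}$. The isometric trace property on $\Omega_k$ gives
\[
\|f_k\|^2_{\dot\B(\bord_k)}=\|\DOd_{\Omega_k}f_k\|^2_{\dot H^1(\Omega_k)},
\]
so the task reduces to computing the limit of the interior harmonic extension norms.

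The key identity is
\[
\DOd_{\Omega_k}f_k=\dot{\mathrm E}_{\overline\Omega_k^c}[u|_{\overline\Omega_k^c}]\big|_{\Omega_k}.
\]
Indeed, by~\eqref{Eq:Dir-Ext-Ext} the right-hand side is harmonic on $\Omega_k$; since $u\in\dot H^1(\R^n)$ has coinciding interior and exterior traces on $\bord_k$, the interior trace on $\bord_k$ from inside $\Omega_k$ equals the exterior trace from $\overline\Omega_k^c$, which is $f_k$. Uniqueness of the harmonic extension with prescribed boundary value yields the identity. Denote $u_k:=\dot{\mathrm E}_{\overline\Omega_k^c}[u|_{\overline\Omega_k^c}]\in\dot H^1(\R^n)$. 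Proposition~\ref{A-Prop:Ext-CV} applied to $u$ (both $\overline\Omega^c$ and $\overline\Omega_k^c$ being admissible) yields $u_k\to\dot{\mathrm E}_{\overline\Omega^c}[u|_{\overline\Omega^c}]$ in $\dot H^1(\R^n)$. Because $u=\dot{\mathrm H}_\bord f=\dot{\mathrm E}_{\overline\Omega^c}\circ\DOd_{\overline\Omega^c}f$ by~\eqref{Eq:D-Ext}, we have $u|_{\overline\Omega^c}=\DOd_{\overline\Omega^c}f$ and thus $\dot{\mathrm E}_{\overline\Omega^c}[u|_{\overline\Omega^c}]=u$; so $u_k\to u$ strongly in $\dot H^1(\R^n)$.

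It remains to transfer the ambient convergence $u_k\to u$ to the convergence of their Dirichlet integrals over $\Omega_k$. Since $\Omega_k\to\Omega$ in the sense of compact sets and both $\Omega$ and $\Omega_k$ are two-sided admissible, $\mathds 1_{\Omega_k}\to\mathds 1_\Omega$ pointwise almost everywhere; the $L^2$-convergence $\nabla u_k\to\nabla u$ gives $|\nabla u_k|^2\to|\nabla u|^2$ in $L^1(\R^n)$. Decomposing
\[
\int_{\Omega_k}|\nabla u_k|^2\,\dx-\int_\Omega|\nabla u|^2\,\dx=\int_{\R^n}(|\nabla u_k|^2-|\nabla u|^2)\mathds 1_{\Omega_k}\,\dx+\int_{\R^n}|\nabla u|^2(\mathds 1_{\Omega_k}-\mathds 1_\Omega)\,\dx,
\]
the first integral vanishes by the $L^1$-convergence and the second by dominated convergence with dominant $|\nabla u|^2\in L^1(\R^n)$. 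This yields $\|f_k\|^2_{\dot\B(\bord_k)}\to\|u|_\Omega\|^2_{\dot H^1(\Omega)}=\|f\|^2_{\Hpo}$, as required. The only non-trivial point, compared with the monotone Lemma~\ref{Lem:CV-B} (in which $(\dot{\mathrm H}_\bord f)|_{\Omega_k}$ was itself harmonic on $\Omega_k$), is the identification $\DOd_{\Omega_k}f_k=u_k|_{\Omega_k}$ through an exterior extension; once this is in place, the rest is routine.
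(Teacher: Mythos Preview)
Your proof is correct and follows essentially the same approach as the paper's: both identify $\DOd_{\Omega_k}f_k$ with $(\dot{\mathrm E}_{\overline\Omega_k^c}[u|_{\overline\Omega_k^c}])|_{\Omega_k}$, invoke Proposition~\ref{A-Prop:Ext-CV} for the strong $\dot H^1(\R^n)$ convergence $u_k\to u$, and combine this with pointwise convergence of $\mathds 1_{\Omega_k}$ to pass to the $\dot H^1(\Omega_k)$ norms via the trace isometry. Your write-up is simply more explicit about the key identity and the splitting of the integral, whereas the paper compresses these into a single line citing Proposition~\ref{A-Prop:Ext-CV} and dominated convergence.
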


\begin{proof}
Let $f\in\Hpo$.
Since $\Omega_k\to\Omega$ in the sense of compact sets, it holds $\mathds{1}_{\Omega_k}\to\mathds{1}_\Omega$ pointwise on $\nbord$.
Hence, by Proposition~\ref{A-Prop:Ext-CV} and dominated convergence, it holds
\begin{equation*}
\lVert(\dot{\mathrm{E}}_{\overline{\Omega}_k^c}(\dot{\mathrm{H}}_{\bord}f)|_{\overline{\Omega}_k^c})|_{\Omega_k}\rVert^2_{\dot H^1(\Omega_k)}\xrightarrow[k\to\infty]{}\lVert(\dot{\mathrm{E}}_{\overline{\Omega}^c}(\dot{\mathrm{H}}_{\bord}f)|_{\overline{\Omega}^c})|_{\Omega}\rVert^2_{\dot H^1(\Omega)}.
\end{equation*}
By Theorem~\ref{Th:Trace}, Point (iii), it follows that
\begin{equation*}
\lVert\trd^{\bord_k}\dot{\mathrm{H}}_{\bord}f\rVert_{\dot \B(\bord_k)}\xrightarrow[k\to\infty]{}\lVert f\rVert_{\Hpo}.\qedhere
\end{equation*}
\end{proof}

For instance, we prove the link between the convergence of harmonic functions and of their traces, that is a variant of Propositions~\ref{Prop:TrCV-i} and~\ref{Prop:TrCV-e} under those new hypotheses.

\begin{proposition}
Let $\Omega$ be a two-sided admissible domain and $(\Omega_k)_{k\in\N}$ be a sequence of two-sided admissible domains such that $\Omega_k\to\Omega$ in the sense of compact sets. Let $u\in \dot V_0(\Omega)$ and $(u_k\in \dot V_0(\Omega_k))_{k\in\N}$. Then, the following equivalence holds:
\begin{multline*}
u_k\xrightarrow[k\to\infty]{((\dot{\mathrm{E}}_{\overline{\Omega}_k^c}[(\dot{\mathrm{E}}_{\Omega}\cdot)|_{\overline{\Omega}_k^c}])|_{\Omega_k})}u\\
\iff\quad (\trd_{\mathrm{i}}^{\bord_k}u_k\in\dot \B(\bord_k))_{k\in\N}\xrightarrow[]{(\trd^{\bord_k}\circ\,\dot{\mathrm{H}}_{\bord})}\trd^{\bord}_{\mathrm{i}}u\in\Hpo.
\end{multline*}
If the Dirichlet harmonic extensions $(\dot{\mathrm{E}}_{\Omega_k})_{k\in\N}$ are uniformly bounded, then the following implication holds:
\begin{equation*}
(v_k\in \dot V_0(\overline{\Omega}_k^c))_{k\in\N}\xrightarrow[]{((\dot{\mathrm{E}}_{\Omega_k}[(\dot{\mathrm{E}}_{\overline{\Omega}^c}\cdot)|_{\Omega_k}])|_{\overline{\Omega}_k^c})}v\in \dot V_0(\overline{\Omega}^c),
\end{equation*}
implies
\begin{equation*}
(\trd_{\mathrm{e}}^{\bord_k}v_k\in\dot \B(\bord_k))_{k\in\N}\xrightarrow[]{(\trd^{\bord_k}\circ\,\dot{\mathrm{H}}_{\bord})}\trd_{\mathrm{e}}^{\bord}v\in\Hpo.
\end{equation*}
If the Dirichlet harmonic extensions $(\dot{\mathrm{E}}_{\overline{\Omega}_k^c})_{k\in\N}$ are uniformly bounded, then the converse implication holds instead.
\end{proposition}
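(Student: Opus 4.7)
The strategy is to transport the arguments of Propositions~\ref{Prop:TrCV-i} and~\ref{Prop:TrCV-e} into the non-monotone framework introduced in Proposition~\ref{A-Prop:CV-H1}, replacing the restriction $\cdot|_{\Omega_k}$ and the operator $\mathcal{E}_k$ by the double-extension operators
\begin{equation*}
\tilde{\mathcal{E}}^{\mathrm i}_k u := \bigl(\dot{\mathrm{E}}_{\overline{\Omega}_k^c}\bigl[(\dot{\mathrm{E}}_\Omega u)|_{\overline{\Omega}_k^c}\bigr]\bigr)\big|_{\Omega_k}\in\dot V_0(\Omega_k),\qquad \tilde{\mathcal{E}}^{\mathrm e}_k v:=\bigl(\dot{\mathrm{E}}_{\Omega_k}\bigl[(\dot{\mathrm{E}}_{\overline{\Omega}^c} v)|_{\Omega_k}\bigr]\bigr)\big|_{\overline{\Omega}_k^c}\in\dot V_0(\overline{\Omega}_k^c),
\end{equation*}
appearing in Proposition~\ref{A-Prop:CV-H1}. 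The two essential ingredients remain: the isometric character of the trace operators on the $\dot V_0$ spaces (Theorem~\ref{Th:Trace}) and the trace-norm equivalence of Proposition~\ref{Prop:Norm-B-Equiv}; both are purely local to each $\Omega_k$ and therefore insensitive to the absence of monotonicity.

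For the interior equivalence, the point is that $\dot{\mathrm{E}}_{\overline{\Omega}_k^c}$ leaves the trace on $\bord_k$ unchanged, so, since $u$ is harmonic,
\begin{equation*}
\trd^{\bord_k}_{\mathrm i}(\tilde{\mathcal{E}}^{\mathrm i}_k u)=\trd^{\bord_k}(\dot{\mathrm{E}}_\Omega u)=\trd^{\bord_k}\dot{\mathrm{H}}_{\bord}\trd^{\bord}_{\mathrm i} u.
\end{equation*}
Since $u_k-\tilde{\mathcal{E}}^{\mathrm i}_k u\in\dot V_0(\Omega_k)$, the interior trace isometry (Theorem~\ref{Th:Trace}(iii)) yields the key equality
\begin{equation*}
\bigl\lVert\trd^{\bord_k}_{\mathrm i} u_k-\trd^{\bord_k}\dot{\mathrm{H}}_{\bord}\trd^{\bord}_{\mathrm i} u\bigr\rVert_{\dot\B(\bord_k)}=\lVert u_k-\tilde{\mathcal{E}}^{\mathrm i}_k u\rVert_{\dot H^1(\Omega_k)},
\end{equation*}
and Lemma~\ref{Lem:CV-KS} closes the interior part.

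For the exterior implications, the analogous trace identity $\trd^{\bord_k}_{\mathrm e}(\tilde{\mathcal{E}}^{\mathrm e}_k v)=\trd^{\bord_k}\dot{\mathrm{H}}_{\bord}\trd^{\bord}_{\mathrm e} v$ holds by the same reasoning. The exterior trace isometry converts $\lVert v_k-\tilde{\mathcal{E}}^{\mathrm e}_k v\rVert_{\dot H^1(\overline{\Omega}_k^c)}$ into $\lVert\trd^{\bord_k}_{\mathrm e}(v_k-\tilde{\mathcal{E}}^{\mathrm e}_k v)\rVert_{\Trd^{\bord_k}_{\mathrm e}}$, whereas the norm on $\dot\B(\bord_k)$ is $\lVert\cdot\rVert_{\Trd^{\bord_k}_{\mathrm i}}$. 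Proposition~\ref{Prop:Norm-B-Equiv} applied to each $\Omega_k$ then supplies two one-sided comparisons between these norms, one with constant $\sqrt{\lVert\dot{\mathrm{E}}_{\Omega_k}\rVert^2-1}$ and the other with $\sqrt{\lVert\dot{\mathrm{E}}_{\overline{\Omega}_k^c}\rVert^2-1}$. Each uniform boundedness hypothesis thus delivers one of the two implications, exactly as in the proof of Proposition~\ref{Prop:TrCV-e}, after which Lemma~\ref{Lem:CV-KS} concludes.

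The only genuinely new point, compared with the monotone case, is the verification of the trace identities above in the absence of any inclusion $\Omega_k\subset\Omega$. Fortunately, they follow directly from the defining property of the Dirichlet harmonic extensions across $\bord_k$ together with the equalities $\dot{\mathrm{E}}_\Omega u=\dot{\mathrm{H}}_{\bord}\trd^{\bord}_{\mathrm i} u$ and $\dot{\mathrm{E}}_{\overline{\Omega}^c} v=\dot{\mathrm{H}}_{\bord}\trd^{\bord}_{\mathrm e} v$, valid for harmonic $u$ and $v$; no geometric relationship between $\Omega$ and $\Omega_k$ is required. Once these identities are in place, the rest of the proof is a direct transcription of the arguments for Propositions~\ref{Prop:TrCV-i} and~\ref{Prop:TrCV-e}.
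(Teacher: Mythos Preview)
Your proposal is correct and follows exactly the approach the paper indicates: the paper does not give an explicit proof of this proposition but states that one should ``follow the steps taken in Subsection~\ref{Subsec:CV-Ext}'' with the double-extension operators of Proposition~\ref{A-Prop:CV-H1} in place of $\cdot|_{\Omega_k}$ and $\mathcal E_k$, and your argument is precisely the transcription of Propositions~\ref{Prop:TrCV-i} and~\ref{Prop:TrCV-e} into that framework. The only new observation needed---that the trace identities $\trd^{\bord_k}_{\mathrm i}(\tilde{\mathcal E}^{\mathrm i}_k u)=\trd^{\bord_k}\dot{\mathrm H}_{\bord}\trd^{\bord}_{\mathrm i}u$ and its exterior analogue hold without any inclusion between $\Omega_k$ and $\Omega$---is exactly the point you isolate and justify.
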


In the same spirit, we can prove the link between the convergence of harmonic functions and of their interior normal derivatives (as in Proposition~\ref{Prop:CV-ddni}) as the domains converge in the sense of compact sets only.
The link with the exterior normal derivatives is proved as Proposition~\ref{Prop:CV-ddne}, by introducing another convergence framework for the duals of the trace spaces.

\begin{proposition}\label{Prop:CV-B'-K}
Let $\Omega$ be a two-sided admissible domain. Let $(\Omega_k)_{k\in\N}$ be a sequence of two-sided admissible domains such that $\Omega_k\to\Omega$ in the sense of compact sets. Then, it holds
\begin{equation*}
\dot \B'(\bord_k)\xrightarrow[k\to\infty]{}\Hpo \quad \mbox{through }\big(\Hmo,({\textstyle\ddno{}{}}\big|_{\bord_k}\circ\,\dot{\mathrm{N}}_{\bord})_{k\in\N}\big).
\end{equation*}
\end{proposition}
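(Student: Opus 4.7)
The strategy mirrors Proposition~\ref{Prop:CV-B-K}, substituting trace by normal derivative and Dirichlet extension by Neumann extension. Fix $g\in\Hmo$ and set $u:=\dot{\mathrm N}_{\bord}g\in \dot V_{\D}(\nbord)$, so that $\ddno u{\mathrm i}|_{\bord}=\ddno u{\mathrm e}|_{\bord}=g$. Proposition~\ref{Prop:ddn-isom} gives $\lVert g\rVert_{\Hmo}=\lVert u|_\Omega\rVert_{\dot H^1(\Omega)}$, so it suffices to show
\begin{equation*}
\lVert \ddno u{\mathrm i}|_{\bord_k}\rVert_{\dot\B'(\bord_k)} \xrightarrow[k\to\infty]{} \lVert u|_\Omega\rVert_{\dot H^1(\Omega)}.
\end{equation*}

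Although $u$ need not lie in $\dot H^1(\Omega_k)$ when $\Omega_k$ straddles $\bord$, I note that the equality of interior and exterior normal derivatives nevertheless allows $\ddno u{\mathrm i}|_{\bord_k}$ to be realized as the bounded functional $\psi\in\dot V_0(\Omega_k)\mapsto\int_{\Omega_k}\nabla u\cdot\nabla\psi\,\dx$: integration by parts separately over $\Omega_k\cap\Omega$ and $\Omega_k\cap\overline\Omega^c$ produces boundary contributions on $\bord\cap\Omega_k$ weighted by the jump $\ddno u{\mathrm i}|_{\bord}-\ddno u{\mathrm e}|_{\bord}=0$, which therefore cancel. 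The Riesz representation theorem then gives a unique $v_k\in\dot V_0(\Omega_k)$ with $\lVert\ddno u{\mathrm i}|_{\bord_k}\rVert_{\dot\B'(\bord_k)}=\lVert v_k\rVert_{\dot H^1(\Omega_k)}$. The same cancellation shows that $\int_{\Omega_k}\nabla u\cdot\nabla\varphi=0$ for every $\varphi\in\dot H^1_0(\Omega_k)$.

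For the upper bound I invoke Cauchy--Schwarz in the variational characterization and obtain $\lVert v_k\rVert_{\dot H^1(\Omega_k)}\le\lVert\nabla u\rVert_{L^2(\Omega_k)}$; since $\Omega_k\to\Omega$ in the sense of compact sets forces $\chi_{\Omega_k}\to\chi_\Omega$ almost everywhere (as $\bord$ has Lebesgue measure zero), dominated convergence yields $\lVert\nabla u\rVert_{L^2(\Omega_k)}\to\lVert u|_\Omega\rVert_{\dot H^1(\Omega)}$. For the matching lower bound I use the test function $\psi_k\in\dot V_0(\Omega_k)$ defined as the harmonic projection of $w|_{\Omega_k}$ in $\Omega_k$, where $w:=\dot{\mathrm E}_\Omega(u|_\Omega)\in\dot H^1(\R^n)$ is harmonic on $\nbord$ and coincides with $u$ on $\Omega$. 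By the orthogonality above, $\int_{\Omega_k}\nabla u\cdot\nabla\psi_k=\int_{\Omega_k}\nabla u\cdot\nabla w$, and splitting this integral across $\Omega_k\cap\Omega$ (where $u=w$ a.e.) and $\Omega_k\cap\overline\Omega^c$ (whose characteristic function vanishes a.e.) yields a limit of $\lVert u|_\Omega\rVert_{\dot H^1(\Omega)}^2$. Meanwhile $\lVert\psi_k\rVert_{\dot H^1(\Omega_k)}=\lVert\trd^{\bord_k}w\rVert_{\dot\B(\bord_k)}\to\lVert u|_\Omega\rVert_{\dot H^1(\Omega)}$ by Proposition~\ref{Prop:CV-B-K} applied to $w=\dot{\mathrm H}_{\bord}\trd_{\mathrm i}^{\bord}(u|_\Omega)$. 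Substituting $\psi_k/\lVert\psi_k\rVert_{\dot H^1(\Omega_k)}$ in the variational characterization of $\lVert v_k\rVert_{\dot H^1(\Omega_k)}$ gives the desired lower bound.

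The hard part is making sense of $\ddno u{\mathrm i}|_{\bord_k}$ in the non-monotone setting, where $u$ genuinely fails to have an $\dot H^1(\Omega_k)$ representative. This is resolved by the integration-by-parts cancellation tied to the Neumann extension: it makes $u$ behave as if it were in $\dot H^1(\Omega_k)$ for the purpose of defining its boundary normal derivative on $\bord_k$, reducing the non-monotone situation to the monotone one (as in Remark~\ref{Rem:B'-Change}) up to corrections supported on $\Omega_k\cap\overline\Omega^c$, which are negligible in the limit.
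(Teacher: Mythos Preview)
Your proof is correct and takes a genuinely different route from the paper's. The paper constructs, for each $k$, an element $u_k\in\dot V_\D(\nbord_k)$ matching $u$ on $\Omega_k$, bounds $\lVert u_k\rVert_{\dot H^1(\nbord_k)}\le\lVert u\rVert_{\dot H^1(\nbord)}$ by energy minimization, extracts a weak limit along an exhaustion by compact sets of $\nbord$, identifies it as $u$ by checking membership in $\dot V_\D(\nbord)$ via Lemma~\ref{Lem:DLP-ortho}, and upgrades to strong convergence on compacta before invoking Proposition~\ref{Prop:ddn-isom}. You instead work only on the interior: the Riesz representative $v_k\in\dot V_0(\Omega_k)$ of the boundary functional is bounded from above by Cauchy--Schwarz and from below by testing against the harmonic projection of $w=\dot{\mathrm H}_{\bord}\trd_{\mathrm i}^{\bord}(u|_\Omega)$, with both bounds driven by dominated convergence and Proposition~\ref{Prop:CV-B-K}. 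Your argument avoids weak compactness, subsequences, and the full Neumann extension on $\nbord_k$, and it is more explicit about what $\ddno{}{}|_{\bord_k}\circ\dot{\mathrm N}_{\bord}$ means when $\Omega_k$ straddles $\bord$ --- a point the paper handles somewhat loosely via the equality ``$u_k|_{\Omega_k}=u|_{\Omega_k}$'', which is not literal since $u|_{\Omega_k}\notin\dot H^1(\Omega_k)$ in general.

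One small tightening: your justification that $\int_{\Omega_k}\nabla u\cdot\nabla\varphi=0$ for $\varphi\in\dot H^1_0(\Omega_k)$ via ``integration by parts separately over $\Omega_k\cap\Omega$ and $\Omega_k\cap\overline\Omega^c$'' presumes those intersections carry a trace theory, which is not guaranteed for general two-sided admissible domains. The same conclusion follows cleanly by integrating by parts on $\Omega$ and on $\overline\Omega^c$ (which \emph{are} admissible): for $\varphi\in C^\infty_c(\Omega_k)\subset\dot H^1(\R^n)$, Green's formula gives $\int_{\R^n}\nabla u\cdot\nabla\varphi=\langle\llbracket\ddno u{}|_{\bord}\rrbracket,\Trd^{\bord}\varphi\rangle=0$, and since $\varphi$ is supported in $\Omega_k$ this equals $\int_{\Omega_k}\nabla u\cdot\nabla\varphi$; density extends to $\dot H^1_0(\Omega_k)$.
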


\begin{proof}
Let $g\in\Hmo$. Denote $u:=\dot{\mathrm N}_{\bord}g\in \dot V_{\D}(\nbord)$. For all $k\in\N$, denote by $u_k$ the unique element of $\dot V_{\D}(\nbord_k)$ such that $u_k|_{\Omega_k}=u|_{\Omega_k}$. As in the proof of Proposition~\ref{A-Prop:Ext-CV}, it holds
\begin{equation}\label{Eq:Neum-Ext-Bound}
\lVert u_k\rVert_{\dot H^1(\nbord_k)}\le \lVert u\rVert_{\dot H^1(\nbord)}.
\end{equation}
Consider an exhaustion by compact sets  $(K_m)_{m\in\N}$ of $\nbord$. Then, by the convergence in the sense of compact sets, there exists (up to a subsequence) $u_\infty\in\dot V_0(\nbord)$ such that $u_\infty|_\Omega=u|_\Omega$ and
\begin{equation*}
\forall m\in \N,\quad u_k|_{K_m}\xrightharpoonup[k\to\infty]{}u_\infty|_{K_m}\quad\mbox{in }\dot H^1(K_m).
\end{equation*}
By Lemma~\ref{Lem:DLP-ortho}, for all $k\in\N$ and $v\in\dot H^1(\R^n)\subset\ker\llbracket\Trd^{\bord}\rrbracket$, it holds $\langle u_k,v\rangle_{\dot H^1(\nbord_k)}=0$. In addition,
\begin{multline*}
\lvert\langle u_k,v\rangle_{\dot H^1(\nbord_k)}-\langle u_\infty,v\rangle_{\dot H^1(\nbord)}\rvert\le\left\lvert\int_{\R^n}\mathds{1}_{K_m}\nabla(u_k-u_\infty)\cdot\nabla v\,\dx\right\rvert\\
+\left\lvert\int_{\R^n}(\mathds{1}_{\nbord}-\mathds{1}_{K_m})\nabla u_\infty\cdot\nabla v\,\dx\right\rvert+\left\lvert\int_{\R^n}(\mathds{1}_{\nbord_k}-\mathds{1}_{K_m})\nabla u_k\cdot\nabla v\,\dx\right\rvert.
\end{multline*}
Taking successively the limits in $k$ and in $m$ (and using~\eqref{Eq:Neum-Ext-Bound} for the third term) yields $\langle u_\infty,v\rangle_{\dot H^1(\nbord)}=0$, for all $v\in\dot H^1(\R^n)$. It follows that $u_\infty\in\dot V_\D(\nbord)$, hence $u_\infty=u$. Along with~\eqref{Eq:Neum-Ext-Bound}, this yields $u_k|_{K_m}\to u|_{K_m}$ for all $m\in\N$. Hence,
\begin{equation*}
\forall m\in\N,\quad \lVert u_k|_{\Omega_k}\rVert_{\dot H^1(\Omega_k)}\xrightarrow[k\to\infty]{}\lVert u\rVert_{\dot H^1(\Omega)},
\end{equation*}
and the result follows by Proposition~\ref{Prop:ddn-isom}.
\end{proof}

In the monotone case, it was pointed out in Remark~\ref{Rem:B'-Change} that both frameworks are equivalent for the operators $\Xi_k$ and $\ddno{}{}|_{\bord_k}\circ\dot{\mathrm N}_{\bord}$ are the same. Under the assumption of a convergence in the sense of compact sets alone, those operators may differ, but still induce equivalent notions of convergence.

\begin{lemma}\label{Lem:Equiv-B'-Frameworks}
Let $\Omega$ be a two-sided admissible domain. Let $(\Omega_k)_{k\in\N}$ be a sequence of two-sided admissible domains such that $\Omega_k\to\Omega$ in the sense of compact sets. Then, for all $g\in\Hmo$, it holds
\begin{equation*}
\left\lVert\Xi_kg-\ddno{}{}\Big|_{\bord_k}\dot{\mathrm N}_{\bord}g\right\rVert_{\dot\B'(\bord_k)}\xrightarrow[k\to\infty]{}0.
\end{equation*}
\end{lemma}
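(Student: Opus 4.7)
The plan is to identify both sides as images under the interior normal-derivative isometry of Proposition~\ref{Prop:ddn-isom} and then show the two $\dot V_0(\Omega_k)$-representatives differ by something supported on $\Omega_k\cap\overline{\Omega}^c$. Given $g\in\Hmo$, I introduce the two natural harmonic extensions of $g$ to $\nbord$: the single-layer type
\[
W^{\Scal}:=\dot{\mathrm H}_{\bord}(\dot\Lambda_{\bord}^{-1}g)\in\dot V_{\Scal}(\nbord)\subset\dot H^1(\R^n),
\]
and the double-layer type $W^{\D}:=\dot{\mathrm N}_{\bord}g\in\dot V_{\D}(\nbord)$. Both have interior normal derivative $g$ on $\bord$, so the restrictions $W^{\Scal}|_\Omega$ and $W^{\D}|_\Omega$ lie in $\dot V_0(\Omega)$ and have the same image under the isometry $\ddno{\,}{\mathrm i}\big|_{\bord}$; hence $W^{\Scal}|_\Omega=W^{\D}|_\Omega$ and the difference $W^{\Scal}-W^{\D}$ is supported on $\overline\Omega^c$, where it belongs to $\dot H^1(\overline\Omega^c)$.

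For $k$ large enough, the isometry $\ddno{\,}{\mathrm i}\big|_{\bord_k}:\dot V_0(\Omega_k)\to\dot\B'(\bord_k)$ identifies $\Xi_k g$ with the unique $U_k^{\Scal}\in\dot V_0(\Omega_k)$ having trace $\trd^{\bord_k}W^{\Scal}$; because $W^{\Scal}$ is harmonic on $\nbord$ and agrees with its harmonic extension on $\Omega_k$ (up to a null set), $U_k^{\Scal}=W^{\Scal}|_{\Omega_k}$. Analogously, $\ddno{\,}{\mathrm i}\big|_{\bord_k}\dot{\mathrm N}_{\bord}g$ is identified with $U_k^{\D}=W^{\D}|_{\Omega_k}$. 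Thus the norm of interest equals
\[
\bigl\lVert\Xi_k g-\ddno{\,}{\mathrm i}\big|_{\bord_k}\dot{\mathrm N}_{\bord}g\bigr\rVert_{\dot\B'(\bord_k)}^{2}=\int_{\Omega_k}\bigl|\nabla(W^{\Scal}-W^{\D})\bigr|^{2}\dx=\int_{\overline\Omega^c}\bigl|\nabla(W^{\Scal}-W^{\D})\bigr|^{2}\mathds{1}_{\Omega_k}\dx,
\]
where the second equality uses $W^{\Scal}|_\Omega=W^{\D}|_\Omega$. For every $x\in\overline\Omega^c$, convergence in the sense of compact sets applied to $K=\{x\}$ gives $x\notin\Omega_k$ eventually, so $\mathds{1}_{\Omega_k}\to 0$ pointwise on $\overline\Omega^c$; since $|\nabla(W^{\Scal}-W^{\D})|^{2}\in L^1(\overline\Omega^c)$, dominated convergence concludes.

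The main obstacle is the identification step: I need both $W^{\Scal}|_{\Omega_k}$ and $W^{\D}|_{\Omega_k}$ to genuinely lie in $\dot V_0(\Omega_k)$ and represent the respective functionals through the isometry. The delicate case is when $\Omega_k$ straddles $\bord$: although $\bord$ is Lebesgue-null (condition (iv) of admissibility), $W^{\D}$ has a genuine trace jump across $\bord$, so $W^{\D}|_{\Omega_k}$ could fail to be in $\dot H^1(\Omega_k)$ if $\bord\cap\Omega_k$ had positive capacity. I expect to handle this either by restricting to the (generic) situation $\bord\cap\overline{\Omega_k}=\varnothing$ for large $k$ that the compact-sets convergence gives on compact patches of $\bord$, or by replacing the pointwise argument with a limit along an exhaustion by compact sets of $\nbord$, analogous to the approach used in the proof of Proposition~\ref{Prop:CV-B'-K}. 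Once this identification is justified, the rest of the argument is a routine application of dominated convergence.
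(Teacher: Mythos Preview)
Your approach is more direct than the paper's, and the key observation $W^{\Scal}|_\Omega=W^{\D}|_\Omega$ is correct and useful. However, the identification step has a gap you have only half-diagnosed.

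You correctly flag that $W^{\D}|_{\Omega_k}$ may fail to lie in $\dot H^1(\Omega_k)$ because of the trace jump. But the identification $U_k^{\Scal}=W^{\Scal}|_{\Omega_k}$ is \emph{also} wrong in general, for a different reason: although $W^{\Scal}\in\dot H^1(\R^n)$ so that $W^{\Scal}|_{\Omega_k}\in\dot H^1(\Omega_k)$, the function $W^{\Scal}$ is a single layer potential and carries a jump in normal derivative across $\bord$; its distributional Laplacian on $\R^n$ is a nonzero distribution supported on $\bord$. Hence $W^{\Scal}|_{\Omega_k}\notin\dot V_0(\Omega_k)$ whenever $\bord\cap\Omega_k$ has positive capacity, and the Poincar\'e--Steklov isometry sends $\Xi_k g$ to $\DOd_{\Omega_k}(\trd^{\bord_k}W^{\Scal})\neq W^{\Scal}|_{\Omega_k}$. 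Your displayed formula for the norm is therefore not valid in the non-monotone setting.

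Your first proposed fix does not work: convergence in the sense of compact sets does \emph{not} force $\bord\cap\overline{\Omega_k}=\varnothing$ eventually (take $\Omega=B(0,1)$ and $\Omega_k=B(0,1+1/k)$). Your second fix, via an exhaustion of $\nbord$ by compacts, is the right instinct but needs substance: one must compare $W^{\Scal}|_{\Omega_k}$ and $W^{\D}|_{\Omega_k}$ with the genuine $\dot V_0(\Omega_k)$-elements obtained by re-extending harmonically across $\bord_k$, and show those corrections vanish. This is exactly what the paper does: it represents both sides via the double-extension operators $(\dot{\mathrm E}_{\overline{\Omega}_k^c}(\dot{\mathrm E}_\Omega u)|_{\overline{\Omega}_k^c})|_{\Omega_k}$ and $(\dot{\mathrm E}^{\mathrm N}_{\overline{\Omega}_k^c}(\dot{\mathrm E}^{\mathrm N}_\Omega u)|_{\overline{\Omega}_k^c})|_{\Omega_k}$, which are harmonic on $\Omega_k$ by construction, and then invokes Proposition~\ref{A-Prop:Ext-CV} and the compact-exhaustion argument of Proposition~\ref{Prop:CV-B'-K} to show both converge to $u$ on compacts of $\Omega$, concluding via Lemma~\ref{Lem:EquivCVFramework}. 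Your dominated-convergence endgame would only kick in after that correction step is in place.
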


\begin{proof}
Let $(u_k\in\dot V_0(\Omega_k))_{k\in\N}$ and $u\in \dot V_0(\Omega)$. By Proposition~\ref{Prop:ddn-isom}, it holds
\begin{equation*}
\left\lVert \ddno{u_k}{\mathrm i}\Big|_{\bord_k}-\ddno{}{\mathrm i}\Big|_{\bord_k}\dot{\mathrm N}_{\bord}\ddno{u}{\mathrm i}\Big|_{\bord}\right\rVert_{\dot\B'(\bord_k)} = \lVert u_k - (\dot{\mathrm E}^{\mathrm N}_{\overline{\Omega}_k^c}(\dot{\mathrm E}^{\mathrm N}_\Omega u)|_{\overline{\Omega}_k^c})|_{\Omega_k}\rVert_{\dot H^1(\Omega_k)},
\end{equation*}
involving the Neumann harmonic extensions as in the proof of Proposition~\ref{Prop:CV-ddne}.
In addition, by Proposition~\ref{A-Prop:Ext-CV} and proceeding as in the proof of Proposition~\ref{Prop:CV-B'-K}, it holds
\begin{equation*}
\lVert (\dot{\mathrm E}_{\overline{\Omega}_k^c}(\dot{\mathrm E}_\Omega u)|_{\overline{\Omega}_k^c})|_{\Omega_k} - (\dot{\mathrm E}^{\mathrm N}_{\overline{\Omega}_k^c}(\dot{\mathrm E}^{\mathrm N}_\Omega u)|_{\overline{\Omega}_k^c})|_{\Omega_k}\rVert_{\dot H^1(\Omega_k)}\xrightarrow[k\to\infty]{}0,
\end{equation*}
both converging in $\dot H^1$ on (every compact subset of) $\Omega$ to $u$. It follows that
\begin{equation*}
\ddno{u_k}{\mathrm{i}}\Big|_{\bord_k}\xrightarrow[k\to\infty]{(\Xi_k)}\ddno u{\mathrm{i}}\Big|_{\bord}\quad
\iff\quad\ddno{u_k}{\mathrm{i}}\Big|_{\bord_k}\xrightarrow[k\to\infty]{(\ddno{}{}|_{\bord_k}\circ\dot{\mathrm{N}}_{\bord})}\ddno u{\mathrm{i}}\Big|_{\bord}.
\end{equation*}
Since $\ddno{}{}|_{\bord}(\dot V_0(\Omega))=\Hmo$, the result follows by Lemma~\ref{Lem:EquivCVFramework}.
\end{proof}

Using those results, we follow the method from Subsection~\ref{Subsec:CV-Rn} (notably Theorem~\ref{Th:CV-LP} and Proposition~\ref{Prop:Link-CV-H1Rn}, Point (ii)) to generalize Theorem~\ref{Th:CV-LP-H1-Rn}.

\begin{theorem}\label{A-Th:CV-LP-H1-Rn}
Let $\Omega$ be a two-sided admissible domain. Assume there exists a sequence $(\Omega_k)_{k\in\N}$ of two-sided admissible domains such that $\Omega_k\to\Omega$ in the sense of compact sets, and that the extension operators $(\dot{\mathrm{E}}_{\Omega_k})_{k\in\N}$ and $(\dot{\mathrm{E}}_{\overline{\Omega}_k^c})_{k\in\N}$ from~\eqref{Eq:Dir-Int-Ext} and~\eqref{Eq:Dir-Ext-Ext} are uniformly bounded. Then,
\begin{enumerate}
\item[(i)] for every $(f_k\in\dot \B(\bord_k))_{k\in\N}\xrightarrow[k\to\infty]{(\trd^{\bord_k}\circ\,\dot{\mathrm{H}}_{\bord})} f\in\Hpo$, it holds
\begin{align*}
&& \dot{\mathrm{E}}_{\Omega_k}(\Dd_{\bord_k}f_k|_{\Omega_k})&\xrightarrow[k\to\infty]{}\dot{\mathrm{E}}_\Omega(\Dd_{\bord}f|_\Omega) &&\mbox{in }\dot H^1(\R^n),\\
&\text{and} &
\dot{\mathrm{E}}_{\overline{\Omega}_k^c}(\Dd_{\bord_k}f_k|_{\overline{\Omega}_k^c})&\xrightarrow[k\to\infty]{}\dot{\mathrm{E}}_{\overline{\Omega}^c}(\Dd_{\bord}f|_{\overline{\Omega}^c}) &&\mbox{in }\dot H^1(\R^n),
\end{align*}
using the convergence framework from Lemma~\ref{Lem:CV-B};

\item[(ii)] for every $(g_k\in\dot\B'(\bord_k))_{k\in\N}\xrightarrow[k\to\infty]{(\Xi_k)} g\in\Hmo$, it holds
\begin{equation*}
\Sd_{\bord_k}g_k\xrightarrow[k\to\infty]{}\Sd_{\bord}g\qquad\mbox{in }\dot H^1(\R^n),
\end{equation*}
using the convergence framework from Lemma~\ref{Lem:CV-B'}.
\end{enumerate}
\end{theorem}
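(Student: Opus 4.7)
The proof adopts the two-step architecture of Theorem~\ref{Th:CV-LP-H1-Rn}: first, a non-monotone analogue of Theorem~\ref{Th:CV-LP} is established in the moving Hilbert space sense of~\cite{kuwae_convergence_2003}; second, this is strengthened to strong $\dot H^1(\R^n)$-convergence via Proposition~\ref{A-Prop:Ext-CV}. For Step~1, Propositions~\ref{Prop:TrCV-i}--\ref{Prop:TrCV-e} and \ref{Prop:CV-ddni}--\ref{Prop:CV-ddne} transfer to the compact-set setting thanks to the isometric properties of Theorem~\ref{Th:Trace} and Proposition~\ref{Prop:ddn-isom}, the uniform boundedness of the harmonic extensions, and Lemma~\ref{Lem:Equiv-B'-Frameworks}, producing the link between convergence of harmonic functions in the framework of Proposition~\ref{A-Prop:CV-H1} and convergence of their boundary values in the frameworks of Propositions~\ref{Prop:CV-B-K}--\ref{Prop:CV-B'-K}. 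The self-adjoint weak-limit argument of Proposition~\ref{Prop:ProjTrCV} then applies verbatim to give convergence of the orthogonal projectors onto $\dot V_\Scal(\nbord_k)$, and Proposition~\ref{Prop:LP-Bounds} combined with Lemma~\ref{Lem:InverseCV} applied to the jump operators (the uniformly bounded inverses of $\Sd_{\bord_k}$ and $\Dd_{\bord_k}$) yields the compact-set analogue of Theorem~\ref{Th:CV-LP}.

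For Step~2, denote $T_k^{\mathrm i} v := (\dot{\mathrm E}_{\overline{\Omega}_k^c}[(\dot{\mathrm E}_\Omega v)|_{\overline{\Omega}_k^c}])|_{\Omega_k}$, the operator appearing in Proposition~\ref{A-Prop:CV-H1}, and set $v := \Dd_\bord f|_\Omega$. Step~1 yields $\lVert\Dd_{\bord_k} f_k|_{\Omega_k} - T_k^{\mathrm i} v\rVert_{\dot H^1(\Omega_k)} \to 0$, and by uniform boundedness of $\dot{\mathrm E}_{\Omega_k}$ the first part of~(i) reduces to showing $\dot{\mathrm E}_{\Omega_k}(T_k^{\mathrm i} v) \to \dot{\mathrm E}_\Omega v$ in $\dot H^1(\R^n)$. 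Setting $w_k := \dot{\mathrm E}_{\overline{\Omega}_k^c}[(\dot{\mathrm E}_\Omega v)|_{\overline{\Omega}_k^c}]$, Proposition~\ref{A-Prop:Ext-CV} gives $w_k \to \dot{\mathrm E}_\Omega v$ in $\dot H^1(\R^n)$. Since $\dot{\mathrm E}_{\Omega_k}(T_k^{\mathrm i} v)$ and $w_k$ share the interior trace on $\bord_k$ while the former is energy-minimizing on $\overline{\Omega}_k^c$, we have $\lVert\dot{\mathrm E}_{\Omega_k}(T_k^{\mathrm i} v)\rVert_{\dot H^1(\R^n)} \le \lVert w_k\rVert_{\dot H^1(\R^n)}$, hence $\limsup_k \lVert\dot{\mathrm E}_{\Omega_k}(T_k^{\mathrm i} v)\rVert_{\dot H^1(\R^n)} \le \lVert\dot{\mathrm E}_\Omega v\rVert_{\dot H^1(\R^n)}$. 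Any weak subsequential limit in $\dot H^1(\R^n)$ is harmonic on $\nbord$ (by compact-set convergence, any test function compactly supported in $\nbord$ is eventually supported in $\nbord_k$, so weak Laplacians pass to the limit) and equals $v$ on $\Omega$; it must therefore be $\dot{\mathrm E}_\Omega v$. Lower semicontinuity of the norm then promotes weak to strong convergence. The exterior assertion in~(i) follows by interchanging the roles of $\Omega$ and $\overline{\Omega}^c$, and~(ii) is handled analogously, characterizing $\Sd_\bord g$ as the Neumann harmonic extension of $g$ and invoking Proposition~\ref{Prop:CV-B'-K}.

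The main obstacle is Step~2: in the absence of the nesting $\Omega_k \subset \Omega$, no natural restriction operator is at hand, and the double-extension $T_k^{\mathrm i} v$ must play a dual role. Its comparison with $w_k$ provides the energy majorant through the minimality of the Dirichlet harmonic extension, while Proposition~\ref{A-Prop:Ext-CV} identifies the correct strong limit; the weak-convergence/lower-semicontinuity dichotomy then closes the argument. A secondary subtlety is the consistency between the two available frameworks for $\Hmo$ in the non-monotone regime, resolved by Lemma~\ref{Lem:Equiv-B'-Frameworks}.
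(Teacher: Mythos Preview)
Your proposal is correct and follows the same two-step architecture the paper sketches: first lift Theorem~\ref{Th:CV-LP} to the compact-set setting via the boundary-value convergences of Section~\ref{Sec:Monotonicity} and Lemma~\ref{Lem:InverseCV}, then upgrade to $\dot H^1(\R^n)$-convergence. One minor point: in Step~2 you invoke a weak-limit/energy-bound argument to show $\dot{\mathrm E}_{\Omega_k}(T_k^{\mathrm i}v)\to\dot{\mathrm E}_\Omega v$, whereas the paper's pointer to Proposition~\ref{Prop:Link-CV-H1Rn}~(ii) suggests a shorter route. Since $T_k^{\mathrm i}v=w_k|_{\Omega_k}$ and you already have $w_k\to\dot{\mathrm E}_\Omega v$ in $\dot H^1(\R^n)$, the uniform bound on $\dot{\mathrm E}_{\Omega_k}$ gives $\lVert\dot{\mathrm E}_{\Omega_k}(w_k|_{\Omega_k})-\dot{\mathrm E}_{\Omega_k}((\dot{\mathrm E}_\Omega v)|_{\Omega_k})\rVert_{\dot H^1(\R^n)}\to 0$, and a second application of Proposition~\ref{A-Prop:Ext-CV} with $u=\dot{\mathrm E}_\Omega v$ finishes directly, bypassing the weak-compactness step. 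Both arguments are valid.
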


\section*{Acknowledgments}
The authors are most grateful to Simon N. Chandler-Wilde, David Hewett and Michael Hinz for insightful discussions on the topic, as well as to Irina Mitrea and Luke Rogers, for their work has motivated the present paper.

\section*{Fundings}
Research supported in part by CNRS INSMI IEA (International Emerging Actions 2022) “Functional and applied analysis with fractal or non-Lipschitz boundaries”.

A.T.: Research was supported in part by NSF Grant DMS-2349433, the Simons Foundation and the Fulbright Program. The project was completed during a stay at CentraleSup\'elec, Universit\'e Paris-Saclay, whose kind hospitality is gratefully acknowledged.

\def\refname{References}
\bibliographystyle{siam}
\bibliography{BibGC.bib}

\end{document}